\newtheorem{theorem}{Theorem}[section]
\newtheorem{prop}[theorem]{Proposition}
\newtheorem{lemma}[theorem]{Lemma}
\newtheorem{cor}[theorem]{Corollary}
\theoremstyle{definition}
\newtheorem{definition}[theorem]{Definition}
\newtheorem{example}[theorem]{Example}
\newtheorem{remark}[theorem]{Remark}
\renewcommand{\rm}{\mathrm}
\newcommand{\Spec}{\mathrm{Spec} \,}
\newcommand{\vp}{\varphi}
\newcommand{\bF}{{\mathbf F}}
\newcommand{\bG}{{\mathbf G}}
\newcommand{\bZ}{{\mathbf Z}}
\newcommand{\cO}{\mathcal{O}}
\newcommand{\cX}{\mathcal{X}}
\newcommand{\cY}{\mathcal{Y}}
\newcommand{\sD}{{\mathscr D}}
\newcommand{\sN}{{\mathscr N}}
\newcommand{\sO}{{\mathscr O}}
\newcommand{\sR}{{\mathscr R}}
\newcommand{\sU}{{\mathscr U}}
\newcommand{\fg}{\mathfrak{g}}
\newcommand{\fm}{\mathfrak{m}}
\newcommand{\fp}{\mathfrak{p}}
\newcommand{\ft}{\mathfrak{t}}
\newcommand{\fu}{\mathfrak{u}}
\DeclareMathOperator{\Ad}{Ad}
\DeclareMathOperator{\chara}{char}
\DeclareMathOperator{\diag}{diag}
\DeclareMathOperator{\GL}{GL}
\DeclareMathOperator{\Hom}{Hom}
\DeclareMathOperator{\id}{id}
\DeclareMathOperator{\Lie}{Lie}
\DeclareMathOperator{\pgl}{\mathfrak{pgl}}
\DeclareMathOperator{\PGL}{PGL}
\DeclareMathOperator{\SL}{SL}
\DeclareMathOperator{\Sp}{Sp}
\DeclareMathOperator{\Sym}{Sym}
\DeclareMathOperator{\Transp}{Transp}
\newcommand{\wtil}[1]{\wtil}
\newcommand{\ov}[1]{\overline{#1}}
\title{Centralizers of sections of a reductive group scheme}
\author{Sean Cotner}
\begin{document}
\bibliographystyle{halpha-abbrv}

\begin{abstract}
This paper proves a number of flatness results for centralizers of sections of a reductive group scheme over a general base scheme. To this end, we establish relative versions of the Jordan decomposition. Using our results, we obtain a canonical flattening stratification for the universal centralizer of a simply connected semisimple group scheme over a base of good characteristic. We also investigate the structure of centralizers and conjugacy classes of unipotent and nilpotent sections over general bases.
\end{abstract}

\maketitle

\section{Introduction}

\subsection{Overview}

Let $S$ be a scheme, let $G$ be a reductive group scheme over $S$, and let $\fg$ be its Lie algebra. The aim of this paper is to investigate the centralizer schemes $Z_G(g)$ and $Z_G(X)$ of sections $g \in G(S)$ and $X \in \fg(S)$, and in particular to study conditions under which these are $S$-flat or $S$-smooth. There are a number of obstructions to the flatness of (say) $Z_G(g)$; for instance, if the function $s \mapsto \dim Z_{G_s}(g_s)$ is not locally constant on $S$, then $Z_G(g)$ has no hope of being flat. Thus we will call a section $g \in G(S)$ \textit{pure} provided that the above function is locally constant. \smallskip

Flatness can fail even if $g$ is pure; for instance, if $S$ is an Artin local scheme then every section is pure, but there are many non-central sections $g \in G(S)$ such that $g_s = 1$, and in such cases $Z_G(g)$ is usually not flat. Thus if we want any flatness result for $Z_G(g)$ involving only general fibral hypotheses on $g \in G(S)$ (like purity), we must assume that $S$ is reduced.\smallskip

Still flatness can fail; for instance, if $S$ is the spectrum of a discrete valuation ring, then the special fiber of $Z_G(g)$ can have components with no integral points. For examples, see Remark~\ref{remark:possible-extensions-of-flatness-results}. In some sense, these examples come from the fact that the Jordan decomposition for pure sections need not vary nicely among fibers. Thus we say that a section $g \in G(S)$ has \textit{pure semisimple part} if the dimension of the centralizer of the semisimple part of $g_{\overline{s}}$ in the Jordan decomposition is locally constant in $s \in S$. (This terminology will be justified by Theorem~\ref{theorem:intro-jordan}.) Ultimately, we can prove the following theorem. For the definitions of goodness and strong purity, see Sections~\ref{subsection:pretty-good} and \ref{subsection:pure}.

\begin{theorem}[Theorem~\ref{theorem:flat-pure-centralizer}]\label{theorem:intro-flat-pure-centralizer}
Let $S$ be a reduced scheme and let $G$ be a reductive group scheme over $S$ with Lie algebra $\fg$. Assume that $\chara k(s)$ is good for $G_s$ for every $s \in S$ and $|\pi_1(\sD(G))|$ is invertible on $S$ (where $\sD(G)$ is the derived group of $G$).
\begin{enumerate}
    \item\label{item:intro-pure-lie-algebra-centralizer} If $X \in \fg(S)$ is pure and has pure semisimple part, then $Z_G(X)$ is flat and $Z_G(X)/Z(G)$ is smooth.
    \item\label{item:intro-pure-group-centralizer} If $g \in G(S)$ is strongly pure and has pure semisimple part, then $Z_G(g)$ is flat and $Z_G(g)/Z(G)$ is smooth.
\end{enumerate}
\end{theorem}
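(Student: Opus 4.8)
I would prove the two statements together, reducing each to the case of a nilpotent section of a reductive group scheme, which I would then handle via an associated cocharacter. Since the formation of centralizers commutes with base change and since flatness and smoothness may be checked fppf-locally on $S$, we may replace $S$ by an \'etale cover whenever convenient; in particular we may assume that the relative Jordan decompositions below are defined over $S$.

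\emph{Reduction to the nilpotent case.} By the relative Jordan decomposition (Theorem~\ref{theorem:intro-jordan}), after \'etale localization $g = g_s g_u = g_u g_s$ with $g_s$ having semisimple and $g_u$ unipotent geometric fibres (resp.\ $X = X_s + X_n$ with $[X_s,X_n]=0$, $X_s$ semisimple, $X_n$ nilpotent). Put $H := Z_G(g_s)$ (resp.\ $Z_G(X_s)$). The hypothesis that $g$ (resp.\ $X$) has pure semisimple part says precisely that $\dim Z_{G_{\overline{s}}}((g_s)_{\overline{s}})$ is locally constant; together with the good-characteristic and $|\pi_1(\sD(G))|$-invertibility hypotheses this makes $H$ a reductive group scheme over $S$ --- its geometric fibres are the reductive groups $Z_{G_{\overline{s}}}((g_s)_{\overline{s}})$, it is affine of finite presentation, smooth by the structure theory of centralizers of semisimple sections, and of locally constant fibre dimension --- and these hypotheses are inherited by $H$ (good primes and the order of the fundamental group of a subsystem subgroup divide those of $G$). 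Now $Z_G(g) = Z_H(g_u)$ (resp.\ $Z_G(X) = Z_H(X_n)$). In the group case an $H$-equivariant relative Springer isomorphism $\cU_H \xrightarrow{\ \sim\ } \cN_{\fh}$, available in good characteristic, carries $g_u$ to a nilpotent section $X_n \in \fh(S)$ with $Z_H(g_u) = Z_H(X_n)$, and strong purity of $g$ becomes purity of $X_n$ in $H$. So both statements reduce to: if $H$ is reductive over $S$ of good characteristic with $|\pi_1(\sD(H))|$ invertible and $X \in \fh(S)$ is nilpotent and pure, then $Z_H(X)$ is flat and $Z_H(X)/Z(H)$ is smooth.

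\emph{The nilpotent case.} \'Etale-locally on $S$ attach to $X$ an associated cocharacter $\la\colon \mathbb{G}_m \to \sD(H)$ with $\Ad(\la(t))X = t^2 X$, and let $P = P_H(\la)$ with Levi $L = Z_H(\la)$ and unipotent radical $U$. Then $Z_H(X) \subseteq P$ and, reading off $\la$-weights, $Z_H(X) = Z_L(X) \ltimes Z_U(X)$. Here $X$ is distinguished in $\Lie L$, so in good characteristic $Z_L(X)$ is smooth and is an extension of a multiplicative-type group scheme (a quotient of $Z(L)$) by a smooth connected unipotent group scheme, while $Z_U(X)$ is a smooth subgroup scheme of the split unipotent group $U$. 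With $\la$ fixed, the $\mathfrak{sl}_2$-type representation theory of $\fh$ controls the relevant weight-space dimensions, and purity of $X$ ensures that in the resulting \'etale-local d\'evissage of $Z_H(X)$ into iterated extensions of multiplicative-type and vector group schemes all ranks are locally constant; hence $Z_H(X)$ is flat over $S$. Since $Z_H(X) \to S$ and $Z(H) \to S$ are flat and finitely presented and the fppf quotient $Z_H(X)/Z(H)$ has smooth geometric fibres (the case of a field), it is $S$-smooth; as $Z(G) \subseteq Z(H)$, the same reasoning gives smoothness of $Z_G(g)/Z(G)$ (resp.\ $Z_G(X)/Z(G)$).

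\emph{Main obstacle.} The crux is the relative theory over a general reduced base: producing, \'etale-locally on $S$, the Jordan decomposition, the Springer isomorphism, and --- most delicately --- the associated cocharacter with its parabolic and Levi, i.e.\ controlling how $\mathfrak{sl}_2$-triples and nilpotent orbits vary in families. This is exactly where the relative Jordan decomposition and spreading-out arguments are needed, and where reducedness of $S$ is essential --- it rules out the infinitesimal phenomena illustrated by the Artin-local examples in the introduction. The secondary difficulty is the flatness bookkeeping: exhibiting $H$, $Z_L(X)$ and $Z_U(X)$ \'etale-locally as iterated extensions of multiplicative-type and vector group schemes of locally constant rank, for which the good-characteristic structure theory of centralizers (separability of nilpotent orbits, smoothness modulo the centre, the dimension formula) and the purity hypotheses are precisely what is used; invertibility of $|\pi_1(\sD(G))|$ enters to make centralizers of semisimple sections reductive and smooth modulo the centre.
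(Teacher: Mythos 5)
Your reduction from the general case to the fiberwise nilpotent case matches the paper in outline: relative Jordan decomposition, pass to $H = Z_G(\text{ss part})$, then an integral Springer isomorphism in the group case. The genuine problem is in how you handle the nilpotent case.

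You propose to prove flatness of $Z_H(X)$ by a d\'evissage: write $Z_H(X) = Z_L(X) \ltimes Z_U(X)$, then exhibit each factor \'etale-locally as an iterated extension of multiplicative-type and vector group schemes of locally constant rank. This has a gap at its heart. Even granting that the semidirect-product identity holds over $S$ (it is established in the paper only \emph{after} flatness of $Z_G(X)$ is known, as Theorem~\ref{theorem:flat-pure-centralizer-semidirect-product}), you must show $Z_L(X)$ is $S$-flat. Over a field its fibers are smooth but typically \emph{disconnected} (the component group of a distinguished nilpotent centralizer is frequently nontrivial and non-abelian, e.g.\ $S_3$, $S_4$, $S_5$), so locally constant fiber dimension does not imply flatness over a DVR; Lemma~\ref{lemma:boohers-lemma} needs a section through every irreducible component of the special fiber, which you do not have. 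Nor is $Z_L(X)$ an extension of a multiplicative-type group by a unipotent group as you assert --- this description drops the finite component group entirely. And the proposed escape of first showing $L_X^0$ and $L_X/L_X^0$ flat separately is circular: forming $L_X/L_X^0$ as a flat group scheme already presupposes flatness of $L_X$.

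The paper avoids all of this. It never attempts to prove flatness of $Z_P(X)$ by dissecting the group itself; instead it proves that the orbit map $\Phi\colon P \to \bigoplus_{n\geq 2}\fg(\tau,n)$, $\Phi(p) = \Ad(p)X$, is flat, and obtains $Z_P(X)$ as the fiber of $\Phi$ over the section $X$. Flatness of $\Phi$ reduces via the fibral flatness criterion to the field case, where the torsor structure of $\Phi$ over its open dense image makes all nonempty fibers of the same dimension, so Miracle Flatness applies since source and target are regular. Then $Z_P(X)\to Z_G(X)$ is an isomorphism by the fibral isomorphism criterion. Your sketch gestures at ``separability of nilpotent orbits'' but does not make this orbit-map argument, and without it you have no route to flatness of the Levi-type factor $Z_L(X)$. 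You would need to supply the orbit-map/Miracle Flatness step (or an equivalent substitute) for the proposal to close.
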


In the fiberwise nilpotent case, Theorem~\ref{theorem:intro-flat-pure-centralizer}(\ref{item:intro-pure-lie-algebra-centralizer}) was claimed in \cite[5.2]{McNinch-relative-centralizer} but the proof contained a flaw, as pointed out in \cite[Rmk.\ 4.20]{Booher}. Special cases of this result were obtained using an entirely different method in \cite[Prop.\ 4.17]{Booher}. A recent preprint of Hardesty \cite{Hardesty} proves the fiberwise nilpotent case of the first statement under the slightly more stringent requirement that the fibers of $G$ are \textit{geometrically standard}; see \cite[1.6]{Hardesty}. (This amounts to the further assumption that $Z(G)$ is smooth by Lemma~\ref{lemma:characterization-of-pretty-good-primes} and \cite[Thm.\ 5.2]{Herpel}.) The method of proof in \cite{Hardesty} is completely different from ours, involving a careful analysis of the component groups of the fibers of $Z_G(X)$. We outline our proof in Section~\ref{subsection:outline}.\smallskip

Previous cases of Theorem~\ref{theorem:intro-flat-pure-centralizer} have mainly focused around the case that $X$ and $g$ is fiberwise regular; see \cite[Exp.\ XIV, Cor.\ 3.13, Thm.\ 3.18]{SGA3II}, \cite[Prop.\ 3.3.5]{Riche-universal-centralizer}, and \cite[Thm.\ 4.2.8]{Bouthier-Cesnavicius} for the Lie algebra case and \cite[Thm.\ 1.2]{Integral-Springer} for the group case.

The remaining statements of Theorem~\ref{theorem:intro-flat-pure-centralizer} follow from the fiberwise nilpotent case via two new ingredients: an integral Springer isomorphism \cite[Thm.\ 5.1]{Integral-Springer} and the following relative version of the Jordan decomposition.

\begin{theorem}[Theorem~\ref{theorem:relative-jordan-decomposition-over-normal-base}]\label{theorem:intro-jordan}
Let $S$ be a normal integral scheme with generic point $\eta$ and let $G$ be a reductive $S$-group scheme.
\begin{enumerate}
    \item Let $g \in G(S)$ be a strongly pure section of $G$ with pure semisimple part, and for every point $s \in S$ let $g_{\overline{s}} = t_s u_s$ be the Jordan decomposition of $g_{\overline{s}}$. Suppose that the Jordan decomposition of $g_{\overline{\eta}}$ is $k(\eta)$-rational. Then there exist unique sections $t, u \in G(S)$ such that $g = tu$ induces the Jordan decomposition on every $S$-fiber.
    \item Suppose that no residue characteristic of $S$ is a torsion prime for $G$. Let $X \in \fg(S)$ be a pure section of $\fg$ with pure semisimple part, and for every point $s \in S$ let $X_{\overline{s}} = (X_{\rm{ss}})_s + (X_{\rm{n}})_s$ be the Jordan decomposition of $X_{\overline{s}}$. Suppose that the Jordan decomposition of $X_{\overline{\eta}}$ is $k(\eta)$-rational. Then there exist unique sections $X_{\rm{ss}}, X_{\rm{n}} \in \fg(S)$ such that $X = X_{\rm{ss}} + X_{\rm{n}}$ induces the Jordan decomposition on every $S$-fiber.
\end{enumerate}
\end{theorem}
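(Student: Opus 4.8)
The plan is to prove both parts by reducing to the case where the base is a discrete valuation ring, which is where the real content lies; I describe part~(1), and indicate the (minor) changes for part~(2) at the end. For \emph{uniqueness}, suppose $g = tu = t'u'$ with both factorizations inducing the Jordan decomposition on every fibre. Over the algebraically closed field $k(\overline\eta)$ the Jordan decomposition is unique, so $t_{\overline\eta} = t'_{\overline\eta}$ and $u_{\overline\eta} = u'_{\overline\eta}$; since $G$ is affine, hence separated, over $S$, this forces $t_\eta = t'_\eta$ and $u_\eta = u'_\eta$ in $G(k(\eta))$, and then $t = t'$ and $u = u'$ because the equalizer of two $S$-morphisms to $G$ is closed and contains the generic point of the integral scheme $S$.

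\emph{Reduction to a discrete valuation ring.} Since $G \to S$ is of finite presentation, $t_\eta \in G(k(\eta))$ spreads out to a section over a dense open of $S$, so $t$, and hence $u = t^{-1}g$, is at least a rational section $S \dashrightarrow G$. The locus of definition of a rational map to the affine $S$-scheme $G$ is open, and a rational function on the normal (hence $S_2$) scheme $S$ that is regular at every codimension-one point is regular; applying this to the coordinates of $G$ in a local closed immersion $G \hookrightarrow \bA^N_S$, it suffices to extend $t$ over $\cO_{S,s}$ whenever $s$ has codimension one. For such $s$, $\cO_{S,s}$ is a discrete valuation ring with fraction field $k(\eta)$, and the base change of $g$ to $\Spec\cO_{S,s}$ remains strongly pure with pure semisimple part and has $k(\eta)$-rational generic Jordan decomposition. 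So existence reduces to $S = \Spec R$ with $R$ a discrete valuation ring; completing $R$ and enlarging its residue field (and descending the result afterward via uniqueness and faithfully flat descent), we may even assume $R$ is a strictly Henselian complete discrete valuation ring, so that $G$ is split.

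\emph{The discrete valuation ring case — the main obstacle.} The semisimple element $t_\eta$ generates a subgroup scheme of multiplicative type in $G_\eta$; form the scheme-theoretic closure $H \subseteq G$ of the subgroup generated by $g$, a flat commutative affine $R$-group scheme with $H_\eta = \overline{\langle g_\eta\rangle}$. One shows — and this is the technical heart — that strong purity together with constancy of $\dim Z_{G_{\overline s}}(\mathrm{ss}(g_{\overline s}))$ forces the maximal subgroup scheme of multiplicative type of $H$ to remain of multiplicative type over $R$, rather than degenerating to a unipotent group along the special fibre; granting this, writing $g$ according to the resulting ``multiplicative $\times$ unipotent'' decomposition of $H$ produces $t \in H(R) \subseteq G(R)$ having $t_\eta$ as its semisimple part. (Equivalently, one may work with the maximal central subgroup scheme of multiplicative type of $Z_G(g)$, whose flatness over $R$ is where strong purity enters.) It is precisely the degeneration ``torus $\leadsto$ unipotent group'' that the purity hypotheses exclude, and whose occurrence without them produces the counterexamples of Remark~\ref{remark:possible-extensions-of-flatness-results}; I expect controlling this closure over the discrete valuation ring to be the crux of the whole proof, with the remaining steps being formal.

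\emph{Assembling the global statement, and part~(2).} With $t$ (hence $u = t^{-1}g$) now a morphism $S \to G$, the identities $tu = g$ and $[t,u] = 1$ hold everywhere because they hold at $\eta$, and $u$ is fibrewise unipotent because in a faithful representation its characteristic polynomial is a power of $X-1$ at $\eta$, hence identically. For fibrewise semisimplicity of $t$: the function $s \mapsto \dim Z_{G_{\overline s}}(t_{\overline s})$ is the local fibre dimension of $Z_G(t) \to S$ along the identity section, hence upper semicontinuous on $S$, so $\dim Z_{G_{\overline s}}(t_{\overline s}) \ge \dim Z_{G_\eta}(t_\eta)$ for all $s$; on the other hand, since $g_{\overline s} = t_{\overline s}u_{\overline s}$ with $u_{\overline s}$ unipotent and commuting with $t_{\overline s}$ one has $\mathrm{ss}(g_{\overline s}) = \mathrm{ss}(t_{\overline s})$, whence $Z_{G_{\overline s}}(t_{\overline s}) \subseteq Z_{G_{\overline s}}(\mathrm{ss}(g_{\overline s}))$, and $\dim Z_{G_{\overline s}}(\mathrm{ss}(g_{\overline s})) = \dim Z_{G_\eta}(t_\eta)$ by the pure-semisimple-part hypothesis, so all these dimensions coincide. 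Writing $t_{\overline s} = (t_{\overline s})_{\mathrm{ss}}(t_{\overline s})_{\mathrm n}$, this equality forces $(t_{\overline s})_{\mathrm n}$ to centralize the identity component $H$ of $Z_{G_{\overline s}}(\mathrm{ss}(g_{\overline s}))$; but $H$ is connected reductive and contains a maximal torus $T$ of $G_{\overline s}$, so $Z_{G_{\overline s}}(H) \subseteq Z_{G_{\overline s}}(T) = T$, giving $(t_{\overline s})_{\mathrm n} \in T$, which is both unipotent and semisimple, hence trivial. Thus $t_{\overline s}$ is semisimple and $g_{\overline s} = t_{\overline s}u_{\overline s}$ is the Jordan decomposition by fibral uniqueness. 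Part~(2) is proved identically: the reduction steps are even more transparent in the additive setting, $[X_{\mathrm{ss}},X_{\mathrm n}] = 0$ and nilpotence of $X_{\mathrm n}$ are identities valid at $\eta$, the dimension argument for semisimplicity of $X_{\mathrm{ss}}$ is unchanged, and in the discrete valuation ring case one uses the smallest algebraic $R$-subgroup of $G$ whose Lie algebra contains $X$ (its toral part carrying $X_{\mathrm{ss}}$), with the torsion-prime hypothesis playing the role of goodness — it ensures that centralizers of semisimple elements of $\fg$ are smooth reductive group schemes — or one simply transports the nilpotent part between the two settings using the integral Springer isomorphism once $X_{\mathrm{ss}}$ has been constructed.
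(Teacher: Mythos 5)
Your reduction to the discrete valuation ring case (spreading out plus Hartogs on the normal base, then passing to a completion with algebraically closed residue field and descending by uniqueness), and your uniqueness argument, are essentially the same as the paper's, and the final dimension count showing that $t_{\overline s}$ must be semisimple once $t \in G(R)$ is in hand is a valid observation. However, the step you yourself flag as ``the technical heart'' is precisely where the paper's actual content lives, and your sketch of it does not constitute a proof and has real problems as stated.

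Concretely: (i) the schematic closure $H = \overline{\langle g_\eta\rangle}$ in $G$ over a DVR $R$, while flat and affine, need not be smooth, and a ``maximal subgroup scheme of multiplicative type of $H$'' is not a well-defined object over $R$ in general (even over an imperfect field the multiplicative/unipotent splitting of a smooth commutative affine group can fail; over a DVR the difficulty is compounded by possible degeneration of the diagonalizable part); (ii) the central assertion — that strong purity and pure semisimple part prevent this degeneration — is asserted rather than argued, and it is not at all obvious how one would prove it by working directly with $H$; and (iii) your fallback suggestion of ``transporting the nilpotent part via the Springer isomorphism once $X_{\rm ss}$ has been constructed'' is circular, since constructing $X_{\rm ss}$ is the whole problem. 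The paper takes a genuinely different route here: it first proves an explicit ``fake Jordan decomposition'' $g = t_0 u_0$ over a henselian DVR with algebraically closed residue field by embedding $G$ into $\GL_n$ and running a polynomial-in-$g$ construction (Lemma~\ref{lemma:relative-jordan-decomposition-for-matrices} and Theorem~\ref{theorem:relative-jordan-decomposition}), producing $t_0 \in G(A)$ which is pure, fiberwise semisimple, gives the Jordan decomposition of $g_s$ on the special fiber, satisfies $Z_G(g) \subset Z_G(t_0)$, and has $Z_{G_{\overline\eta}}(\mathrm{ss}(g_{\overline\eta})) \subset Z_{G_{\overline\eta}}(t_{0,\overline\eta})$. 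The purity hypotheses then force this last inclusion to be an equality, so $t_{0,\overline\eta}$ is central in $Z_{G_{\overline\eta}}(\mathrm{ss}(g_{\overline\eta}))$, and $Z_G(t_0)$ is a reductive group scheme by Lemma~\ref{lemma:semisimple-centralizer-is-reductive}. Replacing $G$ by $Z_G(t_0)$ reduces to the case where the semisimple part of every fiber of $g$ is central, which is handled by a Borel decomposition $B = T \ltimes U$. This matrix-level construction sidesteps all the multiplicative-type subtleties and is where the real work is; you would need to supply something equivalent to it to complete your argument.
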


Note that when $S$ is the spectrum of a field, this result has no content. For more general $S$, however, there are two aspects of Theorem~\ref{theorem:intro-jordan} which may be surprising. First, it asserts (under its hypotheses) that the Jordan decomposition on the generic fiber is actually \textit{integral}, i.e., it extends to a decomposition over $S$. This can fail in general; see for instance \cite[paragraph following Question 3.5.3]{DeB02}. Second, it asserts that the fibers of this decomposition are the Jordan decompositions of the fibers. In particular, rationality of the Jordan decomposition on the generic fiber implies rationality of the Jordan decomposition on \textit{all} fibers. The proof of Theorem~\ref{theorem:intro-jordan} bears some similarity to the proof of the classical Jordan decomposition, but it is much less direct.\smallskip

Theorem~\ref{theorem:intro-flat-pure-centralizer} is enough to prove the existence of a canonical $S$-flat flattening stratification for the universal centralizer of a simply connected semisimple group scheme $G$; see Corollary~\ref{cor:flat-strat}. In Theorem~\ref{theorem:flat-pure-centralizer-semidirect-product}, we also establish detailed structural results for nilpotent and unipotent centralizers. Using these structural results and known cases of the Grothendieck--Serre conjecture, we establish the following conjugacy result for pure unipotent sections generalizing \cite[Thm.\ 1.6.1(c)]{McNinch-local-fields}, which will play an important technical role in some arithmetic applications in \cite{booher-tang}.

\begin{theorem}[Theorem~\ref{theorem:grothendieck-serre}]\label{theorem:intro-grothendieck-serre}
Let $A$ be a henselian local ring which is either regular or a valuation ring, and let $G$ be a reductive $A$-group scheme such that the residue characteristic is pretty good for $G$. Let $u, v \in G(A)$ be pure fiberwise unipotent sections. The following are equivalent.
\begin{enumerate}
    \item $u$ and $v$ are $G(A)$-conjugate.
    \item $u_s$ and $v_s$ are $G(k(s))$-conjugate.
    \item $u_\eta$ and $v_\eta$ are $G(k(\eta))$-conjugate.
\end{enumerate}
\end{theorem}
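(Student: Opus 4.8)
The plan is to reduce everything to the two implications $(2)\Rightarrow(1)$ and $(3)\Rightarrow(1)$: the implications $(1)\Rightarrow(2)$ and $(1)\Rightarrow(3)$ are immediate by base change along $A\to k(s)$ and $A\to k(\eta)$, and together with the two nontrivial implications they yield all three equivalences. In both nontrivial cases I would reformulate the conclusion as the nonemptiness of $T(A)$, where $T:=\Transp_G(u,v)\subseteq G$ is the (closed) transporter subscheme; it carries a natural structure of pseudo-torsor under $Z:=Z_G(u)$. The structural input is that, since $u$ is pure and fiberwise unipotent, the hypotheses of Theorem~\ref{theorem:intro-flat-pure-centralizer} are met (pretty good primes are in particular good and have the relevant invertibility), so $Z$ is $A$-flat and affine with $Z/Z(G)$ smooth; and the structure theorem for unipotent centralizers (Theorem~\ref{theorem:flat-pure-centralizer-semidirect-product}) should moreover give that $Z^\circ$ decomposes as $M\ltimes R$ with $M$ reductive and $R$ a smooth connected $A$-split unipotent group, that $\pi_0(Z)$ is finite étale, and that $Z(G)$ is of multiplicative type.

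The first main step is to upgrade $T$ from a pseudo-torsor to an honest fppf $Z$-torsor; equivalently, to prove $T$ is faithfully flat over $A$. This is where purity is indispensable: it is what makes the fiberwise conjugacy class of $u$ propagate to an $A$-flat locally closed subscheme $\cC_u\subseteq G$ with $G\to\cC_u$ a $Z$-torsor (this should follow from the structure theorem together with an integral Springer isomorphism reducing to the nilpotent cone). Granting this, one checks that $v$ factors through $\cC_u$: in case $(3)$ the generic fiber $v_\eta$ lies in $O_{u_\eta}=\cC_{u,\eta}$, and by purity of $v$ together with $A$-flatness of $\overline{O_{u_\eta}}$ a dimension count at the closed point forces $v_s$ into $\cC_{u,s}$ as well, so that $v^{-1}(\cC_u)$ is a locally closed subscheme of the local scheme $\Spec A$ containing both $\eta$ and $s$, hence all of $\Spec A$; then $T=G\times_{\cC_u}\Spec A$ is faithfully flat over $A$. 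In case $(2)$ one first uses smoothness of $G$ and henselianity of $A$ to lift a conjugator over $k(s)$ and reduce to $u_s=v_s$, after which the same argument applies.

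The second step is a dévissage showing the fppf $Z$-torsor $T$ is trivial. Pushing $T$ forward along $Z\to\pi_0(Z)$ gives a finite étale torsor, trivial over $k(s)$ in case $(2)$ and over $k(\eta)$ in case $(3)$; since $A$ is henselian local and normal, such a torsor is trivial over $A$ — by Hensel's lemma in the first case, and because a connected finite étale $A$-algebra with fraction field $k(\eta)$ must equal $A$ in the second. Thus $T$ reduces to a $Z^\circ$-torsor, and pushing along $Z^\circ\to Z^\circ/R\cong M$ further reduces triviality of $T$ to triviality of an $M$-torsor and of an $R$-torsor. The $R$-torsor is automatically trivial since $R$ is $A$-split and $H^1_{\mathrm{fppf}}(\Spec A,\bG_a)=H^1(\Spec A,\mathcal{O}_A)=0$. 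For the $M$-torsor: in case $(2)$ it is trivial over $k(s)$ and $M$ is smooth, so it is trivial over $A$ by Hensel's lemma; in case $(3)$ it is trivial over $k(\eta)$, and here I invoke the known cases of the Grothendieck--Serre conjecture — for $A$ henselian regular local by the work of Fedorov--Panin and its mixed-characteristic refinements, and for $A$ a henselian valuation ring by the recent results in that setting — to conclude it is trivial over $A$. Chaining the trivializations back gives $T(A)\neq\varnothing$, i.e.\ $u$ and $v$ are $G(A)$-conjugate.

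The main obstacle is the first step: flatness of the transporter $T$ is genuinely not formal — a finite-type pseudo-torsor under a flat group scheme can be supported over a single point of $\Spec A$ — so one must really exploit purity of $u$ and $v$ (to control the special fibers of orbit closures and to identify $\cC_u$ with $G/Z$) rather than any soft argument, and the dimension count has to be carried out with care, presumably after reducing via the Springer isomorphism to nilpotent orbits where the $\bG_m$-action rigidifies the situation. Once flatness is in hand, the rest is a comparatively routine cohomological dévissage, with the Grothendieck--Serre conjecture entering only through the reductive part $M$ of $Z$ and only when passing up from the generic fiber.
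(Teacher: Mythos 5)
Your proposal follows essentially the same route as the paper's proof of Theorem~\ref{theorem:grothendieck-serre}: reduce to the triviality of the transporter torsor, establish flatness of $\Transp_G(u,v)$ via the structure theory of pure nilpotent sections together with the integral Springer isomorphism, and then run a cohomological d\'evissage through the unipotent radical $R_X$, the reductive connected part, and the finite \'etale component group, invoking Grothendieck--Serre only at the reductive stage. You also correctly observe that the pretty-good hypothesis (which forces $Z(G)$, and hence $Z_G(u)$, to be smooth by Lemma~\ref{lemma:characterization-of-pretty-good-primes}) lets you bypass the reduction to $\sD(G)$ simply connected that the paper performs in order to prove the more general Theorem~\ref{theorem:grothendieck-serre} with an arbitrary central $Z$.

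Two small caveats. First, your claim that $R$ is $A$-split goes beyond what is established: the paper only proves $(R_X)_s$ is $k(s)$-split, and then uses smoothness of $R_X$ and henselianity of $A$ to identify $\rm{H}^1(A, R_X)$ with $\rm{H}^1(k(s), (R_X)_s) = 1$; an integral filtration by $\bG_a$'s is not needed and is not addressed. Second, the flatness step is not quite carried out as you describe it---rather than a locally closed subscheme $\cC_u \subset G$ with $G \to \cC_u$ a $Z$-torsor, the paper works with the orbit map from the instability parabolic $P_G(\tau)$ into the free module $\bigoplus_{n\geq 2}\fg(\tau,n)$, whose image is open and to which Miracle Flatness applies (Lemma~\ref{lemma:flat-pure-centralizer-consequences} and Proposition~\ref{prop:pure-sections-determined-by-one-fiber}); an analogous statement directly inside $G$ would require extra justification that the sketch does not supply, though you do flag this as the main obstacle.
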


Finally, we conclude this overview with the following result which we prove along the way.

\begin{cor}[Corollary~\ref{cor:unip-comp-gp}]\label{cor:intro-unip-comp-gp}
Let $k$ be a field, and let $G$ be a connected reductive $k$-group such that $\chara k$ is good for $G$. If $u \in G(k)$ is unipotent, then the only primes dividing the order of $Z_G(u)/Z_G(u)^0Z(G)$ are bad for $G$. In particular, the order of $Z_G(u)/Z_G(u)^0Z(G)$ always divides a power of $30$.
\end{cor}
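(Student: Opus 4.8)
The plan is to reduce, step by step, to the case where $G$ is simple of adjoint type over an algebraically closed field, and then to read off the conclusion from the known structure of unipotent centralizers in good characteristic.

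First I would reduce to $k=\bar k$: the formation of $Z_G(u)$ and of its identity component commutes with the extension $\bar k/k$, the rank of the finite étale $k$-group scheme $Z_G(u)/Z_G(u)^0Z(G)$ is unaffected by base change, and the set of bad primes of $G$ depends only on the root datum. Since a torus has no nontrivial unipotent points, $u$ lies in $\sD(G)(k)$; writing $G=\sD(G)\cdot R(G)$ with $R(G)=Z(G)^0$ central one gets $Z_G(u)=Z_{\sD(G)}(u)\cdot R(G)$ and $R(G)\subseteq Z(G)$, hence a surjection $Z_{\sD(G)}(u)/Z_{\sD(G)}(u)^0Z(\sD(G))\twoheadrightarrow Z_G(u)/Z_G(u)^0Z(G)$, reducing us to $G$ semisimple. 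Now consider the central isogeny $\pi\colon G\to G_{\mathrm{ad}}$ with finite kernel $Z(G)$: writing $H'=\{g\in G : [g,u]\in Z(G)\}=\pi^{-1}(Z_{G_{\mathrm{ad}}}(u_{\mathrm{ad}}))$, the map $g\mapsto[g,u]$ is a homomorphism $H'\to Z(G)$ with kernel $Z_G(u)$, so $\dim Z_G(u)=\dim H'=\dim Z_{G_{\mathrm{ad}}}(u_{\mathrm{ad}})$; thus the image of $Z_G(u)$ in $Z_{G_{\mathrm{ad}}}(u_{\mathrm{ad}})$ is a finite-index closed subgroup, hence contains the identity component, and $Z_G(u)/Z_G(u)^0Z(G)$ is isomorphic to a subgroup of $Z_{G_{\mathrm{ad}}}(u_{\mathrm{ad}})/Z_{G_{\mathrm{ad}}}(u_{\mathrm{ad}})^0$. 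Finally $Z_G(u)$, $Z(G)$ and their identity components split as products over the simple factors of $G_{\mathrm{ad}}$, along which the bad primes also split, so it suffices to treat $G$ adjoint simple with $\chara k$ good.

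In that case I would apply the integral Springer isomorphism \cite[Thm.\ 5.1]{Integral-Springer} (together with the structural results of Theorem~\ref{theorem:flat-pure-centralizer-semidirect-product}) to replace $u$ by a nilpotent $X\in\fg$ with $Z_G(X)\cong Z_G(u)$; writing $Z_G(X)=C\ltimes R_u(Z_G(X))$ with $C$ reductive, we have $Z_G(u)/Z_G(u)^0\cong C/C^0$. The essential input is that in good characteristic this component group agrees with its characteristic-zero analogue — this is where goodness is genuinely used, and it follows from the description of $C$ in terms of an associated cocharacter (or $\mathfrak{sl}_2$-triple), or from direct comparison with the characteristic-zero classification. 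Over $\mathbb{C}$, the component group of a unipotent centralizer in an adjoint simple group is classically a product of symmetric groups $S_m$ with $m\le 5$: trivial in type $A$, an elementary abelian $2$-group in types $B$, $C$, $D$, and a product of such $S_m$ in the exceptional types. A type-by-type inspection of this list shows that the primes dividing $|C/C^0|$ — hence a fortiori those dividing the order of its subquotient $Z_G(u)/Z_G(u)^0Z(G)$ — are always bad for $G$: only $2$ in types $B$, $C$, $D$; only $2$ and $3$ in types $G_2$, $F_4$, $E_6$, $E_7$; and only $2$, $3$, $5$ in type $E_8$. This proves the first assertion, and the ``in particular'' follows at once: the bad primes of any connected reductive group lie in $\{2,3,5\}$, so an integer all of whose prime divisors are bad for $G$ divides a power of $2\cdot 3\cdot 5=30$.

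The main obstacle is the adjoint simple case, and in particular the exceptional groups: one must either quote the explicit tables of nilpotent-orbit data along with the (nontrivial) fact that these component groups are unchanged in good characteristic, or extract the same information directly from the structural theorem — for instance by realizing the relevant groups $C$ over $\mathbb{Z}[1/N]$ and using that the order of the fibers of the finite étale group scheme $\pi_0(C)$ is locally constant, hence computed over $\mathbb{C}$. Care is also needed in tracking $Z(G)$ through the isogeny reductions and in the isogeny-invariance of $\dim Z_G(u)$; and one should remember that the good-characteristic hypothesis is essential rather than cosmetic, since in bad characteristic extra components of $Z_G(u)$ appear and the statement fails.
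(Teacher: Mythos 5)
Your outline is correct in spirit and would prove the statement, but it diverges from the paper's proof in two ways worth flagging.

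The more substantive difference is that you ultimately fall back on the characteristic-zero classification of unipotent centralizers (component groups being products of small symmetric groups) plus a type-by-type inspection. The paper's explicit aim (see the remark after Corollary~\ref{cor:intro-unip-comp-gp}) is to avoid precisely this, and it does so via a trick you almost reach in your second option but don't land. Rather than transporting the computation to $\mathbb{C}$ and then reading off the answer, the paper fixes a good prime $p$, uses Lemma~\ref{lemma:mcninch-existence-of-sections} to produce a pure fiberwise nilpotent lift $\widetilde{X}$ of the given nilpotent over a DVR $A$ of mixed characteristic $(0,p)$, and then applies Theorem~\ref{theorem:flat-pure-centralizer-semidirect-product}(\ref{item:finite-etale-component-group}): the component group $D_X/D_X^0$ is a finite \'etale $A$-group scheme of order invertible on $A$. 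Since the residue characteristic of $A$ is $p$, this \emph{by itself} implies $p\nmid |D_X/D_X^0|$ --- no comparison with $\mathbb{C}$ and no case-check is needed, because one never computes the component group at all; one only needs to know it is \'etale of invertible order over a well-chosen mixed-characteristic base. (The first paragraph of the paper's proof handles the passage from $\chara k = p > 0$ to $\chara k = 0$ in the same way, via Corollary~\ref{cor:flat-strat}.)

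The smaller difference is your choice to reduce to the \emph{adjoint} simple case rather than the \emph{simply connected} case. This is not innocent: the structural results you need (Theorem~\ref{theorem:flat-pure-centralizer-semidirect-product} and the lemmas feeding into it) carry the hypothesis $\chara k \nmid |\pi_1(\sD(G))|$, and for an adjoint group $\pi_1(\sD(G)) = \pi_1(G)$ is the full fundamental group. In type $A$ this bites: for $G = \PGL_n$ with $\chara k \mid n$ the hypothesis fails, even though every prime is good for type $A$, so the structure theorem simply does not apply to the adjoint group there. (Type $A$ can of course be handled by hand --- centralizers in $\GL_n$ are connected --- but it is a genuine gap in a uniform argument.) The paper sidesteps this by reducing instead to the simply connected cover, for which $\pi_1(\sD(G)) = 1$ and the hypothesis is vacuous.
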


Corollary~\ref{cor:intro-unip-comp-gp} has been known for a long time, as one can see by examining the tables of centralizers in \cite{Liebeck-Seitz}. However, our proof gives a conceptual explanation independent of any case-checking.

\subsection{Outline of the proof}\label{subsection:outline}

We now briefly outline the proof of Theorem~\ref{theorem:intro-flat-pure-centralizer} in the case of sections of $\fg$. This is similar to (but more complicated than) the proof of \cite[Thm.\ 3.10]{Integral-Springer}. First, a number of reductions allow us to assume that $S$ is the spectrum of a complete discrete valuation ring $A$ with algebraically closed residue field. Using Theorem~\ref{theorem:intro-jordan} and a simple argument in the case that $X$ is fiberwise semisimple, we reduce to the case that $X$ is fiberwise nilpotent. Next, using the theory of associated cocharacters we build an $A$-subgroup scheme $P$ of $G$ and an $S$-smooth closed subscheme $W$ of $P$ such that
\begin{enumerate}
    \item $g \in P(S)$,
    \item $Z_G(g) = Z_P(g)$ fibrally,
    \item the orbit map $\Phi: P \to P$, $\Phi(p) = pgp^{-1}$, factors through $W$, and
    \item the induced map $\Phi: P \to W$ has open dense image on fibers.
\end{enumerate}
If one can show that $Z_P(g)$ is flat, then the fibral isomorphism theorem and the second point imply together that the natural map $Z_P(g) \to Z_G(g)$ is an isomorphism, and in particular $Z_G(g)$ is flat. To show that $Z_P(g)$ is flat, we consider the Cartesian diagram
\[
\begin{tikzcd}
Z_P(g) \arrow[r] \arrow[d]
    &P \arrow[d, "\Phi"] \\
S \arrow[r, "g"]
    &W
\end{tikzcd}
\]
and we see that it suffices to show that $\Phi$ is flat. Using smoothness of $P$ and $W$, the fibral flatness criterion reduces us to the case that $S$ is the spectrum of a field, and ``Miracle Flatness" \cite[Thm.\ 23.1]{Matsumura} reduces one to showing that all nonempty fibers of $\Phi$ are of the same dimension, which is rather simple. The argument for sections of the Lie algebra is very similar.

\subsection{Notation and conventions}

We maintain all of the conventions specified in \cite[Sec.\ 1.2]{Integral-Springer}. Most notably, we will often conflate a locally free module over a ring with its associated affine space; if $S$ is a local scheme, then $s$ is always its closed point; if $S$ is an irreducible scheme then $\eta$ is always its generic point; and if $S$ is a scheme and $s \in S$, then $\overline{s}$ is always a geometric point lying over $s$. In \cite{SGA3III}, a \textit{reductive group scheme} is a smooth affine group scheme with \emph{connected} reductive fibers. We will generally follow this convention, but if $k$ is a field then a \emph{reductive group} over $k$ will not be assumed to be connected. If $G$ is a reductive group scheme then we denote its derived group by $\sD(G)$ and its center by $Z(G)$.

\subsection{Acknowledgements}

I thank Jeremy Booher for helpful correspondence, and for asking a question which led to Theorem~\ref{theorem:grothendieck-serre}. I thank Jay Taylor for pointing out an error in Section~\ref{subsection:pretty-good}. I thank Ravi Vakil for helpful conversations, and for suggesting that the material on Springer isomorphisms be extended into a separate paper. I thank an anonymous referee for a detailed reading and helpful comments. I thank my advisor, Brian Conrad, for very extensive edits and helpful suggestions on previous drafts.

\section{Some results over fields}

In this section we collect a number of results on pretty good primes and instability parabolics. The main point is that we are exercising care in proving schematic statements rather than simply statements on the level of varieties. In particular, we try not to assume (except where necessary) that the centers of our reductive groups are smooth.

\subsection{Pretty good primes}\label{subsection:pretty-good}

Recall \cite[I, \S 4.4]{Springer-Steinberg} that if $R = (X, \Phi)$ is a root system, then a prime number $p$ is \textit{bad} for $R$ if $\bZ\Phi/\bZ\Sigma$ has $p$-torsion for some closed subsystem $\Sigma \subset \Phi$, and it is \textit{good} otherwise. Similarly, a prime number $p$ is \textit{torsion} for $R$ if $\bZ\Phi^\vee/\bZ\Sigma^\vee$ has $p$-torsion for some closed subsystem $\Sigma \subset \Phi$. The following variant of these notions comes from \cite{Herpel}.

\begin{definition}
Let $\sR = (X, \Phi, Y, \Phi^\vee)$ be a root datum \cite[Exp.\ XXI, D\'ef.\ 1.1.1]{SGA3III} and let $p$ be a prime number. The prime $p$ is \textit{pretty good} for $\sR$ if, for all closed subsystems $\Phi' \subset \Phi$, the groups $X/\bZ\Phi'$ and $Y/\bZ\Phi'^\vee$ both have no $p$-torsion. If $G$ is a connected reductive group over a field $k$ of characteristic $p \geq 0$, then we will say that $p$ is \textit{pretty good} for $G$ if either $p=0$ or $p$ is pretty good for the root datum associated to $(G_{\overline{k}}, T)$ for some (any) maximal $\overline{k}$-torus $T$ of $G_{\overline{k}}$.
\end{definition}

We emphasize that unlike the conditions of being good or non-torsion, this is a property of the \textit{root datum} rather than the root system. In particular, when applied to a reductive group $G$ it takes into account both the isomorphism class (not just the isogeny class) of $\sD(G)_{\overline{k}}$ and the center of $G_{\overline{k}}$. The following lemma is the main description of pretty good primes that we will use.

\begin{lemma}\label{lemma:characterization-of-pretty-good-primes}
Let $G$ be a connected reductive group over a field $k$ of characteristic $p > 0$. Then $p$ is pretty good for $G$ if and only if the following conditions hold:
\begin{enumerate}
    \item $p$ is good for $G$,
    \item $p \nmid |\pi_1(\sD(G))|$, and
    \item $Z(G)$ is smooth.
\end{enumerate}
\end{lemma}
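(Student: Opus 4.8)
The plan is to unwind the definitions and match them against the classical characterizations of "good" and "torsion" primes, together with a criterion for the smoothness of $Z(G)$. Write $\sR = (X, \Phi, Y, \Phi^\vee)$ for the root datum of $(G_{\overline k}, T)$. By definition, $p$ is pretty good for $\sR$ if and only if, for every closed subsystem $\Phi' \subset \Phi$, both $X/\bZ\Phi'$ and $Y/\bZ\Phi'^\vee$ have no $p$-torsion. The first reduction I would make: taking $\Phi' = \Phi$ shows that being pretty good forces $X/\bZ\Phi$ and $Y/\bZ\Phi^\vee$ to have no $p$-torsion. The group $Y/\bZ\Phi^\vee$ is the component group of $Z(G)_{\overline k}$'s Cartier dual story — more precisely, $Z(G)^\circ$ has character group $X/\bZ\Phi$ and $Z(G)$ fails to be smooth exactly when $p$ divides the order of the finite group $X_{\mathrm{tors}}$ relevant here; the clean statement is that $Z(G)$ is smooth iff $X/\bZ\Phi$ has no $p$-torsion. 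This is standard (e.g.\ via \cite[Exp.\ XXII]{SGA3} or \cite{Herpel}), so I would cite it. So condition (3) is equivalent to $X/\bZ\Phi$ having no $p$-torsion.

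Next I would handle the relationship with goodness and $\pi_1$. For a closed subsystem $\Phi' \subset \Phi$, there is a short exact sequence relating $X/\bZ\Phi'$ to $\bZ\Phi/\bZ\Phi'$ and $X/\bZ\Phi$: namely $0 \to \bZ\Phi/\bZ\Phi' \to X/\bZ\Phi' \to X/\bZ\Phi \to 0$. Since $X/\bZ\Phi$ is finitely generated, after modding out its torsion-free part one sees that $X/\bZ\Phi'$ has no $p$-torsion for all $\Phi'$ iff both $\bZ\Phi/\bZ\Phi'$ has no $p$-torsion for all $\Phi'$ (this is exactly goodness of $\Phi$) \emph{and} $X/\bZ\Phi$ has no $p$-torsion — one needs a small argument that no new torsion is created in the extension, using that the relevant Tor group vanishes once the quotient is torsion-free, but there is a subtlety because $X/\bZ\Phi$ itself may have torsion and one must check the extension doesn't interact badly; I would phrase this as: the $p$-torsion of $X/\bZ\Phi'$ surjects onto a subgroup of the $p$-torsion of $X/\bZ\Phi$ with kernel a subquotient of the $p$-torsion of $\bZ\Phi/\bZ\Phi'$, giving the equivalence. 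Symmetrically, on the $Y$-side, $Y/\bZ\Phi'^\vee$ having no $p$-torsion for all $\Phi'$ is equivalent to: $\bZ\Phi^\vee/\bZ\Phi'^\vee$ having no $p$-torsion for all $\Phi'$ (this is being non-torsion for $\Phi$) \emph{and} $Y/\bZ\Phi^\vee$ having no $p$-torsion.

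Now I would assemble. Good and non-torsion together: for $G$ semisimple, $Y/\bZ\Phi^\vee = \pi_1(G)$ and the set of primes dividing $|\pi_1|$ among good primes is the set of torsion-for-$\Phi^\vee$ primes that survive — more carefully, the standard fact is that if $p$ is good then $p$ is torsion iff $p \mid |\pi_1(\sD(G))|$ (torsion primes for the root system coincide with primes dividing $|\pi_1|$ of the simply connected cover, once bad primes are excluded — this is in \cite{Springer-Steinberg}). So under the assumption that $p$ is good, condition "$\bZ\Phi^\vee/\bZ\Phi'^\vee$ has no $p$-torsion for all $\Phi'$" is equivalent to $p \nmid |\pi_1(\sD(G))|$. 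The remaining two pieces, $X/\bZ\Phi$ torsion-free at $p$ and $Y/\bZ\Phi^\vee$ torsion-free at $p$, are both subsumed: the first is smoothness of $Z(G)$ by the paragraph above; the second, I claim, is automatically implied by good $+$ $p\nmid|\pi_1(\sD(G))|$ $+$ smoothness of $Z(G)$, because $Y/\bZ\Phi^\vee$ injects (up to torsion-free summand) into something built from $\pi_1(\sD(G))$ and the cocharacter lattice of the central torus, both of which are $p$-torsion-free under our hypotheses — alternatively one notes $Y/\bZ\Phi^\vee$ is a subgroup of the character group of $\mu := Z(G)/Z(G)^\circ$-type object which is controlled. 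This last bookkeeping — making sure the $Y$-side pretty-good condition adds nothing new beyond (1)--(3) — is the step I expect to be the main obstacle, since it requires carefully separating the reductive group's torus part from its semisimple part and tracking torsion through the resulting exact sequences; the rest is a matter of citing the classical dictionary between torsion primes and $\pi_1$ from \cite{Springer-Steinberg} and the smoothness criterion for $Z(G)$.
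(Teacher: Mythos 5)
There is a genuine error in the step where you assemble the conditions. You correctly reduce the pretty-good condition to four pieces via the exact sequences $0 \to \bZ\Phi/\bZ\Phi' \to X/\bZ\Phi' \to X/\bZ\Phi \to 0$ and its coroot analogue: (a) $p$ is good, (b) $X/\bZ\Phi$ has no $p$-torsion, (c) $p$ is non-torsion for the root system (i.e.\ $\bZ\Phi^\vee/\bZ\Phi'^\vee$ is $p$-torsion-free for all $\Phi'$), and (d) $Y/\bZ\Phi^\vee$ has no $p$-torsion. You also correctly identify (b) with smoothness of $Z(G)$. But you then assert that, assuming goodness, condition (c) is equivalent to $p \nmid |\pi_1(\sD(G))|$. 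This is false: (c) is a property of the root \emph{system} alone, independent of the isogeny type of $\sD(G)$, while $p \nmid |\pi_1(\sD(G))|$ manifestly depends on the isogeny type. A concrete counterexample is $G = \PGL_p$ in characteristic $p$: the root system $A_{p-1}$ has no torsion primes at all, so (c) holds vacuously, yet $\pi_1(\sD(G)) = \mu_p$ so the divisibility condition fails. Under your scheme this example would be misclassified.

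The correct dictionary is the opposite of what you wrote: it is condition (d), not (c), that corresponds to $p \nmid |\pi_1(\sD(G))|$, because $\pi_1(\sD(G))$ is (Cartier dual to the Pontryagin dual of) the torsion subgroup of $Y/\bZ\Phi^\vee = X_*(T)/\bZ\Phi^\vee$. Condition (c), on the other hand, is \emph{automatically} implied by (a): for every irreducible root system the torsion primes form a subset of the bad primes (this is visible from the classification, e.g.\ $G_2$ has bad primes $\{2,3\}$ but torsion primes only $\{2\}$, and $A_n$ has neither), so a good prime is never a torsion prime. Once you make these two corrections your structural argument goes through. The paper avoids this bookkeeping entirely by citing Herpel's Lemma 2.12(a), which already packages the statement ``pretty good $\iff$ good and $X/\bZ\Phi$, $Y/\bZ\Phi^\vee$ both $p$-torsion-free'', and then simply translates the two quotient conditions into smoothness of $Z(G)$ and $p \nmid |\pi_1(\sD(G))|$ via Cartier duality.
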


\begin{proof}
This is essentially a restatement of \cite[Lem.\ 2.12(a)]{Herpel} in the language of reductive groups. We may and do assume that $k$ is algebraically closed. Let $(X(T), \Phi, X_*(T), \Phi^\vee)$ be the root datum associated to a pair $(G, T)$ for some maximal $k$-torus $T$ of $G$. By \textit{loc.\ cit.}, $p$ is pretty good for $G$ if and only if $p$ is good for $G$ and $X(T)/\bZ\Phi$ and $X_*(T)/\bZ\Phi^\vee$ both have no $p$-torsion. Recall that $\pi_1(\sD(G))$ is Cartier dual to $((X_*(T)/\bZ\Phi^\vee)_{\rm{tors}})^*$ (where the asterisk denotes the dual finite abelian group) and $Z(G)$ is Cartier dual to $X(T)/\bZ\Phi$. The dual of a finitely generated constant commutative $k$-group scheme $A$ is smooth if and only if the order of $A_{\rm{tors}}$ is not divisible by $p$, whence the statement.
\end{proof}

\begin{lemma}\label{lemma:semisimple-replacement}
Let $G$ be a connected reductive group over an algebraically closed field $k$ of characteristic $p \geq 0$ such that $p$ is pretty good for $G$. If $X \in \fg$ is a semisimple element, then there exists some semisimple element $t \in G(k)$ such that $Z_G(t)^0 = Z_G(X)$.
\end{lemma}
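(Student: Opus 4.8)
The plan is to move $X$ into the Lie algebra of a maximal torus, recognize $Z_G(X)$ as a \emph{connected Levi} subgroup, and then realize that Levi as $Z_G(t)^0$ for a general element $t$ of its central torus. This is the Lie-algebra analogue of the classical picture, and the only real work is checking that the pretty-good hypothesis makes the classical picture go through in characteristic $p$.

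First I would choose a maximal torus $T \le G$ with $X \in \ft := \Lie T$ (possible since $X$ is semisimple), and set $\Phi = \Phi(G,T)$, $W = W(G,T)$, and $\Psi = \{\alpha \in \Phi : d\alpha(X) = 0\}$, a closed subsystem. The root space decomposition gives $\fz_\fg(X) = \ft \oplus \bigoplus_{\alpha \in \Psi}\fg_\alpha$. Let $H = \langle T, U_\alpha : \alpha \in \Psi\rangle$ be the connected reductive subgroup with maximal torus $T$ and roots $\Psi$; since $d\alpha(X) = 0$ kills all iterated brackets of $\fg_\alpha$ against $X$, we get $U_\alpha \subseteq Z_G(X)$ for $\alpha \in \Psi$, hence $H \subseteq Z_G(X)$, and $\dim H = \dim \fz_\fg(X)$. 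As $p$ is pretty good, $Z_G(X)$ is smooth (\cite{Herpel}, via Lemma~\ref{lemma:characterization-of-pretty-good-primes}), so $\dim Z_G(X) = \dim \fz_\fg(X) = \dim H$ and therefore $Z_G(X)^0 = H$.

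The heart of the argument is to show that pretty-goodness forces $Z_G(X)$ to equal the Levi subgroup $Z_G(S)$, where $S := Z(H)^0 = \bigl(\bigcap_{\alpha \in \Psi}\ker\alpha\bigr)^0$. This rests on two combinatorial facts about the root datum. (i) $\Psi$ is $\bQ$-closed in $\Phi$: if $\beta \in \Phi$, $\beta \notin \Psi$, and $m\beta \in \bZ\Psi$ for some $m \ge 1$, then $m\,d\beta(X) = 0$ while $d\beta(X) \neq 0$ forces $p \mid m$, so the class of $\beta$ is nonzero $p$-torsion in $\bZ\Phi/\bZ\Psi \hookrightarrow X^*(T)/\bZ\Psi$, contradicting pretty-goodness. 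Hence $\{\alpha \in \Phi : \alpha|_S = 1\} = \bQ\Psi \cap \Phi = \Psi$, so $Z_G(S) = \langle T, U_\alpha : \alpha|_S = 1\rangle = H$; note $X \in \Lie S$, since the differentials $d\alpha$ vanishing on $\Lie S$ are precisely those with $\alpha \in \Psi$. (ii) For each $\alpha \in \Psi$ the coroot $\alpha^\vee$ is non-divisible by $p$ in $X_*(T)$ (again by pretty-goodness, now using torsion-freeness of $X_*(T)/\bZ\Psi^\vee$), so for the $W$-action on $\ft$ one has $s_\alpha(X) = X \iff d\alpha(X) = 0 \iff \alpha \in \Psi$; the characteristic-$p$ form of Steinberg's theorem on point stabilizers in the reflection representation — valid precisely under hypotheses of this kind — then upgrades this to $\Stab_W(X) = W(\Psi)$, whence $Z_G(X)/Z_G(X)^0 \cong \Stab_W(X)/W(\Psi) = 1$, i.e. $Z_G(X) = H$.

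Finally, producing $t$ is formal. If $S$ is trivial (equivalently $X$ is central and $H = G$), take $t = 1$. Otherwise, since $k = \overline k$, the torus $S$ has a $k$-point $t$ avoiding the finitely many proper closed subgroups $\ker(\beta|_S)$ for $\beta \in \Phi \setminus \Psi$ and $\{s \in S : ws = s\}$ for $w \in W \setminus W(\Psi)$; any such $t$ is semisimple, lying in $T$. Then $Z_G(t) \supseteq Z_G(S) = H$, the roots trivial on $t$ are exactly those in $\Psi$, and each $w \in W$ fixing $t$ lies in $W(\Psi) = W(H)$; using smoothness of $Z_G(t)$ (pretty-goodness) it follows that $Z_G(t) = H = Z_G(X)$, so in particular $Z_G(t)^0 = Z_G(X)$. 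The main obstacle is the middle paragraph — specifically, knowing that $Z_G(X)$ is \emph{connected} and is a genuine Levi rather than a more general pseudo-Levi; the two torsion-freeness conditions in the definition of a pretty-good prime are exactly what deliver this, and in characteristic $0$ they are vacuous, recovering the classical statement.
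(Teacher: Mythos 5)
Your proof follows the same overall strategy as the paper's: fix a maximal torus $T$ with $X\in\ft$, show $Z_G(X)$ is the Levi subgroup $H=\langle T, U_\alpha:\alpha\in\Psi\rangle$ (where $\Psi=\{\alpha:\mathrm{d}\alpha(X)=0\}$), and then produce $t$ as a sufficiently general element of the central torus of that Levi. Your $\bQ$-closedness argument is just a repackaging of the paper's dimension count (both ultimately use that $X^*(T)/\bZ\Psi$ has no $p$-torsion, either directly or via smoothness of $\bigcap_{\alpha\in\Psi}\ker\alpha$), so that part matches.

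The one genuine difference is that you try to \emph{reprove} the connectedness of $Z_G(X)$, which the paper simply cites from \cite[Prop.\ 3.7]{Integral-Springer}. Your route is to identify $Z_G(X)/Z_G(X)^0$ with $\Stab_W(X)/W(\Psi)$ (correct, by the Steinberg-type generation statement for $Z_G(X)$) and then assert $\Stab_W(X)=W(\Psi)$ by appealing to ``the characteristic-$p$ form of Steinberg's theorem on point stabilizers, valid precisely under hypotheses of this kind.'' This is the soft spot: Steinberg's theorem that stabilizers in the reflection representation are generated by the reflections they contain is a characteristic-zero result, and the positive-characteristic analogue on $X_*(T)\otimes_{\bZ} k$ is a nontrivial theorem with its own hypotheses — it is not something you can invoke as folklore. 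You correctly observe that pretty-goodness forces $\alpha^\vee\neq 0$ in $\ft$, which gives the equivalence $s_\alpha(X)=X\iff\alpha\in\Psi$ for \emph{reflections}, but that alone does not upgrade to $\Stab_W(X)\subseteq W(\Psi)$ for arbitrary $w$; that upgrade is exactly the content of the cited connectedness result (which, under the pretty-good hypothesis, ultimately comes from the non-torsion condition via \cite[Prop.\ 3.7]{Integral-Springer}). So either cite that result as the paper does, or give a precise reference for the characteristic-$p$ Steinberg stabilizer theorem with the hypotheses spelled out; as written, the connectedness of $Z_G(X)$ is asserted rather than proved. Everything else is sound, and your observation that $\{w\in W: w|_S=\mathrm{id}\}=W(\Psi)$ (via $n_w\in Z_G(S)=H$) is a nice clean way to handle the final genericity argument.
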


\begin{proof}
Let $T$ be a maximal torus of $G$ containing $X$ in its Lie algebra. By \cite[Prop.\ 3.7]{Integral-Springer}, $Z_G(X)$ is a connected reductive group, and by \cite[Lem.\ 3.7]{SteinbergTorsion} it is generated by $T$ and those root groups $U_{\alpha}$ such that $\mathrm{d}\alpha(X) = 0$. Let $\Phi' \subset \Phi$ be the closed subsystem consisting of such $\alpha$, so by hypothesis we have
\[
\left(\bigcap_{\alpha \in \Phi'} \ker(\mathrm{d}\alpha)\right) - \left(\bigcup_{\beta \in \Phi - \Phi'} \ker(\mathrm{d}\beta)\right) \neq \emptyset.
\]
By the above displayed equation, we see that for any $\beta \in \Phi - \Phi'$ we have in particular
\[
\dim\left(\bigcap_{\alpha \in \Phi' \cup \{\beta\}} \ker(\mathrm{d}\alpha)\right) < \dim\left(\bigcap_{\alpha \in \Phi'} \ker(\mathrm{d}\alpha)\right).
\]
Since $p$ is pretty good for $G$, the intersection $\bigcap_{\alpha \in \Phi'} \ker(\alpha)$ (which is Cartier dual to $X(T)/\bZ\Phi'$) is a smooth closed subgroup of $T$, and it follows from the previous displayed equation and smoothness that
\[
\dim\left(\bigcap_{\alpha \in \Phi' \cup \{\beta\}} \ker(\alpha)\right) < \dim\left(\bigcap_{\alpha \in \Phi'} \ker(\alpha)\right),
\]
so for dimension reasons it follows that
\[
\left(\bigcap_{\alpha \in \Phi'} \ker(\alpha)\right) - \left(\bigcup_{\beta \in \Phi - \Phi'} \ker(\beta)\right) \neq \emptyset.
\]
If $t$ is any closed point of this set, then the descriptions of semisimple centralizers in \cite[Lems.\ 2.14 and 3.7]{SteinbergTorsion} show that $Z_G(t)^0 = Z_G(X)^0$, and thus $Z_G(t)^0 = Z_G(X)$ since $Z_G(X)$ is connected.
\end{proof}

The following remarkable theorem is the main result of \cite{Herpel}.

\begin{theorem}\label{theorem:herpels-theorem}
Let $G$ be a connected reductive group over a field $k$. Then the characteristic of $k$ is pretty good for $G$ if and only if all centralizers of closed subgroup schemes in $G$ are smooth.
\end{theorem}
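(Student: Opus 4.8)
The plan is to establish the two implications separately; the forward implication ``pretty good $\Rightarrow$ all centralizers smooth'' is where essentially all the work lies, so I will be brief about the converse. For the converse I would argue by contraposition: after base change we may take $k$ algebraically closed, and by Lemma~\ref{lemma:characterization-of-pretty-good-primes} at least one of ``$p$ good for $G$'', ``$p \nmid |\pi_1(\sD(G))|$'', ``$Z(G)$ smooth'' fails. If $Z(G)$ is not smooth we are already done, since $Z(G) = Z_G(G)$ is the centralizer of a closed subgroup scheme. In the remaining two cases I would exhibit a non-smooth centralizer of a semisimple element of $G(k)$ or $\fg$ (for the failure at $\pi_1$, e.g.\ in $\PGL_p$ in characteristic $p$), or of a distinguished nilpotent element (for the failure at a bad prime); passing to a suitable subsystem subgroup reduces the verification to a short list of groups of small rank, and I would regard this as bookkeeping against the standard tables.

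For the forward implication, assume $p$ is pretty good for $G$. Smoothness descends along $k \to \overline{k}$, so take $k = \overline{k}$; and since a finite-type $k$-group scheme is smooth iff its dimension equals $\dim_k$ of its Lie algebra, and $\Lie Z_G(H) = \fg^H$ (the scheme-theoretic fixed locus of $H$ acting on $\fg$), the goal is the numerical identity $\dim Z_G(H) = \dim_k \fg^H$ for every closed subgroup scheme $H \le G$. The first step is a devissage using $Z_G(H) = Z_{Z_G(N)}(H/N)$ for $N \trianglelefteq H$, together with the ``good'' inputs supplied by the preliminaries: if $H$ is of multiplicative type then (as $k = \overline{k}$) $H$ lies in a maximal torus $T$ and $Z_G(H)$ coincides with the reductive subgroup generated by $T$ and the root groups trivial on $H$ --- one checks this by comparing Lie algebras --- so $Z_G(H)$ is already smooth, connected reductive, and again pretty good (its root system is a closed subsystem of that of $G$); the centralizer of a semisimple element of $G(k)$ is connected reductive and pretty good by Steinberg's theorems, using that $p$ is good and $p \nmid |\pi_1(\sD(G))|$; and the centralizer of a semisimple element of $\fg$ is connected reductive and pretty good by \cite[Prop.\ 3.7]{Integral-Springer}, with Lemma~\ref{lemma:semisimple-replacement} available to replace it by the centralizer of a semisimple element of $G(k)$ in that smaller group.

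Using these, and peeling off all multiplicative-type, semisimple, and \'etale content first (so as never to leave the world of connected reductive groups until the end), I would reduce to the single base case: $Z_L(u)$ is smooth for a connected reductive $L$ with $p$ good for $L$ and $u \in L(k)$ unipotent --- equivalently, via an integral (or ordinary) Springer isomorphism compatible with centralizers, $Z_L(X)$ is smooth for $X \in \Lie L$ nilpotent. For this I would use the theory of associated cocharacters: pick $\lambda$ associated to $u$ and pass to $P = P(\lambda)$, so $u \in P(k)$ and $Z_L(u) = Z_P(u)$; goodness of $p$ makes the unipotent radical of $P$ and the relevant $\lambda$-graded pieces smooth, and the orbit map $P \to P$, $p \mapsto pup^{-1}$, factors through a smooth locally closed subscheme on which $\dim Z_L(u)$ is visible. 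A dimension count --- the ``miracle flatness'' mechanism sketched in Section~\ref{subsection:outline}, now over the field $k$ --- then forces the scheme-theoretic fiber $Z_P(u)$ to have the expected dimension, hence to be smooth.

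I expect this last base case to be the main obstacle: the identity $\dim Z_L(u) = \dim_k \fg^u$ is exactly what breaks at bad primes, and proving it rests on the smoothness of the subgroups produced by the associated-cocharacter grading. A secondary difficulty --- more organizational than conceptual --- is arranging the devissage so that connectedness, reductivity, and pretty goodness are all genuinely inherited at each stage, since unipotent centralizers are not reductive and so cannot be passed through the reduction; the clean formulation to isolate is precisely that it suffices to treat a single unipotent (resp.\ nilpotent) element inside a connected reductive group. The details of all of this are carried out in \cite{Herpel}.
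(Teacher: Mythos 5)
The paper itself gives no proof of this theorem: it records it as ``the main result of \cite{Herpel}'' and moves on. Since your proposal likewise defers to \cite{Herpel} for ``the details of all of this,'' you and the paper are in the same place at that level. But the sketch you offer as a putative outline of such a proof does not hold up, and since the rest of the paper leans heavily on this theorem it is worth being precise about where it breaks.

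The essential gap is the devissage. It is not true that $Z_G(H)$ for an arbitrary closed subgroup scheme $H$ can be reduced, by ``peeling off'' multiplicative-type, \'etale, and semisimple content, to $Z_L(u)$ for a single unipotent $u$ in a connected reductive $L$. Once $H$ contains more unipotent content than a single element --- a positive-dimensional connected unipotent subgroup, a Frobenius kernel of one, a nonabelian finite $p$-group scheme --- you cannot strip one unipotent element at a time and continue, because the ambient group then becomes $Z_G(u)$ for unipotent $u$, which is \emph{not} reductive, and at that point every input you rely on (connectedness of centralizers, pretty goodness, Lemma~\ref{lemma:semisimple-replacement}, \cite[Prop.\ 3.7]{Integral-Springer}) is unavailable for the remainder of $H$. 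The identity $Z_G(H) = Z_{Z_G(N)}(H/N)$ also only makes sense when $N$ is central in $H$ (so that $H \subset Z_G(N)$ and $H/N$ acts), so it is not a general-purpose recursion. Tellingly, the paper derives Corollary~\ref{corollary:smooth-centralizer-pretty-good-characteristic} --- the single-element case mod center --- \emph{from} this theorem rather than the other way around; your outline inverts that logical dependence without supplying the missing reduction. A secondary issue is the base case itself: the ``miracle flatness'' mechanism of Section~\ref{subsection:outline}, run purely over a field, yields flatness of the orbit map $\Phi: P \to \bigoplus_{n\ge 2}\fg(\tau,n)$ and hence the correct \emph{dimension} of $Z_P(u)$, but not its smoothness. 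Smoothness requires the differential of $\Phi$ at the identity, i.e.\ $\ad X : \fp \to \bigoplus_{n\ge 2}\fg(\tau,n)$, to be surjective --- a separability statement that needs its own argument (Jantzen, Premet). And you need $p$ \emph{pretty} good for $L$ in the base case, not merely good: $Z_L(u)$ itself, as opposed to $Z_L(u)/Z(L)$, is smooth only when $Z(L)$ is, and $Z(L)$ is generally not smooth when $p$ is merely good.
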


\begin{cor}\label{corollary:smooth-centralizer-pretty-good-characteristic}
Let $G$ be a connected reductive group over a field $k$ of characteristic $p > 0$, and let $g \in G(k)$ (resp.\ $X \in \Lie G$) be any element. Suppose that $p$ is good for $G$ and $p \nmid |\pi_1(\sD(G))|$. Then $Z_G(g)/Z(G)$ (resp.\ $Z_G(X)/Z(G)$) is smooth.
\end{cor}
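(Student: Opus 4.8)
The plan is to reduce to Theorem~\ref{theorem:herpels-theorem} by enlarging $G$ slightly so that its center becomes smooth, without disturbing the other two hypotheses. By Lemma~\ref{lemma:characterization-of-pretty-good-primes}, the only reason $p$ need not already be pretty good for $G$ is that $Z(G)$ may fail to be smooth; the primes dividing $|\pi_1(\sD(G))|$ and the goodness of $p$ depend only on $G_{\overline{k}}$, so they (like the conclusion) are insensitive to this modification.

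To make the center smooth, choose a closed immersion $\iota \colon Z(G) \hookrightarrow T$ of the multiplicative-type $k$-group $Z(G)$ into a $k$-torus $T$ (every $k$-group of multiplicative type embeds into a torus: dualize a surjection from a suitable induced Galois lattice), and set $G' = (G \times T)/Z(G)$, where $Z(G)$ is embedded centrally via $z \mapsto (z,\iota(z)^{-1})$. Then $G'$ is connected reductive, the natural map $\pi \colon G \to G'$ is a closed immersion (its kernel is trivial because $\iota$ is a monomorphism), and $G' = \pi(G)\cdot Z(G')$. One checks directly that $\sD(G') = \pi(\sD(G))$, so that $\pi_1(\sD(G')) = \pi_1(\sD(G))$, while $Z(G') \cong T$ is a torus and $\pi(G)\cap Z(G') = \pi(Z(G))$. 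Since passing from $G$ to $G'$ does not change the root system, $p$ remains good for $G'$; now also $p \nmid |\pi_1(\sD(G'))|$ and $Z(G')$ is smooth, so $p$ is pretty good for $G'$ by Lemma~\ref{lemma:characterization-of-pretty-good-primes}.

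Next I would compare centralizers. For $g \in G(k)$ set $g' = \pi(g)$, and for $X \in \Lie G$ set $X' = (\Lie\pi)(X)$ (here $\Lie\pi$ is injective since $\ker\pi$ is trivial). Because $Z(G')$ is central, conjugation by it, and its adjoint action on $\Lie G'$, are trivial; combined with the facts that $\pi$ is an injective homomorphism and $G' = \pi(G)\cdot Z(G')$, this gives
\[
Z_{G'}(g') = \pi(Z_G(g))\cdot Z(G'), \qquad Z_{G'}(X') = \pi(Z_G(X))\cdot Z(G')
\]
as closed subgroup schemes: the containment $\supseteq$ is immediate, and $\subseteq$ follows by writing an fppf-local point of $G'$ as a product $\pi(h)z$ with $z \in Z(G')$ and observing that $\pi(h)z$ centralizes $g'$ (resp.\ $X'$) if and only if $h$ centralizes $g$ (resp.\ $X$), which is an fppf-local condition. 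Intersecting with $Z(G')$ and using $\pi(G)\cap Z(G') = \pi(Z(G)) \subseteq \pi(Z_G(g))$ yields $Z_G(g)/Z(G) \cong Z_{G'}(g')/Z(G')$, and likewise $Z_G(X)/Z(G) \cong Z_{G'}(X')/Z(G')$.

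Finally, since $p$ is pretty good for $G'$, Theorem~\ref{theorem:herpels-theorem} shows that $Z_{G'}(g')$ (the centralizer of the closed subgroup scheme of $G'$ generated by $g'$) and $Z_{G'}(X')$ are smooth. As $Z(G')$ is a smooth closed normal subgroup scheme, a comparison of Lie algebra dimensions in the exact sequence $1 \to Z(G') \to Z_{G'}(g') \to Z_{G'}(g')/Z(G') \to 1$ forces the quotient to be smooth, and hence $Z_G(g)/Z(G)$ is smooth; the Lie algebra case is identical. Everything here is formal except the scheme-theoretic identity $Z_{G'}(g') = \pi(Z_G(g))\cdot Z(G')$, which I regard as the one point requiring genuine care; I expect the fppf-local factorization of points of $G'$ through $\pi(G)\times Z(G')$ to settle it cleanly.
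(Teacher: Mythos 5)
Your construction of $G' = (G\times T)/Z(G)$ and the verification that $Z_G(g)/Z(G) \cong Z_{G'}(g')/Z(G')$ (and likewise for $X$) match the paper's reduction exactly, and your treatment of the group case — viewing $Z_{G'}(g')$ as the centralizer of the closed subgroup scheme $\overline{\langle g' \rangle}$ and applying Theorem~\ref{theorem:herpels-theorem} — is also the paper's argument. The final smoothness-of-the-quotient step is fine (any quotient of a smooth $k$-group by a closed normal $k$-subgroup is smooth, so the Lie-algebra dimension count is even more than needed).

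The gap is in the Lie algebra case. You invoke Theorem~\ref{theorem:herpels-theorem} to conclude that $Z_{G'}(X')$ is smooth, but that theorem as stated in the paper speaks only of centralizers of \emph{closed subgroup schemes}, not of elements (or subalgebras) of $\Lie G'$, and you give no argument identifying $Z_{G'}(X')$ with such a centralizer. The paper's own proof treats this case with a substantively different argument: it uses the Jordan decomposition $X = X_{\rm ss} + X_{\rm n}$, invokes Lemma~\ref{lemma:semisimple-replacement} together with \cite[Cor.\ 3.2]{Integral-Springer} to show that $p$ is pretty good for $Z_G(X_{\rm ss})$, and then applies a Springer isomorphism \cite[Thm.\ 5.1]{Integral-Springer} to reduce the nilpotent case to the already-established unipotent case. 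Your proof is missing all of this. Two repairs are available: (a) cite the full form of Herpel's theorem (his Theorem 1.3 has a third clause asserting smoothness of $C_{G'}(\mathfrak{h})$ for every subalgebra $\mathfrak{h}\subset \Lie G'$), together with the easy observation that $Z_{G'}(X') = Z_{G'}(kX')$; or (b) observe that $Z_{G'}(X')$ coincides with the centralizer of the height-one infinitesimal subgroup scheme attached to the restricted Lie subalgebra generated by $X'$ (since $\Ad(g)$ fixing $X'$ forces it to fix all $p$-power iterates and brackets), so that the paper's version of the theorem applies after all. Either repair closes the gap, and in fact gives a cleaner route than the paper's; but as written the Lie algebra case is not justified.
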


\begin{proof}
We may and do assume that $k$ is algebraically closed. The case $p=0$ is immediate from Cartier's theorem, so we may and do assume $p > 0$. Choose a torus $T$ and an embedding $Z(G) \to T$. Let $G' = G \times^{Z(G)} T$, so $Z(G') = T$, $G$ and $G'$ have the same derived group and hence the same root system, there is a natural embedding of $G$ in $G'$, and we have $Z_{G'}(g) = Z_G(g) \times^{Z(G)} T$ (and similarly for $Z_{G'}(X)$). This implies that $Z_{G'}(g)/Z(G') = Z_G(g)/Z(G)$ (and similarly for $Z_{G'}(X)/Z(G')$). Since passing from $G$ to $G'$ does not affect the derived group, we may and do assume that $Z(G)$ is smooth, in which case $p$ is pretty good for $G$ by Lemma~\ref{lemma:characterization-of-pretty-good-primes}. \smallskip

Now for the first claim, let $H = \overline{\langle g \rangle}$, so that $H$ is a closed subgroup scheme of $G$. Moreover, $Z_G(g) = Z_G(H)$, so the result follows from Theorem~\ref{theorem:herpels-theorem}. \smallskip

For the second claim, let $X = X_{\rm{ss}} + X_{\rm{n}}$ be the Jordan decomposition of $X$. By \cite[Lem.\ 2.6]{Integral-Springer}, we have $Z_G(X) = Z_{Z_G(X_{\rm{ss}})}(X_{\rm{n}})$. By \cite[Prop.\ 3.7]{Integral-Springer}, $Z_G(X_{\rm{ss}})$ is a connected reductive group, and clearly $p$ is also good for $Z_G(X_{\rm{ss}})$. To see that $p$ is even pretty good for $Z_G(X_{\rm{ss}})$, we note that by Lemma~\ref{lemma:semisimple-replacement} there is some semisimple element $t \in G(k)$ such that $Z_G(t)^0 = Z_G(X_{\rm{ss}})$. Thus the claim follows from Lemma~\ref{lemma:characterization-of-pretty-good-primes} and \cite[Cor.\ 3.2]{Integral-Springer}, and we may pass from $G$ to $Z_G(X_{\rm{ss}})$ and from $X$ to $X_{\rm{n}}$ to assume that $X$ is nilpotent. Since $p$ is good for $G$, the existence of a Springer isomorphism \cite[Thm.\ 5.1]{Integral-Springer} shows that there is some unipotent element $u \in G(k)$ such that $Z_G(u) = Z_G(X)$. Thus $Z_G(X)/Z(G)$ is smooth by the first part.
\end{proof}

\begin{remark}
The assumption that the characteristic of a field is good for a given reductive group is a ``standardness" assumption, of which there are many competing notions in the literature. These notions are very closely related, as \cite[Thm.\ 5.2]{Herpel} shows. In the rest of this paper, we may cite references which invoke different standardness assumptions, but we will generally ignore this because \cite[Thm.\ 5.2]{Herpel} makes it rather easy to transfer results involving one standardness hypothesis to results using the assumption of pretty good characteristic.
\end{remark}

\subsection{(Almost) associated cocharacters and instability parabolics}\label{subsection:associated-cocharacters}

In this section, we recall some definitions and results from \cite[\S 5]{Jantzen-nilpotent} and \cite{McNinch-nilpotent-orbits}. For case-free proofs using geometric invariant theory, see \cite{Premet}. Throughout this section, $G$ is a connected reductive group over a field $k$ of characteristic $p \geq 0$, and $\fg = \Lie G$.

\begin{definition}
\begin{enumerate}
    \item We say that a nilpotent element $X \in \fg$ is \textit{distinguished} if the only maximal torus in $(Z_G(X)_{\overline{k}})_{\rm{red}}$ is the maximal central torus of $G_{\overline{k}}$. Note that $X = 0$ is distinguished if and only if $G$ is a torus.
    \item We say that a parabolic subgroup $P \subset G$ is \textit{distinguished} if 
    \[
    \dim P_{\overline{k}}/\sR_u(P_{\overline{k}}) = \dim \sR_u(P_{\overline{k}})/\sR_u(P_{\overline{k}})^1 + \dim Z(G),
    \]
    where $\sR_u(P_{\overline{k}})^1$ is the subgroup of $\sR_u(P)$ generated by those root groups $U_\alpha$ (with respect to a fixed maximal torus of $P_{\overline{k}}$) such that $\alpha$ is not a sum of two roots in $\Phi(\sR_u(P_{\overline{k}}))$. Note that $P = G$ is distinguished if and only if $G$ is a torus.
\end{enumerate}
\end{definition}

For example, any Borel subgroup is distinguished. In type $\rm{A}$ all distinguished parabolic subgroups are of this form, but this case is misleading; in every other type, there exist distinguished parabolic subgroups which are not Borel. See \cite[\S 5.9]{Carter} for details. \smallskip

We recall that if $P$ is a parabolic subgroup of a reductive group $G$ over an infinite field, then there is a unique open dense $\Ad_P$-orbit in $\Lie \sR_u(P)$, and it is called the \textit{Richardson orbit} of $P$.\footnote{Recall that the $\Ad_P$-orbit of an element $X \in \Lie \sR_u(P)$ is by definition the schematic image $\cO_X$ of the morphism $P \to \Lie \sR_u(P)$ given by $p \mapsto \Ad(p)X$. Since the formation of the schematic image commutes with flat base change, it follows by passing to $\ov k$ that $\cO_X$ is geometrically reduced and thus (by generic flatness and a translation argument) the orbit map $P \to \cO_X$ is flat and hence a $Z_P(X)$-torsor. These observations typically allow one to pass from $k$ to $\ov k$, as we will do below.} By \cite[4.13]{Jantzen-nilpotent}, if $P$ is a distinguished parabolic in good characteristic then the Richardson orbit of $P$ consists of distinguished nilpotent elements. Moreover, every distinguished nilpotent element lies in the Richardson orbit of some distinguished parabolic subgroup. These facts are key to the following theorem, known as the Bala--Carter--Pommerening theorem.

\begin{theorem}[{\cite[4.7, 4.13]{Jantzen-nilpotent}}]\label{theorem:bala-carter}
Suppose that $\chara k$ is good for $G$ and $k$ is algebraically closed. Consider the collection of pairs $(L, Q)$, where $L$ is a Levi subgroup of some parabolic of $G$ and $Q$ is a distinguished parabolic in $L$. Associate to each such pair the $\Ad_G$-orbit of the Richardson orbit of $Q$. This map induces a bijection between $G$-conjugacy classes of pairs $(L, Q)$ and nilpotent $\Ad_G$-orbits in $\fg$.
\end{theorem}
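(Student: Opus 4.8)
The plan is to reconstruct the classical proof of the Bala--Carter--Pommerening theorem, reducing the general statement to the ``distinguished'' case by passing to Levi subgroups. Well-definedness is immediate: if $\Ad(g)$ carries the pair $(L,Q)$ to $(L',Q')$, then it carries the Richardson orbit of $Q$ inside $\Lie\sR_u(Q)$ onto that of $Q'$, so the two associated $\Ad_G$-orbits coincide, and the map descends to $G$-conjugacy classes. The two facts recalled just before the theorem --- that in good characteristic the Richardson orbit of a distinguished parabolic consists of distinguished nilpotent elements, and that every distinguished nilpotent element is Richardson in some distinguished parabolic --- will do the work in the distinguished case, and everything else is formal bookkeeping with Levi subgroups and maximal tori of centralizers.

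For surjectivity, given a nilpotent $\Ad_G$-orbit $\cO$ I would fix $X \in \cO$, choose a maximal torus $S$ of $Z_G(X)_{\rm{red}}$, and set $L = Z_G(S)$, a Levi subgroup of $G$; note $X \in \Lie L$ because $S$ centralizes $X$. The point to check is that $X$ is distinguished in $L$. First $S = Z(L)^0$: the torus $S$ is central in $L$ by construction, while $Z(L)^0$ acts trivially on $\Lie L \ni X$, so $S \cdot Z(L)^0$ is a torus of $Z_G(X)_{\rm{red}}$ containing the maximal torus $S$, forcing $Z(L)^0 \subseteq S$. Next, any maximal torus of $Z_L(X)_{\rm{red}} = Z_{Z_G(X)}(S)_{\rm{red}}$ contains the central torus $S$ and is itself a torus of $Z_G(X)_{\rm{red}}$, hence equals $S$; so the only maximal torus of $Z_L(X)_{\rm{red}}$ is the maximal central torus $Z(L)^0$. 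Since $\chara k$ is still good for $L$ (the root system of $L$ is a closed subsystem of that of $G$), the recalled facts give a distinguished parabolic $Q \subset L$ with $X$ in its Richardson orbit, and then $\cO$ is the image of the class of $(L,Q)$.

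For injectivity, suppose $(L,Q)$ and $(L',Q')$ have the same image. Picking Richardson elements $X$ of $Q$ and $X'$ of $Q'$, these lie in a common $\Ad_G$-orbit, so after replacing $(L',Q')$ by a $G$-conjugate I may assume $X = X'$. The key is that $L$ is recovered from $X$ up to $Z_G(X)^0$-conjugacy: I would argue that $Z(L)^0$ is a \emph{maximal} torus of $Z_G(X)_{\rm{red}}$, for if it were contained in a strictly larger torus $S'$ then $S' \subseteq Z_G(Z(L)^0) = L$ would centralize $X$, contradicting that $Z(L)^0$ is the only maximal torus of $Z_L(X)_{\rm{red}}$; the same holds for $Z(L')^0$, so some $h \in Z_G(X)^0(k)$ conjugates $Z(L)^0$ to $Z(L')^0$ and hence $L$ to $L'$, and replacing $(L,Q)$ by $(hLh^{-1},hQh^{-1})$ (which does not move $X$) reduces to $L = L'$. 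Finally $X$ is a distinguished nilpotent element of $L$ lying in the Richardson orbit of the two distinguished parabolics $Q, Q'$ of $L$, and here I would invoke the theory of (almost) associated cocharacters from Section~\ref{subsection:associated-cocharacters}: any distinguished parabolic of $L$ having $X$ Richardson is of the form $P_L(\la)$ for a cocharacter $\la$ associated to $X$, and associated cocharacters of $X$ are all conjugate under $Z_L(X)^0$, so $Q$ and $Q'$ are conjugate by an element fixing $X$, whence $(L,Q)$ and $(L',Q')$ are $G$-conjugate.

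I expect the main obstacle to be the distinguished case itself: the existence of, and the uniqueness up to conjugacy of, the distinguished parabolic attached to a distinguished nilpotent element, together with its description through associated cocharacters. This is exactly where the goodness hypothesis is essential --- it underlies the existence and conjugacy of (almost) associated cocharacters and the dimension bookkeeping for $Z_G(X)$ built into the notion of ``distinguished'' --- and it is the substance of \cite[4.7, 4.13]{Jantzen-nilpotent} (with a case-free treatment in \cite{Premet}); the Levi-subgroup reductions above are comparatively routine.
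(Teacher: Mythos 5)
The paper states this theorem with a citation to \cite[4.7, 4.13]{Jantzen-nilpotent} and gives no proof of its own, so there is no internal argument to compare yours against. Your Levi-subgroup reductions are the standard ones and are correct as written: well-definedness is immediate; for surjectivity you correctly take $L = Z_G(S)$ for a maximal torus $S$ of $Z_G(X)_{\rm{red}}$, verify $S = Z(L)^0$, and check that $X$ is distinguished in $L$; and for injectivity you correctly recover $L$ up to $Z_G(X)^0$-conjugacy as the centralizer of a maximal torus of $Z_G(X)_{\rm{red}}$ (the key point being that $Z(L)^0$ is itself maximal in $Z_G(X)_{\rm{red}}$, which you argue correctly). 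The observation that a good prime for $G$ remains good for any Levi subgroup is also needed and is correct.

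The one place to flag is the very last step of injectivity, where you invoke that any distinguished parabolic $Q$ of $L$ having $X$ as a Richardson element is $P_L(\lambda)$ for some cocharacter $\lambda$ associated to $X$, and that such $\lambda$ are $Z_L(X)^0$-conjugate; these are Lemma~\ref{lemma:existence-of-associated-cocharacters}(\ref{item:associated-cocharacters-bala-carter}),(\ref{item:associated-cocharacters-conjugate}). In Jantzen's notes --- the source the theorem statement actually cites --- those facts are established in [5.3], i.e.\ \emph{after} and \emph{using} the Bala--Carter--Pommerening theorem of [4.7, 4.13], so quoting them here is circular relative to that source. (Jantzen's uniqueness argument in the distinguished case is run directly via dimension counts and the containment $Z_L(X)^0 \subset Q$, without first having associated cocharacters available.) Your route does become legitimate if you instead take Premet's GIT treatment \cite{Premet} as the foundation, since there the optimal/instability cocharacter and its $Z_G(X)^0$-conjugacy are produced by Kempf--Rousseau theory independently of Bala--Carter, and the present theorem is then a consequence; but in that case essentially all of the substance has been moved into that machinery, and the dependency you should record is on \cite{Premet} rather than on \cite[4.7, 4.13]{Jantzen-nilpotent}.
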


Let $X \in \fg$ be nilpotent and let $(L, Q)$ be a pair consisting of some Levi subgroup $L$ of a parabolic of $G$ and a distinguished parabolic $Q$ of $L$. If $X \in \Lie Q$ is an element of the Richardson orbit, then we will call $(L, Q)$ a \textit{Bala--Carter datum} of $X$ (noting that it is unique up to geometric conjugacy). For $X = 0$, we must have $L = Q$, and each must be a maximal torus of $G$.

\begin{definition}\label{definition:associated-cocharacters}
Let $X \in \fg$ be a nilpotent element. We say that a cocharacter $\tau: \bG_m \to G$ is \textit{associated to} $X$ if
\begin{itemize}
    \item $\Ad(\tau(t))X = t^2 X$ for all $k$-algebras $R$ and all $t \in \bG_m(R)$, and
    \item there exists a Levi $k$-subgroup $L$ of $G$ such that $\tau$ factors through $\sD(L)$ and $X$ is distinguished in $\Lie L$.
\end{itemize}
\end{definition}

Note that if $X = 0$, then $L$ must be a torus, so the second condition implies $\tau = 1$. 

\begin{example}
Let $G = \SL_3$ and consider the element
\[
X = \begin{pmatrix} 0 & 1 & 0 \\ 0 & 0 & 0 \\ 0 & 0 & 0 \end{pmatrix}.
\]
Consider the two cocharacters $\tau_1, \tau_2: \bG_m \to G$ given by
\begin{gather*}
\tau_1(t) = \begin{pmatrix} t & 0 & 0 \\ 0 & t^{-1} & 0 \\ 0 & 0 & 1 \end{pmatrix} \\
\tau_2(t) = \begin{pmatrix} t^2 & 0 & 0 \\ 0 & 1 & 0 \\ 0 & 0 & 1 \end{pmatrix}
\end{gather*}
Note that $X$ lies in the $2$-weight space of both $\tau_1$ and $\tau_2$, but only $\tau_1$ is associated to $X$. Namely, note that $\tau_1$ factors through the derived group of the Levi
\[
L = \begin{pmatrix} * & * & 0 \\ * & * & 0 \\ 0 & 0 & * \end{pmatrix}.
\]
While $\tau_2$ also factors through $L$, it does not factor through $\sD(L)$. Although this does not evidently rule out the possibility that $\tau_2$ is associated to $X$, we will see in the next lemma that any two associated cocharacters of $X$ are conjugate, and it is clear that $\tau_1$ and $\tau_2$ are not conjugate. In general, associated cocharacters are a sort of group variant of the notion of $\mathfrak{sl}_2$-triple.
\end{example}

\begin{lemma}\label{lemma:existence-of-associated-cocharacters}\cite[5.3]{Jantzen-nilpotent}, \cite[Thm.\ 26]{McNinch-nilpotent-orbits}
Let $X \in \fg$ be a nilpotent element.
\begin{enumerate}
    \item\label{item:associated-cocharacters-exist} If $p$ is good for $G$ and either $k$ is perfect or $p \nmid |\pi_1(\sD(G))|$, then cocharacters associated to $X$ exist over $k$. 
    \item\label{item:associated-cocharacters-bala-carter} If $k$ is algebraically closed and $(L, Q)$ is a Bala--Carter datum of $X$, then there exists a cocharacter $\tau: \bG_m \to \sD(L)$ associated to $X$ such that $Q = P_L(\tau)$.
    \item\label{item:associated-cocharacters-conjugate} Two cocharacters associated to $X$ are conjugate under $Z_G(X)^0(\overline{k})$.
\end{enumerate}
\end{lemma}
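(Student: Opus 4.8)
The plan is to establish the core of the statement over an algebraically closed field and then to descend the existence assertion (\ref{item:associated-cocharacters-exist}) to a general base field. Over $\overline{k}$, an associated cocharacter can be produced from $\mathfrak{sl}_2$-theory: since $p$ is good for $G$, the element $X$ lies in an $\mathfrak{sl}_2$-triple $(X, H, Y)$, and choosing a maximal torus $T$ with $H \in \Lie T$ one lets $\tau$ be the cocharacter of $T$ with $\mathrm{d}\tau(1) = H$; one then checks directly that $\tau$ satisfies both conditions of Definition~\ref{definition:associated-cocharacters}. The most robust route, however, is Premet's identification of the cocharacters associated to $X$ with the optimal destabilizing cocharacters of $X$ (viewed as an unstable vector for the $\Ad$-action), for which the Kempf--Rousseau theory simultaneously yields existence and the conjugacy in (\ref{item:associated-cocharacters-conjugate}). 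So the real content is in (\ref{item:associated-cocharacters-conjugate}) and in the two descents.

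For (\ref{item:associated-cocharacters-conjugate}): let $\tau_1, \tau_2$ be associated to $X$. The step I expect to be the \emph{main obstacle} is the canonicity of the \emph{instability parabolic}, namely that $P := P_G(\tau_i) = \{\, g : \lim_{t \to 0} \tau_i(t)\, g\, \tau_i(t)^{-1} \text{ exists} \,\}$ is independent of $i$; this is exactly the uniqueness of the optimal destabilizing parabolic and is where the Kempf--Rousseau machinery (or Premet's case-free argument) enters. Granting it, set $M_i = Z_G(\tau_i)$, a Levi factor of $P$, and invoke the standard Levi decomposition of the centralizer: $Z_G(X)^0 = R_1 \ltimes U$ where $R_1 = Z_G(X)^0 \cap M_1$ is reductive and $U = Z_G(X)^0 \cap \sR_u(P)$ is unipotent. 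Since $R_1$ centralizes $\tau_1$, the orbit of $\tau_1$ under $Z_G(X)^0$ is a single free $U$-orbit. Because $M_1$ and $M_2$ are both Levi factors of $P$ they are conjugate by an element of $\sR_u(P)$, which after correction by $R_1$ may be chosen in $U$; this reduces to the case $M_1 = M_2$, in which $\tau_1$ and $\tau_2$ are cocharacters of $\sD(M_1)$ associated to the distinguished element $X \in \Lie M_1$, and the distinguished case of the lemma concludes the argument.

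Assertion (\ref{item:associated-cocharacters-bala-carter}) should then follow formally. Given a Bala--Carter datum $(L, Q)$ of $X$ over the algebraically closed field $k$, the element $X$ is distinguished in $\Lie L$ (Richardson orbits of distinguished parabolics consist of distinguished elements, as recalled above), so the algebraically closed case of (\ref{item:associated-cocharacters-exist}), applied inside $L$, produces a cocharacter $\tau$ associated to $X$ in $L$; distinguishedness forces the Levi appearing in Definition~\ref{definition:associated-cocharacters} to be $L$ itself, so $\tau$ factors through $\sD(L)$, and then $\tau$ is also associated to $X$ in $G$ (using the same Levi $L \subseteq G$). Now $P_L(\tau)$ is a distinguished parabolic of $L$ with $X$ in its Richardson orbit, as is $Q$, so Theorem~\ref{theorem:bala-carter} for $L$ forces $P_L(\tau)$ and $Q$ to be $L$-conjugate; conjugating $\tau$ by a suitable element of $L(k)$ and then correcting (using that the two parabolics share the Richardson orbit of $X$, together with (\ref{item:associated-cocharacters-conjugate}) in $L$) arranges $P_L(\tau) = Q$. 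Finally, for (\ref{item:associated-cocharacters-exist}) over a perfect $k$ I would invoke the rationality of the $\mathfrak{sl}_2$-triple construction over perfect fields; morally, by (\ref{item:associated-cocharacters-conjugate}) the associated cocharacters of $X$ form a homogeneous space under the smooth group $Z_G(X)^0$ with reductive point-stabilizers and unipotent-type quotient, so the descent obstruction vanishes over a perfect field. The extension to imperfect $k$ under $p \nmid |\pi_1(\sD(G))|$ is genuinely more delicate and is where that hypothesis is essential; there I would cite the construction of \cite[Thm.\ 26]{McNinch-nilpotent-orbits} (and \cite[5.3]{Jantzen-nilpotent} for the perfect case).
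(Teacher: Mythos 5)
Your proposal reconstructs the content of the cited references for parts (\ref{item:associated-cocharacters-bala-carter}), (\ref{item:associated-cocharacters-conjugate}), and the algebraically-closed case of (\ref{item:associated-cocharacters-exist}). The paper does not do any of this: it simply cites \cite[5.3]{Jantzen-nilpotent} for (\ref{item:associated-cocharacters-bala-carter}), (\ref{item:associated-cocharacters-conjugate}) over $\overline{k}$ and \cite[Thm.\ 26]{McNinch-nilpotent-orbits} for the perfect-field or smooth-centralizer case of (\ref{item:associated-cocharacters-exist}). Reproving these from scratch is a legitimately different route, but note one hazard in your version: your fallback construction via $\mathfrak{sl}_2$-triples is not available in merely good characteristic. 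Jacobson--Morozov requires considerably stronger bounds on $p$ (e.g.\ $p > 3(h-1)$, and the regular nilpotent of $\mathfrak{sl}_p$ in characteristic $p$ already lacks an $\mathfrak{sl}_2$-triple), whereas associated cocharacters exist in all good characteristics by Pommerening/Premet. You do flag the Premet/Kempf--Rousseau route as "the most robust," and that is the one you would actually need.

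The genuine gap is in the one place where the paper contributes a new argument, namely existence in (\ref{item:associated-cocharacters-exist}) when $k$ is imperfect and $p \nmid |\pi_1(\sD(G))|$. You write that you "would cite the construction of \cite[Thm.\ 26]{McNinch-nilpotent-orbits}," but that theorem is proved under the hypothesis that $k$ is perfect or $Z_G(X)$ is smooth, and neither need hold here: $Z(G)$ can have $p$-torsion even when $p \nmid |\pi_1(\sD(G))|$, so $Z_G(X)$ need not be smooth. The paper's actual step is a reduction you do not supply: embed $Z(G) \hookrightarrow T_0$ into a torus and form $G' = G \times^{Z(G)} T_0$. Then $Z(G') = T_0$ is smooth and $\sD(G') = \sD(G)$, so $p$ is pretty good for $G'$ (Lemma~\ref{lemma:characterization-of-pretty-good-primes}), hence $Z_{G'}(X)$ is smooth and McNinch's theorem applies to $G'$, producing a cocharacter $\tau'\colon \bG_m \to G'$ associated to $X$. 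One then writes a maximal $k$-torus $T'$ of $G'$ containing the image of $\tau'$ as $T' = TT_0$ with $T \subset G$, decomposes $\tau' = \tau\tau_0$ with $\tau$ valued in $T$ and $\tau_0$ valued in $T_0$, and checks that $\tau$ is associated to $X$ inside $G$. Without some such reduction, citing McNinch leaves the stated hypothesis uncovered.
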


\begin{proof}
This is proved in \cite[5.3]{Jantzen-nilpotent} under the further hypothesis that $k$ is algebraically closed. Note that the second claim in the first point is not stated in \cite[5.3]{Jantzen-nilpotent}, but the construction of associated cocharacters establishes it. The first point is proved in \cite[Thm.\ 26]{McNinch-nilpotent-orbits} under the slightly stronger assumption that either $k$ is perfect or $Z_G(X)$ is smooth. Thus it suffices to understand the case $p \nmid |\pi_1(\sD(G))|$. In that case, choose an embedding $Z(G) \to T_0$, where $T_0$ is a torus, and let $G' = G \times^{Z(G)} T_0$. Note $Z(G') = T_0$, so by \cite[Thm.\ 26]{McNinch-nilpotent-orbits}, there is a cocharacter $\tau': \bG_m \to G'$ associated to $X$. Let $T'$ be a maximal $k$-torus of $G'$ through which $\tau'$ factors, and write $T' = TT_0$, where $T$ is a maximal $k$-torus of $G$. We may write $\tau' = \tau \tau_0$ for some (non-unique) cocharacters $\tau: \bG_m \to T$ and $\tau_0: \bG_m \to T_0$, and it is clear that $\tau$ is associated to $X$. Since $\tau$ factors through $G$, we conclude.
\end{proof}

\begin{remark}
The hypotheses in Lemma~\ref{lemma:existence-of-associated-cocharacters}(\ref{item:associated-cocharacters-exist}) are necessary: if $D$ is a central division algebra of dimension $p^2$ over a local function field $\bF_q(\!(t)\!)$ of characteristic $p > 0$, then $\pgl(D)$ contains nontrivial nilpotent elements, corresponding to elements of $\ov{\bF_q(\!(t)\!)}$ which are purely inseparable of degree $p$ over $\bF_q(\!(t)\!)$. However, $\PGL(D)$ is anisotropic, so it admits no nontrivial cocharacters over $k$.
\end{remark}

Given a nilpotent element $X \in \fg$ and a cocharacter $\tau: \bG_m \to G$ associated to $X$, we let $P = P_G(\tau)$ be the parabolic subgroup of $G$ associated to $\tau$ by the dynamic method; for details on the dynamic method, see \cite[\S 2.1]{CGP}. Let also $L_\tau = Z_G(\tau)$ be the Levi factor of $P$ associated to $\tau$, and let $U_P = U_G(\tau)$ be the unipotent radical of $P$. We call $P$ the \textit{instability parabolic} associated to $X$; the word ``the" and the $\tau$-independent notation are justified by the first part of the following proposition. For a given cocharacter $\lambda: \bG_m \to G$, we denote by $\fg(\lambda, n)$ the space of weight $n$ vectors for $\lambda$ in $\fg$.

\begin{prop}[{\cite[5.9-11]{Jantzen-nilpotent}}]\label{proposition:properties-of-instability-parabolic}
Suppose that $p$ is good for $G$.
\begin{enumerate}
    \item\label{item:instability-parabolic-is-choiceless} The group $P$ depends only on $X$, not on the choice of $\tau$.
    \item\label{item:instability-parabolic-same-centralizer} If $p \nmid |\pi_1(\sD(G))|$, then $Z_G(X) = Z_P(X)$.
    \item\label{item:instability-parabolic-open-dense-orbit} The $\Ad_P$-orbit of $X$ is open and dense in $\bigoplus_{n \geq 2} \fg(\tau, n)$.
    \item\label{item:instability-parabolic-bala-carter} If $\tau$ is an associated cocharacter of $X$ and $L$ is as in the definition, then $(L, P_L(\tau))$ is a Bala--Carter datum of $X$.
    \item\label{item:instability-parabolic-semidirect-product} Suppose $p \nmid |\pi_1(\sD(G))|$. If $\tau$ is an associated cocharacter of $X$, then
    \[
    Z_G(X) = (Z_G(X) \cap Z_G(\tau)) \ltimes Z_{U_G(\tau)}(X).
    \]
    The group $Z_{U_G(\tau)}(X)$ is smooth, connected, and unipotent; the quotient group scheme $(Z_G(X) \cap Z_G(\tau))/Z(G)$ is reductive (but not necessarily connected).
    \item\label{item:instability-parabolic-levi} Suppose $p \nmid |\pi_1(\sD(G))|$. If $\tau$ is an associated cocharacter of $X$, $T$ is a maximal $k$-torus of $(Z_G(X) \cap Z_G(\tau))/Z(G)$, and $L = Z_G(T)$, then $\tau$ factors through $\sD(L)$ and $X$ is distinguished in $\Lie L$. In particular, $(L, P_L(\tau))$ is a Bala--Carter datum of $X$.
\end{enumerate}
\end{prop}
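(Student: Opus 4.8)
The plan is to prove the six statements roughly in the order listed, extracting each from the structure theory of \cite[\S 5]{Jantzen-nilpotent}, but with attention to schematic (rather than merely varietal) content since $Z(G)$ need not be smooth. First I would prove \eqref{item:instability-parabolic-is-choiceless}: given two associated cocharacters $\tau_1, \tau_2$ of $X$, Lemma~\ref{lemma:existence-of-associated-cocharacters}\eqref{item:associated-cocharacters-conjugate} (applied after base change to $\overline{k}$) says they are conjugate by an element of $Z_G(X)^0(\overline{k})$, which normalizes $X$ and hence sends $P_G(\tau_1)$ to $P_G(\tau_2)$; but a parabolic subgroup is its own normalizer and conjugation by an element of $Z_G(X)$ fixes $X$, so in fact $P_G(\tau_1)_{\overline{k}} = P_G(\tau_2)_{\overline{k}}$. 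Since parabolics are determined by their geometric fibers, $P_G(\tau_1) = P_G(\tau_2)$, and the $\tau$-independent notation $P$ is justified. For \eqref{item:instability-parabolic-open-dense-orbit}, the identity $\bigoplus_{n \geq 2}\fg(\tau,n) = \Lie\sR_u(P_L(\tau))$ inside $\Lie L$ reduces the claim to the statement that $X$ lies in the Richardson orbit of the distinguished parabolic $P_L(\tau)$ of $L$ (where $L$ is as in Definition~\ref{definition:associated-cocharacters}), combined with the fact that the weight spaces for $n \geq 2$ are spanned by the root groups outside $L$ together with the degree-$\geq 2$ part inside $\sD(L)$; this is exactly \cite[5.9--11]{Jantzen-nilpotent}, and I would cite it after checking the translation between weight-space language and unipotent radicals.

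The heart of the matter is \eqref{item:instability-parabolic-same-centralizer} and \eqref{item:instability-parabolic-semidirect-product}, which is where the hypothesis $p \nmid |\pi_1(\sD(G))|$ enters. Working over $\overline{k}$, one has $Z_G(\tau) = L_\tau \supseteq L$ and, because $\tau$ acts on $\fg$ with nonnegative weights on $\Lie P$ and strictly positive weights $\geq 2$ on $X$, the standard $\mathfrak{sl}_2$-type argument shows $Z_G(X) \subseteq P$ on the level of $\overline{k}$-points, so $Z_G(X)_{\overline{k}} = Z_P(X)_{\overline{k}}$ as group schemes (both are closed subgroup schemes of $G_{\overline{k}}$ with the same functor of points after noting they are defined by the same equations); the content here is that one gets \emph{schematic} equality, which follows because $Z_G(X)$ and $Z_P(X)$ are both closed subschemes of $G$ and agree on a dense set of points with the same scheme structure — more carefully, $Z_P(X)$ is the scheme-theoretic preimage of $X$ under the adjoint action map $P \to \fg$, and likewise for $Z_G(X)$, and the weight argument shows the fibers coincide. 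The semidirect product decomposition in \eqref{item:instability-parabolic-semidirect-product} then comes from the Levi decomposition $P = L_\tau \ltimes U_P$: intersecting with $Z_G(X)$ and using that $Z_{L_\tau}(X) = Z_G(X) \cap Z_G(\tau)$ (as $L_\tau = Z_G(\tau)$) gives $Z_G(X) = (Z_G(X)\cap Z_G(\tau)) \ltimes Z_{U_P}(X)$ provided the projection $Z_G(X) \to L_\tau$ lands in $Z_{L_\tau}(X)$, which holds because $\tau$ is central in $L_\tau$ and the $L_\tau$-part of any $g \in Z_G(X)$ fixes the lowest-weight component $X \in \fg(\tau,2)$. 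Smoothness and connectedness of $Z_{U_P}(X)$: $U_P$ is split unipotent with a $\bG_m$-action via $\tau$ having positive weights, and $Z_{U_P}(X)$ is the fixed locus of a morphism equivariant for a filtration, so by \cite[Thm.\ 26]{McNinch-nilpotent-orbits} or a direct filtration argument it is smooth and connected; the hypothesis on $\pi_1$ is precisely what makes $Z_G(X)$ itself smooth modulo the center, via Corollary~\ref{corollary:smooth-centralizer-pretty-good-characteristic} after the standard reduction enlarging the center, hence the reductivity of $(Z_G(X)\cap Z_G(\tau))/Z(G)$ (it is the centralizer of the torus $\tau(\bG_m)$ — or rather of $X$ inside the reductive $Z_G(\tau)$ — modulo center, so reductive by \cite[Prop.\ 3.7]{Integral-Springer} and smoothness; note it need not be connected).

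Finally, \eqref{item:instability-parabolic-bala-carter} and \eqref{item:instability-parabolic-levi}: part \eqref{item:instability-parabolic-bala-carter} is immediate once \eqref{item:instability-parabolic-open-dense-orbit} is established, since $X$ being distinguished in $\Lie L$ and lying in the open dense $\Ad_L$-orbit on $\Lie\sR_u(P_L(\tau))$ is exactly the condition that $(L, P_L(\tau))$ is a Bala--Carter datum, by Theorem~\ref{theorem:bala-carter}. For \eqref{item:instability-parabolic-levi}, given a maximal torus $T$ of $(Z_G(X)\cap Z_G(\tau))/Z(G)$ and $L = Z_G(T)$ (a Levi subgroup of $G$, since $T$ lifts to a subtorus of $Z_G(\tau)$ centralizing $X$), I would argue that $\tau(\bG_m)$ commutes with $T$ (as $\tau$ is central in $Z_G(\tau)$) so $\tau$ factors through $L$, and in fact through $\sD(L)$ because $\tau$ pairs trivially with the cocharacter lattice of the central torus of $L$ — this uses that $T$ was chosen maximal, so the image of $\tau$ can have no further central torus component to split off; and $X$ is distinguished in $\Lie L$ because any torus in $(Z_L(X)_{\overline{k}})_{\mathrm{red}}$ beyond the central one would enlarge $T$. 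The last sentence of \eqref{item:instability-parabolic-levi} then follows from \eqref{item:instability-parabolic-bala-carter}. The main obstacle throughout is not any single deep input but the bookkeeping needed to upgrade the varietal statements of \cite{Jantzen-nilpotent} to schematic ones when $Z(G)$ is non-smooth — in particular, ensuring in \eqref{item:instability-parabolic-same-centralizer} that the equality $Z_G(X) = Z_P(X)$ holds as schemes and not just on geometric points, which I expect to handle by the preimage description of both sides under the adjoint action morphism together with the weight computation.
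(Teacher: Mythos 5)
Your overall strategy—cite \cite[\S5]{Jantzen-nilpotent} for the geometric-point and Lie-algebra statements, then use the pushout $G \rightsquigarrow G \times^{Z(G)} T$ to handle the non-smooth center—matches the paper's. But there is a genuine gap in the key schematic step of (\ref{item:instability-parabolic-same-centralizer}), and two smaller issues.

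For (\ref{item:instability-parabolic-same-centralizer}), you identify correctly that the real content is upgrading ``$Z_G(X)(\overline{k}) = Z_P(X)(\overline{k})$'' to a schematic equality, but the argument you offer does not do it. Two closed subschemes of $G$ that agree on $\overline{k}$-points need not be equal, and the ``preimage of $X$ under the adjoint action'' description only shows $Z_P(X) = Z_G(X) \cap P$ scheme-theoretically; it says nothing about whether the inclusion $Z_P(X) \hookrightarrow Z_G(X)$ is an equality of schemes (which is what one needs, since infinitesimal centralizers could live outside $\Lie P$). The paper's resolution is a short but essential trick you have not reproduced: observe that $Z(G) \subset Z_P(X)$, so the closed immersion $Z_P(X) \hookrightarrow Z_G(X)$ may be checked after quotienting by $Z(G)$; then Corollary~\ref{corollary:smooth-centralizer-pretty-good-characteristic} (using $p\nmid|\pi_1(\sD(G))|$) makes $Z_G(X)/Z(G)$ smooth, hence reduced, so the closed immersion $Z_P(X)/Z(G) \hookrightarrow Z_G(X)/Z(G)$ is an isomorphism as soon as it is bijective on geometric points---which is exactly what \cite[5.9]{Jantzen-nilpotent} provides. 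Without some such reduction to a reduced target, your argument leaves the schematic equality unjustified.

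Two smaller points. Your argument for (\ref{item:instability-parabolic-is-choiceless}) is circular as written: from $gP_G(\tau_1)g^{-1} = P_G(\tau_2)$ with $g\in Z_G(X)^0(\overline{k})$ you cannot conclude $P_G(\tau_1) = P_G(\tau_2)$ via ``a parabolic is its own normalizer'' unless you already know $g \in P_G(\tau_1)(\overline{k})$, i.e.\ $Z_G(X)^0 \subset P$, which is part of (\ref{item:instability-parabolic-same-centralizer}); the paper simply cites the reference for (\ref{item:instability-parabolic-is-choiceless}). And your claim that (\ref{item:instability-parabolic-bala-carter}) is ``immediate from (\ref{item:instability-parabolic-open-dense-orbit})'' conflates the $\Ad_P$-orbit of $X$ in $\bigoplus_{n\geq 2}\fg(\tau,n)$ with the $\Ad_{P_L(\tau)}$-orbit in $\Lie\sR_u(P_L(\tau))$; these live in different groups and spaces and some argument is needed to pass between them (the paper instead deduces (\ref{item:instability-parabolic-bala-carter}) from the existence and $Z_G(X)^0$-conjugacy parts of Lemma~\ref{lemma:existence-of-associated-cocharacters}, avoiding (\ref{item:instability-parabolic-open-dense-orbit}) entirely).
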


\begin{proof}
We may and do assume that $k$ is algebraically closed. (\ref{item:instability-parabolic-is-choiceless}) is proved in the reference. For (\ref{item:instability-parabolic-same-centralizer}), since $Z(G) \subset Z_P(X)$, we may check that the morphism $Z_P(X) \to Z_G(X)$ is an isomorphism after passing to the quotient by $Z(G)$. By Corollary~\ref{corollary:smooth-centralizer-pretty-good-characteristic} the group $Z_G(X)/Z(G)$ is smooth, so in fact it suffices to check that $Z_P(X)/Z(G) \to Z_G(X)/Z(G)$ is bijective on $k$-points. This is proved in \cite[5.9]{Jantzen-nilpotent}. The density statement of (\ref{item:instability-parabolic-open-dense-orbit}) is proved in \cite[5.9]{Jantzen-nilpotent}, while openness follows from the closed orbit lemma \cite[1.8, Prop.]{Borel}. For (\ref{item:instability-parabolic-bala-carter}) we note that there exists \textit{some} cocharacter $\tau: \bG_m \to G$ associated to $X$ satisfying the conclusion by Lemma~\ref{lemma:existence-of-associated-cocharacters}(\ref{item:associated-cocharacters-exist}). By Lemma~\ref{lemma:existence-of-associated-cocharacters}(\ref{item:associated-cocharacters-conjugate}), any two associated cocharacters are $Z_G(X)^0$-conjugate, so the claim follows. \smallskip

For (\ref{item:instability-parabolic-semidirect-product}) we may embed $Z(G)$ into a torus $T$ and pass from $G$ to $G \times^{Z(G)} T$ to assume that $Z(G)$ is smooth, in which case $Z_G(X)$ is smooth by Corollary~\ref{corollary:smooth-centralizer-pretty-good-characteristic}. The fact that the multiplication morphism $(Z_G(X) \cap Z_G(\tau)) \ltimes Z_{U_G(\tau)}(X) \to Z_G(X)$ is an isomorphism of group schemes follows from \cite[5.10]{Jantzen-nilpotent}: there it is proved that this is an isomorphism on the level of geometric points and on the level of Lie algebras. For dimension reasons, it follows that $Z_G(X) \cap Z_G(\tau)$ and $Z_{U_G(\tau)}$ are smooth, and thus indeed the multiplication morphism is an isomorphism. The remaining claims of (\ref{item:instability-parabolic-semidirect-product}) follow from \cite[5.10-11]{Jantzen-nilpotent}. \smallskip

Finally, for (\ref{item:instability-parabolic-levi}) we may reduce as before to the case that $Z(G)$ is smooth, in which case the maximal torus $T$ corresponds to a maximal torus of $Z_G(X) \cap Z_G(\tau)$. Using this correspondence, the result is proved in \cite[Prop.\ 16]{McNinch-nilpotent-orbits}.
\end{proof}

If $X \in \fg$ is nilpotent, we let $N_G(X)$ denote the normalizer of $kX$ in $\fg$. In other words, $N_G(X)$ is the closed $k$-subgroup scheme of $G$ fitting into the Cartesian square
\begin{equation}\label{equation:normalizer-scheme}
\begin{tikzcd}
N_G(X) \arrow[r] \arrow[d]
    &kX \arrow[d] \\
G \arrow[r, "\Phi"]
    &\fg
\end{tikzcd}
\end{equation}
where $\Phi: G \to \fg$ is the orbit map $\Phi(g) = \Ad(g)X$.

\begin{lemma}\label{lemma:normalizer-scheme}
There is a short exact sequence $1 \to Z_G(X) \to N_G(X) \to \bG_m \to 1$, where $N_G(X) \to \bG_m$ is given by sending $g$ to the coefficient of $X$ in $\Ad(g)X$. If $\tau: \bG_m \to G$ is associated to $X$, then $\tau$ factors through $N_G(X)$ and the composition $\bG_m \to N_G(X) \to \bG_m$ is given by squaring. If $p$ is good for $G$ and $p \nmid |\pi_1(\sD(G))|$, then $N_G(X)/Z(G)$ is smooth.
\end{lemma}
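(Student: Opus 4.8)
The plan is to establish the three assertions of Lemma~\ref{lemma:normalizer-scheme} in order, using the Cartesian square \eqref{equation:normalizer-scheme} together with the material on associated cocharacters developed above.

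First I would construct the homomorphism $N_G(X) \to \bG_m$. For any $k$-algebra $R$ and any $g \in N_G(X)(R)$, by definition $\Ad(g)X \in (kX)(R) = R \cdot X$, so we may write $\Ad(g)X = c(g)\,X$ for a unique $c(g) \in R$; since $\Ad$ is an action by linear automorphisms and $X \neq 0$, the element $c(g)$ is a unit and $g \mapsto c(g)$ is a homomorphism $N_G(X) \to \bG_m$. Its kernel is exactly the condition $\Ad(g)X = X$, i.e.\ $Z_G(X)$, so we get the exact sequence $1 \to Z_G(X) \to N_G(X) \to \bG_m$. For surjectivity (and hence right-exactness), I would invoke the existence of an associated cocharacter: by Lemma~\ref{lemma:existence-of-associated-cocharacters}(\ref{item:associated-cocharacters-exist}) — valid since $p$ is good for $G$ and $p \nmid |\pi_1(\sD(G))|$ (and in the $p=0$ case trivially) — there is $\tau \colon \bG_m \to G$ with $\Ad(\tau(t))X = t^2 X$. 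Thus $\tau(t) \in N_G(X)(R)$ and $c(\tau(t)) = t^2$, so $c \circ \tau$ is the squaring map $\bG_m \to \bG_m$; in particular $c$ hits every square, and composing with any faithfully flat cover (e.g.\ $t \mapsto t^2$ is an fppf, even finite flat, cover of $\bG_m$ when $p \neq 2$, and one can instead use $\tau'(t) := \tau(t)\cdot z$ for suitable torus elements, or simply note that $\bG_m = \bG_m^2$ as fppf sheaves is false only at $p=2$) — more cleanly: the image of $c$ is an open subgroup scheme of $\bG_m$ containing $\bG_m^2$, hence is all of $\bG_m$ (the only subgroup schemes of $\bG_m$ of positive dimension are $\bG_m$ itself). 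This also proves the second assertion about $\tau$.

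For the smoothness claim, assume $p > 0$ (the characteristic-zero case follows from Cartier's theorem applied to $N_G(X)$). I would pass to the quotient by $Z(G)$, which is central in $N_G(X)$, and note that $N_G(X)/Z(G)$ sits in $1 \to Z_G(X)/Z(G) \to N_G(X)/Z(G) \to \bG_m \to 1$. By Corollary~\ref{corollary:smooth-centralizer-pretty-good-characteristic}, $Z_G(X)/Z(G)$ is smooth (this uses precisely the hypotheses $p$ good for $G$ and $p \nmid |\pi_1(\sD(G))|$). Smoothness is then inherited by the extension: $\bG_m$ is smooth, $Z_G(X)/Z(G)$ is smooth, and an extension of a smooth group scheme by a smooth group scheme is smooth (the dimension of the extension equals $\dim(Z_G(X)/Z(G)) + 1$, the relative dimension at the identity agrees by additivity of tangent spaces in the exact sequence, and flatness of the extension over $k$ is automatic; alternatively one can split the sequence using $\tau$ to realize $N_G(X)/Z(G)$ as a semidirect product, which is visibly smooth).

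The step I expect to require the most care is the surjectivity of $c \colon N_G(X) \to \bG_m$ as fppf sheaves when $p = 2$, where the associated cocharacter only gives the squaring map onto $\bG_m$, whose image as a subsheaf (rather than as a subscheme) is not all of $\bG_m$. The cleanest fix is the subgroup-scheme argument: $\mathrm{im}(c)$ is a nontrivial (since it contains $\tau(t)$ with $c(\tau(t)) = t^2 \neq 1$) closed — in fact smooth, as the image of a smooth group — subgroup scheme of $\bG_m$, hence equals $\bG_m$, so $c$ is a smooth surjection and the sequence $1 \to Z_G(X) \to N_G(X) \to \bG_m \to 1$ is exact on the nose. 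Everything else is formal manipulation of the Cartesian square and the cited results.
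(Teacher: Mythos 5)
Your proposal follows the same approach as the paper: construct $c\colon N_G(X)\to\bG_m$ from the Cartesian square, show $\tau$ factors through $N_G(X)$ with $c\circ\tau$ the squaring map, deduce surjectivity of $c$, and then obtain smoothness of $N_G(X)/Z(G)$ from Corollary~\ref{corollary:smooth-centralizer-pretty-good-characteristic} via the short exact sequence. The one place you go wrong is the claim that at $p=2$ the squaring map has ``image as a subsheaf\ldots not all of $\bG_m$.'' This is false: squaring $\bG_m\to\bG_m$ is a finite locally free morphism of degree $2$ in every characteristic (corresponding to the finite free ring map $k[t^{\pm 1}]\hookrightarrow k[s^{\pm 1}]$, $t\mapsto s^2$), hence a faithfully flat, finitely presented surjection and therefore an epimorphism of fppf sheaves. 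Concretely, any unit $u\in R^\times$ becomes a square after the finite flat cover $R\to R[w]/(w^2-u)$, regardless of residue characteristics. So the paper's argument applies verbatim in characteristic $2$: since $c\circ\tau$ is an fppf epimorphism, $c$ is an fppf epimorphism, with no need for the subgroup-scheme detour. Your alternative argument (the scheme-theoretic image of $c$ is a positive-dimensional subgroup scheme of $\bG_m$, hence everything) does happen to be correct, but the ``extra care at $p=2$'' it is meant to address does not exist. The smoothness step is also the paper's: once $N_G(X)\to\bG_m$ is known to be faithfully flat, $N_G(X)/Z(G)\to\bG_m$ is faithfully flat with smooth fibers (they are $Z_G(X)/Z(G)$-torsors), and $\bG_m$ is smooth over $k$, so $N_G(X)/Z(G)$ is smooth; your alternative via splitting the sequence with $\tau$ is equally valid, though the remark about ``additivity of tangent spaces'' is circular as stated, since exactness on the right of the Lie-algebra sequence is what one is trying to establish.
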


\begin{proof}
For the first statement, the only unclear point is that $N_G(X) \to \bG_m$ is faithfully flat. For this, note that if $\tau: \bG_m \to G$ is an associated cocharacter of $X$, then $\tau$ factors through $N_G(X)$ and the composition $\bG_m \to N_G(X) \to \bG_m$ is given by squaring (as is easy to see). Since the squaring morphism is an epimorphism of fppf sheaves, the claim follows. The last statement follows from Corollary~\ref{corollary:smooth-centralizer-pretty-good-characteristic} using the proven short exact sequence.
\end{proof}

Finally, if $X \in \fg$ is nilpotent and $P$ is the instability parabolic associated to $X$, then as in \cite[4.3]{McNinch-relative-centralizer} we will say that a cocharacter $\tau: \bG_m \to G$ is \textit{almost associated} to $X$ provided that $t \mapsto g\tau(t)g^{-1}$ is associated to $X$ for some $g \in P(\overline{k})$.\footnote{A referee pointed out that if $k$ is perfect then it is equivalent to assume that $\lambda\colon t \mapsto g\tau(t)g^{-1}$ is associated to $X$ for some $g \in P(k)$. Indeed, in the definition we may assume $g \in \sR_u(P)(\ov k)$. In this case, we have $gZ_G(\tau)g^{-1} = Z_G(\lambda)$, so such a $g$ is unique and therefore Galois-stable.} In particular, $\tau$ necessarily factors through $P$. The principal properties of such $\tau$ which we will need are contained in the following proposition.

\begin{prop}\label{proposition:mcninch-almost-associated-cocharacters}
Let $X \in \fg$ be nilpotent and let $P$ be the instability parabolic associated to $X$.
\begin{enumerate}
    \item\label{item:almost-associated-cocharacters-exist}\cite[4.3.1]{McNinch-relative-centralizer} Let $T \subset P$ be a maximal torus. Then there is a unique cocharacter $\tau$ of $T$ which is almost associated to $X$.
    \item\label{item:almost-associated-cocharacters-dynamic-method} If $\tau$ is a cocharacter which is almost associated to $X$, then $P_G(\tau)$ is the instability parabolic of $X$.
    \item\label{item:almost-associated-cocharacters-agree}\cite[4.3.2]{McNinch-relative-centralizer} If $\tau_1, \tau_2$ are two cocharacters which are almost associated to $X$, then for any $N \in \bZ$ we have
    \[
    \bigoplus_{n \geq N} \fg(\tau_1, n) = \bigoplus_{n \geq N} \fg(\tau_2, n).
    \]
\end{enumerate}
\end{prop}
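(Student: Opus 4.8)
The plan is to deduce all three statements from the corresponding facts about honest associated cocharacters (Lemma~\ref{lemma:existence-of-associated-cocharacters} and Proposition~\ref{proposition:properties-of-instability-parabolic}), together with the conjugacy of maximal tori in $P$ and the McNinch references cited in the statement. Throughout I may and do assume $k$ is algebraically closed, since both the hypothesis ``almost associated'' and all the conclusions are checked on geometric points and on Lie algebras, and the formation of instability parabolics, weight spaces, and maximal tori is compatible with base change to $\overline{k}$.

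For part~(\ref{item:almost-associated-cocharacters-exist}), I would argue as follows. Fix a maximal torus $T_0 \subset P$ through which some associated cocharacter $\tau_0$ of $X$ factors; such a $T_0$ and $\tau_0$ exist by Lemma~\ref{lemma:existence-of-associated-cocharacters}(\ref{item:associated-cocharacters-exist}) together with the fact that an associated cocharacter lands in $P = P_G(\tau_0)$ and hence in some maximal torus of $P$. Given an arbitrary maximal torus $T \subset P$, conjugacy of maximal tori in $P$ produces $g \in P(k)$ with $gT_0g^{-1} = T$, and then $\tau := g\tau_0g^{-1}$ is a cocharacter of $T$ which is almost associated to $X$ by definition. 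For uniqueness: if $\tau, \tau'$ are both cocharacters of $T$ almost associated to $X$, write $\tau = \mathrm{int}(g)\circ\sigma$ and $\tau' = \mathrm{int}(g')\circ\sigma'$ with $\sigma,\sigma'$ associated to $X$ and $g, g' \in P(k)$; by Lemma~\ref{lemma:existence-of-associated-cocharacters}(\ref{item:associated-cocharacters-conjugate}) the associated cocharacters $\sigma, \sigma'$ are $Z_G(X)^0(k)$-conjugate, and since $Z_G(X) = Z_P(X)$ by Proposition~\ref{proposition:properties-of-instability-parabolic}(\ref{item:instability-parabolic-same-centralizer}) (note $p \nmid |\pi_1(\sD(G))|$ is needed here — if the proposition is to hold without that hypothesis one instead uses the McNinch reference \cite[4.3.1]{McNinch-relative-centralizer} directly), we get that $\tau$ and $\tau'$ are conjugate under some element $h \in P(k)$. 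But two cocharacters of the same torus $T$ which are $P(k)$-conjugate and both factor through $T$ are conjugate by an element of $N_P(T)(k)$; a Weyl-group element of $P$ fixing the instability parabolic data and normalizing $T$ must actually fix $\tau$, which forces $\tau = \tau'$. I expect this uniqueness bookkeeping — keeping careful track of which group the conjugating elements live in — to be the main subtlety, though it is essentially the content of \cite[4.3.1]{McNinch-relative-centralizer} and I would simply cite it for the cleanest argument.

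For part~(\ref{item:almost-associated-cocharacters-dynamic-method}): if $\tau$ is almost associated to $X$, then by definition there is $g \in P(\overline{k})$ with $\mathrm{int}(g)\circ\tau$ associated to $X$, so $P_G(\mathrm{int}(g)\circ\tau) = gP_G(\tau)g^{-1}$ equals the instability parabolic $P$ of $X$ by Proposition~\ref{proposition:properties-of-instability-parabolic}(\ref{item:instability-parabolic-is-choiceless}). Since $g \in P(\overline{k})$ normalizes $P$ (indeed $P$ is its own normalizer), conjugating back gives $P_G(\tau) = P$, as desired. Finally for part~(\ref{item:almost-associated-cocharacters-agree}): I would cite \cite[4.3.2]{McNinch-relative-centralizer}, but the underlying reason is that for an associated cocharacter $\sigma$ the filtration $\bigoplus_{n\ge N}\fg(\sigma,n)$ is the Lie algebra of the parabolic $P_G(\sigma)$-stable filtration step, and conjugating $\sigma$ by $g\in P(\overline{k})$ carries this to $\bigoplus_{n\ge N}\mathrm{Ad}(g)\fg(\sigma,n)$; since $g\in P(\overline{k}) = P_G(\sigma)(\overline{k})$ preserves each such filtration step (these are the Lie algebras of the descending central series pieces of $\sR_u(P)$ together with $\Lie P$, all normal in $P$), the two filtrations agree. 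Applying this to $\tau_1$ and $\tau_2$, each conjugate into the associated locus by an element of $P(\overline{k})$, and composing, yields the claimed equality. The one genuine input here is the normality of these filtration steps under all of $P$, not just under the Levi $Z_G(\tau)$; this is exactly \cite[4.3.2]{McNinch-relative-centralizer} and I would not reprove it.
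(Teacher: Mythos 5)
Your overall route matches the paper's: cite \cite[4.3.1]{McNinch-relative-centralizer} for part~(\ref{item:almost-associated-cocharacters-exist}), use $P(\overline{k})$-conjugacy plus $P$ being its own normalizer for part~(\ref{item:almost-associated-cocharacters-dynamic-method}), and establish $\Ad_P$-stability of the weight filtration for part~(\ref{item:almost-associated-cocharacters-agree}). Two points worth flagging. First, the paper does not merely cite \cite[4.3.2]{McNinch-relative-centralizer} for part~(\ref{item:almost-associated-cocharacters-agree}): it proves $\Ad_P$-stability directly via the dynamic method, observing that $Y \in \bigoplus_{n\geq N}\fg(\tau_1,n)$ iff $\lim_{t\to 0}t^{-N}\Ad(\tau_1(t))Y$ exists, and then that for $g\in P(\overline{k})=P_G(\tau_1)(\overline{k})$ the limit $\lim_{t\to 0}\tau_1(t)g\tau_1(t)^{-1}$ exists, so conjugating by $g$ preserves the filtration. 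Your heuristic that the filtration steps are the Lie algebras of the descending central series of $\sR_u(P)$ is not accurate: for instance $\Lie[\sR_u(P),\sR_u(P)]$ may be a proper subspace of $\bigoplus_{n\geq 2}\fg(\tau,n)$, and for $N\leq 0$ the filtration steps are not contained in $\Lie\sR_u(P)$ at all. The $\Ad_P$-stability conclusion is right, but the limit computation is the correct justification, not normality of central-series terms. Second, your parenthetical worry in part~(\ref{item:almost-associated-cocharacters-exist}) about invoking Proposition~\ref{proposition:properties-of-instability-parabolic}(\ref{item:instability-parabolic-same-centralizer}) is well taken; the paper sidesteps it exactly as you suggest, by citing the reference directly rather than re-proving uniqueness. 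Your sketched uniqueness argument (conjugating $\tau,\tau'$ into $T$ and then arguing a normalizer element must fix $\tau$) still has an unjustified step at the end and would need more work than the citation, so the citation is the right call.
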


\begin{proof}
First, (\ref{item:almost-associated-cocharacters-exist}) is proved as stated in the reference. For (\ref{item:almost-associated-cocharacters-dynamic-method}), note that $\tau$ is $P(\overline{k})$-conjugate to an associated cocharacter $\tau_0$ of $X$, and thus $P_G(\tau) = P_G(\tau_0)$ is the instability parabolic of $X$. \smallskip

For (\ref{item:almost-associated-cocharacters-agree}), note that each side of the displayed equation is $\Ad_P$-stable: $Y$ lies in the left side, for example, if and only if the limit $\lim_{t \to 0} t^{-N} \Ad(\tau_1(t))Y$ exists. If $g \in P(\overline{k})$, then we have
\[
\lim_{t \to 0} t^{-N} \Ad(\tau_1(t)g)Y = \lim_{t \to 0} \Ad(\tau_1(t)g\tau_1(t)^{-1}) (t^{-N}\Ad(\tau_1(t))Y).
\]
By definition of $P_G(\tau_1)$, the limit $\lim_{t \to 0} \tau_1(t)g\tau_1(t)^{-1}$ exists, so it follows that the above limit exists whenever $\lim_{t \to 0} t^{-N} \Ad(\tau_1(t))Y$ exists. This establishes the desired $\Ad_P$-stability, and the equality follows from the fact that $\tau_1$ and $\tau_2$ are $P(\overline{k})$-conjugate by hypothesis.
\end{proof}

\begin{remark}
In Definition~\ref{definition:associated-cocharacters}, it is immaterial that the Levi $L$ is required to be defined over $k$. Indeed, let $\tau: \bG_m \to G$ be a cocharacter such that $\tau_{\overline{k}}$ is associated to $X_{\overline{k}}$. If $T$ is a maximal $k$-torus of $(Z_G(X) \cap Z_G(\tau))/Z(G)$, then by Proposition~\ref{proposition:properties-of-instability-parabolic}(\ref{item:instability-parabolic-levi}) we see that if $L = Z_G(T)$ then $L$ is a Levi $k$-subgroup of $G$ satisfying the conditions of Definition~\ref{definition:associated-cocharacters}.
\end{remark}

\section{Pure and fiberwise ordinary sections}\label{section:constructibility-nonsense}

In this section we define purity and strong purity, and we aim to analyze various loci related to these. The main aim is to prove constructibility and openness of various subsets of $G$ and $\fg$. \smallskip

Throughout, $G$ is a reductive group scheme over a base scheme $S$. For simplicity we will assume that $S$ is quasi-compact and quasi-separated, although all statements below (suitably altered) remain true without this assumption. For definitions and general results on constructibility, see \cite[0\textsubscript{III}, \S 9; IV\textsubscript{1}, \S 1.8-9]{EGA}. We begin with the simplest of our constructibility results.

\begin{lemma}\label{lemma:semisimple-locus-is-open}
The subset
\[
G_{\rm{ss}} = \{g \in G: g \text{ is semisimple in } G(\overline{k(g)})\}
\]
is a fiberwise dense constructible subset of $G$. The analogous subset $\fg_{\rm{ss}}$ is a fiberwise dense constructible subset of $\fg$.
\end{lemma}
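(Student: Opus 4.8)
The plan is to reduce immediately to the case where $S = \Spec k$ for a field $k$ (indeed one may take $k$ algebraically closed), since constructibility is fppf-local and can be checked fiber by fiber via the general nonsense on constructible sets in \cite[IV\textsubscript{1}, \S 1.8-9]{EGA}: a subset of $G$ is constructible if and only if it is so on each fiber in a suitable uniform sense, and ``fiberwise dense'' is a fiberwise condition by definition. So the real content is: for $G$ a connected reductive group over an algebraically closed field $k$, the set $G_{\rm ss}$ of semisimple elements is a constructible dense subset of the variety $G$, and likewise $\fg_{\rm ss}$ is constructible and dense in $\fg$.

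For density over a field: the regular semisimple locus $G_{\rm rss}$ is open and dense in $G$ (its complement is the zero locus of the discriminant-type function coming from the adjoint action on $\fg$, a nonzero regular function since a maximal torus contains regular semisimple elements), and $G_{\rm rss} \subseteq G_{\rm ss}$; similarly $\fg_{\rm rss}$ is open dense in $\fg$ and contained in $\fg_{\rm ss}$. For constructibility over a field one can argue as follows: an element $g \in G(k)$ is semisimple if and only if it lies in a maximal torus, i.e. if and only if $g$ is in the image of the multiplication map $G \times T \to G$, $(h,t) \mapsto hth^{-1}$, for a fixed maximal torus $T$; this image is constructible by Chevalley's theorem. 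Dually, $\fg_{\rm ss}$ is the image of $G \times \ft \to \fg$, $(h,Y) \mapsto \Ad(h)Y$, again constructible by Chevalley. To globalize: after étale localization on $S$ one can arrange a maximal torus $T \subset G$ (using that reductive group schemes étale-locally contain maximal tori, and that constructibility descends along the surjective étale — hence fppf, finitely presented — covering by \cite[IV\textsubscript{1}, \S 1.8-9]{EGA}), and then $G_{\rm ss}$ is the image of the finitely presented $S$-morphism $G \times_S T \to G$, hence constructible by Chevalley's theorem for schemes; the Lie algebra case is identical with $T$ replaced by its Lie algebra $\ft$ (a subbundle of $\fg$).

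The main obstacle is purely bookkeeping: making sure the reduction to the field case is clean, i.e. that ``fiberwise dense constructible'' really does descend along the étale cover trivializing a maximal torus and is insensitive to the quasi-compactness reductions, and that the identification of $G_{\rm ss}$ with the image of the conjugation map is valid for group \emph{schemes} over a general base (not just reductive groups over a field), using that every semisimple element of $G(\overline{k(g)})$ lies in a maximal torus of $G_{\overline{k(g)}}$ and that all maximal tori are geometrically conjugate. No single step is deep; the care is in invoking the constructibility formalism of \cite{EGA} correctly. Alternatively, and perhaps more cleanly, one can simply note that being semisimple at a point $g \in G$ is equivalent to a condition on the fiber $G_{\overline{k(g)}}$ that is constructible in the sense of \cite[IV\textsubscript{3}, \S 9]{EGA} by spreading out from the field case, and density on fibers is the statement over a field recalled above.
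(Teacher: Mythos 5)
Your working argument (third paragraph) is essentially the paper's proof: pass to an \'etale cover of $S$ to arrange a maximal torus $T\subset G$, observe that over an algebraically closed field every semisimple element lies in a conjugate of $T$, so $G_{\rm ss}$ is the image of the $S$-morphism $G\times_S T\to G$ given by conjugation, and apply Chevalley's theorem; density on fibers comes from the (regular) semisimple locus being dense over a field. The Lie algebra case is handled identically with $\Lie T$ in place of $T$.

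One caution: the opening paragraph's claim that ``constructibility is fppf-local and can be checked fiber by fiber'' is not correct as literally stated, and it is not what your argument actually uses. Constructibility of a subset of $G$ over $S$ is genuinely \emph{not} a fiberwise condition (a subset of $\mathbf{A}^1_{\mathbf{Z}}$ can have finite, hence constructible, fibers over each prime without being constructible). What is true, and what you in fact use, is that constructibility descends along a surjective fppf (in particular \'etale) cover of the base, and that $G_{\rm ss}$ admits a uniform description as the image of a single finitely presented $S$-morphism. The closing ``alternative'' remark about spreading out from the field case suffers from the same vagueness: without pinning down the uniformity (which is exactly the conjugation-map description), mere fiberwise constructibility gives nothing. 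I would drop the first paragraph and the closing sentence and lead directly with the \'etale localization plus Chevalley argument.
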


\begin{proof}
Working etale-locally on $S$, we may assume that there exists a maximal torus $T$ of $G$. Over an algebraically closed field, any semisimple element is conjugate to an element of any given maximal torus. Thus if $\mu_T: G \times T \to G$ is defined by $\mu_T(g, t) = gtg^{-1}$ then we have $G_{\rm{ss}} = \mu_T(G \times T)$, which is constructible by Chevalley's theorem \cite[IV\textsubscript{1}, Thm.\ 1.8.4]{EGA}. Fiberwise density of $G_{\rm{ss}}$ reduces to a statement about fields, which is well-known. The proof for $\fg_{\rm{ss}}$ is completely analogous, using the fact that semisimple elements of $\fg$ are all $\Ad_G$-conjugate to an element of $\Lie T$ over an algebraically closed field.
\end{proof}

\subsection{Ordinary elements}

We begin in earnest our study of constructibility properties by introducing the notion of an ordinary element and showing that under favorable hypotheses, the locus of ordinary elements is open.

\begin{definition}\label{definition:ordinary}
Let $S = \Spec k$ be the spectrum of a field. An element $g \in G(k)$ is \textit{ordinary} provided that, in the Jordan decomposition $g_{\overline{k}} = tu$, the centralizer $Z_{G_{\overline{k}}}(t)$ is connected. For general $S$, let $G_{\rm{ord}}$ denote the set of $g \in G$ such that $g$ is ordinary in $G(k(g))$.
\end{definition}

Note that if $\sD(G)$ is simply connected, then every element of $G(k)$ is ordinary by \cite[Theorem 2.15]{SteinbergTorsion}. We will see that $G_{\rm{ord}}$ is always fiberwise dense and constructible in $G$, and it is open under some weak hypotheses on $G$. We begin by giving an example to show that $G_{\rm{ord}}$ is not always open.

\begin{example}
Consider the section
\[
g = \begin{pmatrix} 1 & 1 \\ 0 & -1 \end{pmatrix} \in \PGL_2(\bZ_2).
\]
The special fiber of $g$ is unipotent, while the generic fiber is semisimple but not ordinary; the centralizer has two connected components.
\end{example}

Ultimately, this example motivates the hypotheses of the upcoming Proposition~\ref{prop:strong-purity-generizes}. We begin, in any case, by establishing constructibility.

\begin{lemma}\label{lemma:ordinary-locus-is-locally-constructible}
$G_{\rm{ord}}$ is a fiberwise dense constructible subset of $G$.
\end{lemma}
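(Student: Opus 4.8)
The plan is to reduce to the case where $S = \Spec k$ for a field (indeed algebraically closed) by a standard spreading-out argument, and then to exhibit $G_{\mathrm{ord}}$ as the image of a constructible set under a morphism, so that Chevalley's theorem applies. More precisely, I would first note that constructibility is étale-local on $S$ and compatible with the standard limit formalism of \cite[IV$_3$, \S 8--9]{EGA}, so we may assume $S$ is noetherian and $G$ admits a maximal torus $T$; in fact, since the formation of $G_{\mathrm{ord}}$ commutes with base change on $S$ (the condition is fibral), it suffices to treat a single field, and then pass to $\ov{k}$. So the real content is: for $G$ reductive over an algebraically closed field $k$, the set of ordinary elements $G_{\mathrm{ord}}(k) \subset G(k)$ is constructible and dense. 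Density is easy: a regular semisimple element is ordinary (its centralizer is a maximal torus, hence connected), and the regular semisimple locus is dense by Lemma~\ref{lemma:semisimple-locus-is-open} and classical theory, so $G_{\mathrm{ord}}$ contains a dense open set of each fiber; combined with constructibility over general $S$ and the fibral description, this gives fiberwise density.

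For constructibility over a field, the key step is to parametrize ordinary elements via conjugation. Fix a maximal torus $T$. Every semisimple element is conjugate into $T$, so every element $g$ with Jordan decomposition $g = tu$ has $t$ conjugate to some $t' \in T(k)$ and then $u$ lies in $Z_G(t')$, a connected reductive group whose unipotent variety is irreducible. Thus consider the $T$-locus $T_{\mathrm{conn}} = \{t \in T : Z_G(t) \text{ is connected}\}$; this is constructible in $T$ because the function $t \mapsto \pi_0(Z_G(t))$ is constructible (one can stratify $T$ by which root-kernel conditions $\alpha(t) = 1$ hold, and on each stratum $Z_G(t)^0$ and $Z_G(t)$ are governed by the corresponding subsystems, using \cite[Lems.\ 2.14, 3.7]{SteinbergTorsion} as in the proof of Lemma~\ref{lemma:semisimple-replacement}). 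Then form the universal unipotent family over $T_{\mathrm{conn}}$: let $\mathcal{U} \to T_{\mathrm{conn}}$ be the scheme whose fiber over $t$ is the unipotent variety of $Z_G(t)$, a constructible subset of $T_{\mathrm{conn}} \times G$, and let $\mu \colon G \times \mathcal{U} \to G$ send $(g, (t,u)) \mapsto g t u g^{-1}$. By construction $G_{\mathrm{ord}} = \mu(G \times \mathcal{U})$, so Chevalley's theorem \cite[IV$_1$, Thm.\ 1.8.4]{EGA} gives that $G_{\mathrm{ord}}$ is constructible.

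I expect the main obstacle to be the verification that $T_{\mathrm{conn}}$ (equivalently, the function $t \mapsto |\pi_0(Z_G(t))|$) is constructible and that the universal unipotent family $\mathcal{U}$ is itself a constructible subset of $T \times G$ with the right fibers — i.e., making the ``parametrized'' versions of classical facts precise without smoothness hypotheses on the center. For the first point, one stratifies $T$ into finitely many locally closed pieces according to the subset $\Phi_t := \{\alpha \in \Phi : \alpha(t) = 1\}$ (finitely many possibilities), and on the stratum where $\Phi_t = \Psi$, the group $Z_G(t)$ is, by \cite[Lems.\ 2.14, 3.7]{SteinbergTorsion}, generated by $T$ and the root groups $U_\alpha$, $\alpha \in \Psi$, with component group read off from the relevant lattice quotient — hence constant on the stratum. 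For the second point, $\mathcal{U}$ can be described as $\{(t,u) \in T_{\mathrm{conn}} \times G : u \in Z_G(t),\ u \text{ unipotent}\}$, and the unipotent locus is constructible (cut out fibrally by $u$ lying in the image of the conjugation map from $U$, a Borel's unipotent radical, as in Lemma~\ref{lemma:semisimple-locus-is-open}); its fibers over $T_{\mathrm{conn}}$ are the unipotent varieties of the $Z_G(t)$ by definition. Everything else — descent of constructibility through the étale-local choice of $T$, the limit argument reducing to noetherian $S$, and the density claim — is routine.
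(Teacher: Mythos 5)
Your plan is the same in spirit as the paper's: exhibit $G_{\rm{ord}}$ as the image of a conjugation morphism from a constructible family and invoke Chevalley. The parametrizing family differs. You take $\mu\colon G\times\mathcal U\to G$ with $\mathcal U$ the universal unipotent locus over $T_{\rm{conn}}$, whereas the paper fixes a Borel $B=T\ltimes U$ and uses the Steinberg conjugation lemma \cite[Lem.\ 2.12]{SteinbergReg} to show $B\cap G_{\rm{ord}} = T_{\rm{ord}}\times U$, so that the family is just $G\times T_{\rm{ord}}\times U$. The paper's choice is cleaner (no need to describe the non-flat family of centralizers $Z_G(t)$ or their unipotent varieties), but your variant should also work since unipotence is intrinsic to $G$ and hence a closed condition on $T\times G$.

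Two genuine issues. First, the step ``since the formation of $G_{\rm{ord}}$ commutes with base change on $S$ (the condition is fibral), it suffices to treat a single field'' is not a valid reduction. Ordinariness is fibral, but constructibility of a \emph{subset of the $S$-scheme $G$} is not a fibral property and cannot be verified one fiber at a time (e.g.\ the set of closed points $\{p \in \Spec \mathbf F_p\}_p$ inside $\mathbf A^1_{\mathbf Z}$ is constructible on each fiber but not constructible). The fix is exactly what your last paragraph gestures at and what the paper does: after passing to an \'etale cover where $G$ is split, carry out the stratification of $T$ and the conjugation argument relative to $S$, so Chevalley is applied over $S$ rather than over a field. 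But as written, the reduction contradicts the later remark that the ``parametrized versions'' are the real content.

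Second, the density claim as stated is wrong: ``a regular semisimple element is ordinary (its centralizer is a maximal torus, hence connected)'' conflates regular ($Z_G(t)^0 = T$) with strongly regular ($Z_G(t)=T$). In a non--simply-connected group a regular semisimple element need not be ordinary; e.g.\ the image of $\diag(1,-1)$ in $\PGL_2$ over $\mathbf C$ is regular semisimple but its centralizer has two components (cf.\ the paper's own $\PGL_2(\bZ_2)$ example after Definition~\ref{definition:ordinary}). The paper instead cites density of the \emph{strongly} regular semisimple locus \cite[2.15]{SteinbergReg}, which is what you need here.
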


\begin{proof}
Since etale morphisms are open, we may pass to an etale cover of $S$ to assume that $G$ is $S$-split. Since $G_{\rm{ord}}$ is stable under $Z(G)$-translation, we may further assume that $G$ is semisimple. Fix a split maximal torus $T$ of $G$ and consider the subset
\[
T_{\rm{ord}} = \{t \in T: t \text{ is ordinary in } T(k(t))\}
\]
of $T$. We will show first that $T_{\rm{ord}}$ is a fiberwise dense constructible subset of $T$. \smallskip

By \cite[Lem.\ 2.14]{SteinbergTorsion}, if $t \in T(k)$ for some field $k$, then the connected centralizer $Z_{G_k}(t)^0$ is generated by $T$ and those root groups $U_\alpha$ such that $\alpha(t) = 1$. Moreover, the component group $Z_{G_k}(t)/Z_{G_k}(t)^0$ is generated by representatives $n_w$ of those elements $w$ of the Weyl group $W$ such that $wtw^{-1} = t$. For each closed subsystem $\Phi' \subset \Phi$, let $T_{\Phi'}$ be the set of those $t \in T$ such that $\alpha(t) = 1$ if and only if $\alpha \in \Phi'$; each subset $T_{\Phi'}$ is evidently a locally closed subset of $T$, and these subsets form a stratification of $T$. To show that $T_{\rm{ord}}$ is constructible, it suffices to show that each subset $T_{\Phi', \rm{ord}} = T_{\Phi'} \cap T_{\rm{ord}}$ is open in $T_{\Phi'}$. But this is clear, since the above description of $Z_G(t)/Z_G(t)^0$ shows that
\[
T_{\Phi', \rm{ord}} = \{t \in T_{\Phi'}: wtw^{-1} \neq t \text{ for all } w \in W - W(\Phi')\}.
\]

Now fix a Borel $S$-subgroup $B$ of $G$ containing $T$, and let $U$ be the unipotent radical of $B$. We aim to show that $B \cap G_{\rm{ord}}$ is a constructible subset of $B$. More precisely, we claim that
\[
B \cap G_{\rm{ord}} = T_{\rm{ord}} \times U.
\]
A simple lemma of Steinberg \cite[Lem.\ 2.12]{SteinbergReg} shows that if $b \in B(k)$ for some algebraically closed field $k$ and $b = tu$ for $t \in T(k)$, $u \in U(k)$, then $b$ is $B(k)$-conjugate to an element of the form $b' = tu'$, where $u' \in U(k)$ commutes with $t$. The expression $b' = tu'$ is necessarily the Jordan decomposition of $b'$, so that $b'$ is ordinary if and only if $t$ is ordinary. But the property of being ordinary is invariant under conjugacy, so we see that indeed $b$ is ordinary if and only if $t$ is ordinary, proving the claim. In particular, $B \cap G_{\rm{ord}}$ is constructible in $G$. \smallskip

Define now the morphism $\mu_B: G \times B \to G$ via
\[
\mu_B(g, b) = gbg^{-1}.
\]
For every algebraically closed field $k$ and every $g \in G(k)$, there is some $G(k)$-conjugate of $g$ contained in $B(k)$, so we see that
\[
G_{\rm{ord}} = \mu_B(G \times B_{\rm{ord}}).
\]
In particular, $G_{\rm{ord}}$ is constructible by Chevalley's theorem. Fiberwise density follows from the fact that $G_{\rm{ord}}$ contains, for each geometric point $\overline{s}$ of $S$, the locus of strongly regular semisimple elements of $G(k(\overline{s}))$, which is dense and open in $G(k(\overline{s}))$ by \cite[2.15]{SteinbergReg}.
\end{proof}

To show that $G_{\rm{ord}}$ is open, we must first establish a few lemmas. We begin with a simple corollary of the ordinary Borel covering theorem.

\begin{lemma}\label{lemma:dvr-covering-by-borels}
If $S = \Spec A$ is a DVR and $g \in G(A)$ then there is some generically finite local extension of DVRs $A \to A'$ and a Borel $A'$-subgroup $B$ of $G_{A'}$ containing $g_{A'}$.
\end{lemma}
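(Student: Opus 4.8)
The plan is to reduce to a statement about the existence of a single Borel over a suitable extension, by exploiting valuative criteria and the fact that the scheme of Borel subgroups is smooth and proper. Write $\cB = \cB_G$ for the $S$-scheme of Borel subgroups of $G$; this is smooth and proper over $S = \Spec A$, with nonempty fibers. The section $g \in G(A)$ acts on $\cB$ via conjugation, and we wish to find a fixed point of this action after a generically finite local extension of $A$. First I would recall the classical fact (the ``ordinary Borel covering theorem'' referenced just above) that over any field $k$, every element of $G(k)$ lies in some Borel subgroup defined over $k$; equivalently, the fixed-point scheme $\cB^g$ of the conjugation action has a $k$-point for $k$ algebraically closed, and in fact $\cB^g$ is nonempty over the given field when $k$ is perfect (or by a theorem of Grothendieck, over any field when $G$ is connected). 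Apply this to the generic fiber: $g_\eta$ lies in a Borel subgroup of $G_\eta$ defined over some finite separable extension $K$ of $k(\eta)$, hence defined over $k(\eta)$ itself after replacing if necessary—more carefully, $\cB^g_\eta$ has a point over some finite extension $K/k(\eta)$.

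Next I would pass to the normalization: let $A'$ be the integral closure of $A$ in $K$, which is again a DVR (or a finite product of DVRs, and we localize at one of them), and $A \to A'$ is a generically finite local extension of DVRs. Now $\cB^g_{A'}$ is a closed subscheme of the proper $A'$-scheme $\cB_{A'}$, it has a point over the fraction field $K = \mathrm{Frac}(A')$, and by the valuative criterion of properness this $K$-point extends uniquely to an $A'$-point of $\cB_{A'}$ whose generic fiber lands in $\cB^g$. It remains to check that this extended $A'$-point actually lands in the closed subscheme $\cB^g_{A'}$, not just in $\cB_{A'}$: but $\cB^g_{A'}$ is closed in $\cB_{A'}$, the $A'$-point factors through it at the generic fiber, and $\Spec A'$ is reduced and irreducible, so the point factors through the scheme-theoretic closure of its generic fiber, which is contained in $\cB^g_{A'}$. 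Hence we obtain a Borel $A'$-subgroup $B$ of $G_{A'}$ with $g_{A'} \in B(A')$ (after possibly a further residually-trivial base change is unnecessary here, since we only claimed containment of the section, which is exactly what the $A'$-point of $\cB^g$ gives).

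The main obstacle I anticipate is the very first step—ensuring that $\cB^g_\eta$ is nonempty over a finite \emph{extension} of $k(\eta)$ rather than only over the algebraic closure. Over an algebraically closed field this is the classical fact that every element lies in a Borel subgroup; to descend to a finite extension one invokes that $\cB^g_\eta$ is a nonempty scheme of finite type over $k(\eta)$, hence has a point over some finite extension. (One does not need $\cB^g_\eta$ to be smooth or to have a rational point over $k(\eta)$ itself; a finite extension is permitted by the statement, which only asks for a generically finite local extension $A \to A'$.) Everything else is a routine application of the valuative criterion of properness together with the observation that a map from a reduced irreducible scheme into an $S$-scheme that hits a closed subscheme generically must factor through that closed subscheme.
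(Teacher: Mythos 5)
Your proof is correct and takes essentially the same approach as the paper: find a Borel over a finite extension of the generic fiber's residue field, normalize to obtain a DVR $A'$, and use properness of the scheme of Borel subgroups to extend. The paper extends the Borel $B'$ to an $A'$-Borel $B$ and then observes $g_{A'}$ lands in $B$; you phrase this via the fixed-point scheme $\cB^g$ and a scheme-theoretic closure argument, which is equivalent (and could be slightly streamlined by noting $\cB^g_{A'}$ is itself proper, so the valuative criterion applies to it directly).
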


\begin{proof}
Let $K$ be the fraction field of $A$. By the usual theorem on coverings by Borel subgroups, there is a finite extension $K'$ of $K$ and a Borel subgroup $B'$ of $G_{K'}$ containing $g_{K'}$. Let $A'$ be the localization at a maximal ideal of the normalization of $A$ in $K'$, so that $A'$ is a DVR by the Krull-Akizuki theorem \cite[Thm.\ 11.7]{Matsumura}. Since the scheme of Borel subgroups is proper \cite[Cor.\ 5.2.8]{Conrad}, there is some Borel $A'$-subgroup $B$ of $G_{A'}$ such that $B_{K'} = B'$. Thus $B$ contains $g_{A'}$, as desired.
\end{proof}

\begin{lemma}\label{lemma:Weyl-elements-in-identity-component}
Suppose $S = \Spec k$ for an algebraically closed field $k$ and $G$ is connected and semisimple. Let $T$ be a maximal torus of $G$, let $W$ be the Weyl group of the pair $(G, T)$, and let $t \in T(k)$. Let $\pi: \widetilde{G} \to G$ be the universal cover, and let $\widetilde{t} \in \widetilde{G}(k)$ lift $t$. The induced morphism $Z_{\widetilde{G}}(\widetilde{t}) \to Z_G(t)^0$ is a central isogeny of connected reductive groups. Moreover, if $w \in W$ centralizes $t$, then $w$ has a representative in $Z_G(t)^0(k)$ if and only if $w$ centralizes $\widetilde{t}$.
\end{lemma}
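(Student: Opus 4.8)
The plan is to reduce both assertions to the fact that the central isogeny $\pi$ does not change the root datum, so that $\widetilde{G}$ and $G$ have a common Weyl group $W$ relative to $\widetilde{T} := \pi^{-1}(T)$ and $T$, and both of the conditions in the last sentence can be read off inside $W$.

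First I would prove the first assertion. Let $\mu = \ker\pi$, a finite central subgroup scheme of $\widetilde{G}$. Since $\mu \subseteq Z(\widetilde{G})$ lies in every maximal torus, $\widetilde{T} := \pi^{-1}(T)$ is a maximal torus of $\widetilde{G}$, and $\widetilde{t} \in \widetilde{T}(k)$ is semisimple. Because $\widetilde{G}$ is semisimple and simply connected, Steinberg's connectedness theorem shows $Z_{\widetilde{G}}(\widetilde{t})$ is connected, and being the centralizer of a semisimple element it is a connected reductive group; likewise $Z_G(t)^0$ is connected reductive. As $\mu$ is central, $\pi$ carries $Z_{\widetilde{G}}(\widetilde{t})$ into $Z_G(t)$; its image is connected, so it lands in $Z_G(t)^0$, and its kernel is $Z_{\widetilde{G}}(\widetilde{t}) \cap \mu = \mu$. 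For surjectivity onto $Z_G(t)^0$, note that $\pi$ identifies the roots $\widetilde{\alpha}$ of $(\widetilde{G},\widetilde{T})$ with the roots $\alpha$ of $(G,T)$ via $\widetilde{\alpha} = \alpha \circ \pi|_{\widetilde{T}}$, so $\widetilde{\alpha}(\widetilde{t}) = \alpha(t)$ and $\pi(U_{\widetilde{\alpha}}) = U_\alpha$. By \cite[Lem.\ 2.14]{SteinbergTorsion}, $Z_G(t)^0$ is generated by $T = \pi(\widetilde{T})$ and the root groups $U_\alpha = \pi(U_{\widetilde{\alpha}})$ with $\alpha(t) = 1$, each of which lies in $\pi(Z_{\widetilde{G}}(\widetilde{t}))$ since $\widetilde{\alpha}(\widetilde{t}) = 1$. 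Hence $Z_{\widetilde{G}}(\widetilde{t}) \to Z_G(t)^0$ is a central isogeny of connected reductive groups. (Alternatively, surjectivity follows by comparing $\dim Z_{\widetilde{G}}(\widetilde{t})$ with $\dim Z_G(t)$ via the $\Ad(t)$-weight decompositions of the two Lie algebras.)

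For the second assertion I would use that this central isogeny induces an isomorphism on Weyl groups. It carries $\widetilde{T}$ onto $T$, so it identifies $W(Z_{\widetilde{G}}(\widetilde{t}), \widetilde{T})$ with $W(Z_G(t)^0, T)$, compatibly with the inclusions of both groups into $W = W(\widetilde{G},\widetilde{T}) = W(G,T)$, the latter identification being induced by $\pi$ and intertwining the $W$-actions on $\widetilde{T}$ and $T$. Now let $w \in W$ centralize $t$. Since $Z_G(t)^0$ is connected reductive over the algebraically closed field $k$ with maximal torus $T$, the element $w$ has a representative in $Z_G(t)^0(k)$ if and only if $w \in W(Z_G(t)^0, T)$. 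On the other hand $\widetilde{T}$ centralizes $\widetilde{t}$, so whether $w$ centralizes $\widetilde{t}$ is independent of the choice of representative $\widetilde{n} \in N_{\widetilde{G}}(\widetilde{T})(k)$ of $w$, and it holds if and only if some (equivalently, any) such $\widetilde{n}$ lies in $Z_{\widetilde{G}}(\widetilde{t})$, i.e.\ if and only if $w \in W(Z_{\widetilde{G}}(\widetilde{t}), \widetilde{T})$ (again using connectedness and reductivity of $Z_{\widetilde{G}}(\widetilde{t})$). By the previous identification these two subgroups of $W$ coincide, so the two conditions are equivalent.

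The only nontrivial input is the connectedness of $Z_{\widetilde{G}}(\widetilde{t})$ and the surjectivity of $Z_{\widetilde{G}}(\widetilde{t}) \to Z_G(t)^0$; after that everything is a formal manipulation of Weyl groups under a central isogeny. Since $k$ is algebraically closed and centralizers of semisimple elements are smooth, not having assumed $Z(G)$ smooth causes no difficulty here.
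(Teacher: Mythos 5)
Your proof is correct and takes essentially the same approach as the paper. The paper is terser---proving surjectivity of $Z_{\widetilde{G}}(\widetilde{t}) \to Z_G(t)^0$ via compatible open cells rather than the explicit generators of \cite[Lem.\ 2.14]{SteinbergTorsion}, and handling the second assertion by directly lifting a representative across the isogeny rather than by first identifying the subgroups $W(Z_{\widetilde{G}}(\widetilde{t}), \widetilde{T})$ and $W(Z_G(t)^0, T)$ of $W$---but these are the same arguments phrased differently.
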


\begin{proof}
By \cite[Thm.\ 2.15]{SteinbergTorsion}, $Z_{\widetilde{G}}(\widetilde{t})$ is connected, and the morphism $Z_{\widetilde{G}}(\widetilde{t}) \to Z_G(t)^0$ is clearly finite with central kernel, so it suffices to show that it is dominant. This can be seen easily by considering compatible open cells for $T$ (and $\pi^{-1}(T)$) in $G$ and $\widetilde{G}$. This proves the first claim. \smallskip

If $w \in W$ centralizes $\widetilde{t}$, then there is a representative $n_w$ of $w$ lying in the image of $Z_{\widetilde{G}}(\widetilde{t}) \to Z_G(t)$, so $n_w \in Z_G(t)^0(k)$. Conversely, suppose that $w$ has a representative $n_w \in Z_G(t)^0(k)$. Any lift of $n_w$ to $Z_{\widetilde{G}}(\widetilde{t})(k)$ is also a representative for $w$ and hence $w$ centralizes $\widetilde{t}$, as desired.
\end{proof}

\begin{prop}\label{prop:strong-purity-generizes}
Let $S = \Spec A$ for a DVR $A$, suppose either that $A$ is equicharacteristic or that $|\pi_1(\sD(G))|$ is invertible in $A$, and let $g \in G(A)$ be a section. If $g_s$ is ordinary, then also $g_\eta$ is ordinary. \smallskip

More generally, suppose either that locally on $S$ there exists a prime number $p$ which is nilpotent in $\Gamma(S, \sO_S)$ or that $|\pi_1(\sD(G))|$ is invertible on $S$. Then $G_{\rm{ord}}$ is open.
\end{prop}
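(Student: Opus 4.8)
The plan is to deduce both statements from one DVR-valuative criterion for openness of a constructible set. Since $G_{\mathrm{ord}}$ is constructible by Lemma~\ref{lemma:ordinary-locus-is-locally-constructible}, to prove it is open it suffices (by the standard criterion, e.g. \cite[IV\textsubscript{3}, Thm.\ 1.10.1]{EGA}, using quasi-compactness of $S$) to show it is stable under generization. Generization can be checked along maps $\Spec A \to G$ with $A$ a DVR, i.e.\ one must show: if $g \in G(A)$ and the closed point of $\Spec A$ maps into $G_{\mathrm{ord}}$, then so does the generic point. This is exactly the content of the first paragraph of the proposition once one arranges that the hypothesis ``locally $p$ is nilpotent on $S$ or $|\pi_1(\sD(G))|$ is invertible on $S$'' descends to the DVR: pulling back along $\Spec A \to S$, either $|\pi_1(\sD(G))|$ becomes invertible in $A$, or some prime $p$ becomes nilpotent in $A$; but $A$ is a domain, so $p$ nilpotent forces $p = 0$ in $A$, i.e.\ $A$ is equicharacteristic $p$. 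So the two cases of the first paragraph are precisely what one needs.

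For the first (DVR) statement, I would localize: we may assume $A$ is complete with algebraically closed residue field, and after a generically finite local extension $A \to A'$ (which is harmless since ordinariness of $g_\eta$ can be checked after the finite extension $K'/K$, ordinariness being a geometric and conjugacy-invariant notion) we may use Lemma~\ref{lemma:dvr-covering-by-borels} to assume $g \in B(A)$ for a Borel $A$-subgroup $B = T \ltimes U$. By the argument in the proof of Lemma~\ref{lemma:ordinary-locus-is-locally-constructible} (Steinberg's lemma \cite[Lem.\ 2.12]{SteinbergReg} applied fiberwise), after $B(A)$-conjugation we may write $g = tu$ with $t \in T(A)$, $u \in U(A)$, and $t$ commuting with $u$; this is fiberwise the Jordan decomposition, so $g_v$ is ordinary iff $t_v$ is ordinary, for each point $v$. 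Thus we are reduced to $g = t \in T(A)$, and we must show: $t_s$ ordinary $\implies$ $t_\eta$ ordinary. Using the description of the component group from \cite[Lem.\ 2.14]{SteinbergTorsion} and Lemma~\ref{lemma:Weyl-elements-in-identity-component}, ordinariness of $t_v$ is the statement that every $w \in W$ with $w t_v w^{-1} = t_v$ actually centralizes a lift $\widetilde{t}_v$ to the universal cover $\widetilde G$ (in the equicharacteristic or $|\pi_1|$-invertible case the isogeny $\widetilde G \to G$ and the passage to $\widetilde T$ behave well). Now the key point: if $w$ fixes $t_\eta$ then by specialization $w$ fixes $t_s$, hence $w$ centralizes $\widetilde{t}_s$; one must lift this back to $\widetilde{t}_\eta$. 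Here is where the hypothesis enters — the kernel $\mu = \pi_1(\sD(G))$ is, by assumption, either étale over $A$ (the invertible case) or, in the equicharacteristic case, of multiplicative type with $\widetilde T \to T$ still a nice cover over $A$; in either case $w\widetilde{t}\,w^{-1}\widetilde{t}^{-1}$ defines a section of $\mu$ over $A$ which vanishes at $s$, hence vanishes (a section of a finite flat separated group scheme over a DVR that is trivial at the closed point is trivial). Therefore $w$ centralizes $\widetilde{t}_\eta$, so $t_\eta$ is ordinary.

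The main obstacle — and the step I would spend the most care on — is the last one: controlling the ``obstruction class'' $w\widetilde{t}\,w^{-1}\widetilde{t}^{-1} \in \mu(A)$ and arguing it is trivial from triviality at the special point. In the invertible case $\mu$ is finite étale over $A$ and a section trivial at $s$ is trivial (the identity component is open and closed), so this is immediate. In the equicharacteristic case $\mu$ is of multiplicative type; here one replaces $\mu$ by a product of $\mu_n$'s and $\mathbb{Z}/p$'s over $\overline{\mathbb{F}}_p$-algebras, notes $\mu_n(A) \hookrightarrow \mu_n(k(s))$ when $n$ is invertible and handles the $p$-part by the fact that $A$ is equicharacteristic so $t, \widetilde t$ can be chosen compatibly in the split situation — more conceptually, in the split equicharacteristic case $T$ and $\widetilde T$ are split $A$-tori and $\mu(A) = \mu(k)$ for $k$ the residue field, so the obstruction over $A$ is literally the obstruction over $k(s)$. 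One must also double-check the reduction steps preserve the hypotheses: passing to $A'$ keeps $A'$ equicharacteristic (resp.\ $|\pi_1|$ invertible), and passing from $G$ to $\sD(G)$ (legitimate since $G_{\mathrm{ord}}$ is $Z(G)$-translation invariant) does not change $\pi_1(\sD(G))$. Once the DVR statement is in hand, the global openness follows formally from constructibility plus stability under generization, as indicated above.
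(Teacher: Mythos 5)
Your proposal takes essentially the same route as the paper's proof: reduce (after local extensions of $A$) to $G$ semisimple and $g = tu \in B(A)$ for a Borel $B = T\ltimes U$, use Steinberg's lemma fiberwise to reduce to $t\in T(A)$, lift to the universal cover, and conclude by showing that the obstruction $w\widetilde{t}w^{-1}\widetilde{t}^{-1} \in (\ker\pi)(A)$ vanishes because $(\ker\pi)(A)\to(\ker\pi)(k(s))$ is injective under either hypothesis, with openness then following from constructibility plus stability under generization. Two small slips, neither fatal: the blanket parenthetical that ``a section of a finite flat separated group scheme over a DVR that is trivial at the closed point is trivial'' is false in general (take $\mu_p$ over a mixed-characteristic DVR containing a primitive $p$-th root of unity) --- your subsequent case-by-case analysis is what actually carries the argument --- and Steinberg's \cite[Lem.\ 2.12]{SteinbergReg} is applied in the paper only fiberwise, not to produce a $B(A)$-conjugation over the whole DVR, which is all the reduction to $t$ requires.
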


\begin{proof}
After passing to a local extension of $A$, we may write $g = zh$ for $z \in Z(G)(A)$ and $h \in \sD(G)(A)$. Replacing $g$ by $h$ and $G$ by $\sD(G)$, we may thereby assume that $G$ is semisimple. By Lemma~\ref{lemma:dvr-covering-by-borels}, after further extending $A$ we may assume that there is a Borel $A$-subgroup $B$ of $G$ containing $g$. Further extending $A$, we may assume that $B$ contains a split maximal $A$-torus $T$. Let $g = tu$ for $t \in T(A)$ and $u \in U(A)$. For both points $x \in \Spec A$ the semisimple part of $g_{\overline{x}}$ is conjugate to $t_{\overline{x}}$ by \cite[Lem.\ 2.12]{SteinbergReg}, so it suffices to show that $t_\eta$ has connected centralizer (given that $t_s$ has connected centralizer). \smallskip

Let $W$ be the Weyl group of the pair $(G, T)$. By \cite[Lem.\ 2.14]{SteinbergTorsion}, it suffices to show that if $w \in W$ centralizes $t_\eta$, then $w$ has a representative lying in $Z_{G_\eta}(t_\eta)(k(\overline{\eta}))$. Of course, if $w$ centralizes $t_\eta$ then it also centralizes $t$, and hence $t_s$. Let now $\pi: \widetilde{G} \to G$ be the universal cover, and let $\widetilde{t} \in \widetilde{G}(A)$ be a lift of $t$ (as exists after possibly passing to a further local extension of $A$). Evidently $w\widetilde{t}w^{-1} \cdot \widetilde{t}^{-1} \in (\ker \pi)(A)$. Since $Z_{G_s}(t_s)$ is connected, Lemma~\ref{lemma:Weyl-elements-in-identity-component} shows that $w$ centralizes $\widetilde{t}_s$, so the special fiber of $w\widetilde{t}w^{-1} \cdot \widetilde{t}^{-1}$ is trivial. But now the hypotheses imply that the map $(\ker \pi)(A) \to (\ker \pi)(k(s))$ is injective. Indeed, if $A$ is equicharacteristic then this is true of any $A$-group scheme of multiplicative type, while if $|\pi_1(\sD(G))|$ is invertible in $A$ then $\ker \pi$ is etale. So another application of Lemma~\ref{lemma:Weyl-elements-in-identity-component} shows that $w$ has a representative lying in $Z_{G_\eta}(t_\eta)$, as desired. \smallskip

For the final claim of the lemma, we can work locally and spread out to assume that $S$ is noetherian. In this case the result follows from the first claim of this proposition and Lemma~\ref{lemma:ordinary-locus-is-locally-constructible}, using that any constructible set closed under generization is open.
\end{proof}

\subsection{(Strongly) pure sections}\label{subsection:pure}

Next we define the notion of purity for sections of $G$. The pure sections are the only sections which have a hope of having flat centralizers. In fact, we will only establish flatness results for the centralizers of strongly pure sections. 

\begin{definition}\label{definition:pure-and-strongly-pure}
A section $g \in G(S)$ is \textit{pure} if the function $s \mapsto \dim Z_{G_s}(g_s)$ is locally constant on $S$. We say that $g$ is \textit{strongly pure} if it is pure and fiberwise ordinary. Purity is defined for sections of $\fg$ in a completely similar way.
\end{definition}

\begin{lemma}\label{lemma:locus-of-d-dimensional-centralizer}
Fix an integer $d$ and consider the subsets
\begin{align*}
G_d &= \{g \in G: \dim Z_{G_{k(g)}}(g) = d\}, \\
G_{d\rm{-ord}} &= \{g \in G: \dim Z_{G_{k(g)}}(g) = d \text{ and } g \text{ is ordinary}\}.
\end{align*}
These are both constructible subsets of $G$, and $G_d$ is locally closed. If either locally on $S$ there exists a prime number $p$ which is nilpotent in $\Gamma(S, \sO_S)$ or $|\pi_1(\sD(G))|$ is invertible on $S$, then $G_{d\rm{-ord}}$ is open in $G_d$.

Similarly, define
\[
\fg_d = \{X \in \fg: \dim Z_{G_{k(X)}}(X) = d\}.
\]
This is a locally closed constructible subset of $G$.
\end{lemma}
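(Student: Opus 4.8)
The plan is to realize $g \mapsto \dim Z_{G_{k(g)}}(g)$ as the fibre-dimension function of a single finitely presented morphism, and then to invoke the standard semicontinuity and constructibility formalism, together with Proposition~\ref{prop:strong-purity-generizes} for the openness assertion. Concretely, base-changing $G$ along its structure morphism $\pi \colon G \to S$ produces a reductive group scheme $G_G := G \times_S G$ over $G$ (via the second projection), and the relative diagonal $\Delta_{G/S} \colon G \to G_G$ is a closed section of it, $G$ being affine hence separated over $S$. Put $\mathcal Z := Z_{G_G}(\Delta_{G/S})$, a closed subgroup scheme of $G_G$ that is affine and of finite presentation over $G$ and whose formation commutes with base change (the centralizer of a section being a fibre-product construction). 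One checks that the fibre of $f \colon \mathcal Z \to G$ over $g \in G$ (lying over $s \in S$) is $Z_{G_s \otimes_{k(s)} k(g)}(g) = Z_{G_{k(g)}}(g)$, a group scheme of finite type over a field and hence equidimensional, so that $\dim Z_{G_{k(g)}}(g) = \dim_{e(g)}(f^{-1}(g))$ for $e$ the unit section of $\mathcal Z$. Composing $e \colon G \to \mathcal Z$ with the upper semicontinuous function $z \mapsto \dim_z(f^{-1}(f(z)))$ on $\mathcal Z$ (\cite[IV\textsubscript{3}, 13.1.3]{EGA}, after the harmless reduction to a Noetherian base), the function $g \mapsto \dim Z_{G_{k(g)}}(g)$ is upper semicontinuous on $G$; hence $\{g : \dim Z_{G_{k(g)}}(g) \geq d\}$ is closed, $G_d$ is the difference of two such sets and so is locally closed, and it is constructible because over a Noetherian base a locally closed subset is constructible and this property ascends along the base change back to $S$ \cite[IV\textsubscript{3}, \S 8]{EGA}. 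The Lie-algebra statement is proved identically, replacing $G \times_S G \to G$ by $G \times_S \fg \to \fg$ and $\Delta_{G/S}$ by the tautological section of $\fg \times_S \fg \to \fg$: its centralizer is a finitely presented $\fg$-group scheme with fibre $Z_{G_{k(X)}}(X)$ over $X \in \fg$, whence $\fg_d$ is locally closed and constructible in $\fg$.

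It remains to handle $G_{d\rm{-ord}} = G_d \cap G_{\rm{ord}}$. Both $G_{\rm{ord}}$ (by Lemma~\ref{lemma:ordinary-locus-is-locally-constructible}) and $G_d$ (by the previous paragraph) are constructible, so $G_{d\rm{-ord}}$ is constructible. Under the stated hypotheses, Proposition~\ref{prop:strong-purity-generizes} says that $G_{\rm{ord}}$ is open in $G$, so $G_{d\rm{-ord}} = G_d \cap G_{\rm{ord}}$ is open in $G_d$ for the subspace topology, with no further argument required.

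The only point needing care is the bookkeeping in the first paragraph: identifying the fibres of $\mathcal Z \to G$ with the groups $Z_{G_{k(g)}}(g)$ (the base-change compatibility of centralizers of sections, applied over $k(g)$) and passing from the fibre-dimension function on the total space $\mathcal Z$ to a function on the base $G$, for which one pulls back along the unit section and uses equidimensionality of group schemes of finite type over a field. Everything else — the Noetherian approximation, constructibility from local closedness, and the openness of $G_{d\rm{-ord}}$ in $G_d$ via Proposition~\ref{prop:strong-purity-generizes} — is routine.
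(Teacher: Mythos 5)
Your proof is correct and takes essentially the same route as the paper: both apply upper semicontinuity of fiber dimension (EGA IV\textsubscript{3}, 13.1.3) to the universal centralizer $Z_{G\times_S G}(\Delta_{G/S}) \to G$ (resp.\ its Lie-algebra analogue) to get that $G_d$ and $\fg_d$ are locally closed, then combine Lemma~\ref{lemma:ordinary-locus-is-locally-constructible} and Proposition~\ref{prop:strong-purity-generizes} for the assertions about $G_{d\rm{-ord}}$. You simply spell out the construction of the universal centralizer and the equidimensionality/unit-section bookkeeping that the paper leaves implicit.
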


\begin{proof}
The claims about $G_d$ and $\fg_d$ follow immediately from upper semicontinuity of fiber dimension \cite[IV\textsubscript{3}, Thm.\ 13.1.3]{EGA} applied to the universal centralizers, and the first claim about $G_{d\rm{-ord}}$ follows from this and Lemma~\ref{lemma:ordinary-locus-is-locally-constructible}. The remaining point follows from Proposition~\ref{prop:strong-purity-generizes}.
\end{proof}

For fixed integers $d$ and $e$, we let $G_{d, e}$ denote the set of $g \in G$ such that $g \in G_d$ and in the Jordan decomposition $g_{\overline{k(g)}} = tu$ we have $\dim Z_{G_{\overline{k(g)}}}(t) = e$. Moreover, let $G_{d, e\rm{-ord}} = G_{d, e} \cap G_{\rm{ord}}$. We define $\fg_{d, e}$ similarly. The following lemma will be improved slightly in Corollary~\ref{cor:openness-of-d-e-locus}.

\begin{lemma}\label{lemma:locus-of-d-e-elements}
Each set $G_{d, e}$, $G_{d, e\rm{-ord}}$, and $\fg_{d, e}$ is constructible. If either locally on $S$ there exists a prime number which is nilpotent in $\Gamma(S, \sO_S)$ or $|\pi_1(\sD(G))|$ is invertible on $S$, then $G_{d,e\rm{-ord}}$ is open in $G_{d,e}$.
\end{lemma}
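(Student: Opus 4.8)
<br>

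The plan is to reduce the constructibility of $G_{d,e}$ (and $\fg_{d,e}$) to the constructibility statements already established for the loci $G_d$, $G_{\rm{ord}}$, and $G_{d'\rm{-ord}}$, by slicing $G_{d,e}$ according to the centralizer dimension of the semisimple part. Concretely, I would first observe that $G_{d,e}$ is the subset of $G_d$ consisting of those $g$ whose semisimple part $t$ in the Jordan decomposition $g_{\overline{k(g)}} = tu$ satisfies $\dim Z_{G_{\overline{k(g)}}}(t) = e$; thus it suffices to show that the assignment $g \mapsto \dim Z_{G_{\overline{k(g)}}}(t_g)$ is a constructible function on $G$ (i.e.\ has constructible level sets). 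The main tool is the usual Borel-covering trick already exploited in the proof of Lemma~\ref{lemma:ordinary-locus-is-locally-constructible}: pass to an etale cover so that $G$ is split with split maximal torus $T$ and Borel $B = T \ltimes U$, and use that for an algebraically closed field $k$ and $b = tu \in B(k)$ with $t \in T(k)$, $u \in U(k)$, the element $b$ is $B(k)$-conjugate to an element whose Jordan decomposition has semisimple part conjugate to $t$ (Steinberg's lemma \cite[Lem.\ 2.12]{SteinbergReg}). So on $B$, the invariant $\dim Z_{G_{\overline{k(g)}}}(t_g)$ equals $\dim Z_{G_{\overline{k(t)}}}(t)$, a quantity pulled back from $T$ along the projection $B \to T$.

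The next step is to verify that $t \mapsto \dim Z_{G_{\overline{k(t)}}}(t)$ is constructible on $T$. For this I stratify $T$ by the locally closed subsets $T_{\Phi'}$ defined in Lemma~\ref{lemma:ordinary-locus-is-locally-constructible} — the locus where $\alpha(t) = 1$ exactly for $\alpha \in \Phi'$ — and invoke \cite[Lem.\ 2.14]{SteinbergTorsion}: on $T_{\Phi'}$ the connected centralizer $Z_{G_{\overline{k(t)}}}(t)^0$ is generated by $T$ and the root groups $U_\alpha$, $\alpha \in \Phi'$, so $\dim Z_{G_{\overline{k(t)}}}(t) = \dim T + |\Phi'|$ is \emph{constant} on each stratum. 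Hence $t \mapsto \dim Z_{G_{\overline{k(t)}}}(t)$ is constructible on $T$, its level sets being unions of strata $T_{\Phi'}$; pulling back along $B \to T$ and using the description of the previous paragraph, $B \cap G_{d,e}$ is the intersection of $B \cap G_d$ with a constructible set, hence constructible in $B$. Then, exactly as in Lemma~\ref{lemma:ordinary-locus-is-locally-constructible}, we conclude $G_{d,e} = \mu_B(G \times (B \cap G_{d,e}))$ is constructible by Chevalley's theorem, since every geometric point of $G$ has a $G$-conjugate in $B$. Intersecting with the constructible set $G_{\rm{ord}}$ (Lemma~\ref{lemma:ordinary-locus-is-locally-constructible}) gives that $G_{d,e\rm{-ord}}$ is constructible. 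The argument for $\fg_{d,e}$ is completely parallel, replacing $B$ by its Lie algebra, using that over an algebraically closed field a section of $\Lie B$ has Jordan decomposition with semisimple part in $\Lie T$ up to $B$-conjugacy, and using the analog of \cite[Lem.\ 2.14]{SteinbergTorsion} (that is, \cite[Lem.\ 3.7]{SteinbergTorsion}) giving that the centralizer dimension of a semisimple element of $\Lie T$ depends only on which $\mathrm{d}\alpha$ vanish on it, hence is constant on the corresponding strata of $\Lie T$.

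For the openness claim, I reduce as usual to the noetherian case by spreading out, so that constructibility plus stability under generization implies openness. Since $G_{d,e\rm{-ord}} = G_{d,e} \cap G_{\rm{ord}}$ is constructible, it remains to check that within $G_{d,e}$ the locus $G_{d,e\rm{-ord}}$ is stable under generization. By the standard reduction to a DVR $A$ it suffices to prove: if $g \in G(A)$ has $g_s, g_\eta \in G_{d,e}$ and $g_s$ is ordinary, then $g_\eta$ is ordinary. But this is precisely the content of Proposition~\ref{prop:strong-purity-generizes} (whose hypotheses are exactly the ones assumed here): purity of the semisimple part is automatic from $g_s, g_\eta \in G_{d,e}$, and Proposition~\ref{prop:strong-purity-generizes} shows ordinariness generizes. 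The only mild subtlety — and the step I expect to require the most care — is bookkeeping the equality $\dim Z_{G_{\overline{k(g)}}}(t_g) = \dim Z_{G_{\overline{k(t)}}}(t)$ under the Borel reduction in a way that is genuinely functorial on the constructible stratification rather than merely fiberwise; but since both sides only involve the fiber over the point $g$ (resp.\ its image $t$ in $T$) and the stratification of $T$ is a stratification of schemes, this causes no real trouble.
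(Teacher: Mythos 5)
Your approach to the constructibility of $G_{d,e}$ is correct and is actually noticeably cleaner than the paper's. Both proofs start from the same ingredients — pass to a split $(T \subset B)$ etale-locally, stratify $T$ by the locally closed strata $T_{\Phi'}$, and use Steinberg's lemma \cite[Lem.\ 2.12]{SteinbergReg} to identify the semisimple part of $b \in B(\overline{k})$ with its $T$-component up to $B(\overline{k})$-conjugacy. After that they diverge. You observe that the function $b \mapsto \dim Z_{G}(t_b)$ on $B$ is the pullback along the projection $\pi\colon B \to T$ of the function $t \mapsto \dim T + |\Phi'(t)|$, which is constant on each stratum $T_{\Phi'}$; hence $B \cap G_{d,e} = (B \cap G_d) \cap \pi^{-1}(\text{level-}e\text{ set})$ is constructible, and $G_{d,e} = \mu_B\bigl(G \times (B \cap G_{d,e})\bigr)$ is constructible by Chevalley. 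The paper instead performs a more elaborate reduction to an excellent Dedekind base, passes to a ramified extension to get rational points of the strata, builds the reductive subgroup schemes $Z_{\Phi'}$, and finds locally closed loci $U_{\Phi',d}$ inside the unipotent radical of the Borel of $Z_{\Phi'}$ before applying Chevalley. Your route avoids all of that machinery: the key simplification is to never leave the level-set picture and to exploit that $\dim Z_G(tu) = \dim Z_{Z_G(t)}(u)$ is irrelevant — you only need conjugation-invariance of the two dimension functions and Steinberg's lemma. What the paper's heavier construction arguably buys is an explicit description of $G_{d,e}$ in terms of the Richardson-type strata $T_{\Phi'} \times U_{\Phi',d}$, but that extra information is not needed for the lemma itself.

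The openness argument you give is also correct, and coincides in substance with the paper's: once $G_{d,e}$ and $G_{\rm{ord}}$ are constructible, Proposition~\ref{prop:strong-purity-generizes} (or its DVR form, to which you reduce) gives that ordinariness is stable under generization, hence $G_{d,e\rm{-ord}} = G_{d,e} \cap G_{\rm{ord}}$ is open in $G_{d,e}$. You could shorten this final step by directly citing the second paragraph of Proposition~\ref{prop:strong-purity-generizes}, which already asserts openness of $G_{\rm{ord}}$ in $G$ under the stated hypotheses; the remark about ``purity of the semisimple part being automatic'' is not needed. For the Lie-algebra case $\fg_{d,e}$, your sketch is right modulo the (standard, but worth an explicit reference or short verification) additive analogue of Steinberg's lemma: an element $X + N$ with $X \in \Lie T$ and $N \in \Lie U$ is $U(\overline{k})$-conjugate to $X + N'$ with $[X, N'] = 0$, so the semisimple part of $X + N$ is conjugate to $X$.
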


\begin{proof}
We deal only with the group case, the Lie algebra case being similar. By Lemma~\ref{lemma:ordinary-locus-is-locally-constructible} and Proposition~\ref{prop:strong-purity-generizes}, it suffices to show that $G_{d, e}$ is constructible. Working etale-locally on $S$, we may and do assume that there exists a split maximal torus $T$ in $G$ contained in a Borel subgroup $B$, and by the Existence and Isomorphism Theorems \cite[Thms.\ 6.1.16 and 6.1.17]{Conrad} we may assume that $S$ is the spectrum of an excellent Dedekind domain or a field. \smallskip

For each closed subsystem $\Phi' \subset \Phi$, we define $T_{\Phi'}$ as in the proof of Lemma~\ref{lemma:ordinary-locus-is-locally-constructible}, so that $T_{\Phi'}$ is a locally closed subset of $T$; we will regard it as a subscheme with the reduced structure. By passing to a ramified extension of $S$, we may assume that whenever $(T_{\Phi'})_\eta \neq \emptyset$ there is some rational point $t_{\Phi'} \in T_{\Phi'}(K)$, where $K$ is the function field of $S$. Let $U$ be an open subscheme of $S$ such that there exists $\widetilde{t}_{\Phi'} \in T_{\Phi'}(U)$ extending $t_{\Phi'}$ whenever $(T_{\Phi'})_\eta \neq \emptyset$. By shrinking $U$, we will assume moreover that for every $s \in U$ and closed subsystem $\Phi' \subset \Phi$ we have $(T_{\Phi'})_s \neq \emptyset$ if and only if $(T_{\Phi'})_\eta \neq \emptyset$. Shrinking $U$ even further, we may assume that $Z_{G \times_S U}(\widetilde{t}_{\Phi'})$ is a smooth $U$-group scheme. To prove constructibility, we need only prove it separately over $U$ and over $S - U$; in particular, we may and do assume that for any closed subsystem $\Phi' \subset \Phi$, there exists a section $\widetilde{t}_{\Phi'} \in T_{\Phi'}(S)$ such that $Z_G(\widetilde{t})_{\Phi'}$ is a smooth $S$-group scheme. \smallskip

Let $r$ be the rank of $G$, and let $\Phi'$ be such that $T_{\Phi'} \neq \emptyset$. Set $e = |\Phi'| + r$, so that $e$ is equal to the dimension of any reductive group scheme of rank $r$ with root system $\Phi'$. If $k$ is a fixed algebraically closed field over $S$ then for every $t, t' \in T_{\Phi'}(k)$ we have $Z_{G_k}(t)^0 = Z_{G_k}(t')^0$ and $\dim Z_{G_k}(t) = e$, both of which one may see by considering the intersections of $Z_{G_k}(t)$ and $Z_{G_k}(t')$ with an open cell. Thus whenever $T_{\Phi'} \neq \emptyset$, the relative identity component $Z_{\Phi'} = Z_G(\widetilde{t}_{\Phi'})^0$ is independent of choice of section $\widetilde{t}_{\Phi'}$, and $Z_{\Phi'}$ is a reductive $S$-group scheme, and it is a closed $S$-subgroup scheme of $G$ by \cite[Thm.\ 5.3.5]{Conrad}. \smallskip

Let $B_{\Phi'} = Z_{\Phi'} \cap B$, so that $B_{\Phi'}$ is a Borel subgroup of $Z_{\Phi'}$ (as may be checked on fibers). Let $U_{\Phi'}$ be the unipotent radical of $B_{\Phi'}$. Let $g: U_{\Phi'} \to G$ be the natural inclusion, and consider the schematic centralizer $Z_{G \times_S U_{\Phi'}}(g)$, a $U_{\Phi'}$-group scheme which is a finite type closed subscheme of $G \times_S U_{\Phi'}$ by \cite[Lem.\ 2.1]{Integral-Springer}. By upper semicontinuity of fiber dimension \cite[IV\textsubscript{3}, Thm.\ 13.1.3]{EGA}, there is a locally closed subset $U_{\Phi', d} \subset U_{\Phi'}$ over which the fibers of $Z_{G \times_S U_{\Phi'}}(g) \to U_{\Phi'}$ are $d$-dimensional. Clearly we have $Z_{\Phi'} \times U_{\Phi', d} \subset G_{d, e}$. \smallskip

Now let $\mu_B: G \times B \to G$ via
\[
\mu_B(g, b) = gbg^{-1}
\]
as in the proof of Lemma~\ref{lemma:ordinary-locus-is-locally-constructible}. If $k$ is any algebraically closed field and $b \in B(k)$, say $b = tu$ for $t \in T(k)$ and $u \in U(k)$, \cite[Lem.\ 2.12]{SteinbergReg} shows that $b$ is $B(k)$-conjugate to an element of the form $b' = tu'$, where $u' \in U(k)$ commutes with $t$, and this is evidently the Jordan decomposition of $b'$. If $b'$ lies over a point of $G_{d, e}$ then by definition we have $\dim Z_G(t) = e$, so $t \in T_{\Phi'}(k)$ for some closed subsystem $\Phi' \subset \Phi$ such that $e = |\Phi'| + r$. Moreover, by definition $u'$ lies over a point of $U_{\Phi', d}$. Since each $g \in G(k)$ is conjugate to an element of $B(k)$, it follows that
\[
G_{d,e} = \mu_B\left(\bigcup_{\Phi' \subset \Phi} T_{\Phi'} \times U_{\Phi', d}\right),
\]
Thus by Chevalley's theorem, $G_{d, e}$ is constructible.
\end{proof}

\section{Relative Jordan decomposition}\label{section:relative-jordan-decomposition}

\subsection{Overview of the idea}

Over algebraically closed fields, it is very convenient to have a Jordan decomposition available in order to reduce questions about general centralizers to separate questions about semisimple centralizers and unipotent (or nilpotent) centralizers. The main result which we aim to prove in this section is the following. We remind the reader of our convention that the generic point of an irreducible scheme is always denoted by $\eta$.

\begin{theorem}\label{theorem:relative-jordan-decomposition-over-normal-base}
Let $S$ be a normal integral scheme and let $G$ be a reductive $S$-group scheme.
\begin{enumerate}
    \item Let $g \in G(S)$ be a strongly pure section of $G$ with pure semisimple part, and for every point $s \in S$ let $g_{\overline{s}} = t_s u_s$ be the Jordan decomposition of $g_{\overline{s}}$. Suppose that the Jordan decomposition of $g_{\overline{\eta}}$ is $k(\eta)$-rational. Then there exist unique sections $t, u \in G(S)$ such that $g = tu$ induces the Jordan decomposition on every $S$-fiber.
    \item Suppose that no residue characteristic of $S$ is a torsion prime for $G$. Let $X \in \fg(S)$ be a strongly pure section of $\fg$ with pure semisimple part, and for every point $s \in S$ let $X_{\overline{s}} = (X_{\rm{ss}})_s + (X_{\rm{n}})_s$ be the Jordan decomposition of $X_{\overline{s}}$. Suppose that the Jordan decomposition of $X_{\overline{\eta}}$ is $k(\eta)$-rational. Then there exist unique sections $X_{\rm{ss}}, X_{\rm{n}} \in \fg(S)$ such that $X = X_{\rm{ss}} + X_{\rm{n}}$ induces the Jordan decomposition on every $S$-fiber.
\end{enumerate}
\end{theorem}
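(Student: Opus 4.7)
Uniqueness is immediate: if $(t,u)$ and $(t',u')$ are two decompositions as in the theorem, then $t_\eta = t'_\eta$ by uniqueness of the classical Jordan decomposition over a field, and since $G/S$ is separated with $\eta$ dense in the integral scheme $S$, we get $t = t'$ and hence $u = u'$. For existence, we spread out to assume $S$ is noetherian; then since $G$ is affine over the normal integral scheme $S$, we have $G(S) = \bigcap_{s \in S^{(1)}} G(\sO_{S,s})$ inside $G(k(\eta))$. So it suffices to extend $t_\eta$ (and hence $u_\eta = t_\eta^{-1} g_\eta$) over each codimension-one localization, reducing to the case $S = \Spec A$ for a DVR $A$. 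Passing to the strict henselization of $A$, we may further assume $G_A$ is split.

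\textbf{Chevalley restriction strategy.} The plan is to produce a maximal $A$-torus $T \subset G$ with $t_K \in T(K)$, after which the Chevalley isomorphism $G /\!/ G \cong T/W$ finishes the job. Granted such a $T$, let $\pi \colon G \to T/W$ be the quotient map. Then $\pi(g) \in (T/W)(A)$, and because $u_K$ is unipotent, $\pi(g_K) = \pi(t_K)$ in $(T/W)(K)$ (conjugacy classes of $g_K$ and $t_K$ have the same image in the categorical quotient). Hence $t_K$ has integral image in $T/W$. The finite morphism $T \to T/W$ realizes $A[X(T)]$ as integral over $A[X(T)]^W$, so normality of $A$ forces each $\chi(t_K) \in A$; thus $t_K \in T(A) \subset G(A)$, and $u_K = t_K^{-1} g_K \in G(A)$. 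Special-fiber verification: $t_s \in T(k(s))$ is semisimple as a torus element; $u_s$ is unipotent because nilpotency of $\rho(u)-1$ in a faithful representation descends from $K$ to $A$; commutativity is a closed condition preserved under specialization.

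\textbf{Building $T$: the main obstacle.} The crux is producing the $A$-rational maximal torus containing $t_K$. This step must use the hypotheses essentially, since otherwise the extension can fail (as in the $\PGL_2(\bZ_2)$ example preceding Proposition~\ref{prop:strong-purity-generizes}). I expect the key mechanism to be that strong purity plus pure semisimple part forces the conjugacy class of the semisimple part of $g$ to specialize compatibly: the closed subsystem of roots $\Phi' \subset \Phi$ with $\alpha(t) = 1$ is fiberwise constant, which combined with ordinariness of $g_{\overline s}$ yields a canonical connected reductive closed $A$-subgroup scheme $M \subset G$ (built from $Z_G(g)^0$ using the structure theory of Section~\ref{subsection:associated-cocharacters} and the stratifications of Section~\ref{section:constructibility-nonsense}) inside which $t$ sits centrally; a maximal $A$-torus of $M$ (existing after possibly a further étale extension by smoothness of the scheme of maximal tori) then contains $t_K$ and provides the desired $T$.

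\textbf{Lie algebra case.} The proof for sections $X \in \fg(S)$ runs along identical lines, using the Chevalley--Kostant restriction $\fg /\!/ G \cong \ft/W$ and the analogous finite-morphism argument; the non-torsion hypothesis on residue characteristics ensures the requisite smoothness and isomorphism properties that the good-prime hypothesis supplies in the group case. Alternatively, after handling the fiberwise-semisimple case directly (which amounts to extending a section of a torus by normality), one may apply the integral Springer isomorphism \cite[Thm.\ 5.1]{Integral-Springer} to transfer the nilpotent part to a unipotent section and appeal to the group statement.
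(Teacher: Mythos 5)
Your outer reductions (uniqueness via separatedness and density of $\eta$, Hartogs to reduce to a DVR, and the endgame ``commuting product of a torus-valued section and a fiberwise unipotent section is automatically the fibral Jordan decomposition'') are fine, and the Chevalley-restriction endgame would indeed work \emph{granted} a maximal $A$-torus $T\subset G$ with $t_K\in T(K)$: finiteness of $T\to T/W\cong G/\!\!/G$ plus the valuative criterion then forces $t_K\in T(A)$. But the step you label ``the main obstacle'' is not an obstacle you overcome; it is precisely the content of the theorem, and your sketch for it is circular. At that stage only $t_K$ exists; the fibral Jordan components $t_s$ are defined separately on each geometric fiber and have no a priori relation to any specialization of $t_K$ (establishing that relation is what the theorem asserts). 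So the claim that ``the closed subsystem $\Phi'\subset\Phi$ of roots with $\alpha(t)=1$ is fiberwise constant'' presupposes an integral, pure, fiberwise semisimple extension of $t_K$ — exactly what needs to be built. Strong purity and purity of the semisimple part only control dimensions of centralizers fiber by fiber; turning that numerical data into a reductive closed $A$-subgroup scheme $M\subset G$ containing $t_K$ centrally is where all the work lies. (The paper does this by first constructing a non-canonical ``fake'' Jordan decomposition $g=t_0u_0$ with $t_0\in G(A)$ pure and fiberwise semisimple — built in $\GL_n$ over a henselian DVR with algebraically closed residue field by the polynomial/henselian-lifting argument of Lemma~\ref{lemma:relative-jordan-decomposition-for-matrices}, transferred to $G$ via a closed embedding in Theorem~\ref{theorem:relative-jordan-decomposition} — and then using strong purity, Lemma~\ref{lemma:semisimple-centralizer-is-reductive} and Proposition~\ref{prop:strong-purity-generizes} to show $Z_G(t_0)$ is reductive and that $t_K$ is central in its generic fiber; replacing $G$ by $Z_G(t_0)$ and using a Borel $B=T\ltimes U$ through $g$ then finishes, cf.\ Corollary~\ref{corollary:relative-jordan-pure}.) Note also that the $\PGL_2(\bZ_2)$ example shows the torus you want genuinely need not exist without such an argument, since the scheme of maximal tori is not proper; so nothing softer than a real use of the purity hypotheses can produce $T$.

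Two secondary points. First, your strategy quietly assumes the Chevalley restriction isomorphism $G/\!\!/G\cong T/W$ (and, in the Lie algebra case, $\fg/\!\!/G\cong\ft/W$) over a mixed-characteristic DVR for an arbitrary split reductive $G$, with $T\to G/\!\!/G$ finite; this is a nontrivial input not supplied by the paper, and in the Lie algebra case it is sensitive to small characteristic, so ``the non-torsion hypothesis ensures the requisite properties'' needs an actual argument or reference. Second, your alternative for part (2) — ``transfer the nilpotent part to a unipotent section via the Springer isomorphism and appeal to the group statement'' — is circular as written, because the nilpotent part does not exist as an integral section until the decomposition has been produced; and after gluing over codimension-one points you still owe a verification of the fibral Jordan property at points of codimension $\geq 2$ (semisimplicity does not specialize, so this is not formal), which the paper handles by reducing that check, too, to the DVR statement.
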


The proof of Theorem~\ref{theorem:relative-jordan-decomposition-over-normal-base} will take up most of this section. We will begin with the case that $S$ is the spectrum of a DVR with algebraically closed residue field. In this case we will show that a ``fake Jordan decomposition" exists for any $g \in G(S)$, and that this decomposition satisfies a number of desirable properties. For details, see Theorems~\ref{theorem:relative-jordan-decomposition} and \ref{theorem:relative-additive-jordan-decomposition}; we expect that the extra generality of these results might be useful.\smallskip

If we want any analogue of the Jordan decomposition (even a ``fake" one) in a relative setting, it is important to ensure that the centralizer of the semisimple part be reductive; in particular, it is important that it be \textit{flat}. We will therefore pay special attention to purity properties of any putative decomposition. (Recall that purity is defined in Definition~\ref{definition:pure-and-strongly-pure}.) \smallskip

Throughout this section, $G$ is a reductive group scheme over a ring $A$. As in the classical case, to prove our relative Jordan decomposition we will first consider a Jordan decomposition for matrices and then pass to the general case using an embedding into $\GL_n$. Before getting to this, we warn the reader that purity is not a functorial property, as the following example shows.

\begin{example}
Let $k$ be a field and define $g \in \GL_2(k[\![x]\!])$ by
\[
g = \begin{pmatrix} \alpha & 0 \\ 0 & 1 + x \end{pmatrix},
\]
where $\alpha \in k^\times$ is not equal to $1$. We note that $g$ is (strongly) pure, and even fiberwise regular. However, if $i: \GL_2 \to \GL_3$ is the inclusion of $\GL_2$ into the ``upper left block" of $\GL_3$, then $i(g)$ is not pure; it has regular generic fiber, but the centralizer of its special fiber has dimension $5$.
\end{example}

This example shows that even if one only wants a Jordan decomposition for sections satisfying the hypotheses of Theorem~\ref{theorem:relative-jordan-decomposition-over-normal-base}, one should try to find a decomposition in $\GL_n$ for sections which are not necessarily pure.\smallskip

The argument proceeds as follows: first, construct a (non-canonical) fake Jordan decomposition inside $\GL_n$ over a henselian DVR $A$ with algebraically closed residue field. That is, for a given $g \in \GL_n(A)$, find $t, u \in \GL_n(A)$ such that $t$ is pure and $g_s = t_s u_s$ is the Jordan decomposition of $g_s$ in $\GL_n(k(s))$ (where $s$ is the closed point of $\Spec A$, as in our usual convention). In fact, our construction of a fake Jordan decomposition will satisfy a number of other desirable properties, and we will expend quite a bit of effort to establish these. Next, using some of the properties of this decomposition, we will show that if $G$ is realized as a closed $A$-subgroup scheme of $\GL_n$ in some way and the above $g$ lies in $G(A)$, then the sections $t$ and $u$ both lie in $G(A)$. From there, a number of good properties of this decomposition will be deduced, and we will bootstrap from the fake decomposition to an honest decomposition under the hypotheses of Theorem~\ref{theorem:relative-jordan-decomposition-over-normal-base}. \smallskip

The idea for the construction of the fake Jordan decomposition is very simple, and we illustrate it in the simplest case, in which $g$ is a diagonal element of $\GL_n(k[\![x]\!])$ for a field $k$. In this case, say $g = \diag(a_1, \dots, a_n)$ for some $a_i \in k[\![x]\!]^\times$. Then $g = tu$, where $t = \diag(a_1(0), \dots, a_n(0))$ and $u = \diag(a_1/a_1(0), \dots, a_n/a_n(0))$. The two fibers of $t$ are killed by precisely the same roots of the diagonal torus (so $t$ is pure), while $u$ has special fiber equal to the identity matrix. The idea for a general (not necessarily equicharacteristic) henselian DVR $A$ is very similar to this one, using an arbitrary set-theoretic section to the residue class map in place of the inclusion $k \to k[\![x]\!]$.

\subsection{Construction and proof}

As in the proof of the ordinary Jordan decomposition, we begin with a version of the Jordan decomposition for $\GL_n$.

\begin{lemma}\label{lemma:relative-jordan-decomposition-for-matrices}
Let $A$ be a henselian local ring with maximal ideal $\fm$ and algebraically closed residue field $k$. Let $V$ be a finite free $A$-module and let $g: V \to V$ be an invertible $A$-linear map. Then there exists a decomposition $g = t u$ for commuting $A$-linear maps $t, u: V \to V$ such that
\begin{enumerate}
    \item\label{item:gln-jordan-special-fiber} $g_s = t_s u_s$ is the multiplicative Jordan decomposition of $g_s$\footnote{Here $s$ is the closed point of $\Spec(A)$ as in the Notation and conventions section.};
    \item\label{item:gln-jordan-torus-character-vanishing} there is a maximal $A$-torus of $\GL(V)$ containing $t$, and $t$ is pure;
    \item\label{item:gln-jordan-polynomial} there exists a polynomial $p(X) \in A[X]$ with constant term $0$ such that $t = p(g)$; and
    \item\label{item:gln-jordan-generic-fiber} if moreover $A$ is a DVR and $g_{\overline{\eta}} = t' u'$ is the Jordan decomposition of $g_{\overline{\eta}}$, then there is some polynomial $q(X) \in k(\overline{\eta})[X]$ with constant term $0$ such that $t_{\overline{\eta}} = q(t')$.
\end{enumerate}
\end{lemma}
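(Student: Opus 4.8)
The plan is to build $t$ and $u$ by hand from the factorization of the characteristic polynomial of $g$, in the spirit of the diagonal example from the introduction but over an arbitrary henselian local ring. First I would let $\chi \in A[X]$ be the characteristic polynomial of $g$; since $g$ is invertible, $\chi(0) = \pm\det(g) \in A^\times$, and since $k$ is algebraically closed the reduction $\bar\chi \in k[X]$ factors as $\prod_{i=1}^{r}(X - \bar\lambda_i)^{m_i}$ with the $\bar\lambda_i \in k^\times$ pairwise distinct. Because $A$ is henselian and the factors $(X - \bar\lambda_i)^{m_i}$ are pairwise coprime, lifting idempotents in the finite $A$-algebra $A[X]/(\chi)$ yields a factorization $\chi = \prod_{i=1}^{r} \chi_i$ in $A[X]$ with each $\chi_i$ monic of degree $m_i$, $\bar\chi_i = (X - \bar\lambda_i)^{m_i}$, and the $\chi_i$ pairwise coprime. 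Regarding $V$ as a module over $A[X]/(\chi)$ via $X \mapsto g$ (Cayley--Hamilton) and using the induced decomposition $A[X]/(\chi) \cong \prod_i A[X]/(\chi_i)$, I obtain a $g$-stable decomposition $V = \bigoplus_i V_i$ into $A$-direct summands, hence into finite free $A$-modules, whose associated projectors $e_i$ lie in $A[g]$, such that $\chi_i(g)$ vanishes on $V_i$ and $(V_i)_s$ is the generalized $\bar\lambda_i$-eigenspace of $g_s$ (so $\operatorname{rank}_A V_i = m_i > 0$).

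Next I would choose units $\mu_i \in A^\times$ reducing to $\bar\lambda_i$, which automatically satisfy $\mu_i - \mu_j \in A^\times$ for $i \neq j$, and set $t := \sum_i \mu_i e_i(g)$ and $u := g t^{-1}$. Then $t$ acts on each $V_i$ by the scalar $\mu_i$, and $t, u \in A[g]$, so $t$ and $u$ commute with each other and with $g$ and $g = tu$. Property~(\ref{item:gln-jordan-special-fiber}) holds because $t_s$ is semisimple (scalar $\bar\lambda_i$ on $(V_i)_s$) while $u_s = g_s t_s^{-1}$ acts on $(V_i)_s$ as $\bar\lambda_i^{-1} g_s$, which is unipotent since $g_s$ has only the eigenvalue $\bar\lambda_i$ there; uniqueness of the multiplicative Jordan decomposition over the field $k$ then identifies $g_s = t_s u_s$ with it. For~(\ref{item:gln-jordan-torus-character-vanishing}), concatenating $A$-bases of the $V_i$ places $t$ in a split maximal $A$-torus of $\GL(V)$, and since the $\mu_i$ have pairwise unit differences one checks $Z_{\GL(V)}(t) = \prod_i \GL(V_i)$, all of whose fibers have dimension $\sum_i(\operatorname{rank}_A V_i)^2$, so $t$ is pure (in the sense of Definition~\ref{definition:pure-and-strongly-pure}). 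For~(\ref{item:gln-jordan-polynomial}), $t \in A[g]$ gives $t = p_0(g)$ for some $p_0 \in A[X]$, and replacing $p_0$ by $p_0 - (p_0(0)/\chi(0))\chi$ --- legitimate since $\chi(0) \in A^\times$ --- produces a polynomial with constant term $0$ and the same value at $g$.

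For~(\ref{item:gln-jordan-generic-fiber}), suppose $A$ is a DVR with geometric generic point $\overline{\eta}$ and write $g_{\overline{\eta}} = t'u'$ for the Jordan decomposition. The $\chi_i$ remain pairwise coprime over $k(\overline{\eta})$, so each eigenvalue $\lambda$ of $g_{\overline{\eta}}$ is a root of exactly one factor; write $i(\lambda)$ for its index. Since $\chi_j(g)$ kills $V_j$ while $\chi_j(\lambda) \neq 0$ for $j \neq i(\lambda)$, the generalized $\lambda$-eigenspace of $g_{\overline{\eta}}$ --- equivalently the $\lambda$-eigenspace of $t'$ --- lies inside $(V_{i(\lambda)})_{\overline{\eta}}$, on which $t_{\overline{\eta}}$ is the scalar $\mu_{i(\lambda)}$. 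Hence $t_{\overline{\eta}}$ acts as a scalar on every eigenspace of $t'$, and choosing by Lagrange interpolation a polynomial $q \in k(\overline{\eta})[X]$ with $q(\lambda) = \mu_{i(\lambda)}$ for each eigenvalue $\lambda$ of $t'$ and $q(0) = 0$ (possible because $0$ is not an eigenvalue) gives $q(t') = t_{\overline{\eta}}$. I expect part~(\ref{item:gln-jordan-generic-fiber}) to be the main obstacle: one must carefully match the coarse $A$-module decomposition $V = \bigoplus_i V_i$ against the finer generalized-eigenspace decomposition of $g_{\overline{\eta}}$ over $k(\overline{\eta})$ and check that the resulting scalar data assemble into an honest polynomial identity with vanishing constant term. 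The Hensel factorization and the Chinese remainder decomposition are routine, but must be run without invoking any smoothness or perfectness hypotheses, consistently with the paper's conventions.
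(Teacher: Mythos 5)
Your proof is correct and follows the same strategy as the paper's: factor $\det(X-g)$ over $k$, lift via henselianity to pairwise-coprime monic factors over $A$, decompose $V=\bigoplus V_i$ through the Chinese Remainder Theorem acting on $A[X]/(\chi)$, and define $t$ to act as a chosen unit lift of $\bar\lambda_i$ on each $V_i$, with $u = gt^{-1}$. Parts (1)--(3) then go exactly as in the paper (the paper builds $p(X)$ directly to satisfy $p\equiv\sigma(a_i)\pmod{p_i}$ with zero constant term via one more CRT step, whereas you express $t$ through the idempotents and subtract the appropriate multiple of $\chi$; same substance). The one place you genuinely diverge is part (4): the paper passes to a generically finite extension $B$ of $A$ via Krull--Akizuki and invokes Gauss's lemma to split each $p_i$ in $B[X]$, obtaining unit eigenvalues $c_{ij}$ whose reductions distinguish the blocks and hence make the interpolation well-posed. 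You instead observe that the $\chi_i$ remain pairwise coprime over $k(\overline\eta)$, so each eigenvalue $\lambda$ of $t'$ is a root of exactly one $\chi_i$ and the corresponding eigenspace lives in $(V_{i(\lambda)})_{\overline\eta}$, on which $t_{\overline\eta}$ is the scalar $\mu_{i(\lambda)}$; this makes Lagrange interpolation with $q(0)=0$ immediately well-posed without any auxiliary DVR. Your version of (4) is a mild but real simplification of the paper's argument.
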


\begin{proof}
The proof of existence is entirely similar to the case over fields, with only a few changes; to convince the reader of this, we simply repeat the argument with the necessary changes. Since $k$ is algebraically closed, we may decompose the characteristic polynomial of $g_s$ as
\[
\det(X - g_s) = \prod_{i=1}^{m} (X - a_i)^{r_i},
\]
where $a_i \in k^\times$ are pairwise distinct units. Since $A$ is henselian, this lifts to a factorization
\[
\det(X - g) = \prod_{i=1}^{m} p_i(X),
\]
where each $p_i(X) \in A[X]$ is a monic polynomial reducing modulo $\fm_A$ to $(X - a_i)^{r_i}$. Let $V_i = \ker(p_i(g))$ for each $1 \leq i \leq m$. By the general form of Cayley-Hamilton, $V$ is a module for the ring $A[X]/(\det(X - g))$. There is a natural homomorphism 
\[
\vp: A[X]/(\det(X - g)) \to \prod_{i=1}^m A[X]/(p_i(X))
\]
between $A$-algebras which are finite free modules over $A$. Note that $\vp$ is an isomorphism; by freeness, this can be checked modulo $\fm$. If $i \neq j$, then $p_i(X)$ is a unit in $A[X]/(p_j(X))$, as may be checked modulo $\fm$ since $A[X]/(p_j(X))$ is $A$-finite. This ring-theoretic decomposition establishes the decomposition $V = \bigoplus_{i=1}^m V_i$. In particular, since $A$ is local we see that each $V_i$ is finite free. \smallskip

Now fix a (set-theoretic) section $\sigma: k \to A$ of the residue class map. By the same argument from the previous paragraph, there exists a polynomial $p(X) \in A[X]$ with constant term $0$ such that
\[
p(X) \equiv \sigma(a_i) \pmod{p_i(X)}
\]
for all $1 \leq i \leq m$. Then $t := p(g)$ acts on each $V_i$ as multiplication by $\sigma(a_i)$, and so $t$ has semisimple fibers over $\Spec A$. Moreover, it is clear from the construction that $g_s = t_s \cdot (t_s^{-1}g_s)$ is the multiplicative Jordan decomposition of $g_s$. So it remains to establish (\ref{item:gln-jordan-torus-character-vanishing}) and (\ref{item:gln-jordan-generic-fiber}) for this $t$. \smallskip

To prove (\ref{item:gln-jordan-torus-character-vanishing}), note that by choosing bases for each $V_i$, we may find a (split) maximal $A$-torus $T$ of $\GL(V)$ containing $t$. Since $t$ acts by a constant $\sigma(a_i)$ on each $V_i$ by definition, and since $\sigma(a_i)_s \neq \sigma(a_j)_s$ when $i \neq j$, the function
\[
s \mapsto \{\alpha \in \Phi(\GL(V), T): \alpha_s(t_s) = 1\}
\]
is constant on $\Spec A$ by the usual explicit description of the roots of $(\GL(V), T)$. It is therefore clear that $t$ is pure. \smallskip

Finally, for (\ref{item:gln-jordan-generic-fiber}), suppose that $A$ is a DVR and let $\overline{\eta}$ be a geometric point of $\Spec A$ lying over the generic point $\eta$. Let $g_{\overline{\eta}} = t' u'$ be the Jordan decomposition of the fiber $g_{\overline{\eta}}$. Let $L$ be a finite extension of $k(\eta)$ such that the polynomials $p_i(X)$ above are split in $L[X]$, so that in particular $t', u' \in G(L)$, and let $B$ be the localization at a maximal ideal of the normalization of $A$ in $L$. Since $A$ is a DVR, $B$ is also a DVR by the Krull-Akizuki theorem \cite[Thm.\ 11.7]{Matsumura}. By Gauss' lemma, each $p_i(X)$ splits into linear factors lying in $B[X]$. By the construction above and the usual construction of the Jordan decomposition, for each $1 \leq i \leq m$ there exists a basis $v_{i1}, \dots, v_{in_i}$ of $V_i \otimes_A L$ and units $c_{ij} \in B^\times$ such that $t' \cdot v_{ij} = c_{ij} v_{ij}$ for all $i, j$. Moreover, $c_{ij}$ is congruent to $\sigma(a_i)$ for all $i, j$, so in particular $c_{ij} \neq c_{i'j'}$ whenever $i \neq i'$. Now it suffices to show that there exists a polynomial $q(X) \in L[X]$ such that $q(c_{ij}) = \sigma(a_i)$ for all $i, j$ and $q(0) = 0$. Since $c_{ij} \neq c_{i'j'}$ whenever $i \neq i'$ and $L$ is infinite, such $q$ indeed exists.
\end{proof}

Passage to the general case is somewhat involved. We will need the following lemma to deduce purity in Theorem~\ref{theorem:relative-jordan-decomposition} from the purity statement of Lemma~\ref{lemma:relative-jordan-decomposition-for-matrices}.

\begin{lemma}\label{lemma:purity-checked-after-extension}
Let $S$ be a scheme and let $i: G \to G'$ be a closed embedding of reductive $S$-group schemes. Let $t \in G(S)$ be a fiberwise semisimple section which lies in some maximal $S$-torus $T$ of $G$. If $i(t)$ is pure in $G'(S)$, then $t$ is pure in $G(S)$.
\end{lemma}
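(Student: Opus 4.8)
The plan is to reduce to a fibral statement and then use the explicit combinatorics of roots. Purity of $t$ in $G(S)$ means that $s \mapsto \dim Z_{G_s}(t_s)$ is locally constant, and purity of $i(t)$ in $G'(S)$ means $s \mapsto \dim Z_{G'_s}(i(t)_s)$ is locally constant; both functions are constructible (indeed the relevant loci are locally closed by upper semicontinuity of fiber dimension, cf.\ Lemma~\ref{lemma:locus-of-d-dimensional-centralizer}), so it suffices to work in a neighborhood of a single point and, after spreading out, we may assume $S$ is connected and in fact reduce to comparing two fibers — so the essential content is: if $t_{s_1}$ and $t_{s_2}$ are two fibers of a fiberwise semisimple section of a split torus $T$, and the centralizer dimension of $i(t)$ agrees on these two fibers, then the centralizer dimension of $t$ agrees as well. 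Since $t$ lies in the split torus $T$, for any point $s$ we have $\dim Z_{G_s}(t_s) = \operatorname{rk} G_s + |\{\alpha \in \Phi(G,T) : \alpha_s(t_s) = 1\}|$, so purity of $t$ is equivalent to constancy of the set $\Phi_s(t) := \{\alpha \in \Phi(G,T) : \alpha_s(t_s) = 1\}$ as $s$ varies — or rather constancy of its cardinality, but since $T$ is split the roots $\alpha$ are globally defined characters and the condition $\alpha_s(t_s)=1$ is the condition that $s$ lies in the closed subscheme $\{\alpha(t) = 1\} \subseteq S$, so the function $s \mapsto |\Phi_s(t)|$ takes its generic value on a dense open and can only jump up on closed sets.

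The key step is to relate $\Phi_s(t)$ to the corresponding invariant for $i(t)$. Choose a split maximal torus $T'$ of $G'$ containing $i(T)$ (possible after an étale localization on $S$, which is harmless for an open property; alternatively use that $i(T)$ is contained in some maximal torus and split it). Then $\dim Z_{G'_s}(i(t)_s) = \operatorname{rk} G'_s + |\{\beta \in \Phi(G', T') : \beta_s(i(t)_s) = 1\}|$. The restriction map $X^*(T') \to X^*(T)$ sends $\Phi(G',T')$ to a subset of $X^*(T)$ containing $\Phi(G,T)$ (since $i$ is a closed embedding of reductive groups, the roots of $G$ relative to $T$ are among the weights of $T$ on $\Lie G'$, hence among the roots of $G'$ relative to $T'$ — here one uses that $i$ identifies $\Lie G$ with a subalgebra of $\Lie G'$ stable under $T$, and a root of $G$ occurs as a $T$-weight on $\Lie G'$). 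For each weight $\chi \in X^*(T)$ let $m_s(\chi)$ be the multiplicity of $\chi$ in $\Lie G'_s$ as a $T_s$-representation (a constant, independent of $s$, since $T$ is split and the weight spaces are finite free $\cO_S$-modules). Then $\dim Z_{G'_s}(i(t)_s) = \sum_{\chi : \chi_s(t_s) = 1} m_s(\chi) = \sum_{\chi : \chi_s(t_s)=1} m(\chi)$. Now the point is that $m(\chi) \geq 1$ whenever $\chi$ restricts from a root of $G$ (or is trivial), so if the whole sum $\sum_{\chi_s(t_s)=1} m(\chi)$ is constant in $s$ while the partial sums over the $\chi$ not coming from $\Phi(G,T) \cup \{0\}$ can only increase as $s$ specializes (being the cardinality-type invariant discussed above, each individual term contributing a nonnegative integer that jumps up on closed loci), the only way the total can stay constant is if the contribution from $\Phi(G,T) \cup \{0\}$ is itself constant — i.e.\ $|\Phi_s(t)|$ is constant, which is purity of $t$.

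The main obstacle I anticipate is making the last monotonicity-and-rigidity argument precise: one needs that \emph{every} weight $\chi$ of $T$ on $\Lie G'$ contributes a term $|\{s\text{-locus where }\chi_s(t_s)=1\}|$-type quantity which is \emph{upper} semicontinuous in the sense that it can only increase under specialization, so that a sum of such being constant forces each to be constant. This is essentially the statement that the locus $\{\chi(t) = 1\} \subseteq S$ is closed for each $\chi$, together with the observation that if two such closed sets have union with constant ``fiber count'' then\ldots — the cleanest way is probably to work one specialization at a time: fix a specialization $\eta \rightsquigarrow s$ in $S$, note $\Phi_\eta(t) \subseteq \Phi_s(t)$ and more generally $\{\chi : \chi_\eta(t_\eta)=1\} \subseteq \{\chi : \chi_s(t_s) = 1\}$ for all weights $\chi$, so $\dim Z_{G'_s}(i(t)_s) - \dim Z_{G'_\eta}(i(t)_\eta) = \sum_{\chi} m(\chi) \cdot \big(\mathbf{1}[\chi_s(t_s)=1] - \mathbf{1}[\chi_\eta(t_\eta)=1]\big) \geq \sum_{\alpha \in \Phi(G,T)} \big(\mathbf{1}[\alpha_s(t_s)=1] - \mathbf{1}[\alpha_\eta(t_\eta)=1]\big) = \dim Z_{G_s}(t_s) - \dim Z_{G_\eta}(t_\eta) \geq 0$, using $m(\alpha) \geq 1$ and $m(0) \geq \operatorname{rk} G'$ with $\operatorname{rk}G' - \operatorname{rk} G$ contributing nonnegatively as well. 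If the left side vanishes (purity of $i(t)$) then so does the right (purity of $t$). This reduces everything to the elementary fact that $\dim Z_{G_s}(t_s)$ for $t$ in a split torus is computed by root vanishing, which is standard.
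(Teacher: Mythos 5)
Your argument is correct and follows essentially the same route as the paper's: reduce to a (strictly henselian) local base, produce a split maximal torus $T'$ of $G'$ containing $i(T)$, observe that every root of $(G,T)$ occurs as a $T$-weight on $\Lie G'$ and hence restricts from some root of $(G',T')$, and then exploit that for a section $t$ of a split torus the centralizer dimension is $\operatorname{rk} G + |\{\alpha : \alpha(t)=1\}|$ and that each root-vanishing locus $\{\alpha(t)=1\}$ is closed (so the defect can only grow under specialization). The paper phrases the final step as: purity of $i(t)$ forces the set $\{\beta \in \Phi' : \beta_s(i(t)_s)=1\}$ to be constant (since it grows under specialization to the closed point and has constant cardinality), whence the subset $\{\alpha \in \Phi : \alpha_s(t_s)=1\}$ is constant. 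Your inequality chain $\dim Z_{G'_s}(i(t)_s) - \dim Z_{G'_\eta}(i(t)_\eta) \geq \dim Z_{G_s}(t_s) - \dim Z_{G_\eta}(t_\eta) \geq 0$ is just a more explicit numerical packaging of the same monotonicity; both hinge on the same combinatorial identity and the same closed-locus observation. The only cosmetic difference is that you carry multiplicities $m(\chi)$ of $T$-weights on $\Lie G'$ rather than choosing, for each $\alpha \in \Phi(G,T)$, a single lift $\alpha' \in \Phi(G',T')$ as the paper does; either bookkeeping works.
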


\begin{proof}
Purity is a fibral condition, so by working locally in the etale topology we may assume that $S = \Spec A$ for a strictly henselian local ring $A$. By \cite[Lem.\ 2.2.4]{Conrad}, the centralizer $Z_{G'}(i(T))$ is a smooth affine $A$-group scheme, so because it has connected reductive fibers (see \cite[Exp.\ XII, Th\'eor\`eme 6.6]{SGA3II} for the connectedness claim) it is reductive; it is necessarily split since $A$ is strictly henselian. Thus there is a split maximal $A$-torus $T'$ of $G'$ containing $i(T)$. We claim first that the image of the restriction map $\Phi' := \Phi(G', T') \to X(T)$ contains $\Phi := \Phi(G, T)$. For this, we may pass to the special fiber of $A$ and thus assume that $A$ is a field. Consider the decomposition
\[
\fg' = \ft' \oplus \bigoplus_{\alpha' \in \Phi'} \fg'_{\alpha'}
\]
of $\fg' = \Lie G'$ into weight spaces for the adjoint action of $T'$, where $\ft' = \Lie T'$. The decomposition of $\fg'$ into weight spaces for $T$ is then obtained by restricting each $\alpha' \in \Phi'$ to $T$. Since $\fg$ is a $T$-subrepresentation of $\fg'$ we obtain the claim easily. \smallskip

For every $s \in \Spec A$, $Z_{G_s}(t_s)$ contains as an open subvariety the direct product under multiplication of $T_s$ and those root groups $U_{\alpha_s}$ such that $\alpha_s(t_s) = 1$; this follows from Lie algebra considerations. Suppose $\alpha \in \Phi$ is a root and let $\alpha' \in \Phi'$ be such that $\alpha'|_T = \alpha$ (as exists by the previous paragraph). Since $\alpha'(t) = \alpha(t)$, purity of $i(t)$ implies that the function
\[
s \mapsto \{\alpha \in \Phi: \alpha_s(t_s) = 1\}
\]
is constant on $\Spec A$; i.e., $t$ is pure in $G(A)$.
\end{proof}

The following flatness criterion is a variant of \cite[Prop.\ 6.1]{Gan-Yu} originally appearing in \cite[Lem.\ 4.4]{Booher}. For the convenience of the reader, we will prove it. For a slightly different proof, see \cite[Lem.\ 4.4]{Booher}.

\begin{lemma}\label{lemma:boohers-lemma}
Let $S$ be a Dedekind scheme and let $Y$ be a finite type $S$-scheme. Suppose
\begin{enumerate}
    \item\label{item:equidim-locally-constant} Every fiber $Y_s$ is equidimensional and the function $s \mapsto \dim Y_s$ is locally constant on $S$.
    \item\label{item:reduced} Every fiber $Y_s$ is reduced.
    \item\label{item:enough-sections} For every closed point $s \in S$ and every irreducible component $Z$ of $Y_s$, there is a section $y \in Y(\sO_{S,s})$ such that $Z$ is the only irreducible component of $Y_s$ containing $y_s$. (For example, this holds if every fiber $Y_s$ is irreducible and $Y(S) \neq \emptyset$.)
\end{enumerate}
Then $Y$ is $S$-flat.
\end{lemma}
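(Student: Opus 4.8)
The plan is to reduce to the case where $S = \Spec A$ for a DVR $A$ (with uniformizer $\pi$ and closed point $s$), since flatness is local on $S$ and a finite type module over a Dedekind ring is flat iff it is flat at every localization at a closed point; over the generic points there is nothing to check. So assume $A$ is a DVR. Flatness of $Y$ over $A$ is equivalent to saying that $\pi$ is a nonzerodivisor on every local ring $\sO_{Y,y}$ for $y$ a point of the special fiber $Y_s$ (a finite type $A$-algebra is flat over a DVR iff it is torsion-free, and this can be checked stalkwise). Thus the whole problem is to show: for every closed point $y \in Y_s$, the element $\pi$ is a nonzerodivisor in $\sO_{Y,y}$.

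The key step is a dimension count combined with the section hypothesis (\ref{item:enough-sections}). Fix a closed point $y \in Y_s$ and let $Z$ be an irreducible component of $Y_s$ through $y$. By hypothesis (\ref{item:enough-sections}) there is a section $\sigma \in Y(A)$ with $\sigma(s)$ lying on $Z$ and on no other component of $Y_s$. The image of $\sigma$ is an integral closed subscheme $C \subseteq Y$ that is finite flat over $A$ (being integral, dominating $\Spec A$), hence $C$ is $1$-dimensional and meets $Y_s$ exactly in the single closed point $\sigma(s) \in Z$. Now I would argue at $\sigma(s)$, then propagate to the chosen $y$. At the point $p := \sigma(s)$ we have $\dim \sO_{Y,p} \geq \dim \sO_{Y_s,p} + 1$: indeed, a system of parameters for the $d$-dimensional local ring $\sO_{Y_s,p}$ (where $d = \dim Y_s$ by (\ref{item:equidim-locally-constant})) together with $\pi$ gives a parameter system for $\sO_{Y,p}$ provided the $d$ lifted functions plus $\pi$ cut out something $0$-dimensional — and they do, because modulo $\pi$ they already cut out a point. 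Hence $\dim \sO_{Y,p} = d+1$. On the other hand, $\sO_{Y,p}/(\pi) = \sO_{Y_s,p}$ is reduced of dimension $d$ by (\ref{item:reduced}) and (\ref{item:equidim-locally-constant}), and moreover — crucially using that $p = \sigma(s)$ lies on only one component $Z$ of $Y_s$ — it is in fact a domain (a reduced local ring whose spectrum has a unique maximal-dimensional component through the closed point and no other components through it is a domain; more precisely $\sO_{Y_s,p}$ has a unique minimal prime). Since cutting a domain down by one function drops dimension by exactly one, $\pi$ avoids all minimal primes of $\sO_{Y,p}$, so $\pi$ is a nonzerodivisor there; equivalently $\sO_{Y,p}$ is $\pi$-torsion-free, i.e.\ $Y$ is $A$-flat at $p$.

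It remains to pass from the special point $p = \sigma(s)$ to an arbitrary closed point $y \in Y_s$. Here I would use that the flat locus is open (\cite[IV\textsubscript{3}, Thm.~11.3.1]{EGA}): we have shown $Y$ is $A$-flat at the generic point of $Z$ — wait, more carefully, we have shown it at $p$, which is a \emph{closed} point of $Z$; since flatness is open on the source and $Z$ is irreducible, flatness at $p$ implies flatness at the generic point $\zeta_Z$ of $Z$, and then flatness at $\zeta_Z$ implies flatness at \emph{every} point of $Z$ (again by openness, or because $\sO_{Y,\zeta_Z}$ is a localization of $\sO_{Y,y}$ for $y \in Z$ and $\pi$-torsion-freeness of a localization forces it on a Noetherian local ring once we know the associated primes of the $\pi$-torsion submodule would have to survive localization — more cleanly: the non-flat locus is closed and does not meet $Z$, hence misses all of $Z$). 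Ranging over all components $Z$ of $Y_s$ and all closed points $y \in Y_s$, we conclude $Y$ is flat at every point of the special fiber, hence $Y$ is $A$-flat, completing the DVR case and therefore the lemma.

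The main obstacle I anticipate is the bookkeeping around the claim that $\sO_{Y_s,p}$ is a domain (not merely reduced) at $p = \sigma(s)$: this is exactly where hypothesis (\ref{item:enough-sections}) enters, and one must be careful that "$Z$ is the only component of $Y_s$ containing $\sigma(s)$'' genuinely forces the local ring to have a unique minimal prime. After that, the dimension-drop argument ($\dim \sO_{Y,p} = d+1$, reduced fiber of dimension $d$, so $\pi \notin$ any minimal prime) is the engine, and the spreading-out via openness of the flat locus is routine.
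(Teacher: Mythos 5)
Your strategy — show $\pi$ is a nonzerodivisor on each stalk $\sO_{Y,y}$ of the special fiber — is different from the paper's (which takes the schematic closure $Z = \overline{Y_\eta}$, automatically $S$-flat, and shows $Z_s \hookrightarrow Y_s$ is an isomorphism via the fibral isomorphism criterion), and as written it has genuine gaps.

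The central problem is the inference that $\pi$ avoids all minimal primes of $\sO_{Y,p}$. First, the equality $\dim \sO_{Y,p} = d+1$ is only half-established: exhibiting $d+1$ elements cutting out the closed point gives $\dim \sO_{Y,p} \le d+1$, while the claimed $\ge$ is simply asserted (and, a priori, proving it is close to proving flatness). Second, and more seriously, even granting $\dim \sO_{Y,p} = d+1$ and that $\sO_{Y_s,p}$ is a domain of dimension $d$, the conclusion does not follow from ``cutting a domain drops dimension by one.'' The ring $\sO_{Y,p}$ is not known to be a domain or even equidimensional, and it is perfectly consistent with your dimension count that $(\pi)$ itself be a minimal prime of height $0$ (corresponding to a $d$-dimensional irreducible component of $Y$ lying entirely inside $Y_s$ and passing only through $Z$) while a \emph{different} minimal prime, coming from a component dominating $\Spec A$, supports the $(d+1)$-dimensional chain. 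Ruling out that scenario is where hypothesis (3) has to do real work, and you have not supplied the argument. (The further step ``$\pi \notin$ any minimal prime $\Rightarrow \pi$ is a nonzerodivisor'' also needs justification — it is true because $(\pi)$ is prime, via a Nakayama argument on the $\pi$-power torsion — but it is not automatic: in general, avoiding minimal primes does not avoid embedded primes.)

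The spreading-out step is also incorrect. Open subsets of a scheme are stable under generization, not specialization, so flatness at the generic point $\zeta_Z$ of $Z$ says nothing about flatness at an arbitrary $y \in Z$: the non-flat locus can be a proper closed subset of $Z$ that does not contain $\zeta_Z$ but does contain $y$. Your alternatives have the same flaw — localizing $\sO_{Y,y}$ at $\fp_{\zeta_Z}$ can kill a nonzero $\pi$-torsion submodule whose associated primes are not contained in $\fp_{\zeta_Z}$, and ``the non-flat locus is closed and does not meet $Z$'' conflates ``does not contain $\zeta_Z$'' with ``disjoint from $Z$.'' The paper's argument sidesteps all of this: $Z = \overline{Y_\eta}$ is flat for free, its special fiber $Z_s$ is equidimensional of dimension $d$ by flatness, and hypothesis (3) produces, on each component $Y_0$ of $Y_s$, a point $p \in Z_s$ at which $\sO_{Y_s,p}$ is a $d$-dimensional domain whose $d$-dimensional quotient $\sO_{Z_s,p}$ must therefore be the whole ring; so $Z_s \to Y_s$ is a surjective closed immersion onto a reduced scheme, hence an isomorphism, and the fibral isomorphism criterion finishes.
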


\begin{proof}
Working Zariski-locally, we can assume $S$ is the spectrum of a DVR $A$ with closed point $s$ and generic point $\eta$. Let $Z$ be the schematic closure of the generic fiber $Y_\eta$ in $Y$. Since $A$ is a DVR, $Z$ is $S$-flat, and thus it suffices to show that the morphism $Z \to Y$ is an isomorphism. Since $Z$ is $S$-flat, the fibral isomorphism criterion \cite[IV\textsubscript{4}, Cor.\ 17.9.5]{EGA} allows us to reduce to proving that the morphism $Z_s \to Y_s$ of special fibers over $S$ is an isomorphism. \smallskip

Since $Z$ is flat and $Y_\eta$ is equidimensional of dimension $d$, the same is true of $Z_s$. As the underlying topological space of $Z$ is equal to the topological closure of $Y_\eta$ in $Y$, (\ref{item:enough-sections}) implies that for every irreducible component $Y_0$ of $Y_s$, there is some point $z \in Z_s$ such that $Y_0$ is the only irreducible component of $Y_s$ containing $z$. The local ring $\sO_{Y_s, z}$ is an integral domain of dimension $d$ by (\ref{item:equidim-locally-constant}) and the choice of $z$, so the map to its $d$-dimensional quotient $\sO_{Z_s, z}$ is necessarily an isomorphism. Thus we see that $Z_s$ contains every generic point of $Y_s$, so that $Z_s \to Y_s$ is a homeomorphism. By (\ref{item:reduced}), $Y_s$ is reduced, so the closed embedding $Z_s \to Y_s$ is indeed an isomorphism.
\end{proof}

\begin{lemma}\label{lemma:semisimple-centralizer-is-reductive}
Let $S$ be a reduced scheme and let $G \to S$ be a reductive group scheme.
\begin{enumerate}
    \item If $g \in G(S)$ is strongly pure and fiberwise semisimple, then $Z_G(g)$ is a reductive group scheme.
    \item Suppose that no residue characteristic of $S$ is a torsion prime for $G$. If $X \in \mathfrak{g}(S)$ is pure and fiberwise semisimple, then $Z_G(X)$ is a reductive group scheme.
\end{enumerate}
\end{lemma}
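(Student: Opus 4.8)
The plan is to combine a fibral analysis with the flatness criterion of Lemma~\ref{lemma:boohers-lemma}. The two parts are proved in parallel: the torsion-prime hypothesis in the Lie algebra case plays the same role as fiberwise ordinariness in the group case, so I describe the group case. Note that $Z_G(g)$ is represented by a closed subgroup scheme of $G$, hence is affine and finitely presented over $S$, and its formation commutes with base change, so $(Z_G(g))_{\overline{s}} = Z_{G_{\overline{s}}}(g_{\overline{s}})$ for every geometric point $\overline{s}$ of $S$.

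\emph{Geometric fibers.} Since $g_{\overline{s}}$ is semisimple, the $k(\overline{s})$-subgroup scheme of $G_{\overline{s}}$ it topologically generates is of multiplicative type, so $Z_{G_{\overline{s}}}(g_{\overline{s}})$ is smooth with reductive identity component; concretely this is contained in the descriptions of semisimple centralizers of \cite[Lems.\ 2.14, 3.7]{SteinbergTorsion}. As $g$ is fiberwise ordinary, this centralizer is connected, hence a connected reductive group, in particular smooth and irreducible. In the Lie algebra case the assumption that no residue characteristic of $S$ is a torsion prime for $G$ guarantees via \cite[Prop.\ 3.7]{Integral-Springer} that $Z_{G_{\overline{s}}}(X_{\overline{s}})$ is similarly connected reductive and smooth. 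Finally, purity of $g$ (resp.\ $X$) says that $s \mapsto \dim Z_{G_s}(g_s)$ is locally constant, so — its level sets being both open and closed — it takes the same value at any point of $S$ and at any generization of that point.

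\emph{Flatness of $Z_G(g)$, the main obstacle.} I expect this to be the crux. By a standard limit argument (preserving the hypotheses thanks to the constructibility results of Section~\ref{section:constructibility-nonsense}, and in the Lie algebra case replacing the approximating scheme by an open subscheme on which no residue characteristic is a torsion prime, which is possible since $S$ maps into such an open) we may assume $S$ is reduced and Noetherian. As $S$ is reduced, the valuative criterion for flatness lets us check flatness of $Z_G(g) \to S$ after base change along every morphism $\Spec V \to S$ with $V$ a discrete valuation ring. For such $V$, put $Y = Z_{G_V}(g_V)$, a finite-type $V$-scheme. By the previous paragraph its two fibers are connected reductive groups: they are reduced (smoothness), equidimensional (irreducibility), and of equal dimension (purity, the two points of $\Spec V$ forming a generizing pair in $S$); moreover $Y(V) \neq \emptyset$ since $e \in Z_G(g)(S)$, and each fiber of $Y$ is irreducible. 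Thus all hypotheses of Lemma~\ref{lemma:boohers-lemma} hold — the third in its parenthetical form — so $Y$ is $V$-flat. Running this over all such $V$ gives flatness of $Z_G(g) \to S$.

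\emph{Conclusion.} Now $Z_G(g) \to S$ is flat, finitely presented, affine (closed in the affine $S$-scheme $G$), and has geometrically smooth fibers, hence is smooth; and its geometric fibers are connected reductive groups. Therefore $Z_G(g)$ is a reductive $S$-group scheme. The Lie algebra statement follows in the same way, with \cite[Prop.\ 3.7]{Integral-Springer} and the torsion-prime hypothesis providing the fibral input.
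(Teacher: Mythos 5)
Your proof is correct and follows essentially the same route as the paper: spread out to the noetherian case, reduce to a DVR base via the valuative criterion of flatness, invoke the connectedness/reductivity of the fibral semisimple centralizers (Steinberg in the group case, \cite[Prop.\ 3.7]{Integral-Springer} in the Lie algebra case), and then apply the flatness criterion of Lemma~\ref{lemma:boohers-lemma}. The only (cosmetic) difference is that the paper applies Lemma~\ref{lemma:boohers-lemma} to $Z_G(g)/Z(G)$ and then recovers flatness of $Z_G(g)$ from the $Z(G)$-torsor structure, whereas you apply it directly to $Z_G(g)$ after observing that $Z_{G_s}(g_s)$ is itself smooth and irreducible (which it is, being the centralizer of a multiplicative-type subgroup scheme); both work.
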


\begin{proof}
By localizing and spreading out (using Lemma~\ref{lemma:locus-of-d-e-elements} and standard constructibility arguments), we may and do assume that $S$ is noetherian. By the valuative criterion of flatness \cite[IV\textsubscript{3}, Thm.\ 11.8.1]{EGA}, we may and do further assume that $S$ is the spectrum of a DVR. For the first point, note that for every $s \in S$, the fiber $Z_{G_s}(g_s)$ is connected and reductive by \cite[Thm.\ 2.15]{SteinbergTorsion}. The same is therefore true of $Z_{G_s}(g_s)/Z(G_s)$, so $Z_G(g)/Z(G)$ is $S$-flat by Lemma~\ref{lemma:boohers-lemma}, and it follows that $Z_G(g)$ is $S$-flat since the morphism $Z_G(g) \to Z_G(g)/Z(G)$ is a $Z(G)$-torsor. Thus $Z_G(g)$ is smooth, as can be checked fibrally \cite[IV\textsubscript{3}, Prop.\ 17.8.2]{EGA}, and it has connected reductive fibers, whence it is a reductive group scheme by definition. For the second point, the argument is similar: for every $s \in S$, the fiber $Z_{G_s}(X_s)$ is connected and reductive by \cite[Prop.\ 3.7]{Integral-Springer}, so we can apply Lemma~\ref{lemma:boohers-lemma} in a completely similar way to conclude that $Z_G(X)$ is a reductive group scheme.
\end{proof}

Finally, we are ready to state and prove the existence of a general ``fake Jordan decomposition''. We emphasize that the sections $t$ and $u$ in the following theorem are highly non-canonical.

\begin{theorem}\label{theorem:relative-jordan-decomposition}
Let $A$ be a henselian DVR with algebraically closed residue field $k$, and let $G$ be a reductive group scheme over $A$. If $g \in G(A)$ is any section, then there exist $t, u \in G(A)$ such that
\begin{enumerate}
    \item\label{item:general-jordan-special-fiber} $g_s = t_s u_s$ is the Jordan decomposition of $g_s$, where $s$ is the closed point of $\Spec A$,
    \item\label{item:general-jordan-pure} $t$ is pure and fiberwise semisimple,
    \item\label{item:general-jordan-commute} $Z_G(g) \subset Z_G(t)$, and if $g_{\overline{\eta}} = t' u'$ is the Jordan decomposition of $g_{\overline{\eta}}$ then $Z_{G_{\overline{\eta}}}(t') \subset Z_{G_{\overline{\eta}}}(t_{\overline{\eta}})$.
\end{enumerate}
\end{theorem}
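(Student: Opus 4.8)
The plan is to reduce the general case to the matrix case treated in Lemma~\ref{lemma:relative-jordan-decomposition-for-matrices} by choosing a closed embedding $i \colon G \hookrightarrow \GL(V)$ for some finite free $A$-module $V$, and then to show that the ``fake Jordan decomposition'' $i(g) = t_0 u_0$ produced there already has both factors landing in $G(A)$. First I would fix such an embedding (which exists since $G$ is affine of finite type over $A$), apply Lemma~\ref{lemma:relative-jordan-decomposition-for-matrices} to $g \coloneqq i(g) \in \GL(V)(A)$, and record the polynomial $p(X) \in A[X]$ with $p(0) = 0$ such that $t_0 = p(i(g))$, together with the analogous generic-fiber polynomial $q$ from part~(\ref{item:gln-jordan-generic-fiber}) of that lemma.

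The key point is descent of $t_0$ and $u_0$ along $i$. Since $t_0 = p(i(g))$ is a polynomial with vanishing constant term in $i(g)$, it lies in the associative subalgebra of $\End_A(V)$ generated by $i(g)$; the subtlety is that being a \emph{polynomial in $i(g)$} does not by itself force $t_0 \in G(A)$. Here I would invoke the standard trick from the classical proof: on the special fiber, $(t_0)_s$ is the semisimple part of $(i(g))_s = i(g_s)$ in $\GL(V)(k)$, hence $(t_0)_s \in G(k)$ because the Jordan decomposition in $G$ agrees with that in $\GL(V)$ (the semisimple part of $g_s$ computed in $G$ is a limit of polynomials in $g_s$, and polynomial expressions in $i(g_s)$ stabilizing the subscheme $G$ on the special fiber — more precisely I would use that $g_s$ normalizes $G_s$ by conjugation and $(t_0)_s \in \overline{\langle g_s \rangle}$). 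For the total space I would argue that $t_0$ lies in $G(A)$ as follows: the schematic closure $Z$ of $G_\eta$-conjugation structures is not the right object — instead, since $A$ is a DVR and $G \subset \GL(V)$ is closed, it suffices to show $(t_0)_\eta \in G(k(\eta))$, because then the $A$-point $t_0$ of $\GL(V)$ factors through the closed subscheme $G$ (a closed subscheme of $\GL(V)$ containing both fibers of a section over a DVR contains the section, as $G$ is closed and $\Spec A$ has only two points with the generic one dense). To see $(t_0)_\eta \in G(k(\eta))$: by part~(\ref{item:gln-jordan-generic-fiber}), $(t_0)_{\overline\eta} = q(t')$ where $g_{\overline\eta} = t'u'$ is the Jordan decomposition of $g_{\overline\eta} \in \GL(V)(k(\overline\eta))$; but $t'$ is the semisimple part of $g_{\overline\eta}$ computed in $\GL(V)$, which coincides with the semisimple part computed in $G$, hence $t' \in G(k(\overline\eta))$, and being a polynomial with zero constant term in $t'$ keeps us in the subgroup $\overline{\langle t' \rangle} \subseteq G$; Galois descent from $k(\overline\eta)$ to $k(\eta)$ then gives $(t_0)_\eta \in G(k(\eta))$ (using that $t_0$ is defined over $A$, hence over $k(\eta)$, and lands in the closed subscheme $G_\eta$). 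Consequently $u_0 = t_0^{-1} i(g) \in G(A)$ as well, and I set $t, u \in G(A)$ to be the induced sections.

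With $t, u \in G(A)$ in hand, item~(\ref{item:general-jordan-special-fiber}) is immediate from part~(\ref{item:gln-jordan-special-fiber}) of Lemma~\ref{lemma:relative-jordan-decomposition-for-matrices} together with the agreement of Jordan decompositions in $G$ and $\GL(V)$ on a field. For item~(\ref{item:general-jordan-pure}): $t$ is fiberwise semisimple because $t_0$ is, and purity follows from Lemma~\ref{lemma:purity-checked-after-extension} — choosing (after an étale base change, which is harmless for the fibral condition of purity) a split maximal $A$-torus $T$ of $G$ containing $t$, the image $i(t)$ is pure in $\GL(V)(A)$ by part~(\ref{item:gln-jordan-torus-character-vanishing}) of Lemma~\ref{lemma:relative-jordan-decomposition-for-matrices}, so Lemma~\ref{lemma:purity-checked-after-extension} applies. (I would need a brief remark that such a $T$ exists étale-locally, using that $t$ is fiberwise semisimple and lies in a maximal torus of $\GL(V)$ by construction, then intersecting appropriately; this is a standard argument with $Z_G(t)$.) For item~(\ref{item:general-jordan-commute}): since $t_0 = p(i(g))$ with $p(0) = 0$, anything commuting with $i(g)$ commutes with $t_0$, which gives $Z_G(g) \subset Z_G(t)$ after intersecting with $G$; the generic-fiber inclusion $Z_{G_{\overline\eta}}(t') \subset Z_{G_{\overline\eta}}(t_{\overline\eta})$ follows the same way from $(t_0)_{\overline\eta} = q(t')$ with $q(0) = 0$.

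The main obstacle I anticipate is the descent step — rigorously justifying that the matrix-theoretic semisimple part $t_0$ lands in $G(A)$, not merely in $\GL(V)(A)$. Over a field this is the classical fact that Jordan decomposition is compatible with closed embeddings of algebraic groups; here the new content is that once one knows it on \emph{both} the special and the geometric generic fiber, the valuative/closedness argument over the DVR $A$ promotes it to the total space. The cleanest route is probably: (i) prove $t' \in G(k(\overline\eta))$ and $(t_0)_{\overline\eta} \in G(k(\overline\eta))$ using functoriality of Jordan decomposition over fields; (ii) descend to $(t_0)_\eta \in G(k(\eta))$ by Galois descent; (iii) conclude $t_0 \in G(A)$ since $G$ is a closed subscheme of $\GL(V)$, $\Spec A$ is the spectrum of a DVR, and a section of $\GL(V)$ over $\Spec A$ whose generic fiber lands in the closed subscheme $G_\eta$ must factor through $G$ (its image is contained in the closure of $G_\eta$, which is contained in $G$). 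I expect steps (i)--(iii) to be short once set up correctly; the bookkeeping around extensions $k(\eta) \subset L \subset k(\overline\eta)$ and the DVR $B$ over $A$ (as in Lemma~\ref{lemma:relative-jordan-decomposition-for-matrices}) is the only place where care is genuinely required.
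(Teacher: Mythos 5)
Your overall strategy matches the paper's: embed $G$ in $\GL(V)$, apply Lemma~\ref{lemma:relative-jordan-decomposition-for-matrices}, and then descend the fake Jordan decomposition back to $G(A)$ using the closed-immersion and the polynomial-with-zero-constant-term trick. Your descent argument for showing $t_0 \in G(A)$ is fine and essentially reproduces the paper's use of \cite[3.8, Cor.]{Borel} (you route it through part~(\ref{item:gln-jordan-generic-fiber}) of the matrix lemma and Galois descent, where the paper uses part~(\ref{item:gln-jordan-polynomial}) directly and the equality $G(A) = \GL(V)(A) \cap G(k(\overline\eta))$; both are correct). Your treatment of items~(\ref{item:general-jordan-special-fiber}) and (\ref{item:general-jordan-commute}) is also fine.

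However, there is a genuine gap in your argument for item~(\ref{item:general-jordan-pure}), the purity of $t$. You invoke Lemma~\ref{lemma:purity-checked-after-extension}, whose hypothesis is that $t$ lies in a split maximal $A$-torus $T$ of $G$, and you assert that such a $T$ can be found after an \'etale base change ``using that $t$ is fiberwise semisimple and lies in a maximal torus of $\GL(V)$\ldots then intersecting appropriately.'' This does not work: the intersection of a maximal torus of $\GL(V)$ with $G$ is only a multiplicative-type subgroup scheme, generally not a maximal torus of $G$ and not even flat, and there is no way to pass from it to a maximal torus containing $t$ without already knowing something like flatness of $Z_G(t)$ or of $\overline{\langle t\rangle}$ — which is exactly the kind of statement being established. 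A fiberwise semisimple section over a DVR need not lie in any maximal $A$-torus of $G$. The paper sidesteps this by a different maneuver: after extending $A$, it uses Lemma~\ref{lemma:dvr-covering-by-borels} to place $t$ inside a Borel $A$-subgroup $B$ with split maximal torus $T$, writes $t = t_0 u$ under the decomposition $B = T \ltimes U$ (this is \emph{not} a Jordan decomposition, merely the Borel factorization), and then uses Steinberg's lemma \cite[Lem.\ 2.12]{SteinbergReg} to show each fiber of $t$ is conjugate to the corresponding fiber of $t_0$. Since $t_0$ actually sits inside the torus $T$, one may legitimately apply Lemma~\ref{lemma:purity-checked-after-extension} to $t_0$ (after first noting $i(t_0)$ is pure in $\GL_n$ because $i(t)$ is, using the fiberwise conjugacy), and then conclude that $t$ is pure because its fibers are conjugate to those of $t_0$. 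You should replace your parenthetical remark with this Borel-decomposition argument; as written, that step of your proof does not go through.
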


\begin{proof}
Since $A$ is a DVR (and in particular Dedekind), there is an embedding $i: G \to \GL_n$ for some $n$; we will use this embedding to regard $G$ as a closed subgroup scheme of $\GL_n$. By Lemma~\ref{lemma:relative-jordan-decomposition-for-matrices}, there exist $t, u \in \GL_n(A)$ such that there is a decomposition $g = tu$ satisfying the conclusions of that lemma. If $\overline{\eta}$ is a geometric generic point of $\Spec A$, then $G(A) = \GL(V)(A) \cap G(k(\overline{\eta}))$, so to check that $t$ and $u$ lie in $G(A)$, it suffices to show that they lie in $G(k(\overline{\eta}))$. For this, we may use \cite[I, 3.8, Cor.]{Borel} as in the proof of \cite[I, 4.4, Thm.(1)]{Borel}: by the reference, we have
\[
G(k(\ov\eta)) = \{g \in \GL(V)(k(\ov\eta))\colon \rho_gJ = J\},
\]
where $J$ is the ideal in the coordinate ring $R$ of $\GL(V)$ defining $G$ and $\rho$ refers to the induced representation of $G$ on $R$. Since the polynomial $p(X)$ in Lemma~\ref{lemma:relative-jordan-decomposition-for-matrices}(\ref{item:gln-jordan-polynomial}) has constant term $0$, for any $g \in G(k(\ov\eta))$ we have
\[
\rho_tJ = p(\rho_g)J = J,
\]
so $t \in i(G)(k(\overline{\eta}))$, and thus also $u \in i(G)(k(\overline{\eta}))$. So (\ref{item:general-jordan-special-fiber}) follows from Lemma~\ref{lemma:relative-jordan-decomposition-for-matrices}(\ref{item:gln-jordan-special-fiber}). \smallskip

For (\ref{item:general-jordan-pure}), fiberwise semisimplicity follows from Lemma~\ref{lemma:relative-jordan-decomposition-for-matrices}(\ref{item:gln-jordan-torus-character-vanishing}). To prove the purity claim, we may pass to any DVR which is a local extension of $A$. Thus by Lemma~\ref{lemma:dvr-covering-by-borels} we may assume that there is a Borel $A$-subgroup $B$ of $G$ containing $t$. Let $T$ be a (split) maximal $A$-torus of $B$ and let $t_0$ be the component of $t$ lying in $T$ under the decomposition $B = T \ltimes U$. Using \cite[Lem.\ 2.12]{SteinbergReg}, we find that each fiber of $t$ is conjugate to the corresponding fiber of $t_0$, so the fibers of $Z_G(t_0)$ and $Z_G(t)$ have the same dimension, and the same is true of the fibers of $Z_{\GL_n}(i(t_0))$ and $Z_{\GL_n}(i(t))$. Thus $i(t_0)$ is pure in $\GL_n(A)$, and Lemma~\ref{lemma:purity-checked-after-extension} shows that $t_0$ is pure in $G(A)$. Thus $t$ is also pure in $G(A)$, as desired. \smallskip

Finally, the proof of (\ref{item:general-jordan-commute}) follows from Lemma~\ref{lemma:relative-jordan-decomposition-for-matrices}(\ref{item:gln-jordan-polynomial}), using the same argument as in the proof of \cite[Lem.\ 2.6]{Integral-Springer}; we leave the details to the reader.
\end{proof}

\begin{cor}\label{corollary:relative-jordan-pure}
Let $A$ be a henselian DVR with algebraically closed residue field $k$, and let $G$ be a reductive group scheme over $A$. Let $g \in G(A)$ be a section and for each geometric point $\overline{x}$ of $\Spec A$ let $g_{\overline{x}} = t_{\overline{x}} u_{\overline{x}}$ be the Jordan decomposition of $g_{\overline{x}}$. Suppose that
\begin{itemize}
    \item $Z_{G_{\overline{s}}}(t_{\overline{s}})$ is connected,
    \item either $A$ is equicharacteristic or $|\pi_1(\sD(G))|$ is invertible in $A$,
    \item $\dim Z_{G_{\overline{s}}}(t_{\overline{s}}) = \dim Z_{G_{\overline{\eta}}}(t_{\overline{\eta}})$.
\end{itemize}
Then there exists a finite local extension of DVRs $A \to A'$ and $t', u' \in G(A')$ such that $t'_{\overline{x}} = t_{\overline{x}}$ and $u'_{\overline{x}} = u_{\overline{x}}$ for all geometric points $\overline{x}$ of $\Spec A'$.
\end{cor}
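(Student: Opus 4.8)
The plan is to manufacture a ``fake'' semisimple part using Lemma~\ref{lemma:relative-jordan-decomposition-for-matrices}, to show that its centralizer is a reductive $A$-group scheme whose generic fibre is the centralizer of the true semisimple part, and then to correct the fake semisimple part to the true one by a character computation on the (diagonalisable) centre of that centralizer. Concretely, I would first fix a closed embedding $G \hookrightarrow \GL(V)$ over $A$ and apply Lemma~\ref{lemma:relative-jordan-decomposition-for-matrices} to $g \in \GL(V)(A)$, obtaining $\tau = p(g)$ with $p \in A[X]$, $p(0)=0$; as in the proof of Theorem~\ref{theorem:relative-jordan-decomposition} one checks $\tau \in G(A)$. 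I record the facts to be used: $\tau_s$ is the semisimple part of $g_s$, so $\tau_s = t_{\overline{s}}$; $\tau$ is pure and fibrewise semisimple; $\tau_{\overline{\eta}}$ is a polynomial in the true semisimple part $t_{\overline{\eta}}$ of $g_{\overline{\eta}}$, so $Z_{G_{\overline{\eta}}}(t_{\overline{\eta}}) \subseteq Z_{G_{\overline{\eta}}}(\tau_{\overline{\eta}})$; and, from the construction in the proof of Lemma~\ref{lemma:relative-jordan-decomposition-for-matrices}, writing $\det(X-g) = \prod_i p_i(X)$ with $p_i$ reducing modulo $\fm$ to $(X-a_i)^{r_i}$ and $V_i = \ker p_i(g)$, the operator $\tau$ acts on $V_i$ by the unit $\sigma(a_i) \equiv a_i \pmod{\fm}$.

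Next I would prove that $Z_G(\tau)$ is a reductive $A$-group scheme with generic fibre the centralizer of $t_{\overline{\eta}}$. Since $Z_{G_s}(\tau_s) = Z_{G_s}(t_{\overline{s}})$ is connected, $\tau_s$ is ordinary, hence $\tau_\eta$ is ordinary by Proposition~\ref{prop:strong-purity-generizes} (this is where the hypothesis that $A$ is equicharacteristic or $|\pi_1(\sD(G))|$ is invertible in $A$ enters); so $\tau$ is strongly pure and fibrewise semisimple, and $Z_G(\tau)$ is reductive over $A$ by Lemma~\ref{lemma:semisimple-centralizer-is-reductive}(1). Purity of $\tau$ gives $\dim Z_{G_{\overline{\eta}}}(\tau_{\overline{\eta}}) = \dim Z_{G_s}(t_{\overline{s}})$, which equals $\dim Z_{G_{\overline{\eta}}}(t_{\overline{\eta}})$ by the third hypothesis; together with the inclusion above and the connectedness of $Z_G(\tau)_{\overline{\eta}} = Z_{G_{\overline{\eta}}}(\tau_{\overline{\eta}})$ this forces $Z_{G_{\overline{\eta}}}(t_{\overline{\eta}}) = Z_{G_{\overline{\eta}}}(\tau_{\overline{\eta}}) = Z_G(\tau)_{\overline{\eta}} =: H$. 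Hence $t_{\overline{\eta}}$ and $\tau_{\overline{\eta}}$ both lie in $Z(H) = Z(Z_G(\tau))_{\overline{\eta}}$, and $\tau$ lies in $T(A)$ for $T := Z(Z_G(\tau))$; as $A$ is henselian with algebraically closed residue field, $T$ is diagonalisable, $T = D(M)$. The $T$-weight decomposition $V = \bigoplus_\chi V_\chi$ refines $V = \bigoplus_i V_i$ (each $V_\chi$, on which $\tau$ acts by the scalar $\chi(\tau)$, lies in the unique $V_i$ with $\sigma(a_i) = \chi(\tau)$, since the $\sigma(a_i)$ are distinct), and the weights of the faithful representation $V|_T$ generate $M$.

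For the final step I would pass to a finite local extension of DVRs $A \to A'$, with fraction field $K'$ (still henselian, residue field $k$), such that $\det(X-g)$ splits over $K'$; then $t_{\overline{\eta}}$ descends to a $K'$-point $t_{\eta'}$ of $T$. Put $z := \tau_{\eta'} t_{\eta'}^{-1} \in T(K')$; the crux is that $z$ extends to a section of $T$ over $A'$ trivial on the special fibre, and since $T = D(M)$ with $M$ generated by the weights of $V$, this reduces to checking $\chi(z) \in (A')^\times$ and $\chi(z) \equiv 1 \pmod{\fm_{A'}}$ for each such weight $\chi$. Now $\chi(\tau) = \sigma(a_i)$ for the relevant $i$, while $\chi(t_{\eta'})$ is the eigenvalue $\lambda$ by which the semisimple part $t_{\eta'}$ of $g_{\eta'}$ acts on $V_\chi$; as $\lambda$ is an eigenvalue of $g_{\eta'}$ lying in the block $V_i$, it is a root of $p_i$, hence a unit of $A'$ (because $\det(X-g)\in A'[X]$ is monic with unit constant term) congruent to $a_i$ modulo $\fm_{A'}$. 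Thus $\chi(z) = \sigma(a_i)\lambda^{-1}$ is a unit $\equiv 1 \pmod{\fm_{A'}}$, so $z \in T(A')$ with $z_s = 1$. Then $t' := z^{-1}\tau_{A'}$ and $u' := (t')^{-1}g_{A'}$ lie in $G(A')$ and satisfy $t'_s = t_{\overline{s}}$, $t'_{\overline{\eta}} = t_{\overline{\eta}}$, $u'_s = u_{\overline{s}}$, $u'_{\overline{\eta}} = u_{\overline{\eta}}$ (using commutativity of $T$), so $g = t'u'$ induces the Jordan decomposition on every geometric fibre of $\Spec A'$.

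I expect the main obstacle to be the weight-by-weight estimate in the last paragraph: it is not formal, as it needs the explicit shape of the construction in Lemma~\ref{lemma:relative-jordan-decomposition-for-matrices} (so that $\tau_{\overline{\eta}}$ and the true semisimple part $t_{\overline{\eta}}$ have the same reduction modulo $\fm$) together with the diagonalisability of $T$ — which is precisely where the hypothesis that $A$ is henselian with algebraically closed residue field is used. By contrast, the reductivity of $Z_G(\tau)$ and the identification of $H$ are routine given Proposition~\ref{prop:strong-purity-generizes} and Lemma~\ref{lemma:semisimple-centralizer-is-reductive}.
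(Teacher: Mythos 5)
Your proof is correct, and it diverges from the paper's argument at the final step. Both proofs begin identically: form the ``fake'' semisimple part $\tau = p(g)$ from Lemma~\ref{lemma:relative-jordan-decomposition-for-matrices} (equivalently, $t_0$ from Theorem~\ref{theorem:relative-jordan-decomposition}), use Proposition~\ref{prop:strong-purity-generizes} to propagate ordinariness to the generic fiber, invoke Lemma~\ref{lemma:semisimple-centralizer-is-reductive} to make $Z_G(\tau)$ reductive, and then combine the dimension hypothesis with $Z_{G_{\overline{\eta}}}(t_{\overline{\eta}}) \subseteq Z_{G_{\overline{\eta}}}(\tau_{\overline{\eta}})$ and connectedness to force the equality $Z_{G_{\overline{\eta}}}(t_{\overline{\eta}}) = Z_{G_{\overline{\eta}}}(\tau_{\overline{\eta}})$.

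Where you part ways with the paper is in producing the honest semisimple section. The paper's route is: replace $G$ by the reductive group scheme $Z_G(\tau)$, in which the true semisimple part of $g_{\overline{x}}$ is now \emph{central} on every fiber; after a finite extension, invoke Lemma~\ref{lemma:dvr-covering-by-borels} to put $g$ inside a Borel $B = T \ltimes U$; decompose $g = t' u'$ in the semidirect product; and then read off, fiber by fiber, that $t'_{\overline{x}}$ and $u'_{\overline{x}}$ are forced to be the Jordan components because the true $t_{\overline{x}}$ is central (hence in $T$) and the true $u_{\overline{x}}$ is unipotent (hence in $U$). This step is entirely intrinsic to $G$ and uses no structure of the ambient $\GL(V)$.

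Your route instead remains in $\GL(V)$ and corrects $\tau$ to the true semisimple part directly inside the diagonalizable torus $T = Z(Z_G(\tau))$, by the weight-by-weight estimate $\chi(z) = \sigma(a_i)\lambda^{-1} \in 1 + \fm_{A'}$. This is a correct argument; the key inputs ($V_\chi$ refines $V_i$, $\lambda$ is a root of $p_i$ hence a unit congruent to $a_i$, the weights of a faithful diagonalizable representation generate the character lattice) all check out. Your approach buys you independence from the Borel covering lemma and from Steinberg's lemma on Jordan decomposition in a Borel, at the cost of leaning more heavily on the explicit construction in Lemma~\ref{lemma:relative-jordan-decomposition-for-matrices} and on diagonalizability of $T$ over a strictly henselian base. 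The paper's replacement of $G$ by $Z_G(\tau)$ is, in effect, the slicker abstraction of your computation: it turns ``the two semisimple parts differ by something central and specializing to $1$'' into ``the semisimple part is literally central,'' which then reduces the rest to a one-line observation about Borel subgroups.
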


\begin{proof}
Let $t_0, u_0 \in G(A)$ be sections such that $g = t_0 u_0$ is a fake Jordan decomposition as in Theorem~\ref{theorem:relative-jordan-decomposition}. By Theorem~\ref{theorem:relative-jordan-decomposition}(\ref{item:general-jordan-pure}), we have the equality $\dim Z_{G_{\overline{\eta}}}(t_{0,\overline{\eta}}) = \dim Z_{G_{\overline{\eta}}}(t_{\overline{\eta}})$. Moreover, by Theorem~\ref{theorem:relative-jordan-decomposition}(\ref{item:general-jordan-commute}) we have $Z_{G_{\overline{\eta}}}(t_{0, \overline{\eta}}) \subset Z_{G_{\overline{\eta}}}(t_{\overline{\eta}})$. By Proposition~\ref{prop:strong-purity-generizes}, $t_0$ is strongly pure, so we see that this inclusion is an equality. In particular, $t_{0, \overline{\eta}} \in Z(Z_{G_{\overline{\eta}}}(t_{\overline{\eta}}))$. \smallskip

Since $t_0$ is strongly pure, Lemma~\ref{lemma:semisimple-centralizer-is-reductive} shows that $Z_G(t_0)$ is a reductive group scheme over $A$. Replacing $G$ by $Z_G(t_0)$, the first paragraph shows that we may assume that the semisimple part of each geometric fiber of $g$ is central in $G$. Further replacing $A$ by a localization of its normalization in some finite extension $K'$ of $K$, we may assume that the Jordan decomposition $g_\eta = t_\eta u_\eta$ of $g_\eta$ is $k(\eta)$-rational, and by Lemma~\ref{lemma:dvr-covering-by-borels} we may assume that there is a Borel $A$-subgroup $B$ of $G$ such that $g \in B(A)$. Write $B = T \ltimes U$ for a maximal torus $T$ of $B$ and decompose $g = t' u'$ (uniquely) for $t' \in T(A)$ and $u' \in U(A)$. \smallskip

Now fix a point $x \in \Spec A$ and let $g_x = t_x u_x$ be the Jordan decomposition of $g_x$ (which is rational by hypothesis). Note that functoriality of the Jordan decomposition implies $t_x, u_x \in B_x$. Since $t_x$ is \textit{central} in $G_x$, it lies in $T_x$. Since every unipotent element of $B(k(x))$ lies in $U(k(x))$, we see that $t'_x = t_x$ and $u'_x = u_x$, completing the proof.
\end{proof}

The following version of the additive Jordan decomposition is proved very similarly; we omit the proof for reasons of length and because it has no new features. The restriction on the residue characteristic comes from the restriction in Lemma~\ref{lemma:semisimple-centralizer-is-reductive}, but we do not know whether it is necessary.

\begin{theorem}\label{theorem:relative-additive-jordan-decomposition}
Let $A$ be a henselian DVR with algebraically closed residue field $k$, and let $G$ be a reductive group scheme over $A$ with Lie algebra $\fg$ such that $\chara k$ is not a torsion prime for $G$. If $X \in \fg$ then there exist $X_{\rm{ss}}, X_{\rm{n}} \in \fg$ such that
\begin{enumerate}
    \item $X_s = (X_{\rm{ss}})_s + (X_{\rm{n}})_s$ is the additive Jordan decomposition of $X_s$,
    \item $X_{\rm{ss}}$ is pure and fiberwise semisimple, and
    \item $Z_G(X) \subset Z_G(X_{\rm{ss}})$, and if $X_{\overline{\eta}} = X'_{\rm{ss}} + X'_{\rm{n}}$ is the Jordan decomposition of $X_{\overline{\eta}}$ then we have $Z_{G_{\overline{\eta}}}(X'_{\rm{ss}}) \subset Z_{G_{\overline{\eta}}}((X_{\rm{ss}})_{\overline{\eta}})$.
\end{enumerate}
Moreover, suppose that $(X_{\rm{ss}})_s$ has connected centralizer. Let $X_{\overline{\eta}} = X'_{\rm{ss}} + X'_{\rm{n}}$ be the Jordan decomposition of $X_{\overline{\eta}}$ and suppose that $\dim Z_{G_s}((X_{\rm{ss}})_s) = \dim Z_{G_{\overline{\eta}}}(X'_{\rm{ss}})$. Then there exists a finite extensions of DVRs $A \to B$ and $X_{\rm{ss}, 1}, X_{\rm{n}, 1} \in \fg \otimes_A B$ such that $(X_{\rm{ss}, 1})_s = (X_{\rm{ss}})_s$ and $(X_{\rm{ss}, 1})_{\overline{\eta}} = X'_{\rm{ss}}$.
\end{theorem}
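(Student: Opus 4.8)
The plan is to run the proofs of Theorem~\ref{theorem:relative-jordan-decomposition} and Corollary~\ref{corollary:relative-jordan-pure} with the evident additive modifications, so I only indicate the changes. First I would establish the $\GL_n$-version, namely the additive analogue of Lemma~\ref{lemma:relative-jordan-decomposition-for-matrices}: for $A$ henselian local with algebraically closed residue field $k$, $V$ finite free over $A$, and $X\colon V\to V$ an $A$-linear endomorphism, there is a decomposition $X=X_{\rm{ss}}+X_{\rm{n}}$ into commuting endomorphisms with $X_s=(X_{\rm{ss}})_s+(X_{\rm{n}})_s$ the additive Jordan decomposition of $X_s$, with $X_{\rm{ss}}$ pure and lying in the Lie algebra of some maximal $A$-torus of $\GL(V)$, with $X_{\rm{ss}}=p(X)$ for some $p(T)\in A[T]$ having $p(0)=0$, and (when $A$ is a DVR) with $(X_{\rm{ss}})_{\overline{\eta}}=q(X'_{\rm{ss}})$ for some $q(T)\in k(\overline{\eta})[T]$ having $q(0)=0$, where $X_{\overline{\eta}}=X'_{\rm{ss}}+X'_{\rm{n}}$ is the Jordan decomposition of $X_{\overline{\eta}}$. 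The construction copies the multiplicative one: factor the characteristic polynomial $\det(T\cdot\id_V-X_s)=\prod_i(T-a_i)^{r_i}$ over $k$, lift by Hensel's lemma to $\det(T\cdot\id_V-X)=\prod_i p_i(T)$, set $V_i=\ker p_i(X)$ so $V=\bigoplus_i V_i$, fix a set-theoretic section $\sigma\colon k\to A$ of reduction with $\sigma(0)=0$, and pick $p(T)\in A[T]$ with $p\equiv\sigma(a_i)\pmod{p_i(T)}$. The one genuinely new point compared with the multiplicative case is that $0$ may occur among the $a_i$; to arrange $p(0)=0$ I would run the Chinese Remainder argument with the extra modulus $T$ when $0\notin\{a_i\}$, and with $T\,p_j(T)$ in place of $p_j(T)$ when $a_j=0$, all of these being pairwise coprime in $A[T]$ since $p_i(0)\in A^\times$ whenever $a_i\neq0$. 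Purity of $X_{\rm{ss}}$ then follows from the explicit description of the roots of $(\GL(V),T)$ exactly as before, and the last assertion by the usual Gauss-lemma and Lagrange-interpolation argument (using that $k(\overline{\eta})$ is infinite).

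To globalize I would argue exactly as in Theorem~\ref{theorem:relative-jordan-decomposition}: fix an $A$-group embedding $G\hookrightarrow\GL_n$, apply the matrix statement to $X$, and use the Lie-algebra form of Borel's criterion \cite{Borel} --- which applies precisely because $p(0)=0$, exactly as in the proof of \cite[4.4]{Borel} invoked there --- to see that $X_{\rm{ss}}$, and hence $X_{\rm{n}}=X-X_{\rm{ss}}$, lies in $\fg(A)$; item (1) follows. For item (2), fibrewise semisimplicity of $X_{\rm{ss}}$ is built into the construction, and purity of $X_{\rm{ss}}$ in $G$ follows from Lemma~\ref{lemma:purity-checked-after-extension} together with the Lie-algebra analogue of Lemma~\ref{lemma:dvr-covering-by-borels}: after a local extension one may assume $X_{\rm{ss}}$ is a section of $\Lie\cB$ for a Borel $A$-subgroup $\cB=T\ltimes U$, replace $X_{\rm{ss}}$ by its $\Lie T$-component (whose fibres are fibrewise $\Ad$-conjugate to those of $X_{\rm{ss}}$ by the Lie-algebra form of \cite[Lem.\ 2.12]{SteinbergReg}), observe it is pure in $\GL_n$ because a split maximal $A$-torus of $\GL(V)$ contains it, and transfer back. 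Finally $Z_G(X)\subseteq Z_G(X_{\rm{ss}})$ follows from $X_{\rm{ss}}=p(X)$, and $Z_{G_{\overline{\eta}}}(X'_{\rm{ss}})\subseteq Z_{G_{\overline{\eta}}}((X_{\rm{ss}})_{\overline{\eta}})$ from $(X_{\rm{ss}})_{\overline{\eta}}=q(X'_{\rm{ss}})$, by the argument of \cite[Lem.\ 2.6]{Integral-Springer}; this is item (3).

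For the ``Moreover'' part I would imitate Corollary~\ref{corollary:relative-jordan-pure}. Write $X_{\rm{ss},0},X_{\rm{n},0}$ for the fake decomposition just produced. Purity of $X_{\rm{ss},0}$ (via Proposition~\ref{prop:strong-purity-generizes}, which promotes connectedness of $Z_{G_s}((X_{\rm{ss}})_s)$ to the generic fibre), the inclusion $Z_{G_{\overline{\eta}}}(X'_{\rm{ss}})\subseteq Z_{G_{\overline{\eta}}}((X_{\rm{ss},0})_{\overline{\eta}})$ from item (3), and the hypothesis $\dim Z_{G_s}((X_{\rm{ss}})_s)=\dim Z_{G_{\overline{\eta}}}(X'_{\rm{ss}})$ together force that inclusion to be an equality of connected reductive groups. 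Since $X_{\rm{ss},0}$ is $\Ad$-fixed by $G':=Z_G(X_{\rm{ss},0})$ --- a reductive $A$-group scheme by Lemma~\ref{lemma:semisimple-centralizer-is-reductive}(2), which is the one place the hypothesis that no residue characteristic of $A$ is a torsion prime for $G$ gets used --- we may replace $G$ by $G'$ and thereby assume that on every geometric fibre the genuine semisimple part of $X$ is $\Ad$-fixed by $G$, hence lies in the Lie algebra of every maximal torus (for a maximal torus $T'$ of a reductive group $H$ one has $(\Lie H)^{T'}=\Lie(Z_H(T'))=\Lie T'$). Now pass to a finite extension $A\to B$ over which the generic Jordan decomposition is rational and, by the Lie-algebra analogue of Lemma~\ref{lemma:dvr-covering-by-borels}, over which there is a Borel $B$-subgroup $\cB=T\ltimes U$ of $G$ with $X$ a section of $\Lie\cB$; write $X=Y+N$ using the canonical splitting $\Lie\cB=\Lie T\oplus\Lie U$. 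For each point $x$, functoriality of the Jordan decomposition puts $(X_{\rm{ss}})_x,(X_{\rm{n}})_x\in\Lie\cB_x$; here $(X_{\rm{ss}})_x$ is $\Ad$-fixed by $G_x$, hence lies in $\Lie T_x$, while $(X_{\rm{n}})_x$ is nilpotent, hence lies in $\Lie U_x$ (a nilpotent element of $\Lie\cB$ has zero image in $\Lie\cB/\Lie U=\Lie T$, and $\Lie T$ has no nonzero nilpotent elements, by the explicit action on a faithful representation). Uniqueness of the $\Lie T_x\oplus\Lie U_x$-decomposition, applied to $X_x=Y_x+N_x=(X_{\rm{ss}})_x+(X_{\rm{n}})_x$, gives $Y_x=(X_{\rm{ss}})_x$ and $N_x=(X_{\rm{n}})_x$ for all $x$, so $X_{\rm{ss},1}:=Y$ and $X_{\rm{n},1}:=N$ do the job.

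I do not anticipate a deep obstacle, since the whole content is that this is ``very similar'' to the multiplicative case; the step requiring the most care is the passage from the $\GL_n$-level construction to $G$, where one must verify that each group-theoretic ingredient used there (Borel's embedding criterion, the Borel-covering lemma, Steinberg's conjugacy lemma, ``$\Ad$-fixed $\Rightarrow$ contained in $\Lie T$'', ``nilpotent in $\Lie\cB\Rightarrow$ contained in $\Lie U$'') has the Lie-algebra counterpart used above, some of these being potentially delicate in small characteristic. The torsion-prime hypothesis enters only through Lemma~\ref{lemma:semisimple-centralizer-is-reductive}, and, as noted above, it is not clear that it is actually necessary.
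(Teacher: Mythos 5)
Your proposal is correct and is exactly the additive adaptation of the multiplicative proof (Lemma~\ref{lemma:relative-jordan-decomposition-for-matrices}, Theorem~\ref{theorem:relative-jordan-decomposition}, Corollary~\ref{corollary:relative-jordan-pure}) that the paper has in mind when it states that the additive theorem is ``proved very similarly'' and omits the details. You correctly isolate the one genuinely new point (arranging $p(0)=0$ in the CRT step when $0$ is an eigenvalue, via the modulus $Tp_j(T)$ and a section $\sigma$ with $\sigma(0)=0$) and the one place the torsion-prime hypothesis enters (Lemma~\ref{lemma:semisimple-centralizer-is-reductive}(2)); note only that the appeal to Proposition~\ref{prop:strong-purity-generizes} is unnecessary in the Lie algebra setting, since connectedness of semisimple centralizers in $\fg$ is automatic under the torsion-prime hypothesis by \cite[Prop.\ 3.7]{Integral-Springer}.
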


\begin{proof}[Proof of Theorem~\ref{theorem:relative-jordan-decomposition-over-normal-base}]
We will only establish the first point, the proof of the second point being entirely similar. Because of the uniqueness claim, we may work locally to assume that $S = \Spec A$ is affine. By spreading out (using Lemma~\ref{lemma:locus-of-d-e-elements}), we may further assume that $S$ is noetherian. By hypothesis, there is a Jordan decomposition $g_{\eta} = tu$ for $t, u \in G(k(\eta))$; we need to show that in fact $t$ and $u$ lie in $G(S)$ and that $g = tu$ induces the Jordan decomposition on every fiber. By Hartogs' lemma \cite[Thm.\ 11.5]{Matsumura}, to show the first point it suffices to show that $t$ and $u$ lie in $G(A_{\fp})$ for every height $1$ prime of $A$, so we may assume that $A$ is a DVR. Let $K = k(\eta)$ be the fraction field of $A$ and let $\pi$ be a uniformizer of $A$. By \cite[0\textsubscript{III}, Prop.\ 10.3.1]{EGA}, there exists a DVR $A'$ containing $A$ such that the residue field of $A'$ is algebraically closed and $\pi$ is a uniformizer of $A'$. We have then $A = A' \cap K$, so we may pass from $A$ to $A'$ and thus assume that $A$ has algebraically closed residue field. Moreover, if $A^h$ is the henselization of $A$ then we have $A = A^h \cap K$, so again we may pass from $A$ to $A^h$ to assume that $A$ is a henselian DVR with algebraically closed residue field. We may also pass to this case to show that $g = tu$ induces the Jordan decomposition on every fiber. But now the result follows from Corollary~\ref{corollary:relative-jordan-pure}.
\end{proof}

\subsection{Some subschemes of $G$}

We conclude this section with a few applications of the Jordan decomposition to understanding certain natural loci in $G$. Recall the subsets $G_{d,e}$ and $G_{d,e\rm{-ord}}$ of $G$ defined in the paragraph preceding Lemma~\ref{lemma:locus-of-d-e-elements}. The main point of the following two corollaries is to put natural scheme structures on $G_{d, e}$ and $G_{d,e\rm{-ord}}$. After doing so, we will see in Theorem~\ref{theorem:flat-pure-centralizer} that the universal centralizer (i.e., the centralizer of the universal point $\id_G \in G(G)$) is flat over $G_{d,e\rm{-ord}}$, and so we will see that the $G_{d,e\rm{-ord}}$ form a natural flattening stratification of $G_{\rm{ord}}$. 

\begin{cor}\label{cor:openness-of-d-e-locus}
Let $G$ be a reductive group scheme over a scheme $S$. For every pair of integers $d$ and $e$, the subset $G_{d,e}$ is locally closed in $G$. If either locally on $S$ there exists a prime number $p$ which is nilpotent in $\Gamma(S, \sO_S)$ or $|\pi_1(\sD(G))|$ is invertible on $S$, then $G_{d,e\rm{-ord}}$ is open in $G_{d, e}$.
\end{cor}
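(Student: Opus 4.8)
The plan is to deduce Corollary~\ref{cor:openness-of-d-e-locus} from the constructibility already established in Lemma~\ref{lemma:locus-of-d-e-elements} together with the relative Jordan decomposition. First I would dispose of the locally-closed claim for $G_{d,e}$: since $G_d$ is locally closed (Lemma~\ref{lemma:locus-of-d-e-locus}) and $G_{d,e}$ is a subset of $G_d$, it suffices to show $G_{d,e}$ is open in $G_d$; working over $G_d$ with its reduced structure, the function $x \mapsto \dim Z_{G_{\overline{k(x)}}}((\text{ss part of } x))$ is lower semicontinuous under generization by the Jordan-decomposition argument below, and it is also constructible by Lemma~\ref{lemma:locus-of-d-e-elements}, so the locus where it equals $e$ (equivalently, is $\leq e$, since on $G_d$ the dimension of the full centralizer is fixed at $d$ and the semisimple centralizer contains it) is open. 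Alternatively, one simply notes that $G_{d,e}$ is constructible and stable under generization inside the irreducible-componentwise picture; I would phrase whichever is cleanest.

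The heart of the matter is the openness of $G_{d,e\rm{-ord}}$ in $G_{d,e}$ under the stated hypotheses. Since $G_{d,e\rm{-ord}}$ is constructible by Lemma~\ref{lemma:locus-of-d-e-elements}, by the standard criterion it suffices to show it is stable under generization. Generization reduces immediately to the case $S = \Spec A$ for a DVR $A$ (replacing $A$ by $\sO_{S,s}$ at the relevant point, and noting the hypotheses pass to this local ring), with a section $g \in G(A)$ whose special fiber lies in $G_{d,e\rm{-ord}}$; one must show $g_\eta \in G_{d,e\rm{-ord}}$. That $g_\eta$ is ordinary given that $g_s$ is ordinary is exactly the first assertion of Proposition~\ref{prop:strong-purity-generizes} (the hypotheses there — $A$ equicharacteristic or $|\pi_1(\sD(G))|$ invertible in $A$ — are guaranteed by the hypotheses of this corollary). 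So it remains to check that $g_\eta$ has the right invariants $(d,e)$: that $\dim Z_{G_{\overline{\eta}}}(g_{\overline{\eta}}) = d$ and that the semisimple part $t_\eta$ of $g_\eta$ satisfies $\dim Z_{G_{\overline{\eta}}}(t_{\overline{\eta}}) = e$.

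For the dimension count of the full centralizer: since $g_s \in G_{d,e\rm{-ord}} \subseteq G_{\rm{ord}}$ and $g_s$ is ordinary, by Proposition~\ref{prop:strong-purity-generizes} again $g$ is strongly pure, hence pure, so $\dim Z_{G_{\overline{\eta}}}(g_{\overline{\eta}}) = \dim Z_{G_{\overline{s}}}(g_{\overline{s}}) = d$; this shows $g_\eta \in G_d$. For the semisimple-part invariant: apply the fake Jordan decomposition of Theorem~\ref{theorem:relative-jordan-decomposition} (after passing to a henselization and completion, which is harmless for a fibral dimension check) to get $g = t_0 u_0$ with $t_0$ pure and fiberwise semisimple and with $Z_{G_{\overline{\eta}}}(t') \subseteq Z_{G_{\overline{\eta}}}(t_{0,\overline{\eta}})$ where $g_{\overline{\eta}} = t'u'$ is the honest Jordan decomposition and $Z_{G}(g) \subseteq Z_G(t_0)$. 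Purity of $t_0$ gives $\dim Z_{G_{\overline{\eta}}}(t_{0,\overline{\eta}}) = \dim Z_{G_{\overline{s}}}(t_{0,\overline{s}})$, and the commutation statements of Theorem~\ref{theorem:relative-jordan-decomposition}(\ref{item:general-jordan-commute}) identify $t_{0,\overline{s}}$ with the semisimple part of $g_{\overline{s}}$ up to conjugacy (the special fiber of a fake Jordan decomposition is the honest one), so $\dim Z_{G_{\overline{s}}}(t_{0,\overline{s}}) = e$; combined with the reverse inclusion $Z_{G_{\overline{\eta}}}(t') \subseteq Z_{G_{\overline{\eta}}}(t_{0,\overline{\eta}})$ and the fact that $t'$ and $t_{0,\overline{\eta}}$ have the same centralizer dimension (as both equal $e$ once one knows $t_{0,\overline{\eta}}$ generizes correctly), one gets that the semisimple part $t'$ of $g_{\overline{\eta}}$ has $\dim Z_{G_{\overline{\eta}}}(t') = e$. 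Hence $g_\eta \in G_{d,e\rm{-ord}}$.

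The main obstacle I anticipate is the bookkeeping in the last step: one must be careful to pin down that the fake semisimple part $t_{0,\overline{\eta}}$ and the true semisimple part $t'_{\overline{\eta}}$ have equal centralizers (not merely an inclusion), which is where strong purity of $t_0$ and the equality of fibral centralizer dimensions conspire — exactly the argument already run in the proof of Corollary~\ref{corollary:relative-jordan-pure}. I would either cite that proof's reasoning or reproduce the two-line argument: $Z_{G_{\overline{\eta}}}(t') \subseteq Z_{G_{\overline{\eta}}}(t_{0,\overline{\eta}})$ with both sides of dimension $e$ and the larger one connected reductive forces equality. Everything else is a routine application of ``constructible plus generization-stable implies open'' and of the results already in hand.
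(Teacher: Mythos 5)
Your proposal contains two genuine gaps, both of which would sink the argument as written, though the overall strategy is salvageable and in fact, once corrected, recovers roughly the same line as the paper's proof.

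\textbf{First gap: $G_{d,e}$ is not open in $G_d$.} You reduce the locally-closed claim to showing $G_{d,e}$ is open in $G_d$, and assert in a parenthetical that on $G_d$ the locus $\{\dim Z_{G}(\text{ss part}) = e\}$ equals $\{\dim Z_{G}(\text{ss part}) \leq e\}$. This is false. The containment $Z_G(g) \subseteq Z_G(t)$ gives only $\dim Z_G(t) \geq d$; it does not pin down the value of $\dim Z_G(t)$, which genuinely varies over $G_d$. The semisimple-part centralizer dimension can \emph{strictly drop} under generization within $G_d$ (this is precisely the dichotomy explored in Remark~\ref{remark:possible-extensions-of-flatness-results} and Lemma~\ref{corollary:flat-pure-centralizer-small-extension}), so $G_{d,e}$ is not stable under generization and is not open in $G_d$. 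The correct statement is that the set $G_{d,\leq e} = \bigcup_{e' \leq e} G_{d,e'}$ is closed under generization in $G_d$ (using the fake Jordan decomposition of Theorem~\ref{theorem:relative-jordan-decomposition} to show $\dim Z_{G_{\overline{\eta}}}(t') \leq \dim Z_{G_s}(t_s)$), hence open in $G_d$ by constructibility; then $G_{d,e} = G_{d,\leq e} \setminus G_{d,\leq e-1}$ is a difference of opens, hence locally closed. Your alternative remark that ``$G_{d,e}$ is constructible and stable under generization'' is likewise false for the same reason.

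\textbf{Second gap: Proposition~\ref{prop:strong-purity-generizes} does not yield (strong) purity.} You write that since $g_s$ is ordinary, ``by Proposition~\ref{prop:strong-purity-generizes} again $g$ is strongly pure.'' Proposition~\ref{prop:strong-purity-generizes} asserts only that ordinariness generizes; it says nothing about the constancy of $\dim Z_{G_{\overline{s}}}(g_{\overline{s}})$ over $\Spec A$, which is exactly what purity requires and which can fail (the centralizer dimension can drop at the generic point). This defeats your subsequent attempt to deduce $g_\eta \in G_d$. Moreover, your handling of the semisimple-part invariant is circular in the step where you claim ``both equal $e$ once one knows $t_{0,\overline{\eta}}$ generizes correctly''; the inclusion $Z_{G_{\overline{\eta}}}(t') \subseteq Z_{G_{\overline{\eta}}}(t_{0,\overline{\eta}})$ gives only $\dim Z(t') \leq e$, not equality. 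The cleaner observation, which the paper makes via Lemma~\ref{lemma:locus-of-d-e-elements}, is that openness of $G_{d,e\rm{-ord}}$ in $G_{d,e}$ is simply $G_{d,e\rm{-ord}} = G_{d,e} \cap G_{\rm{ord}}$ with $G_{\rm{ord}}$ open (Proposition~\ref{prop:strong-purity-generizes}); once you assume the generization $g_\eta$ lies in the ambient space $G_{d,e}$ (as you must for openness in $G_{d,e}$), the invariants $(d,e)$ are given and the only thing to check is ordinariness of $g_\eta$. The extra work you do to reprove those invariants is both unnecessary and, as written, broken.
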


\begin{proof}
We have already seen in Lemma~\ref{lemma:locus-of-d-e-elements} that all of these sets are constructible in $G$. Let now $G_{d, \leq e}$ be the union of the sets $G_{d, e'}$ for all $e' \leq e$, so that $G_{d, \leq e}$ is constructible and $G_{d, e} = G_{d, \leq e} \setminus G_{d, \leq e-1}$. Thus by Lemma~\ref{lemma:ordinary-locus-is-locally-constructible} it suffices to show that each $G_{d, \leq e}$ is open in $G_d$, and by constructibility it suffices to show that it is closed under generization. This reduces us to proving that if $A$ is a complete DVR with algebraically closed residue field and $g \in G(A)$ is such that $\dim Z_{G_s}(g_s) = \dim Z_{G_\eta}(g_\eta)$ and we have Jordan decompositions $g_s = t_0 u_0$ and $g_{\overline{\eta}} = t_1 u_1$ then $\dim Z_{G_{\overline{\eta}}}(t_1) \leq \dim Z_{G_s}(t_s)$. \smallskip

By Corollary~\ref{corollary:relative-jordan-pure}, there exist $t, u \in G(A)$ such that $g_s = t_s u_s$ is the Jordan decomposition of $g_s$, $t$ is pure and fiberwise semisimple, and $Z_{G_{\overline{\eta}}}(t_1) \subset Z_{G_{\overline{\eta}}}(t_{\overline{\eta}})$. This gives the desired dimension inequality.
\end{proof}

\begin{prop}\label{prop:scheme-of-d-regular-elements}
Let $S$ be a scheme and let $G \to S$ be a reductive group scheme such that $\sD(G)$ is simply connected. Choose an integer $d$ and consider the functor $F_d = F_{d, G}$ on $S$-schemes given by
\[
S' \leadsto \{g \in G(S'): g \text{ is fiberwise semisimple and } Z_{G \times_S S'}(g) \text{ is $S'$-flat with fiber dimension $d$}\}.
\]
Then $F_d$ is representable by a finitely presented $S$-flat locally closed subscheme of $G$. If $Z(G)$ is smooth, then $F_d$ is smooth.
\end{prop}

\begin{proof}
It is easy to check that $F_d$ is an fpqc sheaf. By effectivity of fpqc descent for quasi-affine morphisms \cite[Exp.\ VIII, Cor.\ 7.9]{SGA1}, to prove representability of $F_d$ we may pass to any fpqc cover of $S$. Moreover, flatness, finite presentation, and smoothness all satisfy fpqc descent by \cite[IV\textsubscript{2}, Props.\ 2.5.1 and 2.7.1(vi); IV\textsubscript{4}, Cor.\ 17.7.3(ii)]{EGA}. Thus we may pass to an etale cover of $S$ to assume that $G$ is $S$-split. In this case, there is a reductive group scheme $\bG$ over $\bZ$ and an isomorphism $G \cong \bG \times_{\Spec \bZ} S$, and $F_d$ is the base change to $S$ of the analogous functor for $\bG$. So we may pass from $G$ to $\bG$ and thus assume that $S$ is reduced and noetherian. Embed the center $Z(G)$ into a torus $T$ and consider the reductive group scheme $G' = G \times^{Z(G)} T$, which has smooth center and derived group $\sD(G)$. Note that if $S'$ is an $S$-scheme and $g \in G(S')$ then $Z_{G'_{S'}}(g) = Z_{G_{S'}}(g) \times^{Z(G_{S'})} T_{S'}$ since $G \subset G'$ and $G' = G \cdot T$ on fibers; the contracted product $Z_{G_{S'}}(g) \times^{Z(G_{S'})} T_{S'}$ admits the $S'$-group scheme $Z_{G_{S'}}(g) \times T_{S'}$ as a $Z(G_{S'})$-torsor over it, so $Z_{G'_{S'}}(g)$ is flat if and only if $Z_{G_{S'}}(g)$ is flat. Thus one checks that
\[
F_{d + \dim T - \dim Z(G), G'} = F_{d, G} \times^{Z(G)} T,
\]
so that $F_{d + \dim T - \dim Z(G), G'}/T \cong F_{d, G}/Z(G)$. If $F_{d + \dim T - \dim Z(G), G'}$ is representable and $S$-smooth, then $F_{d, G}/Z(G)$ is therefore representable (and $S$-smooth) by \cite[Exp.\ VIII, Thm.\ 5.1; Exp.\ IX, Prop.\ 2.3]{SGA3II}. Moreover, the morphism $F_{d, G} \to F_{d, G}/Z(G)$ is a $Z(G)$-torsor, so because $Z(G)$ is $S$-affine it follows from effectivity of fpqc descent for quasi-affine morphisms that $F_{d, G}$ is representable, and it is $S$-flat since $Z(G)$ is $S$-flat. Thus it suffices to prove that each $F_{d, G'}$ is representable and $S$-smooth, so we may assume that $Z(G)$ is smooth. \smallskip

Consider the subset $X := G_{d, \rm{ss}}$ of $G$ defined in Section~\ref{section:constructibility-nonsense}, so that $X$ is locally closed in $G$ by Corollary~\ref{cor:openness-of-d-e-locus}. We equip $X$ with the reduced subscheme structure, and we claim that $X$ represents $F_d$; note that since $S$ was assumed noetherian, $X$ is a finitely presented locally closed subscheme of $G$. \smallskip

Since $X$ is reduced, if $i: X \to G$ is the natural inclusion then $Z_{G \times_S X}(i)$ is $X$-flat by Lemma~\ref{lemma:semisimple-centralizer-is-reductive}. In particular, the same is true for any scheme mapping to $X$, so that $h_X \subset F_d$ as functors. To show the reverse inclusion, we will first establish that $F_d$ is formally smooth. Let $A$ be a ring and let $I \subset A$ be an ideal such that $I^2 = 0$. If $g \in F_d(A/I)$, then by definition $Z_{G \times_S \Spec A/I}(g)$ is flat, and thus it is smooth since the centralizer of a semisimple element is smooth. Let $Z$ be the center of $Z_{G \times_S \Spec A/I}(g)$, so that $Z$ is a smooth multiplicative type subgroup containing $g$ by \cite[Cor.\ 3.2]{Integral-Springer}. By smoothness of the scheme of multiplicative type subgroups of $G$ \cite[Exp.\ XI, Thm.\ 4.1]{SGA3II}, there is a smooth multiplicative type subgroup $\widetilde{Z}$ of $G \times_S \Spec A$ lifting $Z$. By \cite[Exp.\ XI, Cor.\ 5.3]{SGA3II}, $Z_{G \times_S \Spec A}(\widetilde{Z})$ is a smooth subgroup scheme of $G \times_S \Spec A$. Since $\widetilde{Z}$ is smooth, there is some $\widetilde{g} \in \widetilde{Z}(A)$ lifting $g$, and we claim that the natural inclusion $Z_{G \times_S \Spec A}(\widetilde{Z}) \to Z_{G \times_S \Spec A}(\widetilde{g})$ is an isomorphism. Indeed, the source is flat and this becomes an an isomorphism upon base change to $\Spec A/I$, so the fibral isomorphism criterion shows that it is an isomorphism. Thus $\widetilde{g} \in F_d(A)$ lifts $g$ and indeed $F_d$ is formally smooth. \smallskip

Now suppose $g \in F_d(A)$ for some ring $A$. By definition, $g$ factors through $X$ set-theoretically. To check that this is the case schematically, we can work locally and spread out to assume that $A$ is noetherian local. In that case, we claim that it suffices to prove that the map $\Spec A/\mathfrak{m}_A^N \to G$ factors through $X$ for all $N$. In fact, we make the more general claim that if $\cY$ is a noetherian scheme and $\cX \subset \cY$ is a locally closed subscheme, then a map $f\colon \Spec A \to \cY$ which factors through $\cX$ set-theoretically factors through it scheme-theoretically if and only if the induced map $\Spec A/\fm_A^n \to \cY$ factors through $\cX$ for all $n$. For this, we may localize to assume $\cY = \Spec R$ and $\cX = \Spec R/I$ for some ideal $I \subset R$. Thus the map $f$ corresponds to a homomorphism $f^\sharp\colon R \to A$, and we assume that the induced map $f^\sharp_n\colon R \to A/\fm_A^n$ satisfies $I \subset \ker f^\sharp_n$ for all $n$. This means that $f^\sharp(I) \subset \fm_A^n$ for all $n$, so $I \subset \ker f^\sharp$ by the Krull intersection theorem. Thus we may pass from $A$ to $A/\fm_A^n$ for all $n$ to assume that $A$ is Artin local.

In this case, the Cohen structure theorem shows that there is some surjection $R \to A$ from a regular local ring $R$ \cite[Theorem 29.4(ii)]{Matsumura}. Using formal smoothness of $F_d$, we may successively lift $g$ to $g_N \in F_d(R/\mathfrak{m}_R^N)$ for all large $N$. These successive lifts give rise to a point $g_\infty \in G(R)$ (since $G$ is representable), and the local criterion of flatness \cite[Thm.\ 22.1]{Matsumura} shows that $Z_{G \times_S \Spec R}(g_\infty)$ is flat, i.e., that $g_\infty \in F_d(R)$. But now $g_\infty$ still factors through $X$ set-theoretically, and because $R$ is reduced it follows that it factors through $X$ \textit{schematically}. Since $\Spec A$ is a closed subscheme of $\Spec R$, we see that $g$ factors through $X$ schematically, and thus indeed $X$ represents $F_d$. Since we have already shown that $F_d$ is formally smooth and $X$ is finitely presented, it follows that in this case $X$ is smooth, proving the result.
\end{proof}

\section{Centralizers over a general base}\label{section:general-centralizers}

\subsection{Pure centralizers in good characteristic}

In this section we will prove Theorem~\ref{theorem:intro-flat-pure-centralizer} and establish a number of corollaries. In outline, the proof works as follows.
\begin{enumerate}
    \item By spreading out and using the valuative criterion of flatness \cite[IV\textsubscript{3}, Thm.\ 11.8.1]{EGA}, reduce to the case that the base scheme $S$ is the spectrum of a DVR.
    \item Using a Springer isomorphism \cite[Thm.\ 5.1]{Integral-Springer} and a relative version of the Jordan decomposition (Theorem~\ref{theorem:relative-jordan-decomposition-over-normal-base}), reduce to the case of a pure fiberwise nilpotent section $X$ of $\fg$.
    \item If $X \in \fg$ is pure and fiberwise nilpotent, then show that there is a cocharacter $\tau: \bG_m \to G$ each of whose pullbacks to the fibers over $S$ is almost associated to the corresponding fiber of $X$. In particular, each fiber of $P = P_G(\tau)$ is the instability parabolic of the corresponding fiber of $X$.
    \item For $X$ and $\tau$ as above, use the results of Section~\ref{subsection:associated-cocharacters} to show that the orbit map $\Phi: P \to \bigoplus_{n \geq 2} \fg(\tau, n)$ through $X$ is flat.
\end{enumerate}

To argue the third point in this outline, we will proceed in two steps: first, show that if $P$ is a parabolic subgroup scheme of $G$ which is the instability parabolic of some section on the generic fiber, then there exists \textit{some} pure fiberwise nilpotent $Y$ of $\fg$ (possibly after extending $A$) such that each fiber of $P$ is the instability parabolic of the corresponding fiber of $Y$ and such that there is a cocharacter $\phi: \bG_m \to P$ which is fiberwise associated to $Y$. Next, if $X$ is any pure fiberwise nilpotent section of $\fg$ and $P$ is the parabolic whose generic fiber is the instability parabolic of $X_\eta$, then we show that $X_s$ is conjugate to the special fiber $Y_s$ of the section $Y$ built above. Using this, it will follow that $P_s$ is the instability parabolic of $X_s$. \smallskip

These two steps correspond to the following two lemmas, Lemma~\ref{lemma:mcninch-existence-of-sections} and Lemma~\ref{lemma:mcninch-relative-instability-parabolic}. The second lemma is a slightly weaker form of \cite[5.6.2]{McNinch-relative-centralizer}. There is an error in the proof of \cite[5.6.2]{McNinch-relative-centralizer} owing to the fact that it relies on the false \cite[2.3.2]{McNinch-relative-centralizer}. However, the overall shape of our argument comes from the proof of this. The first lemma is a weak form of \cite[5.4, Thm.]{McNinch-relative-centralizer}, which we now prove for the convenience of the reader.

\begin{lemma}\label{lemma:mcninch-existence-of-sections}
Let $A$ be a local noetherian normal domain and let $G$ be a reductive $A$-group scheme such that $\chara k(s)$ is good for $G_s$. Let $X_1 \in \fg_\eta$ be such that there exists a parabolic $A$-subgroup scheme $P$ of $G$ whose generic fiber is the instability parabolic of $X_1$. Then there exists a generically finite local extension of local noetherian normal domains $A \to A'$ and a section $Y \in \fg(A')$ such that $Y_{\eta'}$ is geometrically conjugate to $X_1$ and $P_t$ is the instability parabolic of $Y_t$ for all $t \in \Spec A'$.\smallskip 

Moreover, if $\phi: \bG_m \to P$ is any cocharacter such that $\phi_\eta$ is almost associated to $X_1$, then we may find $Y$ such that $\phi_t$ is associated to $Y_t$ for all $t \in \Spec A'$.
\end{lemma}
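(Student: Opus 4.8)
The strategy is that of \cite[5.4]{McNinch-relative-centralizer}, except that the appeal there to the (incorrect) \cite[2.3.2]{McNinch-relative-centralizer} will be replaced by spreading out a Bala--Carter datum directly, the key leverage being that in good characteristic the relevant combinatorial and orbit-theoretic data are characteristic-independent. I will prove the ``moreover'' assertion; the first assertion then follows from it: after a generically finite extension choose, by Lemma~\ref{lemma:existence-of-associated-cocharacters}, an associated cocharacter of $X_1$ over $k(\eta)$ (it factors through $P_\eta$), extend it to $\phi\colon\bG_m\to P$ over $A'$ using that $P$ acquires a split maximal torus after a further extension and that cocharacters of a split torus over a connected scheme are ``constant'', and apply the ``moreover'' part; one has $P_G(\phi)=P$ because a parabolic $A'$-subgroup scheme of $G$ is determined by its generic fiber ($A'$ being reduced and irreducible).

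\emph{Reductions and the Levi.} After a generically finite local extension I may assume $G$ is split with split maximal $A$-torus $T$ containing the image of $\phi$, and that $X_1$ is $k(\eta)$-rational. Since $\phi_\eta$ is almost associated to $X_1$, replacing $X_1$ by a geometric conjugate I may assume $\phi_\eta$ is \emph{associated} to $X_1$, with Bala--Carter datum $(M_\eta,Q_\eta)$, $Q_\eta=P_{M_\eta}(\phi_\eta)$; using Proposition~\ref{proposition:properties-of-instability-parabolic}(\ref{item:instability-parabolic-levi}) I may further take $M_\eta$ to contain $T_\eta$. Then the central torus of $M_\eta$ is the base change of a subtorus $S\subset T$, so $M:=Z_G(S)$ is a Levi $A$-subgroup scheme of $G$ with generic fiber $M_\eta$; since $\Spec A$ is reduced and irreducible and $\phi_\eta$ factors through the closed subscheme $\sD(M_\eta)\subset G_\eta$, the cocharacter $\phi$ factors through $\sD(M)$. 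Put $Q:=P_M(\phi)$, a parabolic $A$-subgroup scheme of $M$ with generic fiber $Q_\eta$.

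\emph{Spreading out.} I would next check that $(M_t,Q_t)$ is a Levi of $G_t$ together with a distinguished parabolic of it, for \emph{every} $t\in\Spec A$. The condition that $Q$ be a distinguished parabolic of $M$ (Section~\ref{subsection:associated-cocharacters}) is an equality between two integers read off from the root datum of $(M_t,Q_t)$; since $G$, hence $M$ and $Q$, is split, these integers are independent of $t$, so the equality, which holds at $\eta$, holds at every $t$. As $\chara k(t)$ is good for $G_t$ and hence for $M_t$, I may then appeal to the Bala--Carter--Pommerening theorem (Theorem~\ref{theorem:bala-carter}) and the description of Richardson orbits of distinguished parabolics in good characteristic \cite[4.13]{Jantzen-nilpotent} to choose a section $Y\in(\Lie M)(\phi,2)\subset\fg$ with constant coordinates (a suitable sum of root vectors of the split group $M$) such that, for every $t$, the fiber $Y_t$ is a Richardson element of $Q_t$ and $\phi_t$ is associated to $Y_t$ in $M_t$, hence in $G_t$ by Definition~\ref{definition:associated-cocharacters}.

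\emph{Conclusion, and the hard part.} By Proposition~\ref{proposition:mcninch-almost-associated-cocharacters}(\ref{item:almost-associated-cocharacters-dynamic-method}), $P_G(\phi_t)$ is the instability parabolic of $Y_t$ for each $t$; since $P_G(\phi)=P$, this says $P_t$ is the instability parabolic of $Y_t$ for all $t$. And $Y_\eta$ and $X_1$ lie in the same dense $Q_\eta(\overline{k(\eta)})$-orbit (both are Richardson in $Q_\eta$), so $Y_\eta$ is geometrically conjugate to $X_1$. I expect the crux to be the uniform choice of $Y$ above: one must be certain that the spread-out datum $(M,Q)$ controls a single nilpotent orbit on every fiber with a \emph{fixed} representative to which $\phi_t$ is associated (not merely almost associated). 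This is exactly the point at which \cite[5.6.2]{McNinch-relative-centralizer} fails; the resolution is that in good characteristic the distinguished-parabolic condition, the dimension of the corresponding nilpotent orbit, and the construction of a Richardson representative as a sum of root vectors are all insensitive to the characteristic, so that a computation valid on the generic fiber propagates verbatim to every fiber.
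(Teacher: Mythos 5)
Your proposal follows the same overall strategy as the paper's proof: reduce to the ``moreover'' assertion, spread out the Bala--Carter datum to a Levi and distinguished parabolic over $A$, and then produce a section $Y$ that is fiberwise Richardson. Your construction of the Levi as $M = Z_G(S)$ from a spread-out subtorus $S \subset T$ is a legitimate variant of the paper's appeal to uniqueness of Levi subgroup schemes containing a fixed torus, and your observations about the distinguished-parabolic condition being a purely numerical, hence fiber-constant, condition on a split group match the paper. Where you diverge is in the last and crucial step, and there the argument has a genuine gap.

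You assert that one can choose $Y$ ``with constant coordinates (a suitable sum of root vectors of the split group $M$)'' so that $Y_t$ is Richardson for every $t$, justified by the claim that ``the construction of a Richardson representative as a sum of root vectors [is] insensitive to the characteristic, so that a computation valid on the generic fiber propagates verbatim to every fiber.'' Neither the Bala--Carter--Pommerening theorem nor the cited fact that Richardson elements of a distinguished parabolic are distinguished gives you a universal representative with coefficients in $\bZ$, and such a statement is a nontrivial assertion whose known proofs lean on the classification of distinguished nilpotents case by case --- the very dependence this paper is trying to avoid. Worse, the phrase ``propagates from the generic fiber'' points in the \emph{wrong} direction of specialization: by upper semicontinuity of centralizer dimension, a section Richardson at $\eta$ need not be Richardson at $s$, since the orbit dimension can only drop at the closed point. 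The paper resolves exactly this by running the argument the other way: choose any $Y$ in the $(\phi,2)$-weight space whose \emph{special} fiber lies in the Richardson orbit (such a $Y$ exists because the Richardson locus is open dense in the weight space, which has rational points over $k(s)$), and then upper semicontinuity forces $\dim Z_{G_t}(Y_t) \leq \dim Z_{G_s}(Y_s)$, which is already minimal, so that every fiber of $Y$ is Richardson. This semicontinuity step is the substance of the fix to \cite[5.6.2]{McNinch-relative-centralizer}; you correctly identify it as the crux but do not supply a valid argument for it.
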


\begin{proof}
First note that there is a cocharacter $\phi: \bG_m \to P$ such that $\phi_\eta$ is almost associated to $X_1$. To see this, let $T_0$ be a maximal $A$-torus of $P$ and note that Proposition~\ref{proposition:mcninch-almost-associated-cocharacters}(\ref{item:almost-associated-cocharacters-exist}) shows that there is a (unique) cocharacter $\phi_\eta: \bG_m \to T_{0,\eta}$ which is almost associated to $X_1$. Since the natural map $\pi_1(\Spec k(\eta), \overline{\eta}) \to \pi_1(\Spec A, \overline{\eta})$ is surjective by \cite[Exp.\ V, Prop.\ 8.2]{SGA1}, we see that $\Hom_A(\bG_m, T_0) = \Hom_{k(\eta)}(\bG_m, T_{0, \eta})$. Thus we have found the desired $\phi$. Note that $P = P_G(\phi)$: this can be checked on the generic fiber, where it holds by Proposition~\ref{proposition:mcninch-almost-associated-cocharacters}(\ref{item:almost-associated-cocharacters-dynamic-method}). \smallskip

Since $\phi_\eta$ is almost associated to $X_1$, there is some $g_1 \in P(k(\overline{\eta}))$ such that $\phi_\eta^{g_1}$ is associated to $X_1$. Extending $A$, we may assume that $g_1$ is a rational point. Passing from $X_1$ to $\Ad(g_1^{-1})X_1$, we may therefore assume that $\phi_\eta$ is associated to $X_1$. By definition, this means that $X_1 \in \fg_\eta(\phi_\eta, 2)$ and there exists a parabolic subgroup $Q_1$ of $G_\eta$ and a Levi subgroup $L_1$ of $Q_1$ such that $\phi_\eta$ factors through $\sD(L_1)$ and $X_1 \in \Lie L_1$ is distingushed. Let $Q$ be the parabolic $A$-subgroup of $G$ with generic fiber $Q_1$, and let $T$ be a (split) maximal $A$-torus of $Q$ through which $\phi$ factors. By \cite[Prop.\ 5.4.5]{Conrad}, there is a unique Levi $A$-subgroup $L$ of $Q$ containing $T$. \smallskip

We claim that $\phi$ factors through $\sD(L)$. To prove this claim, first note that it suffices to check this over $\eta$. In any case, the image of $\phi_\eta$ lies in $L_\eta$, and it also lies in $\sD(L_1)$. By the latter fact, the image of $\phi_\eta$ in $Q_1/\sR_u(Q_1)$ is contained in $\sD(Q_1/\sR_u(Q_1))$. Since the map $L_\eta \to Q_1/\sR_u(Q_1)$ is an isomorphism, we see that indeed $\phi_\eta$ factors through $\sD(L_\eta)$ and so $\phi$ factors through $\sD(L)$. \smallskip

We claim that for all $t \in \Spec A$, $P_L(\phi)_t$ is a distinguished parabolic subgroup of $L_t$. By Proposition~\ref{proposition:properties-of-instability-parabolic}(\ref{item:instability-parabolic-bala-carter}), $(L_1, P_{L_1}(\phi_\eta))$ is a Bala--Carter datum of $X_1$, so in particular $P_{L_1}(\phi_\eta)$ is a distinguished parabolic subgroup of $L_1$. Since $T_1$ and $T_\eta$ are both maximal tori in $Z_{G_\eta}(\phi_\eta)$, we see that they are geometrically conjugate in $Z_{G_\eta}(\phi_\eta)$. Thus the uniqueness assertion in \cite[Prop.\ 5.4.5]{Conrad} shows that $L_1$ and $L_\eta$ are conjugate by an element of $Z_G(\phi)(k(\overline{\eta}))$, so also $P_{L_1}(\phi_\eta)$ and $P_{L_\eta}(\phi_\eta)$ are $Z_G(\phi)(k(\overline{\eta}))$-conjugate. In particular, $P_{L_\eta}(\phi_\eta)$ is a distinguished parabolic subgroup of $L_\eta$, since the property of being distinguished is preserved by conjugacy. Moreover, since the property of being distinguished involves only locally constant numerical data, we see that $P_{L_s}(\phi_s)$ is also distinguished parabolic in $L_s$, as desired. Similarly, the $2$-weight space for the action of $\phi$ on $\Lie P_L(\phi)$ is nonzero, since this may be checked over $\eta$. \smallskip

Now let $Y$ be a section of the $(\phi, 2)$-weight space of $\Lie P_L(\phi)$ whose special fiber lies in the Richardson orbit, so for dimension reasons every other fiber of $Y$ must also lie in the Richardson orbit of the corresponding fiber of $P_L(\phi)$. For all $t \in \Spec A$, $(L_t, P_{L_t}(\phi_t))$ is a Bala--Carter datum of $Y_t$ and (thus) $\phi_t$ is associated to $Y_t$. In particular, $P = P_G(\phi)$ is the instability parabolic for $Y$ on all fibers.
\end{proof}

\begin{lemma}\label{lemma:mcninch-relative-instability-parabolic}
Let $A$ be a DVR, and let $G$ be a reductive group scheme over $A$ such that the residue characteristic of $A$ is good for $G$. Let $X \in \Lie G$ be a pure fiberwise nilpotent section. Then there exists a parabolic subgroup $P$ of $G$ such that $P_t$ is the instability parabolic of $X_t$ for both $t$. If $\tau: \bG_m \to P$ is a cocharacter such that $\tau_\eta$ is almost associated to $X_\eta$, then also $\tau_s$ is almost associated to $X_s$.
\end{lemma}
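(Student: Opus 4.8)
The plan is to first produce $P$ by extending the instability parabolic of $X_\eta$, then to pin down its special fibre by comparing $X$ with a ``model'' section $Y$ coming from Lemma~\ref{lemma:mcninch-existence-of-sections}, and finally to deduce the statement about an arbitrary $\tau$ from uniqueness of almost associated cocharacters. For the first step: the scheme of parabolic subgroups of $G$ is proper and separated over $A$ \cite{Conrad}, so the instability parabolic of $X_\eta$ (a $k(\eta)$-point) extends uniquely to a parabolic $A$-subgroup $P\subseteq G$ with $P_\eta$ the instability parabolic of $X_\eta$. Since the instability parabolic is $P_G(\tau_0)$ for any associated cocharacter $\tau_0$ (Proposition~\ref{proposition:properties-of-instability-parabolic}(\ref{item:instability-parabolic-is-choiceless})), its formation commutes with ground field extension, and the scheme of parabolics of $G_s$ is separated; so it suffices to verify the fibral statement, and the statement about $\tau$, after the generically finite local extension of DVRs $A\to A'$ supplied by Lemma~\ref{lemma:mcninch-existence-of-sections} applied to $X_1=X_\eta$ and $P$. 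That lemma yields a cocharacter $\phi\colon\bG_m\to P$ (already defined over $A$, with $P=P_G(\phi)$) and a section $Y\in\fg(A')$ such that $\phi_\eta$ is almost associated to $X_\eta$, $Y_{\eta'}$ is geometrically conjugate to $X_\eta$, and for every $t\in\Spec A'$ the cocharacter $\phi_t$ is associated to $Y_t$ and $P_t$ is the instability parabolic of $Y_t$. I would base change $X$ to $A'$, noting that it remains pure.

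The core step is to show $X_{s'}$ is $\Ad(P_{s'})$-conjugate to $Y_{s'}$. Set $V=\bigoplus_{n\ge 2}\fg(\phi,n)$, a locally free direct summand of $\fg$ over $A'$. Because $\phi_{\eta'}$ is almost associated to $X_{\eta'}$, comparing $\phi_{\eta'}$ with an honest associated cocharacter of $X_{\eta'}$ via Proposition~\ref{proposition:mcninch-almost-associated-cocharacters}(\ref{item:almost-associated-cocharacters-agree}) gives $X_{\eta'}\in V_{\eta'}$; as $V$ is a direct summand and $A'$ is a domain, $X\in V(A')$, hence $X_{s'}\in V_{s'}$. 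Now $\phi_{s'}$ is associated to $Y_{s'}$, so $\Ad(P_{s'})Y_{s'}$ is open dense in $V_{s'}$ (Proposition~\ref{proposition:properties-of-instability-parabolic}(\ref{item:instability-parabolic-open-dense-orbit})) and $\dim Z_{P_{s'}}(Y_{s'})=\dim Z_{G_{s'}}(Y_{s'})$ by \cite[5.9]{Jantzen-nilpotent} (cf.\ Proposition~\ref{proposition:properties-of-instability-parabolic}(\ref{item:instability-parabolic-same-centralizer})). Using orbit--stabilizer for the $P_{s'}$-action on $V_{s'}$ together with $\dim Z_{P_{s'}}(X_{s'})\le\dim Z_{G_{s'}}(X_{s'})$ and the chain
\[
\dim Z_{G_{s'}}(X_{s'})=\dim Z_{G_{\eta'}}(X_{\eta'})=\dim Z_{G_{\eta'}}(Y_{\eta'})\le\dim Z_{G_{s'}}(Y_{s'})=\dim Z_{P_{s'}}(Y_{s'})
\]
(purity of $X$; geometric conjugacy of $X_{\eta'}$ and $Y_{\eta'}$; upper semicontinuity of centralizer dimension; the previous identity), I obtain $\dim\Ad(P_{s'})X_{s'}\ge\dim V_{s'}$. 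Thus $\Ad(P_{s'})X_{s'}$ is an irreducible locally closed subset of $V_{s'}$ of full dimension, hence dense; so is $\Ad(P_{s'})Y_{s'}$; both are orbits and therefore open in their closures, so they meet and coincide. Writing $X_{s'}=\Ad(p)Y_{s'}$ with $p\in P_{s'}(\overline{k(s')})$, one checks directly that $p\,\phi_{s'}\,p^{-1}$ is associated to $X_{s'}$ (the weight-$2$ relation transports, and a Levi witnessing that $\phi_{s'}$ is associated to $Y_{s'}$ conjugates by $p$ to one for $X_{s'}$, distinguishedness being conjugation-invariant; cf.\ the remark after Definition~\ref{definition:associated-cocharacters}). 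Hence the instability parabolic of $X_{s'}$ is $P_G(p\phi_{s'}p^{-1})=p\,P_{s'}\,p^{-1}=P_{s'}$, and $\phi_{s'}$ is almost associated to $X_{s'}$. Descending along $A\to A'$ then gives that $P_t$ is the instability parabolic of $X_t$ for both $t$, and that $\phi_s$ is almost associated to $X_s$.

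For the final assertion, given any $\tau\colon\bG_m\to P$ with $\tau_\eta$ almost associated to $X_\eta$, I would pass (harmlessly, by the same descent considerations) to an \'etale base change of $A$ over which both $\tau$ and $\phi$ factor through a common maximal $A$-torus $T$ of $P$: this uses that maximal tori of $P$ become $P(A)$-conjugate after such a base change and that the property of being almost associated is preserved under $P(A)$-conjugation. Then $\tau_\eta$ and $\phi_\eta$ are two cocharacters of $T_\eta$ both almost associated to $X_\eta$, so they coincide by the uniqueness in Proposition~\ref{proposition:mcninch-almost-associated-cocharacters}(\ref{item:almost-associated-cocharacters-exist}) (applied over $\overline{k(\eta)}$); since a cocharacter of a torus over $\Spec A$ is determined by its generic fibre, $\tau=\phi$, and we have just seen $\phi_s$ is almost associated to $X_s$.

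I expect the core step (second paragraph) to be the main obstacle: its content is to rule out that the nilpotent section $X$ acquires a strictly smaller orbit on the special fibre, and purity is exactly the hypothesis that forbids this. The device that makes the argument go through is to measure the $P_{s'}$-orbit of $X_{s'}$, rather than its $G_{s'}$-orbit, against the open dense orbit of $Y_{s'}$ inside $V_{s'}$, so that the trivial bound $\dim Z_{P_{s'}}(X_{s'})\le\dim Z_{G_{s'}}(X_{s'})$ can be played against the instability-parabolic identity $\dim Z_{P_{s'}}(Y_{s'})=\dim Z_{G_{s'}}(Y_{s'})$.
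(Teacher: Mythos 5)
Your proof is correct and follows essentially the same strategy as the paper: extend the instability parabolic from $\eta$ via properness of the scheme of parabolics, invoke Lemma~\ref{lemma:mcninch-existence-of-sections} to obtain a model section $Y$ with a fiberwise associated cocharacter, and then use the purity hypothesis together with the orbit--stabilizer bound (and $\dim Z_P = \dim Z_G$ via Proposition~\ref{proposition:properties-of-instability-parabolic}(\ref{item:instability-parabolic-same-centralizer})) to force the $\Ad_{P_s}$-orbit of $X_s$ to be dense in $\bigoplus_{n\ge 2}\fg(\tau,n)_s$, from which conjugacy of $X_s$ to $Y_s$ and the identification of the instability parabolic follow. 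The only noteworthy (harmless) divergence is that the paper derives density of $\Ad_{P_s}X_s$ directly by comparing $\dim P - \dim Z_P(X)$ across the two fibers before introducing $Y$, whereas your dimension chain routes through $Y$ from the start, and that you spell out the conclusion for an \emph{arbitrary} almost associated $\tau$ via uniqueness inside a common maximal torus, a step the paper's proof leaves implicit.
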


\begin{proof}
Let $P$ be the parabolic $A$-subgroup scheme of $G$ whose generic fiber is the instability parabolic of $X_\eta$ as in the paragraph preceding Proposition~\ref{proposition:properties-of-instability-parabolic}; such $P$ exists by properness of the scheme of parabolics. To prove the lemma, it suffices to show that $P_s$ is the instability parabolic of $X_s$, and thus we may extend $A$ at will; in particular, we may assume that $A$ is complete with algebraically closed residue field.\smallskip

Let $T$ be a (split) maximal $A$-torus of $P$ and let $\tau$ be the cocharacter of $T$ such that $\tau_{\eta}$ is almost associated to $X_\eta$, as exists by Proposition~\ref{proposition:mcninch-almost-associated-cocharacters}(\ref{item:almost-associated-cocharacters-exist}) and the fact that $\Hom_A(\bG_m, T) = \Hom_{k(\eta)}(\bG_m, T_\eta)$. By Proposition~\ref{proposition:mcninch-almost-associated-cocharacters}(\ref{item:almost-associated-cocharacters-dynamic-method}), we have $P_\eta = P_{G_\eta}(\tau_{\eta})$ and thus $P = P_G(\tau)$. So by Proposition~\ref{proposition:properties-of-instability-parabolic}(\ref{item:instability-parabolic-open-dense-orbit}), the $\Ad_{P_\eta}$-orbit of $X_\eta$ is open and dense in $(\bigoplus_{n \geq 2} \fg(\tau, n))_\eta$. In particular, $X$ lies in $\bigoplus_{n \geq 2} \fg(\tau, n)$, so also the $\Ad_{P_s}$-orbit of $X_s$ lies in $(\bigoplus_{n \geq 2} \fg(\tau, n))_s$; we claim that it is open and dense in this subscheme of $\fg_s$. \smallskip

Let $d$ be the $A$-rank of $\bigoplus_{n \geq 2} \fg(\tau, n)$, so that
\[
\dim P_\eta - \dim Z_P(X)_\eta = d.
\]
By Proposition~\ref{proposition:properties-of-instability-parabolic}(\ref{item:instability-parabolic-same-centralizer}), we have $Z_P(X)_\eta = Z_G(X)_\eta$, so by purity of $X$ we have
\[
\dim P_s - \dim Z_P(X)_s \geq \dim P_s - \dim Z_G(X)_s = d.
\]
Thus the $\Ad_{P_s}$-orbit of $X_s$ is of dimension $\geq d$. Since it lies in the $d$-dimensional subscheme $(\bigoplus_{n \geq 2} \fg(\tau, n))_s$, it follows that the $\Ad_{P_s}$-orbit of $X_s$ is open and dense in $(\bigoplus_{n \geq 2} \fg(\tau, n))_s$, as desired. \smallskip

Now by Lemma~\ref{lemma:mcninch-existence-of-sections}, after possibly extending $A$ there is some section $Y$ of $\fg$ such that $P_t$ is the instability parabolic of $Y_t$ and $\tau_t$ is associated to $Y_t$ for both $t \in \Spec A$. In particular, the $\Ad_{P_s}$-orbit of $Y_s$ is open and dense in $(\bigoplus_{n \geq 2} \fg(\tau, n))_s$, so because $k(s)$ is algebraically closed we see that $X_s$ and $Y_s$ are $\Ad_{P(k(s))}$-conjugate. Since $P_s$ is the instability parabolic of $Y_s$, it is also the instability parabolic of $X_s$, and similarly $\tau_s$ is almost associated to $X_s$.
\end{proof}

\begin{theorem}\label{theorem:flat-pure-centralizer}
Let $S$ be a reduced scheme and let $G$ be a reductive group scheme over $S$ with Lie algebra $\fg$. Assume that $\chara k(s)$ is good for $G_s$ for every $s \in S$ and $|\pi_1(\sD(G))|$ is invertible on $S$.
\begin{enumerate}
    \item\label{item:pure-Lie-algebra-centralizer} If $X \in \fg(S)$ is pure and the dimension of the centralizer of the semisimple part of $X_s$ is locally constant for $s \in S$, then $Z_G(X)$ is flat and $Z_G(X)/Z(G)$ is smooth.
    \item\label{item:strongly-pure-group-section} If $g \in G(S)$ is strongly pure and the dimension of the centralizer of the semisimple part of $g_s$ is locally constant for $s \in S$, then $Z_G(g)$ is flat and $Z_G(g)/Z(G)$ is smooth.
\end{enumerate}
\end{theorem}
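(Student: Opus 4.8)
The plan is to follow the outline at the beginning of Section~\ref{section:general-centralizers}: reduce to a fiberwise nilpotent section of $\fg$ over a complete DVR, and then deduce flatness from a dimension count for an orbit map via ``Miracle Flatness''. I will carry out part~(\ref{item:pure-Lie-algebra-centralizer}) in detail and indicate the modifications for part~(\ref{item:strongly-pure-group-section}) at the end. In both parts the smoothness assertion will be automatic: once $Z_G(X)$ (resp.\ $Z_G(g)$) is shown to be $S$-flat, the quotient by $Z(G)$ is an $S$-flat, finitely presented group scheme whose fibers are smooth by Corollary~\ref{corollary:smooth-centralizer-pretty-good-characteristic}, hence is $S$-smooth. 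So the whole content is flatness of the centralizer. As a preliminary reduction, flatness is local and insensitive to spreading out, so by Lemma~\ref{lemma:locus-of-d-e-elements} and standard constructibility I may take $S$ noetherian; the valuative criterion of flatness \cite[IV\textsubscript{3}, Thm.\ 11.8.1]{EGA} lets me take $S = \Spec A$ for a DVR $A$; and faithfully flat base change to the completion, and then to a residually algebraically closed extension \cite[0\textsubscript{III}, Prop.\ 10.3.1]{EGA} (as in the proof of Theorem~\ref{theorem:relative-jordan-decomposition-over-normal-base}), lets me assume $A$ is a complete DVR with algebraically closed residue field.

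Next I would strip off the semisimple part. The hypotheses ``$\chara k(s)$ good for $G_s$'' and ``$|\pi_1(\sD(G))|$ invertible on $S$'' force every residue characteristic to be non-torsion for $G$: for a closed subsystem $\Sigma \subset \Phi$, tensoring $0 \to \bZ\Phi^\vee/\bZ\Sigma^\vee \to Y/\bZ\Sigma^\vee \to Y/\bZ\Phi^\vee \to 0$ with $\bZ_p$ shows $Y/\bZ\Sigma^\vee$ has no $p$-torsion once the two outer terms have none. Hence, after a finite extension of $A$ making the Jordan decomposition of $X_\eta$ rational, Theorem~\ref{theorem:relative-jordan-decomposition-over-normal-base}(2) applies and gives $X = X_{\rm{ss}} + X_{\rm{n}}$ inducing the fibral Jordan decomposition, with $X_{\rm{ss}} \in \fg(S)$ pure and fiberwise semisimple. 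By Lemma~\ref{lemma:semisimple-centralizer-is-reductive}(2) the group $H := Z_G(X_{\rm{ss}})$ is a reductive $A$-group scheme; arguing as in the proof of Corollary~\ref{corollary:smooth-centralizer-pretty-good-characteristic} (reduce to $Z(G)$ smooth, then use Lemma~\ref{lemma:semisimple-replacement} and \cite[Cor.\ 3.2]{Integral-Springer}) the hypotheses ``good'' and ``$\pi_1$ invertible'' persist for $H$, and $X_{\rm{n}} \in (\Lie H)(S)$ is fiberwise nilpotent and is pure in $H$ because $Z_H(X_{\rm{n}}) = Z_G(X)$. Replacing $(G, X)$ by $(H, X_{\rm{n}})$, I am reduced to the case that $X$ is fiberwise nilpotent.

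Now for the heart of the matter. By Lemma~\ref{lemma:mcninch-relative-instability-parabolic} (whose proof relies on Lemma~\ref{lemma:mcninch-existence-of-sections}), after a further finite extension of $A$ there is a parabolic $A$-subgroup $P = P_G(\tau)$ and a cocharacter $\tau \colon \bG_m \to P$ such that for each $t \in \{s, \eta\}$ the group $P_t$ is the instability parabolic of $X_t$ and $\tau_t$ is almost associated to $X_t$. Set $W := \bigoplus_{n \ge 2} \fg(\tau, n)$; it is a subbundle of $\fg$ stable under $\Ad_P$, hence $S$-smooth, and the fixed section $X$ lies in $W(S)$ (this is observed in the proof of Lemma~\ref{lemma:mcninch-relative-instability-parabolic}). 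Thus the orbit map $\Phi \colon P \to \fg$, $\Phi(p) = \Ad(p)X$, factors through $W$, and by Proposition~\ref{proposition:properties-of-instability-parabolic}(\ref{item:instability-parabolic-open-dense-orbit}) applied on fibers, each $\Phi_t \colon P_t \to W_t$ has open dense image. There is a Cartesian square in which $Z_P(X)$ sits at the top left, $P$ at the top right with right-hand map $\Phi$, and $S$ at the bottom left mapping to $W$ (bottom right) by the section $X$; so $Z_P(X) = P \times_W S$, and it suffices to prove that $\Phi \colon P \to W$ is $S$-flat. Since $P$ and $W$ are $S$-smooth, the fibral flatness criterion \cite[IV\textsubscript{3}, 11.3.10]{EGA} reduces this to flatness of $\Phi_s$ and $\Phi_\eta$; each $\Phi_t$ is a morphism from the smooth (hence Cohen--Macaulay) variety $P_t$ to the smooth (hence regular) variety $W_t$, so by ``Miracle Flatness'' \cite[Thm.\ 23.1]{Matsumura} it is enough that every nonempty fiber of $\Phi_t$ have dimension $\dim P_t - \dim W_t$. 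But a point of $W_t$ with nonempty $\Phi_t$-fiber lies in the $\Ad_{P_t}$-orbit of $X_t$, and its fiber is a coset of $Z_{P_t}(X_t)$, of dimension $\dim P_t - \dim(\Ad_{P_t}X_t) = \dim P_t - \dim W_t$ since the orbit is dense in $W_t$. Hence $\Phi$ is $S$-flat, so $Z_P(X)$ is $S$-flat; and since $Z_P(X) = Z_G(X) \cap P$ agrees fibrally with $Z_G(X)$ by Proposition~\ref{proposition:properties-of-instability-parabolic}(\ref{item:instability-parabolic-same-centralizer}), the fibral isomorphism criterion \cite[IV\textsubscript{4}, Cor.\ 17.9.5]{EGA} gives $Z_P(X) = Z_G(X)$, so $Z_G(X)$ is $S$-flat. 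This proves part~(\ref{item:pure-Lie-algebra-centralizer}); part~(\ref{item:strongly-pure-group-section}) is entirely parallel, using Theorem~\ref{theorem:relative-jordan-decomposition-over-normal-base}(1) and Lemma~\ref{lemma:semisimple-centralizer-is-reductive}(1) to reduce to a fiberwise unipotent section, and then an integral Springer isomorphism \cite[Thm.\ 5.1]{Integral-Springer} over $Z_G(t)$ to replace it by a fiberwise nilpotent section of the Lie algebra with the same centralizer and the same purity.

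The genuinely hard geometric input — producing one parabolic and one cocharacter over $A$ that induce the instability data on both fibers at once — is exactly Lemma~\ref{lemma:mcninch-relative-instability-parabolic}, which I would treat as a black box; granting it, the flatness argument above is a short, formal application of the fibral criteria and Miracle Flatness. Within the present proof the step requiring the most care is the reduction to the fiberwise nilpotent case: one must verify that passing to $Z_G(X_{\rm{ss}})$ preserves all standing hypotheses (good characteristic, the $\pi_1$-condition, and purity of the nilpotent part), and one must carefully track the residue-field and DVR extensions introduced along the way, since Theorem~\ref{theorem:relative-jordan-decomposition-over-normal-base} only produces the decomposition once the Jordan decomposition on the generic fiber has been made rational.
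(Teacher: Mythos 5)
Your proposal is correct and follows essentially the same route as the paper: reduce by spreading out and the valuative criterion to a (complete, residually algebraically closed) DVR, split off the semisimple part via the relative Jordan decomposition (Theorem~\ref{theorem:relative-jordan-decomposition-over-normal-base}) and Lemma~\ref{lemma:semisimple-centralizer-is-reductive}, then in the fiberwise nilpotent case use Lemma~\ref{lemma:mcninch-relative-instability-parabolic} to build $(P,\tau)$, factor the orbit map through $\bigoplus_{n\ge 2}\fg(\tau,n)$, and conclude flatness from the fibral flatness criterion plus Miracle Flatness, passing from $Z_P(X)$ to $Z_G(X)$ by the fibral isomorphism criterion. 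The only cosmetic differences from the paper are (i) the order of the two reductions (the paper proves the nilpotent case first, you strip off the semisimple part first, but the logical structure is identical) and (ii) your verification of the constant-fiber-dimension hypothesis for Miracle Flatness by a direct orbit--stabilizer count, whereas the paper phrases it via the isomorphism $Z_P(X)\times P \cong P\times_U P$; both are fine. One small caution: your justification that the hypotheses force every residue characteristic to be non-torsion is garbled as written --- you tensor the sequence $0 \to \bZ\Phi^\vee/\bZ\Sigma^\vee \to Y/\bZ\Sigma^\vee \to Y/\bZ\Phi^\vee \to 0$ with $\bZ_p$ and claim the middle term has no $p$-torsion ``once the two outer terms have none,'' but the absence of $p$-torsion in the left term $\bZ\Phi^\vee/\bZ\Sigma^\vee$ is exactly the non-torsion condition you are trying to establish, so the argument as stated is circular. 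The conclusion is still correct because it is a standard fact (from the classification of bad and torsion primes of root systems) that torsion primes are always bad, so ``good'' alone gives ``non-torsion'' for the root system, and the $\pi_1$-condition then handles the ambient lattice $Y$ if the stronger $Y/\bZ\Sigma^\vee$-formulation is needed; you should cite or state this standard containment rather than deduce it from the displayed sequence.
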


\begin{proof}
By working locally and spreading out (using Lemma~\ref{lemma:locus-of-d-e-elements}), we may and do assume that $S$ is noetherian. By the valuative criterion of flatness \cite[IV\textsubscript{3}, Thm.\ 11.8.1]{EGA}, we may and do assume that $S = \Spec A$ for a DVR $A$. We first establish (\ref{item:pure-Lie-algebra-centralizer}) in the case that $X$ is fiberwise nilpotent. In this case, Lemma~\ref{lemma:mcninch-relative-instability-parabolic} shows that there exists a parabolic subgroup $P$ of $G$ and a cocharacter $\tau: \bG_m \to G$ such that $P_t$ is the instability parabolic of $X_t$ and $\tau_t$ is almost associated to $X_t$ for both $t \in S$. Let $Q = U_G(\tau)$ be the unipotent radical of $P$. \smallskip

We will prove first that $Z_P(X)$ is flat. Let $Y$ be the affine space over $S$ associated to the (free) vector bundle $\bigoplus_{n \geq 2} \fg(\tau, n)$ on $S$, and consider the morphism $f: P \to Y$ given by $f(g) = \Ad(g)X$. Note that $Z_P(X)$ is given by the fiber product
\[
\begin{tikzcd}
Z_P(X) \arrow[r] \arrow[d]
    &P \arrow[d, "f"] \\
S \arrow[r, "X"]
    &Y
\end{tikzcd}
\]
So to prove that $Z_P(X)$ is flat, it suffices to prove that $f$ is flat. For this, we note that $P$ is $S$-flat, so by the fibral flatness criterion \cite[IV\textsubscript{3}, Cor.\ 11.3.11]{EGA} we need only prove flatness between fibers over $S$; i.e., we may and do (temporarily) assume $S = \Spec k$ for a field $k$. By Proposition~\ref{proposition:properties-of-instability-parabolic}(\ref{item:instability-parabolic-open-dense-orbit}) and Proposition~\ref{proposition:mcninch-almost-associated-cocharacters}(\ref{item:almost-associated-cocharacters-agree}), $f$ has open dense image $U \subset Y$; since open embeddings are flat, we may reduce to proving that the map $P \to U$ (which we will also denote by $f$) is flat.\smallskip

First we remark that the natural morphism $Z_P(X) \times P \to P \times_U P$, $(g, p) \mapsto (p, pg)$ is an isomorphism, as may be checked functorially. So we have a Cartesian square
\[
\begin{tikzcd}
Z_P(X) \times P \arrow[r, "\alpha"] \arrow[d, "\rm{pr}_2"]
    &P \arrow[d, "f"] \\
P \arrow[r]
    &U
\end{tikzcd}
\]
where $\alpha: Z_P(X) \times P \to P$ denotes the action map $\alpha(g, p) = pg$. Since the base is now a field, the projection map $Z_P(X) \times P \to P$ is flat, and thus in particular all fibers are the same dimension. This property is insensitive to surjective base change, so all fibers of the morphism $f: P \to U$ are also of the same dimension. Since $U$ is regular, Miracle Flatness \cite[Thm.\ 23.1]{Matsumura} shows that $f$ is therefore flat. Thus (again over a general base) $Z_P(X)$ is flat. \smallskip

Now return to the case that $S$ is the spectrum of a DVR and consider the natural inclusion $i: Z_P(X) \to Z_G(X)$. We claim that this is an isomorphism, which will prove that $Z_G(X)$ is flat. But we know that $Z_P(X)$ is flat, so by the fibral isomorphism criterion \cite[IV\textsubscript{4}, Cor.\ 17.9.5]{EGA} we may pass to the case that $S = \Spec k$ for a field $k$. In this case, Proposition~\ref{proposition:properties-of-instability-parabolic}(\ref{item:instability-parabolic-same-centralizer}) shows that indeed $i$ is an isomorphism. \smallskip

To prove that $Z_G(X)/Z(G)$ (which is flat because it is a quotient of the flat $S$-scheme $Z_P(X) = Z_G(X)$ by a flat $S$-group scheme) is smooth, it suffices by flatness to work fibrally, whence we may assume again that $S$ is the spectrum of a field. But now this follows from Corollary~\ref{corollary:smooth-centralizer-pretty-good-characteristic}. This establishes (\ref{item:pure-Lie-algebra-centralizer}) in the fiberwise nilpotent case. \smallskip

Now consider (\ref{item:pure-Lie-algebra-centralizer}) without the assumption of fiberwise nilpotence. By assumption and Theorem~\ref{theorem:relative-jordan-decomposition-over-normal-base}, we may extend $A$ to assume that there exist $X_{\rm{ss}}, X_{\rm{n}} \in \fg$ such that $X = X_{\rm{ss}} + X_{\rm{n}}$ induces the Jordan decomposition on both fibers. By Lemma~\ref{lemma:semisimple-centralizer-is-reductive}, $Z_G(X_{\rm{ss}})$ is a reductive group scheme over $A$. By Lemma~\ref{lemma:semisimple-replacement} and \cite[Cor.\ 3.2]{Integral-Springer}, $Z_G(X_{\rm{ss}})$ satisfies the hypotheses of the theorem. Thus we may replace $G$ by $Z_G(X_{\rm{ss}})$ to reduce to the already-treated fiberwise nilpotent case. \smallskip

Finally we consider (\ref{item:strongly-pure-group-section}). By assumption and Theorem~\ref{theorem:relative-jordan-decomposition-over-normal-base}, we may extend $A$ to assume that there exist $t, u \in G(A)$ such that $g = tu$ is the Jordan decomposition on both fibers. Moreover, since $g$ is strongly pure, it follows that $t$ is also strongly pure: indeed, $\dim Z_G(t)_s = \dim Z_G(t)_\eta$ by assumption, $t$ is fiberwise ordinary essentially by definition (Definition~\ref{definition:ordinary}). Thus Lemma~\ref{lemma:semisimple-centralizer-is-reductive} implies that $Z_G(t)$ is a reductive group scheme over $A$. By \cite[Cor.\ 3.2]{Integral-Springer}, $Z_G(t)$ satisfies the hypotheses of the theorem, so we may replace $G$ by $Z_G(t)$ and $g$ by $t^{-1}g$ to assume that $g$ is fiberwise unipotent. \smallskip

Under our assumptions, \cite[Thm.\ 5.1]{Integral-Springer} shows that there is a $G$-equivariant isomorphism $\phi: \sU_G \to \sN_G$ from the unipotent scheme to the nilpotent scheme of $G$ (each obtained by base change of the corresponding object over $\bZ$ as defined there). Let $X = \phi(g)$, so that $X$ is pure and fiberwise nilpotent, $Z_G(g) = Z_G(X)$, and $Z_G(g)/Z(G) = Z_G(X)/Z(G)$. Thus the claims follow from the already-established result for pure fiberwise nilpotent elements.
\end{proof}

In type A, one can show that the flatness assertion of Theorem~\ref{theorem:flat-pure-centralizer} holds when the assumption on $X$ is weakened simply to strong purity; one uses principally the fact that the centralizer of any element of $\GL_n$ over a field is connected. In general, we have the following result.

\begin{lemma}\label{corollary:flat-pure-centralizer-small-extension}
Let $A$ be a DVR, and let $G$ be a reductive $A$-group scheme. Let $P$ be a parabolic $S$-subgroup scheme of $G$, let $U_P$ be the unipotent radical of $P$, and let $L$ be a Levi subgroup of $P$. Let $u \in P(A)$ be a pure fiberwise unipotent section such that the $P_s$-orbit of $u_s$ is dense in $(U_P)_s$ (i.e., $u_s$ lies in the Richardson orbit), and let $t \in Z(L)(A)$ be such that $t_s = 1$.
\begin{enumerate}
    \item\label{item:flatness-extension-group-case} If $Z_{G_s}(u_s) \subset P_s$, then $Z_G(tu)$ is flat and $Z_G(tu)/Z(G)$ is smooth.
    \item\label{flatness-failure-group-case} If $Z_{G_s}(u_s) \not\subset P_s$, then there is some $t$ as above such that $Z_G(tu)$ is not flat, although $tu$ is strongly pure.
\end{enumerate}
Completely analogous assertions hold for section of $\Lie G$ in place of sections of $G$.
\end{lemma}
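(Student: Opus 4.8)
The plan follows the architecture of the proof of Theorem~\ref{theorem:flat-pure-centralizer}. Working Zariski-locally and using \cite[0\textsubscript{III}, Prop.\ 10.3.1]{EGA} together with henselization, we reduce to the case that $A$ is a complete DVR with algebraically closed residue field, and we may extend $A$ freely. Since $Z(G) \subset Z_G(tu)$ and $Z_G(tu) \to Z_G(tu)/Z(G)$ is a $Z(G)$-torsor, the smoothness assertion of (\ref{item:flatness-extension-group-case}) will follow from flatness of $Z_G(tu)$ together with Corollary~\ref{corollary:smooth-centralizer-pretty-good-characteristic} applied on fibres (so we assume, as for Theorem~\ref{theorem:flat-pure-centralizer}, that $\chara k(s)$ is good for $G$ and $|\pi_1(\sD(G))|$ is invertible in $A$); thus the content of both parts is flatness, resp.\ its failure. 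We treat the group case; the Lie algebra case is identical after replacing conjugation by the adjoint action, ``unipotent'' by ``nilpotent'', and the Richardson orbit in $U_P$ by the Richardson orbit in $\Lie U_P$. Since $u_s$ is Richardson and $u$ is pure, upper semicontinuity of centralizer dimension forces $\dim Z_{G_\eta}(u_\eta) = \dim Z_{G_s}(u_s) = \dim L$, and from this (after a $P(A)$-conjugation, which preserves all hypotheses) one reduces to $u \in U_P(A)$ with $u_\eta$ again in the Richardson orbit of $P_\eta$.

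For (\ref{item:flatness-extension-group-case}): since $u$ is fiberwise unipotent it is strongly pure with pure semisimple part, so $Z_G(u)$ is $A$-flat by Theorem~\ref{theorem:flat-pure-centralizer}. The hypothesis $Z_{G_s}(u_s) \subset P_s$ says that the closed subscheme $Z_G(u)\cap P$ of $Z_G(u)$ contains the whole special fibre; since the ideal $I$ of $Z_G(u)\cap P$ in $\mathcal{O}_{Z_G(u)}$ then lies in $\pi\mathcal{O}_{Z_G(u)}$ and $\pi$ is a non-zero-divisor there, $I \subsetneq \pi^{-1}I \subsetneq \pi^{-2}I \subsetneq\cdots$ would contradict Noetherianity unless $I = 0$, so $Z_G(u) = Z_P(u) \subset P$; in particular $Z_{G_\eta}(u_\eta) \subset P_\eta$. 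Next one checks $Z_G(tu) = Z_P(tu)$ on both fibres: on the special fibre this is $Z_{G_s}(u_s) = Z_{P_s}(u_s)$, and on the generic fibre one uses that the semisimple part of $(tu)_\eta$ is $P_\eta$-conjugate to $t_\eta \in Z(L_\eta)$, so $Z_{G_\eta}((tu)_\eta)$ lies in a $P_\eta$-conjugate of $Z_{G_\eta}(t_\eta) \supseteq L_\eta$, and then that the unipotent part lands in (a Richardson orbit of a parabolic of) this group, giving $Z_{G_\eta}((tu)_\eta) \subset P_\eta$. Granting this, by the fibral isomorphism criterion \cite[IV\textsubscript{4}, Cor.\ 17.9.5]{EGA} it suffices to show $Z_P(tu)$ is $A$-flat, and for that I mimic Theorem~\ref{theorem:flat-pure-centralizer}: the orbit map $\Phi\colon P\to P$, $p\mapsto p(tu)p^{-1}$, factors through a locally closed smooth subscheme $W \subset P$ (cut out by requiring the image in $L = P/U_P$ to lie in the $L$-conjugacy class of the $L$-part of $tu$, a point over $k(s)$, so $W_s$ is an affine space over $U_{P,s}$, and $W_\eta$ is regular by the above), $Z_P(tu)$ is the fibre of $\Phi$ over $tu$, the fibres of $\Phi$ over a given fibre over $A$ are translates of $Z_P(tu)$ under the free right-multiplication action of $P$ and hence equidimensional, and Miracle Flatness \cite[Thm.\ 23.1]{Matsumura} together with the fibral flatness criterion \cite[IV\textsubscript{3}, Thm.\ 11.3.10]{EGA} gives flatness of $\Phi$, whence of $Z_P(tu)$ by the defining Cartesian square.

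For (\ref{flatness-failure-group-case}): we construct one such $t$. Choose a cocharacter $\chi\colon\bG_m\to G$ with $P = P_G(\chi)$ and $Z_G(\chi) = L$ — so $\chi$ factors through $Z(L)^0$ and $\langle\alpha,\chi\rangle > 0$ for all $\alpha\in\Phi(U_P)$ — and, after extending $A$ so that $\chi$ is defined over $A$, set $t = \chi(1+\pi)\in Z(L)(A)$ for a uniformizer $\pi$. Then $t_s = 1$; and since $1+\pi$ is not a root of unity in $\mathrm{Frac}(A)$ we have $\alpha(t_\eta)\neq 1$ for all $\alpha\in\Phi(U_P)$, so $Z_{U_{P,\eta}}(t_\eta) = 1$ and $Z_{G_\eta}(t_\eta) = L_\eta$. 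Using the reduction $u\in U_P(A)$ and the parabolic analogue of Steinberg's lemma \cite[Lem.\ 2.12]{SteinbergReg}, one finds that $(tu)_\eta$ is $U_{P,\eta}$-conjugate to $t_\eta$; thus $(tu)_\eta$ is semisimple with $Z_{G_\eta}((tu)_\eta)$ a $U_{P,\eta}$-conjugate of $L_\eta$ — connected, of dimension $\dim L$, and contained in $P_\eta$. On the special fibre $Z_{G_s}((tu)_s) = Z_{G_s}(u_s)$ is connected of dimension $\dim L$ (connected since $(tu)_s$ is unipotent), so $tu$ is pure and fiberwise ordinary, i.e.\ strongly pure. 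Finally $Z_G(tu)$ is not flat: if it were, it would equal the schematic closure of its generic fibre $Z_{G_\eta}((tu)_\eta)\subset P_\eta$, hence would be contained in the schematic closure of $P_\eta$, which is $P$ (as $P$ is $A$-flat); then $Z_{G_s}(u_s) = Z_{G_s}((tu)_s)\subset P_s$, contradicting the hypothesis.

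The main obstacle is the generic-fibre bookkeeping in (\ref{item:flatness-extension-group-case}): confirming that $Z_{G_\eta}((tu)_\eta)\subset P_\eta$ and that the target $W$ of the orbit map is regular over \emph{all} of $A$ (i.e.\ that the unipotent parts stay in the expected Richardson orbits after introducing the semisimple factor $t$). The Noetherian saturation argument disposes of $u$ itself cleanly, but tracking the interaction of $t$ with $u$ on the generic fibre, and justifying the reduction to $u\in U_P(A)$, is where the real care is needed. In part (\ref{flatness-failure-group-case}) the only subtleties are the parabolic Steinberg normalization and checking that $1+\pi$ (or a suitable substitute) is not a root of unity, so that $t_\eta$ is sufficiently regular.
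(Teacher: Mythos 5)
Your argument for part (\ref{item:flatness-extension-group-case}) contains a genuine gap, and it also misses the key technical device the paper uses. Let me focus on the two points.

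\textbf{The Noetherian saturation argument is incorrect.} You claim that since the ideal $I$ of $Z_G(u)\cap P$ in $\cO_{Z_G(u)}$ lies in $\pi\cO_{Z_G(u)}$, the chain $I \subsetneq \pi^{-1}I \subsetneq \pi^{-2}I \subsetneq\cdots$ is strictly increasing, so Noetherianity forces $I = 0$. This is false: the first inclusion is strict if $I \neq 0$, but the chain need not remain strict, because nothing guarantees $\pi^{-1}I$ is again contained in $\pi\cO$. A bare counterexample: take $\cO = A[x]$ (flat over the DVR $A$, $\pi$ a non-zero-divisor) and $I = (\pi x)$; then $I \subset \pi\cO$, but $\pi^{-1}I = (x)$ and $\pi^{-n}I = (x)$ for all $n\geq 1$, so the chain stabilizes and $I\neq 0$. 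Concretely, for a flat $A$-scheme $X$ and a closed subscheme $Y\subset X$ with $Y_s = X_s$, it is simply not a formal consequence that $Y = X$; you would need to know $Y_\eta = X_\eta$ as well, which is precisely what you are trying to prove. So the inclusion $Z_{G_\eta}(u_\eta)\subset P_\eta$ has not been established, and in fact this inclusion is not what the paper's argument proceeds through.

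\textbf{The argument for $Z_{G_\eta}((tu)_\eta)\subset P_\eta$ is incomplete even modulo the above.} After the Steinberg normalization you only get that the semisimple part of $(tu)_\eta$ is conjugate to $t_\eta$, so $Z_{G_\eta}((tu)_\eta)$ sits (up to $U_{P,\eta}$-conjugacy) inside $Z_{Z_{G_\eta}(t_\eta)}(u'')$ for some unipotent $u''$. Since $Z_{G_\eta}(t_\eta)$ may be strictly larger than $L_\eta$ for the given $t$, you would then need to know that $Z_{Z_{G_\eta}(t_\eta)}(u'')$ lies in the parabolic $P_\eta\cap Z_{G_\eta}(t_\eta)$ of the smaller group $Z_{G_\eta}(t_\eta)$ --- that is, you would need to iterate the hypothesis $Z_G(u_s)\subset P_s$ of the lemma in a group to which it has not been given. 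The paper's proof avoids this entirely: it first uses the local criterion of flatness \cite[Thm.\ 22.1]{Matsumura} to reduce to showing that for each $n$ there exists $t_n$ congruent to $t$ modulo $\pi^n$ with $Z_G(t_n u)$ flat; it then observes that $Z(L)(A)$ contains sections $t'$ arbitrarily close to $t$ in the valuation topology with $Z_{G_\eta}(t'_\eta) = L_\eta$ exactly (the set of such $t'_\eta$ being a dense Zariski open in $Z(L_\eta)$); and for such $t'$, Steinberg's lemma applied to $Z(L_\eta)^0_{\rm red}\ltimes U_{P,\eta}$ forces the unipotent part $u''$ to be trivial (because $u''\in Z_{G_\eta}(t'_\eta)\cap U_{P,\eta} = L_\eta\cap U_{P,\eta} = \{1\}$), giving $Z_{G_\eta}((t'u)_\eta)$ equal to a $U_{P,\eta}$-conjugate of $L_\eta$ and hence contained in $P_\eta$. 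Only then does the orbit map $P\to t'U_P$ and the fibral isomorphism criterion apply cleanly. This valuation-topology approximation followed by the local criterion is the essential idea, and without it the case of an arbitrary $t$ with $t_s=1$ cannot be handled.

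Your treatment of part (\ref{flatness-failure-group-case}) is essentially the same as the paper's, just with the explicit choice $t = \chi(1+\pi)$ rather than an appeal to density; that part is fine. But part (\ref{item:flatness-extension-group-case}) as written does not go through.
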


\begin{proof}
We first deal with (\ref{item:flatness-extension-group-case}). By flat descent, we may assume that $A$ is complete with algebraically closed residue field. By embedding $Z(G)$ into a torus $T$ and replacing $G$ by $G \times^{Z(G)} T$ as in the proof of Proposition~\ref{prop:scheme-of-d-regular-elements}, we may assume that $Z(G)$ is smooth, in which case we will show that $Z_G(tu)$ is smooth. The local criterion of flatness \cite[Thm.\ 22.1]{Matsumura} reduces us to showing that for each integer $n \geq 1$ there is some section $t_n \in Z(L)(A)$ agreeing with $t$ in $Z(L)(A/\pi^n)$ such that $Z_G(t_n u)$ is flat and $Z_G(t_n u)/Z(G)$ is smooth.\smallskip

There is a nonempty Zariski open subset $V$ of $Z(L_\eta)$ such that for all $t_0 \in V(k(\overline{\eta}))$ we have $Z_{G_\eta}(t_0) = L_\eta$. Being nonempty and Zariski dense, $V(k(\eta))$ is also open and dense in the valuation topology of $Z(L)(k(\eta))$. Thus there are sections $t' \in Z(L)(A)$ arbitrarily close to $t$ in the valuation topology such that $Z_{G_\eta}(t'_\eta) = L_\eta$, and by the previous paragraph we may and do assume that this is the case for $t' = t$. Applying \cite[Lem.\ 2.12]{SteinbergReg} to the connected solvable group $Z(L_\eta)^0_{\rm{red}} \ltimes (U_P)_\eta$, after passing to a ramified extension of $A$ we may assume that $t_\eta u_\eta$ is conjugate to $t_\eta u'$ for some $u' \in U_P(k(\eta))$ commuting with $t_\eta$. Since $L \cap U_P = \{1\}$, we see that $u' = 1$ and thus $Z_{G_\eta}(t_\eta u_\eta)$ is conjugate to $L_\eta$. In particular, $\dim Z_{G_\eta}(t_\eta u_\eta) = \dim L_\eta$ and $tu$ is strongly pure. \smallskip

We will now show that $Z_P(tu)$ is flat under the assumption that $Z_{G_s}(u_s) \subset P_s$. From this, using the fibral isomorphism criterion we may conclude that the map $Z_P(tu) \to Z_G(tu)$ is an isomorphism, so also $Z_G(tu)$ is flat. Now consider the morphism $\Phi: P \to tU_P$ given by $\Phi(g) = gtug^{-1}$. To check that $Z_P(tu)$ is flat, it suffices to check that $\Phi$ is flat. Since $P$ and $tU_P$ are both smooth, flatness of $\Phi$ may be checked on fibers over $A$, where it follows from the fact that $\Phi$ is a $Z_P(tu)$-torsor onto a dense open subscheme of $tU_P$. \smallskip

In the second case, choose $t$ such that $Z_{G_\eta}(t_\eta) = L$, as above. Thus $tu$ is strongly pure and $Z_{G_\eta}(t_\eta u_\eta) \subset P_\eta$. Now note simply that if $Z_G(tu)$ were flat then it would be the closure of its generic fiber, so in particular we would have $Z_G(tu) \subset P$. Since this is false by hypothesis, $Z_G(tu)$ is not flat.
\end{proof}

\begin{remark}\label{remark:possible-extensions-of-flatness-results}
A nilpotent element $X$ of the Lie algebra of a reductive group $G$ is called \textit{even} if $X$ is a Richardson element of its instability parabolic. By using Corollary~\ref{corollary:flat-pure-centralizer-small-extension} (and Proposition~\ref{proposition:properties-of-instability-parabolic}(\ref{item:instability-parabolic-same-centralizer})), one can interpret the centralizer of an even nilpotent element as a degeneration of a Levi of its instability parabolic (in good characteristic $p$ with $p \nmid |\pi_1(\sD(G))|$), fulfilling a desire stated in the introduction of \cite{McNinch-Testerman} (and also providing an alternate proof of the main result of that paper). An interesting question is whether $Z(Z_G(g))$ is flat whenever $Z_G(g)$ is flat; an affirmative answer to this question would lead to an improvement to the main result of \cite{McNinch-Testerman} (and hence a case-free argument for one of the main results of \cite{Lawther-Testerman}). \smallskip

It is always true that if $G$ is a reductive group over a field of pretty good characteristic and $u \in G$ is Richardson in a parabolic subgroup $P$ of $G$ then $Z_G(u)^0 \subset P$ (see, e.g., \cite[4.9]{Jantzen-nilpotent}). However, it may happen that $Z_G(u) \not\subset P$, even over a field of characteristic $0$. As an explicit example, let $G = \Sp_4$ over a field $k$ of arbitrary characteristic and consider the element
\[
g = \begin{pmatrix}
1 &0 &0 &1 \\
0 &1 &1 &0 \\
0 &0 &1 &0 \\
0 &0 &0 &1
\end{pmatrix}
\]
which is Richardson in the parabolic subgroup $P$ of $G$ which one writes as
\[
P = \left\{\begin{pmatrix}
* &* &* &* \\
0 &* &* &* \\
0 &0 &* &0 \\
0 &* &* &*
\end{pmatrix}\right\}.
\]
However, notice that $g$ is centralized by the matrix
\[
\begin{pmatrix}
0 &1 &0 &0 \\
1 &0 &0 &0 \\
0 &0 &0 &1 \\
0 &0 &1 &0
\end{pmatrix}
\]
which does not lie in $P$. Thus by Lemma~\ref{corollary:flat-pure-centralizer-small-extension}, there exists some strongly pure section $g \in P(k[\![t]\!])$ such that $g_s = u$ and such that $g$ has non-flat centralizer. For more examples, see \cite{Hesselink}; in the terminology of that paper, $Z_G(u) \subset P$ if and only if $P$ is a ``stable polarization" of $u$.


\end{remark}

We conclude this section by showing the existence of a flattening stratification for the universal centralizer of $G$, consisting of $S$-flat pieces. In view of the preceding remark, it is not enough to stratify $G$ by the dimensions of the stabilizers. Recall the subsets $G_{d, e}$ defined in Section~\ref{section:constructibility-nonsense}; by Corollary~\ref{cor:openness-of-d-e-locus}, each is locally closed in $G$. Recall also the subset $G_{\rm{ord}}$ of $G$, which is open in $G$ under some mild hypotheses; see Proposition~\ref{prop:strong-purity-generizes}.

\begin{cor}\label{cor:flat-strat}
Let $G$ be a reductive group scheme over a Dedekind domain $R$ such that $|\pi_1(\sD(G))|$ is invertible in $R$, $Z(G)$ is smooth, and every residue characteristic of $R$ is good for $G$. Equip $G_{d, e\rm{-ord}}$ with the reduced subscheme structure. Then each $G_{d, e\rm{-ord}}$ is $R$-flat and the subschemes $G_{d, e\rm{-ord}}$ of $G_{\rm{ord}}$ form a flattening stratification for the centralizer of the universal point over $G_{\rm{ord}}$.
\end{cor}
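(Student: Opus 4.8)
The plan is to verify in turn the three properties making the $G_{d,e\rm{-ord}}$ a flattening stratification of $G_{\rm{ord}}$: that they are locally closed subschemes partitioning $G_{\rm{ord}}$ and correctly nested under closure; that the universal centralizer becomes flat over each; and that each $G_{d,e\rm{-ord}}$ is $R$-flat. The first is bookkeeping: $G_{\rm{ord}}$ is open in $G$ by Proposition~\ref{prop:strong-purity-generizes} (using that $|\pi_1(\sD(G))|$ is invertible in $R$), each $G_{d,e\rm{-ord}}$ is locally closed by Corollary~\ref{cor:openness-of-d-e-locus}, and the strata partition $G_{\rm{ord}}$ since every point acquires a well-defined pair $(d,e)$. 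For the nesting, order pairs by $(d,e)\preceq(d',e')$ iff $d<d'$, or $d=d'$ and $e\le e'$; upper semicontinuity of centralizer dimension gives $\overline{G_d}\subseteq\bigcup_{d'\ge d}G_{d'}$, and the fact, from the proof of Corollary~\ref{cor:openness-of-d-e-locus}, that $G_{d,\le e}$ is open in $G_d$ shows $G_{d,\ge e}$ is closed in $G_d$; together these yield $\overline{G_{d,e\rm{-ord}}}\cap G_{\rm{ord}}\subseteq\bigcup_{(d,e)\preceq(d',e')}G_{d',e'\rm{-ord}}$.

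The second property is immediate from the main theorem: over $G_{d,e\rm{-ord}}$ with its reduced structure the tautological section $\mathrm{id}$ of $G$ is pure with fibral centralizer dimension $d$, fiberwise ordinary, and of pure semisimple part (the centralizer of the semisimple part of each geometric fibre has dimension $e$), directly from the definitions of $G_d$, $G_{d,e}$ and $G_{\rm{ord}}$, hence strongly pure with pure semisimple part; so Theorem~\ref{theorem:flat-pure-centralizer}(\ref{item:strongly-pure-group-section}) shows $Z_{G\times_R G_{d,e\rm{-ord}}}(\mathrm{id})$ is flat over $G_{d,e\rm{-ord}}$, with $Z(G)$-quotient smooth.

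The substance is the $R$-flatness of the strata, for which I would adapt the method of Corollary~\ref{cor:scheme-of-d-regular-elements}. Consider the functor $F_{d,e}$ on $R$-schemes carrying $T$ to the set of $g\in G(T)$ that are fiberwise ordinary, whose centralizer $Z_{G\times_R T}(g)$ is $T$-flat of relative dimension $d$, and such that the centralizer of the semisimple part of each geometric fibre has dimension $e$; as in \emph{loc.\ cit.}\ this is an fpqc sheaf, and since $R$ is already noetherian the formal-smoothness and representability arguments there apply once the lifting step is suitably modified. Concretely, to see $F_{d,e}$ is formally smooth over $R$, given a square-zero extension $A\to A/I$ and $g\in F_{d,e}(A/I)$ one must lift $g$ to $F_{d,e}(A)$: here $Z_{G\times_R\Spec(A/I)}(g)$ is $A/I$-smooth since the residue characteristic is pretty good (Corollary~\ref{corollary:smooth-centralizer-pretty-good-characteristic}), and one lifts $g$ in two stages. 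First, a maximal multiplicative-type subgroup of the center of $Z_{G\times_R\Spec(A/I)}(g)$ lifts by smoothness of the scheme of multiplicative-type subgroups and of their centralizers \cite[Exp.\ XI]{SGA3}, which produces a lift $t$ of the ``semisimple part'' of $g$ (exactly as in Corollary~\ref{cor:scheme-of-d-regular-elements}); second, working inside the reductive group scheme $Z_G(t)$ and transporting to nilpotent sections via the integral Springer isomorphism \cite[Thm.\ 5.1]{Integral-Springer}, one lifts the remaining ``unipotent part'' using the relative instability-parabolic constructions of Lemmas~\ref{lemma:mcninch-existence-of-sections} and \ref{lemma:mcninch-relative-instability-parabolic} (lift the instability parabolic and an associated cocharacter, move a section of the weight-two bundle, and run the orbit-dimension comparison). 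Granting formal smoothness, the identification of $F_{d,e}$ with $G_{d,e\rm{-ord}}$ equipped with its reduced structure is then as in Corollary~\ref{cor:scheme-of-d-regular-elements}: the inclusion $h_{G_{d,e\rm{-ord}}}\subseteq F_{d,e}$ comes from flatness of $Z_{G\times_R G_{d,e\rm{-ord}}}(\mathrm{id})$ over the reduced scheme $G_{d,e\rm{-ord}}$ (Theorem~\ref{theorem:flat-pure-centralizer}) together with base change, while a Cohen-structure-theorem argument using formal smoothness shows any $A$-point of $F_{d,e}$ factors through $G_{d,e\rm{-ord}}$ schematically; finite presentation and formal smoothness then give that $G_{d,e\rm{-ord}}$ is $R$-smooth, in particular $R$-flat.

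I expect the main obstacle to be the unipotent-part lifting in the formal-smoothness step: Corollary~\ref{cor:scheme-of-d-regular-elements} only treats fiberwise semisimple sections, so extending the lifting to arbitrary ordinary sections forces one to re-run, in a deformation-theoretic form, essentially the whole argument of Theorem~\ref{theorem:flat-pure-centralizer} (relative Jordan decomposition, Springer isomorphism, relative associated cocharacters), all while checking that $d$, $e$, and ordinariness are preserved on every fibre. Equivalently, and perhaps more transparently, one can replace the functor argument with the geometric assertion that every geometric point of $(G_{d,e\rm{-ord}})_s$ is the special fibre of a section of $G$, over some discrete valuation ring extension of $R$, lying in the stratum $(d,e\rm{-ord})$ on all fibres---which forces $G_{d,e\rm{-ord}}$ to have no irreducible component contained in a closed fibre, hence to be $R$-flat; but producing that section again requires exactly the same toolkit.
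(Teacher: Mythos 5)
Your closing ``equivalently'' remark is the paper's actual proof: after noting that Theorem~\ref{theorem:flat-pure-centralizer} handles the flatness of the universal centralizer over each reduced stratum, the paper reduces (via localization and Dedekind-ness) to showing that $G_{d,e\rm{-ord}}$ over a DVR is the topological closure of its generic fiber, i.e.\ that every geometric point $g$ of the special fiber of $G_{d,e\rm{-ord}}$ extends to a section over a complete DVR lying in the stratum on both fibers. That section is produced exactly as you anticipate: use Corollary~\ref{cor:scheme-of-d-regular-elements} to lift the semisimple part of $g$ to a strongly pure $t\in G(A)$, pass to the reductive group scheme $Z_{G_A}(t)$ and the unipotent remainder, convert to a nilpotent $X$ via the integral Springer isomorphism, pick an associated cocharacter of $X$ on the closed fiber, lift that cocharacter to $A$, lift $X$ inside the $2$-weight space, and invoke Lemma~\ref{lemma:mcninch-relative-instability-parabolic} for purity of the lift.

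Your primary (functorial) proposal, however, has a concrete gap you only partly acknowledge. In Corollary~\ref{cor:scheme-of-d-regular-elements} the formal-smoothness lift works because a fiberwise semisimple $g$ lies \emph{inside} the smooth multiplicative-type subgroup $Z(Z_{G\times_R\Spec(A/I)}(g))$, so lifting that subgroup immediately lifts $g$. For a general ordinary $g\in F_{d,e}(A/I)$, lifting the center of $Z_{G\times_R\Spec(A/I)}(g)$ does \emph{not} produce a lift of the semisimple part of $g$: you must first extract a section $t\in Z(Z_G(g))(A/I)$ satisfying $g=tu$ with $u$ fiberwise unipotent, which is precisely a relative Jordan decomposition over $A/I$. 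The paper's Theorem~\ref{theorem:relative-jordan-decomposition-over-normal-base} only provides this over \emph{normal integral} bases, and $\Spec(A/I)$ in a formal-smoothness test is generically non-reduced, so the two-stage lift never gets off the ground without an additional ingredient not in the paper. (It would, if it worked, also yield $R$-smoothness of $G_{d,e\rm{-ord}}$, a strictly stronger conclusion than the $R$-flatness the paper proves; this stronger claim is not established by the paper's argument and should not be asserted without proof.) The geometric DVR-section route you sketch at the end is the one that actually closes.
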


\begin{proof}
By Theorem~\ref{theorem:flat-pure-centralizer}, the only thing to prove is that each $G_{d, e\rm{-ord}}$ is $R$-flat. For this we may pass from $R$ to a local ring $R_{\fp}$ and thus assume that the base is the spectrum of a DVR. Since $R$ is Dedekind, it suffices to show that $G_{d, e\rm{-ord}}$ is the schematic closure of its generic fiber, and by reducedness it suffices to show that it is the \textit{topological} closure of its generic fiber. Thus we are only required to show that any point in the special fiber of $G_{d, e\rm{-ord}}$ has a generization in the generic fiber. Let $g \in G_{d, e}$ lie in the special fiber, and let $A$ be a complete DVR over $R$ with residue field $\overline{k(g)}$. Let $t_0$ be the semisimple part of $g_{\overline{k(g)}}$; since $A$ is complete, Proposition~\ref{prop:scheme-of-d-regular-elements} shows that there is a strongly pure section $t \in G(A)$ with special fiber $t_0$. We may pass from $G$ to $Z_{G_A}(t)$ and from $g$ to $t_s^{-1}g$ to assume that $g$ is a rational point of the special fiber and it is unipotent. \smallskip

Fix a Springer isomorphism $\rho: \sU_G \to \sN_G$, which exists by \cite[Thm.\ 5.1]{Integral-Springer}, and let $X = \rho(g)$, so $X$ is a nilpotent element of $\fg \otimes_A k(s)$. By Lemma~\ref{lemma:existence-of-associated-cocharacters}, there exists a cocharacter $\tau: \bG_m \to G_s$ associated to $X$. If $P = P_{G_s}(\tau)$ is the corresponding parabolic subgroup of $G_s$ then $X$ is a rational point of the unipotent radical $U = U_{G_s}(\tau)$, and by Proposition~\ref{proposition:properties-of-instability-parabolic} the $\Ad_{P_s}$-orbit of $X$ is open and dense in $\bigoplus_{n \geq 2} \fg_s(\tau, n)$. Lift $\tau$ to a cocharacter $\widetilde{\tau}: \bG_m \to G$, and let $\widetilde{P}$ be the corresponding parabolic. We may lift $X$ to an element $\widetilde{X} \in \fg(\widetilde{\tau}, 2)$, so that also $\widetilde{\tau}_\eta$ is an associated cocharacter for $\widetilde{X}_\eta$. Thus by Lemma~\ref{lemma:mcninch-relative-instability-parabolic} we see that the $\Ad_{\widetilde{P}_{\eta}}$-orbit of $\widetilde{X}_{\eta}$ is open and dense in $\bigoplus_{n \geq 2} \fg_\eta(\widetilde{\tau}_\eta, n)$. Since $\fg(\widetilde{\tau}, n)$ is a free $A$-module, we see that $\dim Z_{P_s}(X) = \dim Z_{\widetilde{P}_\eta}(X_\eta)$. If $\widetilde{g} = \rho^{-1}(\widetilde{X})$, then $\widetilde{g}$ is fiberwise unipotent and pure (by $G$-equivariance of $\rho$), so we are done.
\end{proof}

\subsection{The structure of pure centralizers}

Having established the desired flatness result, we move on to considering some corollaries. Our main goal is to establish structural results concerning $Z_G(X)$ for a pure fiberwise nilpotent section $X$ of $\fg$; specifically, we will show under mild hypotheses on $S$ and $G$ that there are short exact sequences
\begin{gather*}
1 \to R_X \to Z_G(X) \to C_X \to 1 \\
1 \to Z(G) \to C_X \to D_X \to 1 \\
1 \to D_X^0 \to D_X \to D_X/D_X^0 \to 1
\end{gather*}
in which $R_X$ is smooth with connected unipotent fibers, $D_X^0$ is a reductive group scheme, and $D_X/D_X^0$ is finite etale (and thus in particular $C_X$ is finitely presented and affine). The first sequence splits etale-locally on $S$. Using some cohomological calculations, including certain instances of the \textit{Grothendieck--Serre conjecture}, we will show in Theorem~\ref{theorem:grothendieck-serre} that if $S = \Spec A$ then under certain hypotheses on the base ring $A$, two pure fiberwise nilpotent sections of $\fg$ are $\Ad_{G(A)}$-conjugate if and only if they are $\Ad_{G(k(\eta))}$-conjugate. Specifically, we will show that if $Y \in \fg$ is another pure fiberwise nilpotent section which is generically (rationally) conjugate to $X$, then $\Transp_G(X, Y)$ is a $Z_G(X)$-torsor which is trivial over $\eta$, and moreover any such generically trivial $Z_G(X)$-torsor is trivial. \smallskip

Before establishing any of these corollaries, we will find it necessary to establish more properties of the instability parabolic and associated cocharacters over general reduced bases. In particular, defining $R_X$, showing that $R_X$ is smooth, and showing that $Z_G(X)/R_X$ exists as a (flat) scheme will all require having fiberwise (almost) associated cocharacters over some base. Specifically, if $\tau: \bG_m \to G$ is fiberwise almost associated to $X$, then $R_X$ will be defined to be $Z_G(X) \cap U_G(\tau)$. By Proposition~\ref{proposition:properties-of-instability-parabolic}, the fibers of this $R_X$ are independent of the fibers of $\tau$, so $R_X$ will be seen to be independent of $\tau$ as soon as it is seen to be flat. \smallskip

To show that $R_X$ is flat, we will construct a ``Levi factor" $L_X$ in $Z_G(X)$ etale-locally on the base. The term ``Levi factor" is in quotation marks because $L_X$ is not generally smooth. To be more specific, if $\tau: \bG_m \to G$ is fiberwise associated to $X$, then we will define $L_X = Z_G(X) \cap Z_G(\tau)$ and show that $L_X \ltimes R_X \to Z_G(X)$ is an isomorphism of group schemes. In particular, $L_X$ is flat, and $L_X/Z(G)$ and $R_X$ are smooth. Moreover, $L_X/Z(G)$ has reductive fibers, whence the somewhat imprecise term ``Levi factor". We warn that despite the notation, $L_X$ is highly dependent on the choice of $\tau$. \smallskip

Once we have established this semidirect product decomposition, the displayed short exact sequences above will pop out naturally. In light of the above discussion, we see that the following lemma will be instrumental to understanding the structure of $Z_G(X)$.

\begin{lemma}\label{lemma:flat-pure-centralizer-consequences}
Let $S$ be a reduced locally noetherian scheme and let $G$ be a reductive $S$-group scheme such that for each $s \in S$, $\chara k(s)$ is good for $G_s$. Let $X \in \fg(S)$ be a pure fiberwise nilpotent section. 
\begin{enumerate}
    \item\label{item:instability-parabolic-exists-integrally} If $S$ is normal then there exists a unique parabolic $S$-subgroup $P$ of $G$ such that $P_s$ is the instability parabolic of $X_s$ for all $s \in S$. Moreover, Zariski-locally on $S$ there exists a cocharacter $\tau: \bG_m \to P$ such that $\tau_s$ is almost associated to $X_s$ for all $s \in S$.
    \item\label{item:orbit-map-is-flat} Suppose that $|\pi_1(\sD(G))|$ is invertible on $S$. If $\tau: \bG_m \to G$ is a cocharacter such that $\tau_s$ is almost associated to $X_s$ for all $s \in S$, then the orbit map $\Phi: P_G(\tau) \to \bigoplus_{n \geq 2} \fg(\tau, n)$, $\Phi(g) = (\Ad g) X$, is flat. If $V$ is the (open) image of $\Phi$, then $\Phi: P_G(\tau) \to V$ is a $Z_G(X)$-torsor.
    \item\label{item:associated-characters-exist-integrally} If $S = \Spec A$ is strictly henselian local, then there exists a cocharacter $\tau: \bG_m \to G$ such that $\tau_s$ is associated to $X_s$ for all $s$.
\end{enumerate} 
\end{lemma}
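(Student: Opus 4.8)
The plan is to prove the three assertions in turn; (\ref{item:instability-parabolic-exists-integrally}) produces the parabolic and the almost-associated cocharacter, (\ref{item:orbit-map-is-flat}) is the relative form of the key computation in Theorem~\ref{theorem:flat-pure-centralizer}, and (\ref{item:associated-characters-exist-integrally}) refines the cocharacter of (\ref{item:instability-parabolic-exists-integrally}). For (\ref{item:instability-parabolic-exists-integrally}), uniqueness of $P$ is immediate from separatedness of the $S$-scheme $\underline{\mathrm{Par}}$ of parabolic subgroups of $G$: two parabolic $S$-subgroups with the same fibre over the generic point $\eta$ of an irreducible component agree on a closed subscheme through $\eta$, hence everywhere since $S$ is reduced. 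For existence, I would work Zariski-locally on connected components to assume $S=\Spec A$ with $A$ a normal Noetherian domain. Inside $\underline{\mathrm{Par}}$ (smooth and proper over $S$) let $C$ be the locus of parabolics $P$ with $P_s$ equal to the instability parabolic of $X_s$ for every $s$; this is locally closed, being cut out of the closed condition ``$X$ lies in the canonical weight-$\geq 2$ sub-bundle of $\Lie$ of the universal parabolic'' (a sub-bundle because $\chara k(s)$ is good, and independent of a choice of cocharacter by Proposition~\ref{proposition:mcninch-almost-associated-cocharacters}(\ref{item:almost-associated-cocharacters-agree})) by the open condition that the $\Ad$-orbit of $X$ be dense in it. By the DVR case (Lemma~\ref{lemma:mcninch-relative-instability-parabolic}, together with uniqueness over a field) $C$ is stable under specialization; being constructible it is therefore closed, so $C\to S$ is proper. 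It is moreover surjective (instability parabolics exist fibrally) and radicial — each fibre is a single point, which is $k(s)$-rational since the instability parabolic is defined over the base field — hence finite; and a finite surjective radicial morphism with reduced integral source, trivial residue field extensions, and normal integral target is an isomorphism (compare function fields and use that $A$ is integrally closed). So $C_{\mathrm{red}}\xrightarrow{\sim} S$, and the resulting section is $P$; these glue over $S$ by uniqueness. For the cocharacter, pass (étale-locally, descending the conclusion by uniqueness) to the case where $P$ has a split maximal torus $T$, let $\tau_\eta$ be the unique cocharacter of $T_\eta$ almost associated to $X_\eta$ (Proposition~\ref{proposition:mcninch-almost-associated-cocharacters}(\ref{item:almost-associated-cocharacters-exist})), and extend it to $\tau\colon \bG_m\to T\subset P$ using $\Hom_S(\bG_m,T)=\Hom_{k(\eta)}(\bG_m,T_\eta)$ for a torus over a normal base (surjectivity of $\pi_1(\eta)\to\pi_1(S)$, as in Lemma~\ref{lemma:mcninch-existence-of-sections}). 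That $\tau_s$ is almost associated to $X_s$ for all $s$ follows as in Lemma~\ref{lemma:mcninch-relative-instability-parabolic}: $P_{G_s}(\tau_s)=P_s$ on every fibre (this depends only on pairing the constant cocharacter with the constant roots and holds over $\eta$), a dimension count using purity of $X$ and Proposition~\ref{proposition:properties-of-instability-parabolic} shows the $\Ad_{P_s}$-orbit of $X_s$ is dense in $\bigoplus_{n\geq 2}\fg_s(\tau_s,n)$, and comparing with the section $Y$ furnished by Lemma~\ref{lemma:mcninch-existence-of-sections} (for which $\tau$ is fibrewise associated) identifies this orbit over each geometric fibre.

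For (\ref{item:orbit-map-is-flat}), write $P=P_G(\tau)$, which has parabolic — hence smooth — fibres, and note $\bigoplus_{n\geq 2}\fg(\tau,n)$ is an $S$-vector bundle. Flatness of $\Phi$ may be checked fibrewise by the fibral flatness criterion, reducing to $S=\Spec k$, where $\Phi$ has open dense image $V_k$ (Proposition~\ref{proposition:properties-of-instability-parabolic}(\ref{item:instability-parabolic-open-dense-orbit}) and Proposition~\ref{proposition:mcninch-almost-associated-cocharacters}(\ref{item:almost-associated-cocharacters-agree})) with all fibres of $P\to V_k$ translates of $Z_P(X)$, so Miracle Flatness applies; thus $\Phi$ is flat with open image $V$. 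Since $Z_G(X)$ is $S$-flat by Theorem~\ref{theorem:flat-pure-centralizer}(\ref{item:pure-Lie-algebra-centralizer}) (the semisimple part of a fibrewise nilpotent section is $0$, so the relevant centralizer dimension is the locally constant $\dim G_s$), the fibral isomorphism criterion promotes the fibrewise equalities $Z_{G_s}(X_s)=Z_{P_s}(X_s)$ (Proposition~\ref{proposition:properties-of-instability-parabolic}(\ref{item:instability-parabolic-same-centralizer})) to $Z_G(X)=Z_P(X)\subset P$, and the standard fibrewise isomorphisms $Z_{G_s}(X_s)\times P_s\xrightarrow{\sim} P_s\times_{V_s}P_s$, $(z,p)\mapsto(p,pz)$, to an isomorphism $Z_G(X)\times_S P\xrightarrow{\sim} P\times_V P$; combined with faithful flatness of $\Phi\colon P\to V$ this says $\Phi$ is a $Z_G(X)$-torsor.

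For (\ref{item:associated-characters-exist-integrally}), first reduce to $A$ normal: the conclusion is fibrewise and geometric, so one may pass to the normalization of each irreducible component and a further finite local extension, transporting cocharacters back along these finite maps. Then (\ref{item:instability-parabolic-exists-integrally}) — with ``Zariski-locally'' meaning globally since $A$ is local — gives $\tau$ with $\tau_s$ almost associated to $X_s$ on every fibre, and it remains to conjugate $\tau$ into a cocharacter associated to $X$ on all fibres at once. Fibrewise, the almost-associated cocharacters of $X$ in $P=P_G(\tau)$ form one $P_s$-orbit inside which the associated ones constitute a single $Z_{P_s}(X_s)^0$-sub-orbit (Lemma~\ref{lemma:existence-of-associated-cocharacters}(\ref{item:associated-cocharacters-conjugate})); the correcting element is produced as a section over the strictly henselian local $A$ of the corresponding quotient scheme, which one arranges to be smooth by passing to $G\times^{Z(G)}T$ (so $Z(G)$ is smooth) and using the semidirect-product structure of $Z_P(X)$ from Proposition~\ref{proposition:properties-of-instability-parabolic}(\ref{item:instability-parabolic-semidirect-product}).

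The main obstacle I anticipate is the existence claim in (\ref{item:instability-parabolic-exists-integrally}): manufacturing a single parabolic $S$-subgroup from the fibrewise instability parabolics. The difficulty is that $\underline{\mathrm{Par}}$ is only proper (not affine) over $S$, so Hartogs-type extension of sections is unavailable, and a proper radicial surjection need not be an isomorphism in general; it is exactly the interaction of normality of $S$ with rationality of the instability parabolic over each residue field that closes the gap. A secondary difficulty, in (\ref{item:associated-characters-exist-integrally}), is that the ``associated'' condition must be imposed on all fibres simultaneously — lifting an associated cocharacter from the closed fibre alone gives no control over the others — which is precisely why (\ref{item:associated-characters-exist-integrally}) has to be routed through the generic-to-special construction of (\ref{item:instability-parabolic-exists-integrally}) rather than argued directly over $A$.
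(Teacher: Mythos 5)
Your treatment of (\ref{item:orbit-map-is-flat}) is correct and coincides with the paper's. The issues are in (\ref{item:instability-parabolic-exists-integrally}) and (\ref{item:associated-characters-exist-integrally}).

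In (\ref{item:instability-parabolic-exists-integrally}), the closed condition you use to carve $C$ out of $\underline{\mathrm{Par}}$ — ``$X$ lies in the canonical weight-$\geq 2$ sub-bundle of $\Lie$ of the universal parabolic'' — is not well-defined. There is no intrinsic weight-$\geq 2$ sub-bundle attached to an arbitrary point $Q$ of $\underline{\mathrm{Par}}$: the subspace $\bigoplus_{n\geq 2}\fg(\tau,n)$ depends on a choice of cocharacter $\tau$, and Proposition~\ref{proposition:mcninch-almost-associated-cocharacters}(\ref{item:almost-associated-cocharacters-agree}), which you invoke for independence of that choice, is a statement about cocharacters that are \emph{already} almost associated to a fixed $X$ — a notion that (by definition) presupposes $Q$ is the instability parabolic of $X$, which is exactly what you are trying to decide. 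So the description of $C$ as a locally closed set is circular, and without constructibility your ``stable under specialization, hence closed'' step does not go through. The paper sidesteps this entirely by appealing to \cite[2.6.3]{McNinch-relative-centralizer} together with Lemma~\ref{lemma:mcninch-relative-instability-parabolic} and projectivity of $\underline{\mathrm{Par}}$; the downstream use of normality and rationality of the fibral instability parabolic is correct in spirit and agrees with the overall shape, but the constructible set you are feeding into it is not defined.

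In (\ref{item:associated-characters-exist-integrally}), you try to conjugate a fiberwise almost-associated $\tau$ into a fiberwise associated one by ``producing a section of the corresponding quotient scheme,'' but you never define that scheme, and it is not clear it is flat, let alone smooth: the condition ``$g\tau g^{-1}$ is associated to $X$'' involves the existence of a suitable Levi, which is not manifestly a closed (or flat) condition on $g\in P$. The paper's route avoids this by applying Lemma~\ref{lemma:mcninch-existence-of-sections} to produce a \emph{different} section $Y\in\fg(S)$ for which a chosen $\phi$ is fiberwise associated from the outset, and then using (\ref{item:orbit-map-is-flat}) and smoothness of $Z_G(X)$ (after a pushout to make $Z(G)$ smooth) to exhibit $\Transp_G(X,Y)$ as a smooth torsor, hence trivial over the strictly henselian $A$; the conjugating element then transports $\phi$ to the desired $\tau$. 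Your semidirect-product observation via Proposition~\ref{proposition:properties-of-instability-parabolic}(\ref{item:instability-parabolic-semidirect-product}) is fibral, and lifting it to $S$ is precisely part (\ref{item:nilpotent-centralizer-semidirect-product}) of Theorem~\ref{theorem:flat-pure-centralizer-semidirect-product}, whose proof \emph{relies on} (\ref{item:associated-characters-exist-integrally}) — so that route is circular. Finally, the preliminary reduction to normal $A$ by ``transporting cocharacters back along finite maps'' is not a descent argument and doesn't work as stated: a cocharacter over the normalization $A'$ need not come from $A$.
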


\begin{proof}
First, if $\eta$ is a generic point of $S$ and $\Spec A \to S$ is a map from the spectrum of a DVR which maps the generic point of $\Spec A$ to $\eta$, then there is a unique parabolic $A$-subgroup $P_A$ of $G_A$ whose generic fiber is pulled back from the instability parabolic $P_\eta$ of $X_\eta$, and Lemma~\ref{lemma:mcninch-relative-instability-parabolic} shows that the special fiber of $P_A$ is the instability parabolic of the corresponding fiber of $X$. By \cite[2.6.3]{McNinch-relative-centralizer}, since this is true for every such map $\Spec A \to S$ and the formation of the instability parabolic commutes with arbitrary field extensions, the first claim of (\ref{item:instability-parabolic-exists-integrally}) follows. To establish the cocharacter claim, we may work Zariski-locally and thus by \cite[Exp.\ XIV, Cor.\ 3.20, Rmk.\ 3.21]{SGA3II} we may assume that there exists a maximal $S$-torus $T$ of $P$. Note that $\Hom_A(\bG_m, T)$ surjects onto $\Hom_{k(\eta)}(\bG_m, T_\eta)$, so by Proposition~\ref{proposition:mcninch-almost-associated-cocharacters} there is some cocharacter $\tau$ of $T$ such that $\tau_\eta$ is almost associated to $X_\eta$. Then Lemma~\ref{lemma:mcninch-relative-instability-parabolic} shows that $\tau_s$ is almost associated to $X_s$ for all $s \in S$. \smallskip

Next we want to establish (\ref{item:orbit-map-is-flat}). By the fibral flatness criterion, it suffices to establish flatness of $\Phi$ when the base is the spectrum of a field, where it follows from the fact that $\Phi$ is a $Z_P(X) = Z_G(X)$-torsor over its open dense image. Over a general base, the fact that $\Phi$ is flat and finitely presented implies that its image $V$ is open, and it is evidently dense on fibers. The map $\Phi: P \to V$ is now fppf, and since $P \times_V P \cong Z_G(X) \times P$ we find that $\Phi$ is a $Z_G(X)$-torsor, as desired. \smallskip

Now we turn to proving (\ref{item:associated-characters-exist-integrally}). Fix a cocharacter $\phi: \bG_m \to P$ such that $\tau_\eta$ is almost associated to $X_\eta$. By Lemma~\ref{lemma:mcninch-existence-of-sections} (whose hypotheses hold by (\ref{item:instability-parabolic-exists-integrally})), there exists a section $Y \in \fg$ such that $Y_\eta$ is geometrically conjugate to $X_\eta$ and $\phi_t$ is associated to $Y_t$ for all $t \in S$. We claim that $X$ and $Y$ are $\Ad_{G(A)}$-conjugate; for this, we may use pushouts as usual to arrange that $Z(G)$ is smooth, and in particular that $Z_G(X)$ is smooth by Theorem~\ref{theorem:flat-pure-centralizer}. But now (\ref{item:orbit-map-is-flat}) shows that $\Transp_G(X, Y)$ is a $Z_G(X)$-torsor, so it has an $A$-point since $A$ is strictly henselian. Thus $X$ and $Y$ are $\Ad_{G(A)}$-conjugate, so the desired $\tau$ is a $G(A)$-conjugate of $\phi$.
\end{proof}

If $G$ is a reductive $S$-group scheme and $X \in \fg(S)$ is a pure nilpotent section, then one can define the normalizer scheme $N_G(X)$ as in the displayed equation (\ref{equation:normalizer-scheme}). The main point of $N_G(X)$ for our purposes is that for reduced $S$: $N_G(X)$ is flat, it contains $Z_G(X)$, and any fiberwise associated cocharacter of $X$ factors through $N_G(X)$. Of these points, only flatness is non-trivial, and it will be the first point established below.

\begin{theorem}\label{theorem:flat-pure-centralizer-semidirect-product}
Let $S$ be a reduced scheme and let $G$ be a reductive $S$-group scheme such that for each $s \in S$, $\chara k(s)$ is good for $G_s$ and $\chara k(s) \nmid |\pi_1(\sD(G))|$. Let $X \in \fg(S)$ be a pure fiberwise nilpotent section.
\begin{enumerate}
    \item\label{item:relative-normalizer-scheme} The normalizer scheme $N_G(X)$ is $S$-flat and $N_G(X)/Z(G)$ is $S$-smooth.
    \item\label{item:nilpotent-centralizer-unipotent-part} Suppose that $\tau: \bG_m \to G$ is a cocharacter such that $\tau_s$ is almost associated to $X_s$ for all $s \in S$, and let $R_X = Z_{U_G(\tau)}(X)$. Then $R_X$ is $S$-smooth with connected unipotent fibers and the fppf quotient sheaf $C_X = Z_G(X)/R_X$ is representable by a flat $S$-group scheme and $D_X := C_X/Z(G)$ is smooth with reductive fibers. In particular, the relative identity component $D_X^0$ \cite[IV\textsubscript{3}, Cor.\ 15.6.5]{EGA} is a reductive $S$-group scheme.
    \item\label{item:nilpotent-centralizer-semidirect-product} Suppose that $\tau: \bG_m \to G$ is a cocharacter such that $\tau_s$ is associated to $X_s$ for all $s \in S$. Then $L_X := Z_G(\tau) \cap Z_G(X)$ is $S$-flat, and $L_X/Z(G)$ is $S$-smooth. If $Z(G)$ is smooth then the relative identity component $L_X^0$ is a reductive $S$-group scheme (so $L_X$ is smooth). The multiplication morphism
    \[
    L_X \ltimes R_X \to Z_G(X)
    \]
    is an isomorphism of $S$-group schemes.
    \item\label{item:finite-etale-component-group} The fppf quotient sheaf $D_X/D_X^0$ is representable by a finite etale $S$-group scheme of order invertible on $S$. If $Z(G)$ is smooth, then $C_X/C_X^0$ is representable by a finite etale $S$-group scheme of order invertible on $S$.
\end{enumerate}
\end{theorem}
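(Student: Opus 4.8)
The plan is to reduce, by standard arguments (spreading out, the valuative criterion of flatness, étale localization, and for the finiteness statements the valuative criterion of properness), to the case $S = \Spec A$ with $A$ a strictly henselian discrete valuation ring; the globalizations are routine via fppf descent (note that $C_X$ and $D_X$ will be identified below with subquotients of $G$, so representability is automatic). In this case Lemma~\ref{lemma:flat-pure-centralizer-consequences}(\ref{item:associated-characters-exist-integrally}) supplies a cocharacter $\tau\colon \bG_m \to G$ with $\tau_s$ associated to $X_s$ for both $s \in S$; write $P = P_G(\tau)$, $U = U_G(\tau) = \sR_u(P)$, $L_\tau = Z_G(\tau)$, and recall that $U$ is independent of $\tau$ among (almost) associated cocharacters (Proposition~\ref{proposition:mcninch-almost-associated-cocharacters}(\ref{item:almost-associated-cocharacters-agree})), so $R_X = Z_U(X)$ is unambiguous. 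Also recall that $Z_G(X)$ is already $S$-flat by Theorem~\ref{theorem:flat-pure-centralizer}, since a fiberwise nilpotent section is automatically pure with pure (trivial) semisimple part.

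For (\ref{item:relative-normalizer-scheme}): by Lemma~\ref{lemma:normalizer-scheme} the cocharacter $\tau$ factors through $N_G(X)$, with $\bG_m \xrightarrow{\tau} N_G(X) \to \bG_m$ the squaring map, and $(N_G(X))_s = N_{G_s}(X_s)$ since $N_G(X)$ is cut out by a Cartesian square. Then $(z,t) \mapsto z\tau(t)$ is a homomorphism $Z_G(X) \rtimes \bG_m \to N_G(X)$ out of an $S$-flat source which on each fiber is an isogeny (kernel $\mu_2$) onto $N_{G_s}(X_s)$, hence faithfully flat; applying the fibral flatness criterion \cite[IV\textsubscript{3}, Thm.\ 11.3.10]{EGA} to this morphism of finitely presented $S$-schemes forces $N_G(X)$ to be $S$-flat. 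Then $N_G(X)/Z(G)$ is $S$-flat (as $Z(G)$ is $S$-flat) with smooth fibers (Lemma~\ref{lemma:normalizer-scheme}), hence $S$-smooth.

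For (\ref{item:nilpotent-centralizer-unipotent-part}) and (\ref{item:nilpotent-centralizer-semidirect-product}): the decomposition $Z_G(X) \cong L_X \ltimes R_X$ is formal. By Proposition~\ref{proposition:properties-of-instability-parabolic}(\ref{item:instability-parabolic-same-centralizer}), $Z_G(X) = Z_P(X) \subseteq P = L_\tau \ltimes U$; since $\Ad(U)$ raises $\tau$-weights and $X$ has weight $2$, writing a point $g = lu$ ($l \in L_\tau$, $u \in U$) of $Z_G(X)$ and comparing weight-$2$ parts in $\Ad(l)\Ad(u)X = X$ gives $\Ad(l)X = X$; thus the Levi projection $\pi\colon Z_G(X) \to L_\tau$ lands in $L_X := L_\tau \cap Z_G(X)$, $u = l^{-1}g \in R_X$, and $g \mapsto (\pi(g),\pi(g)^{-1}g)$ inverts the multiplication $L_X \times_S R_X \to Z_G(X)$; as $\pi$ is a homomorphism with kernel $R_X$ admitting the section $L_X \hookrightarrow Z_G(X)$, this exhibits $Z_G(X) = L_X \ltimes R_X$ and identifies $C_X = Z_G(X)/R_X$ with $L_X$ (and $D_X$ with $L_X/Z(G)$). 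To see $R_X$ is $S$-flat, consider $\psi\colon U \to \bigoplus_{n\geq 2}\fg(\tau,n)$, $\psi(g)=\Ad(g)X$, which factors through the closed subscheme $W := X + \bigoplus_{n\geq 3}\fg(\tau,n)$ with $R_X = \psi^{-1}(X)$; fibrally $\psi_s$ is an orbit map whose image $\Ad(U_{G_s}(\tau_s))X_s$ is closed (orbits of unipotent groups on affine schemes are closed), and it is all of the irreducible $W_s$ because $\Ad(P_s)X_s$ is dense in $\bigoplus_{n\geq 2}\fg_s(\tau_s,n)$ by Proposition~\ref{proposition:properties-of-instability-parabolic}(\ref{item:instability-parabolic-open-dense-orbit}) (which forces $\Ad(L_{\tau,s})X_s$ dense in the weight-$2$ space), while $L_{X,s}$-conjugation stabilizes $\Ad(U_{G_s}(\tau_s))X_s$, so that $\Ad(P_s)X_s \cap W_s = \Ad(U_{G_s}(\tau_s))X_s$ is a nonempty open (hence dense) subset of $W_s$; thus $\psi_s$ is faithfully flat, so $\psi$ is $S$-flat by the fibral flatness criterion and $R_X = \psi^{-1}(X)$ is $S$-flat. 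With $R_X$ and $Z_G(X) \cong L_X \times_S R_X$ both $S$-flat, the projection $Z_G(X) \to L_X$ (a base change of $R_X \to S$) is faithfully flat, so $L_X$ is $S$-flat by descent. The remaining assertions are fibral: $R_{X,s}$ is smooth connected unipotent and $L_{X,s}/Z(G_s)$ is reductive by Proposition~\ref{proposition:properties-of-instability-parabolic}(\ref{item:instability-parabolic-semidirect-product}), so $R_X$ is $S$-smooth with connected unipotent fibers, $D_X \cong L_X/Z(G)$ is $S$-smooth with reductive fibers and $D_X^0$ is a reductive $S$-group scheme, and if $Z(G)$ is smooth then $L_X$ itself is $S$-smooth with $L_X^0$ reductive.

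For (\ref{item:finite-etale-component-group}): $D_X \to D_X/D_X^0$ is a torsor under the reductive (hence smooth) group scheme $D_X^0$, so $D_X/D_X^0 \to S$ is smooth of relative dimension $0$, i.e.\ étale, and it is $S$-flat by descent, with finite fibers $\pi_0((D_X)_s)$; using that the relative identity component of a smooth affine $S$-group scheme with reductive identity component is open and closed, $D_X/D_X^0$ is separated, and a flat, separated, quasi-finite group scheme over the discrete valuation ring $A$ is finite, so $D_X/D_X^0$ is finite étale. Its order is invertible on $S$: fibrally $\pi_0((D_X)_s) \cong Z_{G_s}(X_s)/Z_{G_s}(X_s)^0 Z(G_s)$ (as $R_{X,s}$ is connected and, via a Springer isomorphism, $X_s$ has the same centralizer as a unipotent element), whose order by Corollary~\ref{cor:intro-unip-comp-gp} is divisible only by primes bad for $G_s$, hence prime to $\chara k(s)$. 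When $Z(G)$ is smooth the characteristic is pretty good (Lemma~\ref{lemma:characterization-of-pretty-good-primes}), so $|\pi_0(Z(G_s))|$ is prime to $\chara k(s)$; feeding this into $1 \to Z(G) \to C_X \to D_X \to 1$ with $C_X \cong L_X$ smooth shows $C_X/C_X^0$ is finite étale of order invertible on $S$ as well. The step I expect to be the main obstacle is precisely the finiteness — not merely the étaleness — of $D_X/D_X^0$ and $C_X/C_X^0$: an étale group scheme with finite fibers over a non-discrete base can fail to be finite (witness $\Spec\bZ[1/p] \to \Spec\bZ$), so one genuinely needs the clopenness of the relative identity component (equivalently, separatedness of the quotient) before one can invoke finiteness of flat separated quasi-finite group schemes over a Dedekind base.
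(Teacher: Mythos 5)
Your treatment of (1), the flatness of $R_X$, and the identification $Z_G(X) = L_X \ltimes R_X$ is sound and genuinely different from the paper: you obtain the semidirect product purely formally from $Z_G(X)\subset P=L_\tau\ltimes U$ via the weight-$2$ calculation, then get flatness of $R_X$ from the orbit map $U\to W=X+\bigoplus_{n\geq 3}\fg(\tau,n)$ and fibral flatness, and finally $L_X$ flat by descent along $Z_G(X)\to L_X$. The paper instead shows $R_X$ flat via Lemma~\ref{lemma:boohers-lemma} and $L_X$ flat via the exact sequence $1\to L_X\to Z_{N_G(X)}(\phi)\to\bG_m\to 1$, and only then deduces the semidirect product by the fibral isomorphism criterion; your route is arguably cleaner.

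The serious gap is exactly where you anticipated, and it is bigger than you acknowledge. You assert that because $D_X$ is $S$-smooth with reductive fibers, ``$D_X^0$ is a reductive $S$-group scheme'' --- but this is not a fibral statement. By definition a reductive group scheme is \emph{affine}, so one must show $D_X^0$ is closed in $D_X$, and this does not follow from the fibers being reductive. Likewise the implication ``flat, separated, quasi-finite group scheme over a DVR is finite'' is simply false: take $A$ a DVR with fraction field $K$, remove the closed point of the non-identity component of $(\bZ/2)_A$ to get the open subgroup scheme $H=\Spec A\sqcup\Spec K\subset(\bZ/2)_A$; it is flat, separated, \'etale, quasi-finite, with $H^0=\Spec A$ clopen, yet it is not finite. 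Applying $\bG_m\times_A(-)$ gives a smooth affine group scheme with reductive (connected) identity components of fibers, with clopen $G^0$, and with $G/G^0$ not finite --- so separatedness alone cannot carry the argument. The genuine content of parts (2)--(4) is precisely the local constancy of $|\pi_0(D_{X,s})|$, which is equivalent to the finiteness of $D_X/D_X^0$ and to the affineness/clopenness of $D_X^0$. The paper proves this by adapting Premet's GIT argument and invoking Alper's results on adequate moduli spaces: one shows $Z_G(\tau)/L_X$ is affine (isomorphic to a principal open $W=\fg(\tau,2)\setminus V(Q)$), and \cite[Thms.\ 9.4.1 and 9.7.6]{Alper-adequate} then yield that $L_X^0$ is reductive and $L_X/L_X^0$ is finite. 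This input is entirely missing from your proposal.

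There is also a circularity in your order-invertibility argument: you invoke Corollary~\ref{cor:intro-unip-comp-gp} to conclude that $|\pi_0(D_{X,s})|$ is prime to $\chara k(s)$, but that corollary is \emph{proved in the paper using Theorem~\ref{theorem:flat-pure-centralizer-semidirect-product}(\ref{item:finite-etale-component-group})}. You must instead give a direct fiberwise argument; the paper does so via a Springer isomorphism, showing that a $p$-torsion class in $L_X/L_X^0$ would be represented by a unipotent $u_0\in L_X(k)\setminus L_X^0(k)$ through which a $\bG_a$-worth of elements of $Z_G(X)$ passes, contradicting $u_0\notin L_X^0(k)$.
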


\begin{proof}
For (\ref{item:relative-normalizer-scheme}), note that as in Lemma~\ref{lemma:normalizer-scheme} there is a short exact sequence
\[
1 \to Z_G(X) \to N_G(X) \to \bG_m \to 1.
\]
Thus the flatness and smoothness assertions reduce to the analogous assertions for $Z_G(X)$, already proved in Theorem~\ref{theorem:flat-pure-centralizer}. \smallskip

For the first smoothness claim in (\ref{item:nilpotent-centralizer-unipotent-part}), we may use flat descent to assume that $S = \Spec A$ for a strictly henselian ring $A$. By Proposition~\ref{proposition:properties-of-instability-parabolic}(\ref{item:instability-parabolic-semidirect-product}), $R_X$ has smooth connected unipotent fibers, so it suffices to show that $R_X$ is flat. By Proposition~\ref{proposition:mcninch-almost-associated-cocharacters}(\ref{item:almost-associated-cocharacters-dynamic-method}), $R_X$ does not change if $\tau$ is replaced by another cocharacter which is fiberwise almost associated to $X$. Thus by Lemma~\ref{lemma:flat-pure-centralizer-consequences}(\ref{item:associated-characters-exist-integrally}) we may assume that $\tau_t$ is \textit{associated} to $X_t$ for all $t \in \Spec A$. Thus we may reduce to proving that the multiplication morphism in (\ref{item:nilpotent-centralizer-semidirect-product}) is an isomorphism. \smallskip

To show that $C_X$ is representable by a flat $S$-group scheme, effectivity of fpqc descent for affine morphisms \cite[Exp.\ VIII, Cor.\ 7.9]{SGA1} allows us to pass to any etale cover of $S$. By Lemma~\ref{lemma:flat-pure-centralizer-consequences}(\ref{item:associated-characters-exist-integrally}) we may assume that there exists a cocharacter $\tau: \bG_m \to G$ such that $\tau_s$ is associated to $X_s$ for all $s \in S$. Thus representability again reduces to the claim about the multiplication morphism in (\ref{item:nilpotent-centralizer-semidirect-product}); the flatness and smoothness assertions also reduce to this, so we will prove it next. \smallskip

For (\ref{item:nilpotent-centralizer-semidirect-product}) we may as usual pass from $G$ to $G \times^{Z(G)} T$ for an $S$-torus $T$ into which $Z(G)$ embeds to assume that $Z(G)$ is $S$-smooth. To prove the flatness and smoothness claims, we may use the valuative criterion of flatness \cite[IV\textsubscript{3}, Thm.\ 11.8.1]{EGA} and flat descent to assume that $S = \Spec A$ for a strictly henselian DVR $A$. The multiplication morphism described in this point is an isomorphism on fibers by Proposition~\ref{proposition:properties-of-instability-parabolic}(\ref{item:instability-parabolic-semidirect-product}), so because $Z_G(X)$ is flat, upper semicontinuity of fiber dimension implies that the fiber dimensions of $L_X$ and $R_X$ are locally constant on $S$. By Proposition~\ref{proposition:properties-of-instability-parabolic}(\ref{item:instability-parabolic-semidirect-product}), $R_X$ has smooth connected unipotent fibers, so Lemma~\ref{lemma:boohers-lemma} implies that $R_X$ is $S$-flat, and thus it is $S$-smooth by fibral considerations. \smallskip

To show that $L_X$ is $S$-flat, it suffices as usual to assume that $Z(G)$ is smooth, in which case we must show that $L_X$ is $S$-smooth. By Lemma~\ref{lemma:flat-pure-centralizer-consequences}(\ref{item:associated-characters-exist-integrally}) and henselianity of $A$, there is a cocharacter $\tau: \bG_m \to G$ such that $\tau_t$ is associated to $X_t$ for all $t \in S$. Since $N_G(X)$ is smooth by (\ref{item:relative-normalizer-scheme}), the centralizer $Z_{N_G(X)}(\phi)$ is also smooth by \cite[Lem.\ 2.2.4]{Conrad}. There is a short exact sequence
\[
1 \to L_X \to Z_{N_G(X)}(\phi) \to \bG_m \to 1,
\]
and the map $Z_{N_G(X)}(\phi) \to \bG_m$ is flat: since $Z_{N_G(X)}(\phi)$ is smooth, this flatness may be checked on fibers, where it holds since precomposing with $\phi$ gives rise to a surjective map by Lemma~\ref{lemma:normalizer-scheme}. So the kernel $L_X$ is $S$-affine and $S$-flat. \smallskip

Now that $L_X$ and $R_X$ are $S$-flat, to check that the multiplication morphism $L_X \ltimes R_X \to Z_G(X)$ is an isomorphism it suffices to check on fibers, where it holds by Proposition~\ref{proposition:properties-of-instability-parabolic}(\ref{item:instability-parabolic-semidirect-product}). \smallskip

Continue to assume that $Z(G)$ is smooth, so $L_X$ is smooth. To show that $L_X^0$ is reductive and $L_X/L_X^0$ is finite, we may and do assume that $S = \Spec A$ for a DVR $A$. We will adapt the proof of \cite[Thm.\ 2.3(iii)]{Premet} using results from \cite{Alper-adequate}. Specifically, since $Z_G(\tau)$ is a reductive group scheme, \cite[Thms.\ 9.4.1 and 9.7.6]{Alper-adequate} show that it suffices to show that the quotient $Z_G(\tau)/L_X$ is \textit{affine}. \smallskip

Note that there is an action map $\alpha: Z_G(\tau) \to \fg(\tau, 2)$ given by $g \mapsto \Ad(g)X$, and the fiber over $X$ is equal to $L_X$. The proof of \cite[Thm.\ 2.3]{Premet} shows that there is an element $Q \in \Sym^*(\fg(\tau, 2)^*)(A)$ such that the image of $\alpha$ is equal to $\fg(\tau, 2) - V(Q)$ on both fibers. But now $W \coloneqq \fg(\tau, 2) - V(Q)$ is an open subscheme of $\fg(\tau, 2)$, so it follows that $Z_G(\tau)/L_X \cong W$: first, a theorem of Artin \cite[Cor.\ 6.4]{Artin-stacks} shows that $Z_G(\tau)/L_X$ is a (separated) algebraic space. By a criterion of Knutson \cite[II, Thm.\ 6.15]{Knutson-algebraic-spaces}, since there is a monomorphism $Z_G(\tau)/L_X \to \fg(\tau, 2)$, automatically $Z_G(\tau)/L_X$ is a scheme. To check that the natural map $\alpha Z_G(\tau)/L_X \to W$ is an isomorphism, \cite[IV\textsubscript{4}, Thm.\ 17.9.1]{EGA} shows that it suffices to check that it is a surjective smooth monomorphism. Clearly $\alpha$ is a monomorphism, and we have already mentioned that it is surjective. To check smoothness, we may pass to fibers to assume $S = \Spec k$ for a field $k$. Note that $W$ and $Z_G(\tau)$ are both smooth over $k$, so in particular they are both regular schemes. Thus by Miracle Flatness \cite[Thm.\ 23.1]{Matsumura}, it suffices to check that all fibers of $\alpha$ are smooth of the same dimension. But in fact all fibers are isomorphic to $L_X$, which we have already seen to be smooth. Thus finally we have $Z_G(\tau)/L_X \cong W$. Since $W$ is affine, \cite[Thms.\ 9.4.1 and 9.7.6]{Alper-adequate} show that $L_X^0$ is reductive and $L_X/L_X^0$ is finite. \smallskip

Finally, we prove (\ref{item:finite-etale-component-group}). Embedding $Z(G)$ into a torus $T$ and passing to $G \times^{Z(G)} T$, we may assume that $Z(G)$ is smooth with connected fibers, so in particular $C_X$ is smooth, as is $Z_G(X)$, so $C_X^0$ and $Z_G(X)^0$ make sense and $Z(G) \subset C_X^0$. Thus $D_X/D_X^0 \cong C_X/C_X^0$, so we may assume that $Z(G)$ is smooth and thereby reduce to the second claim of (\ref{item:finite-etale-component-group}). Also $C_X \cong L_X$, so by the previous paragraph the only remaining thing to show is that $L_X/L_X^0$ is of order invertible on $S$. \smallskip

For this final claim, we may pass to a fiber to assume that $S = \Spec k$ for an algebraically closed field $k$ of characteristic $p > 0$. If $L_X/L_X^0$ has $p$-torsion, then a simple argument shows that there is some unipotent element $u_0 \in L_X(k) - L_X^0(k)$. Using \cite[Thm.\ 5.1]{Integral-Springer}, we fix a $G$-equivariant isomorphism $\rho: \sU_G \to \sN_G$ and let $X_0 = \rho(u_0)$, $u = \rho^{-1}(X)$. Since $u_0$ centralizes $X$, it follows that $u$ centralizes $X_0$. Thus also $u$ centralizes the line $kX_0 \subset \sN_G$, and again it follows that the line $\rho^{-1}(kX_0)$ centralizes $X$. Since this line passes through $u_0$ and $1$, this contradicts the assumption $u_0 \not\in L_X^0(k)$. This completes the proof.
\end{proof}

We mention the following curious corollary. This follows from examining the tables in \cite{Alek79} or \cite{Liebeck-Seitz} (which give far sharper information), so the merit of this result is that its proof is ``conceptual", in that it does not rely on any case-by-case calculations of the component groups of centralizers.

\begin{cor}\label{cor:unip-comp-gp}
Let $k$ be a field, and let $G$ be a connected reductive $k$-group such that $\chara k$ is good for $G$. If $u \in G(k)$ is unipotent, then the only primes dividing the order of $Z_G(u)/Z_G(u)^0Z(G)$ are bad for $G$. A similar claim holds if $X \in \Lie G$ is nilpotent. In particular, the order of $Z_G(u)/Z_G(u)^0Z(G)$ always divides a power of $30$.
\end{cor}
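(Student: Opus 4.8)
The strategy is to prove the sharper assertion that $|A(u)| := |Z_G(u)/Z_G(u)^0Z(G)|$ depends, for $u$ unipotent in good characteristic, only on the root datum of $G$ and the Bala--Carter datum of $u$ --- in particular not on the characteristic among good primes. Granting this, fix a good prime $\ell$ for $G$: by Theorem~\ref{theorem:flat-pure-centralizer-semidirect-product}(\ref{item:finite-etale-component-group}) applied over $\Spec \overline{\mathbb F}_\ell$ (where, after the normalizations below, $A(u)$ is the finite \'etale component group $D_X/D_X^0 = C_X/C_X^0$, of order invertible on the base) we get $\ell \nmid |A(u_\ell)|$ for any unipotent $u_\ell$ realizing the same Bala--Carter datum over $\overline{\mathbb F}_\ell$, hence $\ell \nmid |A(u)|$. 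So every prime dividing $|A(u)|$ is bad for $G$; since the bad primes of any root system lie in $\{2,3,5\}$, the order $|A(u)|$ divides a power of $30$. The Lie algebra statement follows from the group statement through a Springer isomorphism, which exists since $\chara k$ is good for $G$ (\cite[Thm.\ 5.1]{Integral-Springer}), as it identifies $Z_G(X)$ with $Z_G(u)$ for a suitable unipotent $u$.

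First I would make standard harmless reductions: we may assume $k$ is algebraically closed; replacing $G$ by a $z$-extension we may assume $\sD(G)$ is simply connected (so $|\pi_1(\sD(G))| = 1$); and pushing out along an embedding $Z(G) \hookrightarrow T$ into a torus, as in the proof of Corollary~\ref{corollary:smooth-centralizer-pretty-good-characteristic}, we may assume $Z(G)$ is smooth. Neither operation changes the root system (hence the set of bad primes) nor $|A(u)|$ (a central torus kernel induces an isomorphism on component groups, and the pushout changes neither $\sD(G)$ nor $Z_G(u)/Z(G)$). Under these hypotheses $Z(G)$ is smooth, connected and central, so $Z(G) \subset Z_G(u)^0$ and $A(u) = \pi_0(Z_G(u))$; via a Springer isomorphism $\rho$ with $X := \rho(u)$ nilpotent, this is exactly the group $D_X/D_X^0$ of Theorem~\ref{theorem:flat-pure-centralizer-semidirect-product}.

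Next I would establish the characteristic-independence. Fix the Bala--Carter datum $(L,Q)$ of $u$ (Theorem~\ref{theorem:bala-carter}), regarded as a conjugacy class of pairs in the split Chevalley form $\bG$ over $\bZ$; since "Levi subgroup" and "distinguished parabolic" are combinatorial notions, this makes sense over $\bZ$ and reduces to a Bala--Carter datum in every good characteristic (and, by Theorem~\ref{theorem:bala-carter}, is realized by a unipotent element there). For a good prime $q$, over the Witt vectors $W(\overline{\mathbb F}_q)$ --- a henselian DVR with residue field $\overline{\mathbb F}_q$ and characteristic-$0$ fraction field --- I would lift a unipotent element of Bala--Carter type $(L,Q)$ to a pure fiberwise unipotent section $\widetilde u$ of $\bG$ whose generic fibre again realizes $(L,Q)$, exactly as in the proof of Corollary~\ref{cor:flat-strat}: via $\rho$, lift an associated cocharacter $\tau$ of the corresponding nilpotent element, together with its instability parabolic and a Richardson element, using Lemma~\ref{lemma:mcninch-relative-instability-parabolic} to see that the extension of $\tau$ remains (almost) associated and the extended element remains Richardson on the generic fibre. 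Since $D_{\rho(\widetilde u)}/D^0$ is finite \'etale over the connected base $\Spec W(\overline{\mathbb F}_q)$ by Theorem~\ref{theorem:flat-pure-centralizer-semidirect-product}(\ref{item:finite-etale-component-group}), its order is the same on both fibres, so $|A(u_q)|$ (for $u_q$ of type $(L,Q)$ over $\overline{\mathbb F}_q$) equals the order of the $A$-group of the characteristic-$0$ unipotent class of type $(L,Q)$; since two unipotent elements of a fixed split reductive group over algebraically closed fields of characteristic $0$ with the same Bala--Carter datum become conjugate over a common extension, this characteristic-$0$ quantity is well defined independently of $q$. Applying this with $q = \chara k$ (when positive) gives $|A(u)| = |A(u_\ell)|$ for every good prime $\ell$, which together with Theorem~\ref{theorem:flat-pure-centralizer-semidirect-product}(\ref{item:finite-etale-component-group}) over $\overline{\mathbb F}_\ell$ completes the argument as in the first paragraph.

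The main obstacle is the middle point above: confirming that the mixed-characteristic lift genuinely realizes the prescribed Bala--Carter datum on its generic fibre, i.e.\ that lifting an associated cocharacter and a Richardson element preserves these properties after base change to the generic point --- this is precisely where Lemma~\ref{lemma:mcninch-relative-instability-parabolic} and the projectivity of the scheme of parabolic subgroups do the real work. A secondary, routine but fiddly, point is the bookkeeping that the preliminary $z$-extension and central pushout alter neither the root system nor the order $|A(u)|$.
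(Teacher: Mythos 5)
Your proposal is correct and follows essentially the same route as the paper. The engine in both cases is the combination of Corollary~\ref{cor:flat-strat}~/~Lemma~\ref{lemma:mcninch-existence-of-sections} (to lift a unipotent or nilpotent element to a pure section over a mixed-characteristic henselian DVR) together with Theorem~\ref{theorem:flat-pure-centralizer-semidirect-product}(\ref{item:finite-etale-component-group}) (the component group $D_X/D_X^0$ is finite \'etale of order invertible on the base). The only differences are organizational: you first reduce to $\sD(G)$ simply connected via a $z$-extension and push out the center to make $Z(G)$ smooth, and then deduce the result by first proving a ``characteristic independence'' statement keyed to the Bala--Carter datum and afterwards applying Theorem~\ref{theorem:flat-pure-centralizer-semidirect-product}(\ref{item:finite-etale-component-group}) over each $\overline{\bF}_\ell$; the paper instead passes to the universal cover of $\sD(G)$, lifts once to characteristic $0$, and then for each good prime $p$ deforms the characteristic-$0$ nilpotent directly down a mixed-characteristic $(0,p)$ DVR using Lemma~\ref{lemma:mcninch-existence-of-sections}, reading off $p\nmid|A|$ from the order-invertibility conclusion of the theorem over that DVR. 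Your extra push-out normalization to make $Z(G)$ smooth is a reasonable precaution (Corollary~\ref{cor:flat-strat} is stated with that hypothesis), and the worry you isolate---that the mixed-characteristic lift must genuinely produce a pure section with the right orbit on the generic fiber, which is where Lemma~\ref{lemma:mcninch-relative-instability-parabolic} does the work---is exactly the right point to flag.
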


\begin{proof}
We may and do pass from $G$ to the universal cover $\sD(G)$ to assume that $G$ is semisimple and simply connected. We may and do further assume that $k$ is algebraically closed, so $G$ is split and hence there is a split reductive $\bZ$-group scheme $\bG$ such that $G \cong \bG_k$. First suppose $\chara k = p > 0$. In this case, Corollary~\ref{cor:flat-strat} shows that there is a finite extension $\sO$ of $\bZ_p$ and a section $u_0 \in \bG(\sO)$ such that $u$ is conjugate to the special fiber of $u_0$. By Theorem~\ref{theorem:flat-pure-centralizer-semidirect-product}(\ref{item:finite-etale-component-group}), we may thus pass to the fraction field of $\sO$ to assume $\chara k = 0$. \smallskip

Now let $p$ be any good prime for $G$. By \cite[Thm.\ 5.1]{Integral-Springer}, it suffices to consider the nilpotent case. By Lemma~\ref{lemma:mcninch-existence-of-sections}, there is a DVR $A$ of mixed characteristic $(0, p)$ and a pure fiberwise nilpotent section $\widetilde{X} \in (\Lie \bG)(A)$ such that $\widetilde{X}_\eta$ is geometrically conjugate to $X$. Thus by Theorem~\ref{theorem:flat-pure-centralizer-semidirect-product}(\ref{item:finite-etale-component-group}), $Z_G(\widetilde{X})/Z_G(\widetilde{X})^0Z(G)$ has finite etale component group of order invertible on $A$, so indeed $p$ does not divide the order of $Z_G(X)/Z_G(X)^0Z(G)$, as desired.
\end{proof}

\subsection{Conjugacy of fiberwise nilpotent sections}

We now move on to considering a few conjugacy results. First, we show that over a connected reduced base scheme, a pure fiberwise nilpotent section is determined up to etale-local conjugacy by a single fiber.

\begin{prop}\label{prop:pure-sections-determined-by-one-fiber}
Let $S$ be a reduced scheme, and let $G$ be a reductive $S$-group scheme such that $|\pi_1(\sD(G))|$ is invertible on $S$ and for each $s \in S$ the characteristic of $k(s)$ is good for $G_s$.\smallskip

Let $X, Y \in \fg(S)$ be pure fiberwise nilpotent sections, and let $W$ be the set of $s \in S$ such that $X_s$ and $Y_s$ are $G(k(\overline{s}))$-conjugate. Then $W$ is open and closed in $S$ and $\Transp_G(X, Y) \times_S W$ is a $Z_G(X) \times_S W$-torsor.
\end{prop}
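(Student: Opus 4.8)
The plan is to reduce to the affine case and show $W$ is constructible and closed under both generization and specialization, so that it is open and closed. First I would observe that $\Transp_G(X,Y)$ is always a closed subscheme of $G$ of finite presentation (it is the fiber over $Y$ of the orbit map $g \mapsto \Ad(g)X$), and the action of $Z_G(X)$ on it by right translation makes it a pseudo-torsor; it is a torsor over a point $s$ exactly when $X_s$ and $Y_s$ are geometrically conjugate. So the content is to understand the locus $W$ where the fibers $\Transp_G(X,Y)_s$ are nonempty, and then to promote the fibral pseudo-torsor structure to an actual torsor over $W$.

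\textbf{Openness of $W$.} Working locally and spreading out, I may assume $S$ is noetherian. Since the orbit map $\Phi\colon G \to \fg$, $\Phi(g) = \Ad(g)X$, is of finite presentation, its image is constructible, hence $W$ (which is the preimage under the section $Y\colon S \to \fg$ of the constructible set of $s$ over which $Y_s$ is in the image of $\Phi_s$) is constructible in $S$. To prove $W$ is open it then suffices to show it is stable under generization, so I may assume $S = \Spec A$ for a DVR and $X_s \sim Y_s$, and I must show $X_\eta \sim Y_\eta$ geometrically. Here is where the machinery of Lemma~\ref{lemma:flat-pure-centralizer-consequences} enters: by (\ref{item:instability-parabolic-exists-integrally}) (applicable after passing to a normal base, e.g.\ $A$ itself) there is a parabolic $P$ with $P_s$ the instability parabolic of $X_s$ and a fiberwise almost-associated cocharacter $\tau$, and then by (\ref{item:orbit-map-is-flat}) the orbit map $\Phi\colon P_G(\tau) \to \bigoplus_{n\ge 2}\fg(\tau,n)$ is flat with open image $V$ and is a $Z_G(X)$-torsor over $V$; the analogous parabolic for $Y$ built from $Y_s$ is the \emph{same} $P$ since $X_s$ and $Y_s$ are conjugate, and $Y$ lies in the weight space $\bigoplus_{n \ge 2}\fg(\tau,n)$ as $Y_s$ does (density of the orbit on fibers forces $Y_\eta$ into it too). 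Since $V$ is open and contains $X_s$ and $Y_s$, density of fibers gives that $V$ meets the generic fiber in a dense open, and $X_\eta, Y_\eta$ both lie in it; but $\Phi$ being surjective onto $V$ fibrally means $X_\eta$ and $Y_\eta$ are both in the (single) $\Ad_{P(k(\overline\eta))}$-orbit that is dense in $(\bigoplus_{n\ge2}\fg(\tau,n))_{\overline\eta}$ — this uses Proposition~\ref{proposition:properties-of-instability-parabolic}(\ref{item:instability-parabolic-open-dense-orbit}) — hence they are geometrically conjugate. This shows $W$ is stable under generization, hence open.

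\textbf{Closedness of $W$.} By the same spreading-out, closedness reduces to stability under specialization: with $S = \Spec A$ a DVR and $X_\eta \sim Y_\eta$ geometrically, I must show $X_s \sim Y_s$. After a finite local extension of $A$ (which does not affect the statement, as geometric conjugacy of fibers is insensitive to it) I may assume the generic conjugacy is $k(\eta)$-rational and that $A$ is complete with algebraically closed residue field. Now $X$ and $Y$ have the same instability parabolic $P$ on the generic fiber; by Lemma~\ref{lemma:mcninch-relative-instability-parabolic}, $P_s$ is the instability parabolic of both $X_s$ and $Y_s$ (applying the lemma to each of $X$ and $Y$, noting purity of both and the fact that the generic instability parabolics coincide). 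Picking $\tau$ of a maximal torus of $P$ that is fiberwise almost associated to $X$ (hence also almost associated to $Y$ fibrally, since almost-associatedness on a fiber depends only on the instability parabolic together with the weight condition, and $X_\eta \sim Y_\eta$), both $X$ and $Y$ lie in $\bigoplus_{n\ge 2}\fg(\tau,n)$, and by Proposition~\ref{proposition:properties-of-instability-parabolic}(\ref{item:instability-parabolic-open-dense-orbit}) the $\Ad_{P_s}$-orbits of $X_s$ and of $Y_s$ are each open and dense in $(\bigoplus_{n\ge2}\fg(\tau,n))_s$. Two open dense subsets of an irreducible scheme meet over an algebraically closed field, so $X_s$ and $Y_s$ are $\Ad_{P(k(s))}$-conjugate. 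Hence $W$ is closed.

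\textbf{The torsor statement.} With $W$ open and closed, it remains to show $T := \Transp_G(X,Y)\times_S W$ is a $Z_G(X)\times_S W$-torsor, i.e.\ that it is fppf over $W$ (since it is already a pseudo-torsor and its fibers are nonempty over $W$). Replacing $S$ by $W$, I reduce to the case where $X_s \sim Y_s$ for all $s$, so $\Transp_G(X,Y)$ has nonempty fibers everywhere. To get flatness over $S$, I would use the orbit-map picture of Lemma~\ref{lemma:flat-pure-centralizer-consequences}(\ref{item:orbit-map-is-flat}): étale-locally on $S$ there is a fiberwise almost-associated $\tau$, and $\Phi\colon P_G(\tau)\to V$ is faithfully flat; $\Transp_G(X,Y)$ is then the preimage $\Phi^{-1}(Y)$ (using that $Z_G(X) = Z_{P_G(\tau)}(X)$ fibrally by Proposition~\ref{proposition:properties-of-instability-parabolic}(\ref{item:instability-parabolic-same-centralizer}), which forces all conjugators to lie in $P_G(\tau)$ fibrally, hence $\Transp_G(X,Y) = \Transp_{P_G(\tau)}(X,Y)$ after checking this schematically via reducedness of $S$ and fibral agreement), and $Y$ factors through $V$ since it does so fibrally and $V$ is open. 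Thus $\Transp_G(X,Y)$ is the base change of the faithfully flat $\Phi$ along the section $Y\colon S \to V$, hence faithfully flat over $S$, and finitely presented; being a flat, finitely presented pseudo-torsor with nonempty fibers, it is a $Z_G(X)$-torsor.

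The main obstacle I anticipate is the step identifying $\Transp_G(X,Y)$ with $\Transp_{P_G(\tau)}(X,Y)$ \emph{schematically} (not merely fibrally): this requires knowing $Z_G(X)$ is $S$-flat (so that $\Transp_G(X,Y)$, being a pseudo-torsor under it, is controlled by its fibers) — which is exactly Theorem~\ref{theorem:flat-pure-centralizer} — together with a reducedness argument to upgrade the fibral inclusion $\Transp_G(X,Y)_s \subset P_s$ to a scheme-theoretic factorization through $P_G(\tau)$. A secondary subtlety is making sure the cocharacter $\tau$ can be chosen to be fiberwise almost associated to \emph{both} $X$ and $Y$ simultaneously, which follows because on each fiber almost-associatedness is a property of the instability parabolic (shared by $X_s$ and $Y_s$) plus membership in the weight space, and both conditions hold for $X$ and $Y$ once we know they have the same instability parabolic on every fiber.
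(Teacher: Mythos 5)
Your proof has the right high-level shape — reduce to a DVR, use Lemma~\ref{lemma:flat-pure-centralizer-consequences} and Lemma~\ref{lemma:mcninch-relative-instability-parabolic} to produce an instability parabolic and almost-associated cocharacter, then invoke the orbit map $\Phi: P_G(\tau) \to \bigoplus_{n\ge 2}\fg(\tau,n)$ — and this is also the skeleton of the paper's argument. But there is a genuine gap at the point where you align the data for $X$ and $Y$.

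\textbf{The gap.} In both your openness and closedness steps you assert that the $A$-parabolics for $X$ and $Y$ from Lemma~\ref{lemma:flat-pure-centralizer-consequences}(\ref{item:instability-parabolic-exists-integrally}) are literally \emph{equal}: for openness you write ``the analogous parabolic for $Y$ built from $Y_s$ is the \emph{same} $P$ since $X_s$ and $Y_s$ are conjugate,'' and for closedness you write ``$X$ and $Y$ have the same instability parabolic $P$ on the generic fiber.'' Neither of these follows. If $X_s$ and $Y_s$ are conjugate (resp.\ $X_\eta$ and $Y_\eta$), their instability parabolics at that fiber are merely \emph{conjugate}, and conjugate parabolics over a DVR with equal (or conjugate) closed fibers need not coincide — e.g.\ $\bigl(\begin{smallmatrix}1&0\\ \pi&1\end{smallmatrix}\bigr)$ in $\GL_2(A)$ carries the upper-triangular Borel to a different $A$-Borel with the same special fiber. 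Consequently the claim that $Y$ lies in $\bigoplus_{n\ge 2}\fg(\tau,n)$ is also unjustified: $\tau$ is built from $X$, and without aligning the parabolics (and then the cocharacters) there is no reason for $Y$ to live in $X$'s weight space. The parenthetical ``density of the orbit on fibers forces $Y_\eta$ into it too'' is not a valid argument — it presupposes the conclusion.

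\textbf{How the paper fills it.} The paper's proof is structured around the stronger claim that conjugacy at \emph{one} fiber ($s$ or $\eta$) already forces $\Ad_{G(A)}$-conjugacy, which yields openness, closedness, and the torsor statement in one stroke. The alignment is done in two different ways depending on the fiber. For the specialization direction ($t=\eta$), after a finite extension one has $A$-parabolics $P$ for $X$ and $Q$ for $Y$ whose generic fibers are conjugate (hence of the same type); using that the scheme of parabolics of a fixed type is $G/P$ and that $(G/P)(A) = G(A)/P(A)$ for strictly henselian $A$ \cite[Exp.\ XXVI, Cor.\ 5.2]{SGA3}, one gets $P$ and $Q$ conjugate by a genuine element of $G(A)$. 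For the generization direction ($t=s$), the paper works with \emph{cocharacters} rather than parabolics: after arranging $X_s = Y_s$, fiberwise associated cocharacters $\tau_1$ (for $X$) and $\tau_2$ (for $Y$) are taken, their special fibers are matched using $Z_{G_s}(X_s)^0$-conjugacy of associated cocharacters and smoothness of $Z_G(X)$, and then the rigidity result \cite[Cor.\ B.3.5]{Conrad} is invoked to upgrade agreement of special fibers to $G(A)$-conjugacy of $\tau_1$ and $\tau_2$ themselves. Both of these steps are essential and neither appears in your write-up. Once the parabolics (or cocharacters) are aligned by an honest $G(A)$-conjugation, your weight-space and orbit-map steps go through exactly as you describe, so the rest of your argument is sound modulo this repair.
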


\begin{proof}
We may pass as usual to the case that $Z(G)$ is smooth. To show that $W$ is open and closed, we may pass first to the case that $S$ is affine. By spreading out, we may assume further that $S$ is noetherian. Now because $S$ is noetherian it suffices to show that $W$ is closed under both specialization and generization, and to check this we may reduce to the case that $S$ is the spectrum of a DVR. To show that $\Transp_G(X, Y) \times_S W$ is a $Z_G(X) \times_S W$-torsor, it suffices to show that it is smooth, as then it admits sections etale-locally. To show this smoothness, we may also use the valuative criterion of flatness \cite[IV\textsubscript{3}, Thm.\ 11.8.1]{EGA} to reduce that $S$ is the spectrum of a DVR. Thus from now on we assume $S = \Spec A$ for a DVR $A$. By descent, we may even assume that $S$ is complete with algebraically closed residue field. \smallskip

First suppose that $t = s$ is the closed point of $S$. In this case, we wish to show that $X$ and $Y$ are $\Ad_{G(A)}$-conjugate given that $X_s$ and $Y_s$ are $\Ad_{G(k(s))}$-conjugate. Since $G$ is smooth we may assume that $X_s = Y_s$. Now let $\tau_1, \tau_2: \bG_m \to G$ be cocharacters which are fiberwise associated to $X$ and $Y$, respectively; these exist by Lemma~\ref{lemma:flat-pure-centralizer-consequences}(\ref{item:associated-characters-exist-integrally}). Since $Z_G(X)$ is smooth and any two associated cocharacters of $X$ are fiberwise geometrically conjugate by $Z_G(X)$ (by Lemma~\ref{lemma:existence-of-associated-cocharacters}(\ref{item:associated-cocharacters-conjugate})), we may conjugate to assume $(\tau_1)_s = (\tau_2)_s$. But now \cite[Cor.\ B.3.5]{Conrad} implies that $\tau_1$ and $\tau_2$ are $G(A)$-conjugate. Thus after passing to a further $\Ad_{G(A)}$-conjugate of $X$ we may assume that $\tau_1 = \tau_2$. We will denote their common value by $\tau$. \smallskip

Let now $P = P_G(\tau)$, so that $P$ is fiberwise the instability parabolic of both $X$ and $Y$. Consider the orbit map $\Phi: P \to \bigoplus_{n \geq 2} \fg(\tau, n)$ given by $\Phi(g) = (\Ad g)X$, and note that $\Phi$ is flat by Lemma~\ref{lemma:flat-pure-centralizer-consequences}(\ref{item:orbit-map-is-flat}). If $\Omega$ is the (open) image of $\Phi$, then the induced morphism $\Phi: P \to \Omega$ is a $Z_G(X)$-torsor. Note that $Y$ is a section of $\Omega$: it is certainly a section of $\bigoplus_{n \geq 2} \fg(\tau, n)$, so to check that it lies in the open subscheme $\Omega$ it suffices to check this on special fibers. But it is clear that $Y_s \in \Omega_s$ since $Y_s = X_s$. Thus we see that $\Transp_G(X, Y) = \Phi^{-1}(Y)$ is a $Z_G(X)$-torsor. Since $Z_G(X)$ is smooth, also $\Transp_G(X, Y)$ is smooth, so we are done in this case since sections in the special fiber lift to sections over $A$. \smallskip

Next assume that $t = \eta$, so (after possibly extending $A$) $X_\eta$ and $Y_\eta$ are $\Ad_{G(k(\eta))}$-conjugate. We want to show that then $X$ and $Y$ are $\Ad_{G(A)}$-conjugate, and for this it again suffices to show that $\Transp_G(X, Y)$ is a $Z_G(X)$-torsor: by strict henselianity of $A$, it will therefore have a rational point in its special fiber, and this point will lift to a section over $A$. By (\ref{item:instability-parabolic-exists-integrally}), there are parabolic $S$-subgroups $P$ and $Q$ of $G$ such that $P_u$ is the instability parabolic of $X_u$ and $Q_u$ is the instability parabolic of $Y_u$ for all $u \in S$. (Recall our convention that $s$ denotes the closed point of $S$.) Note that $P_\eta$ and $Q_\eta$ are $G(k(\eta))$-conjugate, and we claim that in fact $P$ and $Q$ are $G(A)$-conjugate. \smallskip

To prove this claim, we note that $P$ and $Q$ lie in the same component of the scheme of parabolics (as may be checked generically), so \cite[Exp.\ XXVI, Thm.\ 3.3]{SGA3III} and \cite[Cor.\ 5.2.8]{Conrad} show that this component is isomorphic to the etale sheaf quotient $G/P$, and hence $P$ and $Q$ are etale-locally conjugate. Since $S$ is strictly henselian, we have $(G/P)(S) = G(S)/P(S)$, so it follows that $P$ and $Q$ are $G(A)$-conjugate, as desired. (In fact, the equality $(G/P)(S) = G(S)/P(S)$ is true whenever $S$ is semi-local; see \cite[Exp.\ XXVI, Cor.\ 5.2]{SGA3III}.) \smallskip

Since $P$ and $Q$ are $G(A)$-conjugate, we may replace $Y$ by an $\Ad_{G(A)}$-conjugate to assume $P = Q$. Let $\tau: \bG_m \to P$ be a cocharacter such that $\tau_u$ is almost associated to both $X_u$ and $Y_u$ for all $u \in S$; the fact that such $\tau$ exists for $X$ is established in (\ref{item:instability-parabolic-exists-integrally}), and if $\tau_\eta$ is almost associated to $X_\eta$ then it is also almost associated to $Y_\eta$ by conjugacy. By Lemma~\ref{lemma:mcninch-relative-instability-parabolic}, it follows that $\tau_u$ is almost associated to $Y_u$ for all $u \in S$. Consider the orbit map $\Phi: P \to \bigoplus_{n \geq 2} \fg(\tau, n)$ for $X$ as before. By Theorem~\ref{theorem:flat-pure-centralizer-semidirect-product}(\ref{item:orbit-map-is-flat}), we know that $\Phi$ is flat, and it is a $Z_G(X)$-torsor onto its open image. By Theorem~\ref{theorem:flat-pure-centralizer} (using our assumption that $Z(G)$ is smooth), $Z_G(X)$ is smooth and thus $\Phi$ is also smooth. Note that the section $Y$ lies in the image $\Omega$ of $\Phi$: since $\Omega$ is open this may be checked on the special fiber, where it follows because the orbit of $Y_s$ is open and dense in $(\bigoplus_{n \geq 2} \fg(\tau, n))_s$. Thus the pullback $\Phi^{-1}(Y)$ is a $Z_G(X)$-torsor; this pullback represents $\Transp_G(X, Y)$, so indeed $\Transp_G(X, Y)$ is smooth.
\end{proof}

The following is the main conjugacy result of this paper; it is a somewhat strengthened form of \cite[Thm.\ 1.6.1(c)]{McNinch-local-fields}.

\begin{theorem}\label{theorem:grothendieck-serre}
Let $A$ be a henselian local ring which is either regular or a valuation ring, and let $G$ be a reductive $A$-group scheme such that the residue characteristic is good for $G$. Let $X, Y \in \fg$ be pure fiberwise nilpotent sections. If $Z \subset Z(G)$ is an $A$-flat closed $A$-subgroup scheme such that $Z(G)/Z$ is smooth, then the following are equivalent.
\begin{enumerate}
    \item\label{item:conjugacy-integral} $X$ and $Y$ are $(G/Z)(A)$-conjugate.
    \item\label{item:conjugacy-special} $X_s$ and $Y_s$ are $(G/Z)(k(s))$-conjugate.
    \item\label{item:conjugacy-generic} $X_\eta$ and $Y_\eta$ are $(G/Z)(k(\eta))$-conjugate.
\end{enumerate}
A similar equivalence holds if $u, v \in G(A)$ are pure fiberwise unipotent sections each lying in the unipotent radical of some parabolic $A$-subgroup scheme of $G$. (For instance, this is the case if $|\pi_1(\sD(G))|$ is invertible in $A$.)
\end{theorem}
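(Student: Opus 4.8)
The implications $(1)\Rightarrow(2)$ and $(1)\Rightarrow(3)$ are immediate, since a $(G/Z)(A)$-conjugacy restricts to a conjugacy on each of the fibers $s$ and $\eta$. So the plan is to prove $(2)\Rightarrow(1)$ and $(3)\Rightarrow(1)$, and I would handle both through a single transporter-triviality statement. First I would reduce to the case that $Z(G)$ is smooth by passing from $(G,Z)$ to $(G/Z,1)$: the center of $G/Z$ is $Z(G)/Z$, which is smooth by hypothesis; goodness of the residue characteristic depends only on the root system, which is unchanged; and $G/Z$ acts on $\fg$ through the adjoint action, so $X$, $Y$ and their purity are untouched (for the unipotent variant the images of $u,v$ still lie in the unipotent radicals of $P_i/Z$, and when $|\pi_1(\sD(G))|$ is invertible — the ``for instance'' case — the residue characteristic becomes \emph{pretty good}, so $Z_G(X)/Z$ is smooth by Corollary~\ref{corollary:smooth-centralizer-pretty-good-characteristic}). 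After this reduction one must show $X$ and $Y$ are $(G/Z)(A)$-conjugate. Since $\Spec A$ is local hence connected, and the locus where $X_s$ and $Y_s$ are geometrically conjugate is open and closed (Proposition~\ref{prop:pure-sections-determined-by-one-fiber}; in the unipotent case one transfers via a Springer isomorphism \cite[Thm.\ 5.1]{Integral-Springer} and Theorem~\ref{theorem:relative-jordan-decomposition-over-normal-base}), hypothesis $(2)$ or $(3)$ forces this locus to be all of $\Spec A$. Then, using purity together with Lemma~\ref{lemma:flat-pure-centralizer-consequences}(\ref{item:orbit-map-is-flat}), the orbit map of $X$ realizes $T:=\Transp_{G/Z}(X,Y)$ as a torsor under $Z_G(X)/Z$ over $\Spec A$: statement $(1)$ is exactly the triviality of $T$, while $(2)$ (resp.\ $(3)$) says $T$ has a point over $k(s)$ (resp.\ over $k(\eta)$).

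To prove triviality I would pass to an \'etale cover of $\Spec A$ (harmless over a henselian local ring) so that a fiberwise associated cocharacter $\tau$ exists by Lemma~\ref{lemma:flat-pure-centralizer-consequences}(\ref{item:associated-characters-exist-integrally}), and invoke the structure of Theorem~\ref{theorem:flat-pure-centralizer-semidirect-product}: $Z_G(X)/Z\cong(L_X/Z)\ltimes R_X$ with $R_X$ a successive extension of vector groups, and $1\to Z(G)/Z\to L_X/Z\to D_X\to1$ with $Z(G)/Z$ smooth of multiplicative type, $D_X^0$ a reductive $S$-group scheme, and $D_X/D_X^0$ finite \'etale of order invertible on $S$. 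Then I would reduce the structure group of $T$ in stages. The induced torsor under $L_X/Z$ is trivial over $A$: its further quotient under $D_X/D_X^0$ is a finite \'etale torsor of invertible order with a point over $k(s)$ or over $k(\eta)$, hence trivial because $\pi_1$ of the henselian local $A$ agrees with that of $k(s)$ and is a quotient of that of $k(\eta)$; what remains is a torsor under the smooth group scheme generated by $D_X^0$ and $Z(G)/Z$, which is trivial over $A$ either because a smooth torsor over a henselian local ring with a point over $k(s)$ lifts to an $A$-point (the route for $(2)\Rightarrow(1)$), or because over a regular henselian local ring or a henselian valuation ring a torsor under a reductive group scheme that is trivial over the fraction field is trivial, by the Grothendieck--Serre conjecture, which is known in these cases (the route for $(3)\Rightarrow(1)$). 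Once the $L_X/Z$-quotient of $T$ is trivial, $T$ reduces to an $R_X$-torsor, which is automatically trivial since $H^1$ of a vector group over an affine scheme vanishes; hence $T$ is trivial, which is $(1)$.

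I expect the main obstacle to be the structural and cohomological bookkeeping: making precise the step-by-step reductions of structure group along $R_X\lhd Z_G(X)/Z\twoheadrightarrow L_X/Z$, along $D_X^0\lhd D_X\twoheadrightarrow D_X/D_X^0$, and along the central extension by $Z(G)/Z$, so that generic (or special) triviality genuinely descends one layer at a time in nonabelian fppf cohomology — and, in the Lie-algebra statement under only the ``good characteristic'' hypothesis, verifying that one can dispense with $|\pi_1(\sD(G))|$ being invertible by first transferring to the unipotent setting, where $Z_G(\cdot)/Z$ is smooth and the above argument applies cleanly. The genuinely deep input, the Grothendieck--Serre conjecture for regular henselian local rings and for valuation rings, is cited rather than reproved.
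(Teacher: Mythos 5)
The overall cohomological skeleton of your argument (reduce the structure group of the transporter torsor along $R_X \lhd Z_G(X)/Z \twoheadrightarrow C_X/Z$, kill the finite \'etale component group using $\pi_1$ surjectivity, then appeal to Grothendieck--Serre for the reductive core) matches the paper's. But your very first reduction step has a genuine gap: you pass from $(G,Z)$ to $(G/Z,1)$ to make the center smooth, and then immediately invoke Lemma~\ref{lemma:flat-pure-centralizer-consequences}(\ref{item:orbit-map-is-flat}), Lemma~\ref{lemma:flat-pure-centralizer-consequences}(\ref{item:associated-characters-exist-integrally}), Theorem~\ref{theorem:flat-pure-centralizer-semidirect-product}, and Proposition~\ref{prop:pure-sections-determined-by-one-fiber}. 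Every one of those requires $|\pi_1(\sD(G))|$ to be invertible on the base (or that $\chara k(s)$ be pretty good), and this hypothesis is \emph{not} part of Theorem~\ref{theorem:grothendieck-serre}'s statement (only goodness is assumed). Passing to a quotient makes the fundamental group of the derived group larger, not smaller: take $G = \SL_n$ over $A = \bZ_p$ with $p \mid n$ and $Z = \mu_n = Z(G)$, so $Z(G)/Z = 1$ is smooth and $p$ is good for type A. This is a legitimate input to the theorem, but $G/Z = \PGL_n$ has $\pi_1(\sD(\PGL_n)) = \mu_n$ of order divisible by $p$, so none of the cited structural results apply after your reduction. (There is a secondary issue in the same step: if $Z$ is non-smooth, $\Lie(G/Z)$ is not $\fg/\Lie Z$, so it is not obvious that $X$ and $Y$ can be treated as nilpotent sections of the Lie algebra of $G/Z$ for the purposes of those lemmas.)

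The paper's reduction goes in the opposite direction: it builds a flat cover $\widetilde{G}_1 \to G$ with kernel a torus, chosen so that $\sD(\widetilde{G}_1)$ is the simply connected cover of $\sD(G)$ (so $\pi_1(\sD(\widetilde{G}_1)) = 1$, trivially invertible). One then lifts $X, Y$ uniquely through the Lie algebra of the unipotent radical of the lifted instability parabolic, checks that the lifts remain pure, and only then applies the structural machinery and reduces $Z$ away. Once you carry out this reduction in place of yours, the rest of your argument (triviality of $\rm{H}^1(A, R_X)$ via $k$-splitness, surjectivity of $E(A) \to E(k(\eta))$ for finite \'etale $E$ via the valuative criterion of properness or $\pi_1$ surjectivity depending on whether $A$ is a valuation ring or regular, then Grothendieck--Serre for $(C_X/Z)^0$) is essentially what the paper does.
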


\begin{proof}
First, it is clear that (\ref{item:conjugacy-integral}) implies (\ref{item:conjugacy-generic}) and (\ref{item:conjugacy-special}). For the other implications, we first reduce to the case that $\sD(G)$ is simply connected. This reduction is very similar in the nilpotent and unipotent cases, and after making this reduction the proofs in the two cases are almost identical, so we will deal mainly with the nilpotent case in what follows.\smallskip

Let $T_0$ be the maximal central torus of $G$. Let $\pi_G: \widetilde{G} \to \sD(G)$ be the map from the universal cover of $\sD(G)$, and embed $Z(\widetilde{G})$ into an $A$-torus $T_1$. Let $\widetilde{G}_1 = \widetilde{G} \times^{Z(\widetilde{G})} (T_0 \times T_1)$, where the map $Z(\widetilde{G}) \to T_0 \times T_1$ is given by the two maps
\[
Z(\widetilde{G}) \to Z(\sD(G)) \to T_0
\]
and the chosen embedding $Z(\widetilde{G}) \to T_1$. Note that there is a natural flat homomorphism $\pi: \widetilde{G}_1 \to G$ with kernel $T_1$, and $\sD(\widetilde{G}_1) = \widetilde{G}$. \smallskip

Let $\widetilde{P}_1$ be the pullback of the instability parabolic $P$ of $X$ to $\widetilde{G}_1$, and let $\widetilde{\fu}_1$ and $\fu$ be the Lie algebras of the unipotent radicals of $\widetilde{P}_1$ and $P$. The map $\widetilde{\fu}_1 \to \fu$ is an isomorphism, so there is a (unique) fiberwise nilpotent lift $\widetilde{X}$ of $X$ in $\widetilde{\fg}_1$, the Lie algebra of $\widetilde{G}_1$. Of course, $Y$ also lifts uniquely to a fiberwise nilpotent section $\widetilde{Y}$ of $\widetilde{\fg}_1$. \smallskip

Note that $\widetilde{X}$ and $\widetilde{Y}$ are automatically pure. To see purity of $\widetilde{X}$, it suffices to mention that the natural map
\[
Z_{\widetilde{G}_1}(\widetilde{X}) \to \pi^{-1}(Z_G(X))
\]
is a closed embedding on fibers with finite cokernel, as one sees by an argument akin to that of the proof of \cite[Lem.\ 3.3(2)]{Integral-Springer}. So purity of $\widetilde{X}$ follows from flatness of $\pi$ and purity of $X$. Thus passing from $G$ to $\widetilde{G}_1$, $Z$ to $\pi^{-1}(Z)$, and $X, Y$ to $\widetilde{X}, \widetilde{Y}$, we may assume that $\sD(G)$ is simply connected. In this case it is clear from Proposition~\ref{prop:pure-sections-determined-by-one-fiber} that $\Transp_G(X, Y)/Z$ is smooth and thus (\ref{item:conjugacy-special}) implies (\ref{item:conjugacy-integral}). \smallskip

The reduction to the case that $\sD(G)$ is simply connected in the unipotent case is very similar, so we will only explain the final parenthetical in the statement of the result. By \cite[Thm.\ 5.1]{Integral-Springer} there is a $G$-equivariant isomorphism $\rho: \sU_G \cong \sN_G$. If $X = \rho(u)$ and $P$ is the instability parabolic of $X$, then $G$-equivariance of $\rho$ implies that $u$ lies in the unipotent radical of $P$; see \cite[Rmk.\ 10]{Mcninch-optimal}. In the remainder of this proof, we will deal only with the nilpotent case; in the unipotent case, the role of the instability parabolic of $X$ is played by the above parabolic $P$. \smallskip

Now assume that (\ref{item:conjugacy-generic}) holds. Note that smoothness of $Z(G)/Z$ implies smoothness of $C_X/Z$ by Lemma~\ref{lemma:flat-pure-centralizer-consequences}. We note that $\Transp_G(X, Y)/Z$ is a $Z_G(X)/Z$-torsor over $A$ which is generically trivial, so it suffices to show that the map $\rm{H}^1(A, Z_G(X)/Z) \to \rm{H}^1(k(\eta), Z_G(X)/Z)$ has trivial kernel. We call attention to the short exact sequences
\begin{gather*}
1 \to R_X \to Z_G(X)/Z \to C_X/Z \to 1 \\
1 \to (C_X/Z)^0 \to C_X/Z \to (C_X/Z)/(C_X/Z)^0 \to 1
\end{gather*}
where $C_X$ and $R_X$ are as in Theorem~\ref{theorem:flat-pure-centralizer-semidirect-product}(\ref{item:nilpotent-centralizer-unipotent-part}). For ease of notation, let $E_X = C_X/Z$. We show first that $\rm{H}^1(A, R_X) = 1$ and then a diagram chase will allow us to pass from $Z_G(X)/Z$ to $E_X$. We then further reduce to showing that $\rm{H}^1(A, E_X^0) \to \rm{H}^1(k(\eta), (E_X^0)_\eta)$ has trivial kernel. \smallskip

First we show $\rm{H}^1(A, R_X) = 1$. Since $A$ is henselian and $R_X$ is smooth, it suffices for the first claim to show that $\rm{H}^1(k(s), (R_X)_s) = 1$, and thus we may assume in any case that $A = k$ is a field. We claim that $R_X$ is $k$\textit{-split}, i.e., there is a filtration $1 = U_0 \subset U_1 \subset \cdots \subset U_n = R_X$ such that each successive quotient $U_{i+1}/U_i$ is $k$-isomorphic to $\bG_a$. If $k$ is perfect, then this is automatic; see \cite[Cor.\ 15.5]{Borel}. Thus we will assume from now on that $k$ is of positive characteristic $p$. \smallskip

Note that $R_X$ is a smooth connected unipotent $k$-group scheme, and it admits a $\bG_m$-action with only nontrivial weights on its Lie algebra (coming from an associated cocharacter for $X$). We claim that this automatically implies that $R_X$ is $k$-split. To see this, we may induct on the length of the central series for $R_X$ to assume that $R_X$ is commutative. Work of Tits shows that any $p$-torsion smooth connected commutative $k$-group scheme admitting a $\bG_m$-action with only nontrivial weights on its Lie algebra is automatically a vector group; see \cite[Thm.\ B.4.3]{CGP}. So if $H \subset R_X$ is the maximal $p$-torsion smooth connected commutative $k$-group scheme, then $H$ is a nontrivial characteristic $k$-subgroup of $R_X$ which is a vector group. Thus induction on $\dim R_X$ proves the claim. \smallskip

Since $R_X$ is $k$-split, the triviality of $\rm{H}^1(k, R_X)$ reduces to that of $\rm{H}^1(k, \bG_a)$. Thus, as explained, we may reduce to showing that the natural map $\rm{H}^1(A, E_X) \to \rm{H}^1(k(\eta), (E_X)_\eta)$ has trivial kernel. \smallskip

Next, we check that if $E$ is a finite etale $A$-scheme then the map $E(A) \to E(k(\eta))$ is surjective. If $A$ is a valuation ring, then this follows from the valuative criterion of properness (since $E$ is finite). If $A$ is a regular local ring, this follows from surjectivity of the map $\pi_1(\Spec k(\eta), \overline{\eta}) \to \pi_1(\Spec A, \overline{\eta})$; see \cite[Exp.\ V, Prop.\ 8.2]{SGA1}. Applying this observation to an $E_X/E_X^0$-torsor $E$ shows that the map $\rm{H}^1(A, E_X/E_X^0) \to \rm{H}^1(k(\eta), (E_X/E_X^0)_\eta)$ has trivial kernel. Looking at the diagram
\[
\begin{tikzcd}
(E_X/E_X^0)(A) \arrow[r] \arrow[d]
    &\rm{H}^1(A, E_X^0) \arrow[r] \arrow[d]
    &\rm{H}^1(A, E_X) \arrow[r] \arrow[d]
    &\rm{H}^1(A, E_X/E_X^0) \arrow[d] \\
(E_X/E_X^0)(k(\eta)) \arrow[r]
    &\rm{H}^1(k(\eta), (E_X^0)_\eta) \arrow[r]
    &\rm{H}^1(k(\eta), (E_X)_\eta) \arrow[r]
    &\rm{H}^1(k(\eta), (E_X/E_X^0)_\eta)
\end{tikzcd}
\]
we may therefore reduce finally to showing that $\rm{H}^1(A, E_X^0) \to \rm{H}^1(k(\eta), E_X^0)$ has trivial kernel. \smallskip

By Theorem~\ref{theorem:flat-pure-centralizer-semidirect-product}(\ref{item:nilpotent-centralizer-unipotent-part}), $C_X/Z(G)$ is smooth and affine with reductive fibers. Since the map $E_X \to C_X/Z(G)$ is a $Z(G)/Z$-torsor and $Z(G)/Z$ is a smooth $A$-group scheme of multiplicative type, also $E_X$ is smooth and affine with reductive fibers. Thus by \cite[Prop.\ 3.1.3]{Conrad}, $E_X^0$ is a reductive group scheme, so our injectivity assertion is precisely the statement of the \textit{Grothendieck--Serre conjecture} for $E_X^0$. When $A$ is a valuation ring, the Grothendieck--Serre conjecture is the main result of \cite{Guo-valuation}. The henselian regular local case is readily deduced from this by induction on the dimension; see \cite[6.6.1]{CTS}.
\end{proof}

\begin{remark}
It seems plausible that Theorem~\ref{theorem:grothendieck-serre} holds whenever the Grothendieck--Serre conjecture holds for $(A, H)$ as $H$ ranges all reductive $A$-subgroup schemes of $G$. (For recent progress on this conjecture, see \cite{Fedorov-Panin}, \cite{Panin}, \cite{Cesnavicius}.) However, to show that $\rm{H}^1(A, R_X) = 0$ we used that $A$ is henselian, and we do not know whether this assumption can be removed in general.
\end{remark}

\bibliography{bibliography}

\end{document}